\newcommand{\N}{\ensuremath{\mathbb{N}}}   % natural numbers
\newcommand{\Q}{\ensuremath{\mathbb{Q}}}   % rational numbers
\newcommand{\R}{\ensuremath{\mathbb{R}}}   % real numbers
\newcommand{\K}{\ensuremath{\mathbb{K}}}   % field K
\newcommand{\C}{\ensuremath{\mathbb{C}}} %complex numbers
\newcommand{\T}{\ensuremath{\mathbb{T}}} % complex numbers of norm 1
\newcommand{\Perm}{\mathfrak G}
\newcommand{\MU}{{\mathcal U}}
\newcommand{\MS}{{\mathcal S}}
\newcommand{\MW}{{\mathcal W}}
\newcommand{\MK}{{\mathcal K}}
\newcommand{\MT}{{\mathcal T}}
\newcommand{\MDU}{{\mathcal {DU}}}
\newcommand{\MDO}{{\mathcal {DO}}}
\newcommand{\Herm}{\mathrm{Herm}}
\newcommand{\Cnn}{\C^n\ot \C^n}
\newcommand{\Cnnt}{\C^n\ot (\C^n)^{\ot t}}
\newcommand{\Cnt}{(\C^n)^{\ot t}}
\newcommand{\MKt}{\MK^{(t)}}
\newcommand{\sS}{\mathbb S}
\newcommand{\PCP}{\text{\rm PCP}}
\newcommand{\TCP}{\text{\rm TCP}}
\newcommand{\SEP}{\text{\rm SEP}}
\newcommand{\CP}{\text{\rm CP}}
\newcommand{\COP}{\text{\rm COP}}
\newcommand{\CLDUI}{\text{\rm CLDUI}}
\newcommand{\LDUI}{\text{\rm LDUI}}
\newcommand{\LDOI}{\text{\rm LDOI}}
\newcommand{\DPS}{\text{\rm DPS}}
\newcommand{\DPSt}{\mathrm{DPS}^{(t)}}
\newcommand{\SEPBS}{\SEP^{\BS}}
\newcommand{\tilDPS}{\widetilde {\DPS}}
\newcommand{\tilDPSt}{\widetilde{\DPS}^{(t)}}
\newcommand{\tilF}{\widetilde F}
\newcommand{\tilMK}{\widetilde\MK}
\newcommand{\PCOP}{\text{\rm PCOP}}
\newcommand{\Tr}{\text{\rm Tr}}
\newcommand{\cone}{\text{\rm cone}}
\newcommand{\conv}{\text{\rm conv}}
\newcommand{\ou}{\overline u}
\newcommand{\ox}{\overline x}
\newcommand{\oy}{\overline y}
\newcommand{\diag}{\text{\rm diag}}
\newcommand{\Diag}{\text{\rm Diag}}
\newcommand{\ot}{\otimes}
\newcommand{\PiLDUI}{\Pi_{\LDUI}}
\newcommand{\PiLDUIt}{\Pi_{\LDUI^{(t)}}}
\newcommand{\BS}{\text{\rm BS}}
\newcommand{\supp}{\text{\rm supp}}
\newcommand{\sfT}{T}
\newcommand{\ui}{\underline{i}}
\newcommand{\uj}{\underline{j}}
\newcommand{\uk}{\underline{k}}
\newcommand{\uh}{\underline{h}}
\newcommand{\ovU}{\overline U}
\newcommand{\ovx}{\overline x}
\newcommand{\ovy}{\overline y}
\newcommand{\Rea}{\text{\rm Re}}
\newcommand{\Ima}{\text{\rm Im}}
\newcommand{\bfi}{\mathbf i}
\newcommand{\edge}{\stackrel{1}{\simeq}}
\newcommand{\edgetwo}{\stackrel{2}{\simeq}}
\newcommand{\tftwo}{{\lfloor (t+1)/2\rfloor}}
\theoremstyle{plain}
\newtheorem{satz}{Satz}[section]
\newtheorem{lemma}[satz]{Lemma}
\newtheorem{corollary}[satz]{Corollary}
\newtheorem{definition}[satz]{Definition}
\newtheorem{theorem}[satz]{Theorem}
\newtheorem{remark}[satz]{Remark}
\newtheorem{example}[satz]{Example}
\newtheorem{conjecture}[satz]{Conjecture}
\renewenvironment{proof}[1][\proofname]{%
  \par
  \pushQED{}% remove automatic qed
  \normalfont
  \topsep6\p@\@plus6\p@
  \trivlist
  \item[\hskip\labelsep\itshape #1\@addpunct{.}]% <-- italic heading
}{%
  \popQED
  \endtrivlist
}
\newcommand{\ML}{\textcolor{blue}}
\newcommand{\ignore}[1]{}
 \title{Semidefinite hierarchies for diagonal unitary invariant bipartite quantum states}
\author{Jonas Britz \thanks{Centrum Wiskunde \& Informatica (CWI), Amsterdam, and LAAS-CNRS, Toulouse \url{Jonas.Britz@cwi.nl}}
\and Monique Laurent \thanks{Centrum Wiskunde \& Informatica (CWI), Amsterdam, and Tilburg University, \url{Monique.Laurent@cwi.nl}}
}
\date{\today}
\begin{document}

\maketitle

\begin{abstract}
We investigate questions about the cone $\SEP_n$ of separable bipartite states, consisting of the Hermitian matrices acting on   $\C^n\ot\C^n$ that can be written as conic combinations of rank one matrices of the form $xx^*\ot yy^*$ with $x,y\in\C^n$. % in the complex unit sphere. 
Bipartite states that are not separable are said to be  entangled. Detecting quantum entanglement is a fundamental task in quantum information and a hard computational problem. We explore the Doherty-Parrilo-Spedaglieri (DPS) hierarchy of semidefinite conic approximations for $\SEP_n$ when the bipartite states have some additional structural properties: first, (i)  for states with diagonal unitary invariance, and second (ii) for states with Bose symmetry. In   case (i) we show that the DPS hierarchy can be block diagonalized, which, combining with its moment reformulation,  leads to a substantially more efficient implementation. In case (ii), we give a characterization of the dual hierarchy, in terms of %real-sums-of-squares for 
sums of squares of Hermitian complex polynomials, extending a known result in the generic case. It turns out that the completely positive cone $\CP_n$, its dual cone $\COP_n$, and their sums-of-squares based conic approximations $\MK^{(t)}_n$, play a central role in these two settings (i),(ii). We clarify these connections and test the block diagonal relaxations on   classes of examples.
\end{abstract}

%\keywords{quantum bipartite states \and entanglement \and separable states \and DPS hierarchy \and  \and sum-of-squares polynomial \and moment approach   \and completely positive matrix \and copositive matrix \and semidefinite programming \and sparsity pattern \and symmetry}
%\subclass{90C22; 90C26; 90C30}

\tableofcontents

\section{Introduction}\label{sec:introduction}

In quantum information theory, a  bipartite quantum state corresponds to a Hermitian positive semidefinite matrix\footnote{We consider here {\em unnormalized} states, i.e., we do not require that $\rho_{AB}$ has trace one. This restriction, which is  common  in quantum information, is however just a matter of scaling.}  $\rho_{AB}$ acting on a tensor product space $\C^{n_A}\ot\C^{n_B}$.
Each of the two Hilbert spaces $\C^{n_A}$ and $\C^{n_B}$ corresponds to a part  of a bipartite quantum system, also referred to as its A- and B-registers;
 the notation $\rho_{AB}$ is meant to reflect this fact and is commonly used  in quantum information theory.
 Then, the state $\rho_{AB}$ is said to be  {\em separable} when it can written as a sum of tensor product states, of the form $X\ot Y$ with $X\in \Herm^{n_A}_+$ and $Y\in \Herm^{n_B}_+$, in which case $\rho_{AB}$ acts `separately' on each register of the quantum system. When $\rho_{AB}$ is not separable it is said to be {\em entangled}. Entanglement is a physical resource that can be used to carry out more efficiently on a quantum computer certain tasks in computation, communication, teleportation, etc. We refer, e.g., to the books \cite{Nielsen_Chuang,Watrous} for background in quantum information theory.  From now on we assume  $n_A=n_B=n$, but we may sometimes still use the letters $n_A$, $n_B$ to stress the roles of the two A- and B-registers in the system.
 The set of separable states in $\Herm(\Cnn)$ 
 can  alternatively be described as
\begin{align}\label{eq:SEP}
\begin{split}
\SEP_n 
= \cone\{ xx^*\ot yy^*: x,y\in\C^n, \|x\|=\|y\|=1\}\subseteq \Herm(\Cnn). 
\end{split}
\end{align}
Testing whether a given bipartite state is separable or entangled is a fundamental problem in quantum information theory, which has been shown to be  computationally hard  (Gurvits \cite{Gurvits}).  

Doherty, Parrilo and Spedaglieri \cite{DPS_2002,DPS_2004} introduced a hierarchy of semidefinite relaxations $\DPS_n^{(t)}$ (for $t\ge 1$) for the separable cone $\SEP_n$, having the property that it is {\em complete}: 
%equality $\bigcap_{t\ge 1}\DPSt_n=\SEP_n$ holds. In other words,
if a state $\rho_{AB}$ is entangled, then $\rho_{AB}\not\in\DPSt_n$ for some order $t$, i.e., entanglement is detected at some finite level of the DPS hierarchy. Testing membership in the relaxation $\DPSt_n$ amounts to testing feasibility of a  semidefinite program involving $t+1$ matrices of size  $n^{t+1}$, thus rapidly  increasing  with the relaxation order $t$. This  motivates investigating more economical relaxations when dealing with classes of bipartite states that enjoy some  structural properties.

In this paper we investigate the following structural properties: when the bipartite state is {\em invariant under some diagonal unitaries}, leading to classes of bipartite states coined in the literature as $\CLDUI_n$, $\LDUI_n$  and $\LDOI_n$, and when the bipartite state is {\em Bose symmetric}. 
For separable bipartite states enjoying some of these structural properties, there are interesting connections  with the cone $\CP_n$ of completely positive matrices,  a well-studied matrix cone in matrix theory and optimization.  % (and the dual cone $\COP_n$ of copositive matrices) 
Some natural extensions of $\CP_n$ have been introduced in quantum information theory, known as $\PCP_n$ and $\TCP_n$, consisting of all pairwise (triplewise) completely positive matrices (exact definitions follow below). %in Section \ref{sec:intro-PCP}.
Via conic duality there are also intimate links to positive polynomials and their characterizations in terms of sums of squares of polynomials.
%, that we briefly review in Section \ref{sec:intro-dualobjects}. 
This leads in particular to connections with the copositive cone $\COP_n$, the dual cone of $\CP_n$, and some of its semidefinite approximation hierarchies.

\subsubsection*{Diagonal unitary invariant bipartite  states }\label{sec:intro-PCP}

Following Johnston and MacLean \cite{JML-PCP} and Singh and Nechita \cite{SN-CLDUI}, we say  that a bipartite state $\rho_{AB}\in \Herm(\Cnn)$ is  {\em conjugate local diagonal unitary invariant} \footnote{The name reflects the fact that each $U$ is `diagonal unitary' and `conjugate local'  refers to the fact that $U$ acts on the A-register while its conjugate $\ovU$ acts on the B-register.}
(CLDUI, for short) 
if  
\begin{align}\label{eq:CLDUI}
\rho_{AB}= (U\ot \ovU) \rho_{AB} (U\ot \ovU)^* \ \text{ for all } U\in\MDU_n.
\end{align}
Here,  $\MDU_n$ is the set of diagonal unitaries, which consists of the matrices $U=\Diag(u)$ for $u\in \T^n$, where $\T=\{z\in\C: |z|=1\}$.
%i.e., with $u\in\C^n$ and $|u_i|=1$ for $i\in [n]$. 
Analogously, $\rho_{AB}$ is {\em  local diagonal unitary invariant} (LDUI) if
\begin{align}\label{eq:LDUI}
\rho_{AB}= (U\ot U) \rho_{AB} (U\ot U)^* \ \text{ for all } U\in\MDU_n
\end{align}
and $\rho_{AB}$ is {\em  local diagonal orthogonal invariant} (LDOI) if it satisfies 
\begin{align}\label{eq:LDOI}
\rho_{AB}= (O\ot O)\rho_{AB} (O\ot O)^* \ \text{ for all } O=\Diag(o) \text{ with } o\in \{\pm 1\}^n.
\end{align}
% for all diagonal orthogonal matrices $U$, i.e., of the form $U=\Diag(u)$ with $u\in \{\pm 1\}^n$. 
Let $\CLDUI_n$, $\LDUI_n$ and $\LDOI_n$ denote the  sets of bipartite states with the relevant invariance properties, so that $\CLDUI_n\cup \LDUI_n\subseteq \LDOI_n$.
If, in (\ref{eq:CLDUI}) and (\ref{eq:LDUI}), one would ask invariance under {\em all unitaries} (instead of all diagonal unitaries), then one  gets the class of (CLDUI) Werner states \cite{Werner} and (LDUI) isotropic states \cite{HH-isotropic}. %, see \cite{SN-CLDUI}.}
As observed in \cite{JML-PCP,SN-CLDUI} and recalled in (\ref{eq:suppCLDUI})-(\ref{eq:suppLDOI}) below, CLDUI, LDUI, and LDOI states have a very special sparsity pattern. 

\smallskip
A matrix $\rho_{AB}\in\Herm(\Cnn)$  is indexed by $[n]^2$, whose elements are  the sequences $(i,j)$ in $[n_A]\times [n_B]$, simply denoted as $ij$ for compact notation; so\footnote{The letters $n_A$, $n_B$ are used to stress the different roles of the indices, but recall   $n_A=n_B=n$.}, $\rho_{AB}=((\rho_{AB})_{ij,kl})_{i,k \in [n_A], j,l\in [n_B]}$. Its {\em support} $\supp(\rho_{AB})$    is defined as the set of positions corresponding to nonzero entries of $\rho_{AB}$, thus consisting of the pairs $(ij, kl)\in [n]^2\times [n]^2$ with $(\rho_{AB})_{ij,kl}\ne 0$.
Define the sets
\begin{align*}
\Omega_n=\{ (ij,ij): i, j\in [n]\}\cup \{(ii,jj): i,j\in [n]\},\\
\Phi_n=\{(ij,ij): i, j\in [n]\}\cup \{(ij,ji): i,j\in [n]\}.
\end{align*}
 For a matrix $\rho_{AB}\in \Herm(\Cnn)$, the following  equivalences hold:
\begin{align}
\rho_{AB} \text{ is CLDUI} &\Longleftrightarrow \supp(\rho_{AB})\subseteq
 \Omega_n,
 %=\{ (ij,ij): i, j\in [n]\}\cup \{(ii,jj): i,j\in [n]\},
 \label{eq:suppCLDUI}\\
\rho_{AB} \text{ is LDUI}  &\Longleftrightarrow \supp(\rho_{AB})\subseteq \Phi_n,
%=\{(ij,ij): i, j\in [n]\}\cup \{(ij,ji): i,j\in [n]\},
\label{eq:suppLDUI}\\
\rho_{AB} \text{ is LDOI}  &\Longleftrightarrow \supp(\rho_{AB})\subseteq
\Omega_n\cup\Phi_n.\label{eq:suppLDOI}
\end{align}
As an illustration,  we  give the (easy) argument for (\ref{eq:suppCLDUI}). Let $U=\Diag(u)$ with $u\in\T^n$, and let $i,j,k,l\in [n]$.  Then, we have
$((U\ot\ovU) \rho_{AB} (U\ot \ovU)^*)_{ij,kl}= u_i\overline{u_j}\ \overline {u_k}u_l (\rho_{AB})_{ij,kl}$, which is equal to $(\rho_{AB})_{ij,kl}$  if $(i,k)=(j,l)$ or $(i,j)=(k,l)$. At any other position, there exists $u\in\T^n$ such that $u_i\overline{u_j}\ \overline {u_k}u_l \ne 1$, which implies $(\rho_{AB})_{ij,kl}=0$.
This shows (\ref{eq:suppCLDUI}) and the argument is similar for (\ref{eq:suppLDUI})-(\ref{eq:suppLDOI}).

Hence,   the nonzero entries of an LDOI matrix $\rho_{AB}\in\Herm(\Cnn)$   are fully captured by a triplet of matrices $(X,Y,Z)\in \R^{n\times n} \times \Herm^n\times \Herm^n$, obtained by setting
\begin{align}\label{eq:rho-ABC}
X=((\rho_{AB})_{ij,ij})_{i,j=1}^n,\quad Y=((\rho_{AB})_{ii,jj})_{i,j=1}^n,\quad Z=((\rho_{AB})_{ij,ji})_{i,j=1}^n;
\end{align}
note that  $\diag(X)=\diag(Y)=\diag(Z)$ holds. 
Conversely,  given $(X,Y,Z)\in \R^{n\times n} \times \Herm^n\times \Herm^n$ with
$\diag(X)=\diag(Y)=\diag(Z)$, one can define a matrix 
$\rho_{AB} \in \Herm(\Cnn)$ via relation (\ref{eq:rho-ABC}), with zero entries at all other positions;  following \cite{SN-CLDUI}, this matrix is denoted $\rho_{(X,Y,Z)}$.
For the reader's convenience,  its explicit description reads
\begin{align}\label{eq:rhoXYZ0}
\begin{split}
&\rho_{(X,Y,Z)} \\
&=\sum_{i,j=1}^n X_{ij} e_ie_i^*\ot e_je_j^* +\sum_{i\ne j\in [n]} Y_{ij} e_ie_j^*\ot e_ie_j^*
+\sum_{i\ne j\in [n]} Z_{ij} e_ie_j^* \ot e_je_i^*\\
& =\sum_{i,j=1}^n X_{ij} (e_i\ot e_j) (e_i\ot e_j)^* +\sum_{i\ne j\in [n]} Y_{ij} (e_i\ot e_i)(e_j\ot e_j)^*\\
& \quad +\sum_{i\ne j\in [n]} Z_{ij} (e_i\ot e_j)(e_j\ot e_i)^*.
\end{split}
\end{align}
Hence,  $\rho_{(X,Y,Z)}$ is LDOI by construction. Set $d=\diag(X)=\diag(Y)=\diag(Z)\in\R^n$. When $Y$ is diagonal, set $\rho_{(X,\cdot,Z)}=\rho_{(X,\Diag(d),Z)}$, which is LDUI by construction, and 
when $Z$ is diagonal, set $\rho_{(X,Y)}=\rho_{(X,Y,\Diag(d))}$, which is CLDUI by construction. 
In other words, $\rho_{(X,Y,Z)}$ can be written (after suitably permuting its rows/columns) as a block-diagonal matrix of the form
\begin{align}\label{eq:rhoXYZ-block}
\rho_{(X,Y,Z)}\simeq Y \oplus \displaystyle\oplus_{1\le i<j\le n} \begin{pmatrix} X_{ij} & Z_{ij} \cr Z_{ji} & X_{ji}\end{pmatrix}.
\end{align}
Hence, $\rho_{(X,Y,Z)}\succeq 0$ if and only if $Y\succeq 0$ and each of the above $2\times 2$ matrices is positive semidefinite. Furthermore, $\Tr [\rho_{(X,Y,Z)}]=1$ (i.e. $\rho_{(X,Y,Z)}$ is a normalized quantum state) if and only if $\sum_{i,j\in[n]}X_{i,j}=1$.

Observe that going from  the CLDUI sparsity pattern to the LDUI sparsity pattern amounts to ``taking the partial transpose $T_B$ with respect to the B-register" (see (\ref{eq:transpose1})). Roughly, this means switching the second indices,  i.e., mapping position $(ih,jk)$ to position $(ik,jh)$. In particular, 
\begin{align}\label{eq:rhoXYZ-block-PT}
\rho_{(X,Y,Z)}^{T_B}=\rho_{(X,Z,Y)},\quad \rho_{(X,Y)}^{T_B}= \rho_{(X,\cdot,Y)}.
\end{align}
So, the CLDUI and LDOI sparsity structures can be seen as the main players, but additional symmetry structure can be added to LDUI states as we see below.
%: $\rho_{(X,\cdot,X)}=\rho_{(X,X)}^{T_B}$ is indeed both LDUI and Bose symmetric.  \ML{See Example \ref{Ex?} below for an illustration for the case $n=3$.}
 
\subsubsection*{(Pairwise/triplewise)   completely positive matrices}

Following \cite{JML-PCP,SN-CLDUI}, define  the cone $\PCP_n$ (resp., the cone $\TCP_n$) consisting of all matrix pairs $(X,Y)$ for which the bipartite state $\rho_{(X,Y)}$ is separable (resp., all matrix triplets $(X,Y,Z)$ for which $\rho_{(X,Y,Z)}$ is separable).  
 It turns out that these two cones can be seen as  generalizations of the classical {\em completely positive cone} $\CP_n$, defined as 
  \begin{align}\label{eq:CP}
 \CP_n=\conv\{xx^\sfT: x\in\R^n_+\}.
 \end{align}
Indeed, the following links between the cones $\CP_n$, $\PCP_n$ and $\TCP_n$   are shown in \cite{JML-PCP,SN-CLDUI} (recalled in Lemma \ref{lem:CP-PCP-TCP} with a short proof).  For a matrix $X\in \MS^n$, 
 \begin{align}\label{eq:CP-PCP-TCP}
\begin{split}
X\in \CP_n & \Longleftrightarrow  (X,X)\in \PCP_n \ \ (\text{\rm i.e., } \rho_{(X,X)}\in\SEP_n)  \\& \Longleftrightarrow (X,X,X)\in \TCP_n 
\ \ (\text{\rm i.e., } \rho_{(X,X,X)}\in\SEP_n).  
\end{split}
\end{align}
Accordingly, elements of $\PCP_n$ ($\TCP_n$) are called {\em pairwise (triplewise) completely positive}. 

\subsubsection*{Bose symmetric bipartite states}

 Let $\BS(\Cnn)\subseteq\Herm(\Cnn)$ denote the set of {\em Bose symmetric} (or simply {\em symmetric}) bipartite  states $\rho_{AB}$, i.e., satisfying the following permutation invariance property: 
 $$(\rho_{AB})_{ij,kl}=(\rho_{AB})_{ij,lk} \text{  for all } i,j,k,l\in [n].$$
Hence, any $\rho_{AB}\in\BS(\C^n\ot \C^n)$ leaves $S^2(\C^n)$ invariant and vanishes on $(S^2(\C^n))^\perp$, so that $\BS(\C^n\ot \C^n)\simeq\Herm(S^2(\C^n))$. Clearly, any state $xx^*\ot xx^*$ is Bose symmetric and, in fact,  these states  generate  the cone  $\SEPBS_n$, defined as the  set of separable Bose symmetric states:
\begin{align}\label{eq:SEP-BS}
\SEPBS_n:=\SEP_n\cap\BS(\Cnn)=\cone\{xx^*\ot xx^*: x\in \sS^{n-1}\}.
\end{align}
(See Lemma \ref{lem:SEP-BS} for a short argument.)
%Note that, for any   $X\in\MS^n$ and $Y\in \Herm^n$ with $\diag(X)=\diag(Y)$, the associated LDOI state $\rho_{(X,Y,X)}$ is Bose symmetric. 
Note that Bose symmetric states with LDOI pattern arise in the following way, as directly follows using relation (\ref{eq:rho-ABC}): for a 
bipartite state $\rho_{AB}\in\Herm(\C^n\ot \C^n)$,
\begin{align}\label{eq:LDUI-BS}
\begin{split}
&\rho_{AB}\in \LDOI_n \cap \BS(\C^n\ot \C^n)\Longleftrightarrow  \\
&\rho_{AB}=\rho_{(X,Y,X)} \text{ for some } X\in\MS^n, Y\in\Herm^n \text{ with } \diag(X)=\diag(Y).
\end{split}\end{align}
%Indeed, if $\rho_{AB}\in\LDUI_n$, then $\rho_{AB}=\rho_{(X,\cdot,Z)}$ for some $X\in\R^{n\times n}$, $Z\in\Herm^n$ with $\diag(X)=\diag(Z)$, and, moreover,   $X=Z$ if $\rho_{AB}$ is Bose symmetric. 
 We will give a class of examples in Section \ref{sec:exampleab} as  an illustration for the case $n=3$.

\subsubsection*{Our contributions and organization of the paper}
 The above connections suggest the following natural questions:\\ % that we address in this paper:\\
$\bullet$  Is it possible to adapt the DPS hierarchy in order to test separability of CLDUI (or LDUI, LDOI) bipartite quantum states in a more economical way? \\
$\bullet$  Is there a   link between the DPS hierarchy for separable bipartite states and the well-known hierarchy of semidefinite relaxations for the cone $\CP_n$ (and its dual, the copositive cone $\COP_n$)? \\
 These questions form the main motivation for our work.  We address the first question in Section~\ref{sec:DPS-CLDUI} and the second one in Section \ref{sec:DPS-symmetric}.

In  Section~\ref{sec:DPS-CLDUI}  we show that the sparsity structure of CLDUI (LDUI, LDOI) states can be transported to every level of  the DPS hierarchy. In a nutshell, we show that the matrices involved in the semidefinite program modeling the DPS relaxation at any given  level $t$ inherit the sparsity structure of the original bipartite state (Theorem  \ref{theo:CLDUItDPSCert}) and, moreover, that they  enjoy a block-diagonal structure (Theorem \ref{theo:LDUILDOItDPSCert}).   As an application, solving these semidefinite programs is much more efficient, which makes it possible to compute the relaxations at higher order.
In Corollary \ref{cor:sizeclique},  we give an estimate on the %gain in size 
 size of the matrices involved in the semidefinite programs modeling the level $t$ DPS relaxation in the tensor setting and, in Lemma \ref{lem:LDOIBlockRatio},  we indicate the maximum block size when reformulating the relaxation in the moment framework. We also refer to Tables \ref{tab:GenericBlockSize}, \ref{tab:LDOIBlockSize}, \ref{tab:CLDUIBlockSize} that show the comparative explicit matrix sizes for small $n$ and $t$ (when reformulating the SDP in the polynomial optimization framework). We report about computational experiments in  Section~\ref{sec:runtime}.

  It turns out that the second question -- about links between the hierarchy of conic approximations for $\COP_n$ and the DPS hierarchy -- was  considered  in the recent paper \cite{GNP_2025}, of which we learned while completing   the present manuscript. We revisit this question in 
Section \ref{sec:DPS-symmetric}, where we additionally offer some sharpenings and more compact arguments. The analysis relies in a crucial manner on the dual objects, the dual cone of  $\SEP_n$ and the copositive cone $\COP_n$  that tie naturally with positive polynomials and sums of squares. In particular, we show a characterization  of the dual of the variation   (\ref{eq:DPSt-BS})   of the DPS hierarchy adapted to Bose symmetric states     (Theorem \ref{theo:FF-BS}), which fully mirrors an earlier result of Fang and Fawzi \cite{FF-sphere} for general bipartite states. Based on this characterization, one can then easily establish a correspondence between the hierarchy of inner conic approximations   $\MK^{(t)}_n$   for the copositive cone $\COP_n$,  and the dual of the symmetry-adapted DPS hierarchy  (Theorem \ref{theo:DPS-KCOP}), thereby essentially recovering a recent result of \cite{GNP_2025}.
 
\medskip
 The paper is organized as follows. In Section \ref{sec:background} we introduce the main concepts and provide background facts on the main players in the paper, including  the  cones $\CP_n$, $\SEP_n$, $\DPS^{(t)}_n$, $\PCP_n$ and their duals, and we review basic properties of bipartite states with diagonal unitary invariance. In Section \ref{sec:DPS-CLDUI}, we investigate how to exploit the sparsity structure of CLDUI (LDUI, LDOI) states in order to design an adapted, more economical  variation of the DSP hierarchy; we show that this leads to a reformulation involving matrices with a block-diagonal structure. Section \ref{sec:DPS-symmetric} is devoted to Bose symmetric bipartite states; we first give a characterization of the dual symmetry-adapted DPS hierarchy, which we then use to establish a correspondence between the hierarchy of cones $\MK^{(t)}_n$ and the symmetry-adapted DPS hierarchy.
 In Section  \ref{sec:implementation} we discuss several issues regarding the practical implementation of the DPS hierarchy. We first indicate how to reformulate it in the `moment setting', which is the right setting permitting to remove the redundancies that are inherent to the `tensor setting' in which the DPS hierarchy is originally defined. This applies to  general bipartite states, as well as to bipartite states with some diagonal unitary invariance. After that, we report on numerical experiments that have been carried out on classes of bipartite states with LDOI sparsity structure, motivated by the so-called  PPT$^2$ conjecture from quantum information theory.

% ignore
\ignore{
\medskip
\noindent
\ML{Grouping here some notation, at the moment what follows  is for us only. Preliminaries about notation are given in Section \ref{sec:background}.}

\medskip\noindent
$\Perm_n$: group of permutation on $n$ elements\\
$\T^n=\{z\in\C^n: |z_i|=1 \text{ for all } i\in [n]\}$\\
$\Diag(x)$: diagonal matrix with diagonal entries $x$, for $x\in \C^n$\\
$\diag(X)$: vector of diagonal entries for a matrix $X$

\medskip\noindent
About matrices:\\
$\Herm^n=\Herm(\C^n)$: $n\times n$ complex Hermitian matrices (acting on $\C^n$)\\
$\Herm(\C^n\ot\C^n)$: $n^2 \times n^2$ Hermitian matrices (acting on $\C^n\ot\C^n$)\\
$\Herm^n_+=\Herm(\C^n)_+$: Hermitian positive semidefinite matrices
'
\medskip \noindent
About cones:\\
$\MS^n$: all $n\times n$ real symmetric matrices\\
$\MS^n_+$: all $n\times n$ symmetric positive semidefinite matrices\\
$\CP_n\subseteq \MS^n$: cone of completely positive matrices \\
$\COP_n\subseteq \MS^n$: copositive matrices\\
$\PCP_n$: pairwise completely positive matrices\\
$\TCP_n$: triplewise copmpletely positive matrices\\
$\SEP_n\subseteq \Herm(\C^n\ot \C^n)$: separable cone\\
$\MU_n$: unitaries of size $n$\\
$\MDU_n$: diagonal unitaries of size $n$: $\MDU_n=\{\Diag(u): u\in \T^n\}$\\
$\MDO_n$: diagonal orthogonal matrices of size $n$: $\MDO_n=\{\Diag(u): u\in \{\pm 1\}^n\}$\\

\medskip\noindent
About tensors: for the field $\K=\R$ or $\C$:\\
$(\K^n)^{\ot t}$: all $t$-tensors on $\K^n$\\
$\MS^t(\K^n)$: all symmetric $t$-tensors on $\K^n$\\
$\Pi_t$: projection from $(\K^n)^{\ot t}$ onto $\MS^t(\K^n)$\\
Shorthand for $\Cnn=\C^n\ot \C^n$; $\Cnnt=\C^n\ot (\C^n)^{\ot t}$

\medskip\noindent
$\rho_{AB}\in \Herm(\C^{n_A} \ot\C^{n_B}$: bipartite state, where $\C^{n_A}$ is Alice's Hilbert's space and $\C^{n_B}$ is Bob's Hilbert's space. For simplicity, $n_A=n_B=n$ throughout.\\
$\DPS_n^{(t)}\subseteq \Herm(\Cnnt)$: DPS relaxation of order $t$. So, $\rho_{AB}$ belongs to $\DPS_n^{(t)}$ if it has a certificate $\rho_{AB[t]}\in \Herm(\Cnnt)$\\
$\CLDUI_n\subseteq \Herm(\Cnn)$: CLDUI bipartite states \\
$\LDUI_n, \LDOI_n\subseteq \Herm(\Cnn)$\\
$\CLDUI_n^{(t)}\subseteq \Herm(\Cnnt)$
}
% end ignore
 %\label{sec:introduction}

\section{Preliminaries and background results}\label{sec:background}

\subsection{Preliminaries on notation }
We begin with introducing  notation on matrices, tensors, and polynomials,
used throughout the paper.   Thereafter, we group basic definitions and preliminary results about the various objects that are the main players in the paper. This includes  the DPS hierarchy and the associated dual cones, a semidefinite  hierarchy for the copositive cone and the associated dual cones for (pairwise) completely positive matrices, and basic facts on bipartite states with diagonal unitary invariance.
So, this section offers a road map through the main players of the paper and can serve as a gentle introduction to the subject.

\subsubsection*{Matrices and tensors}

For  integers $1\le k\le n$, set $[k:n]=\{k,k+1,\ldots, n-1,n\}$ and $[n]=[1:n]$, and let $\Perm_n$ denote the group of permutations of the set $[n]$. Let $\T\subseteq \C$ denote the set of complex numbers $z$ with squared modulus $|z|^2={\overline z z}=1$. We use the notation $\bfi=\sqrt{-1}\in\T$. For a complex number $z\in \C$, $\Rea(z)=(z+\overline z)/2$ denotes its real part; the notation extends to vectors and matrices by taking the entrywise real part. Let $e_1,\ldots,e_n$ denote the standard unit vectors that form an orthonormal basis of $\K^n$, for $\K=\R$, $\C$. For   $u\in\K^n$, $\|u\|= \sqrt {u^*u}$ is the Euclidean norm, where $u^*=\overline u^T$ denotes the conjugate transpose of $u$. Let $\sS^{n-1}=\{u\in \C^n: \|u\|=1\}$ denote the unit sphere in $\C^n$.

We use the trace inner product $\langle X,Y\rangle =\Tr(X^*Y)$ for matrices $X,Y\in \K^{n\times n}$, with $X^*=\overline X^T$ denoting the conjugate transpose of $X$. A matrix $X$ is Hermitian if $X^*=X$ and positive semidefinite, denoted as $X\succeq 0$, if all its eigenvalues are nonnegative. Let $\Herm^n$ (resp., $\MS^n$) denote the set of $n\times n$ Hermitian (resp., real symmetric) matrices and $\Herm^n_+$ (resp., $\MS^n_+$) its subcone of positive semidefinite matrices. 
For a matrix $X\in \C^{n\times n}$, $\ker X$ denotes its kernel (all vectors $w\in\C^n$ such that $Xw=0$).
Moreover, $\diag(X)=(X_{ii})_{i=1}^n$ denotes the vector consisting of its diagonal entries and, for a vector $x\in \C^n$, $\Diag(x)\in\C^{n\times n}$ denotes the diagonal matrix with $X_{ii}=x_i$ for $i\in [n]$, and $D_X=\Diag(\diag(X))$ denotes the diagonal matrix with same diagonal entries as $X$. We let $I_n$, $J_n$  denote, respectively, the  identity matrix and the all-ones matrix of size $n\times n$.  

The support of a matrix $X\in \Herm^n$ is the set $\supp(X)=\{(i,j)\in [n]^2: i\ne j, X_{i,j}\ne 0\}$ consisting of the off-diagonal positions corresponding to nonzero entries. Then,  $X$ is said to be supported on a graph $G=([n],E)$ if $\supp(X)\subseteq E$, i.e., $X_{i,j}=0$ if $(i,j)$ is not an edge of $G$.

We use the symbol `$\circ$' to denote the Hadamard product and `$\otimes$' to denote the tensor product. Given two vectors $u,v \in \C^n$, their Hadamard product is $u\circ v=(u_iv_i)_{i=1}^n\in\C^n$. The definition applies analogously to matrices: $A\circ B=(A_{ij}B_{ij})_{i\in [m], j\in [n]}$ for $A,B\in\C^{m\times n}$. Given vectors $u\in \C^{n_1}$, $v\in \C^{n_2}$, their tensor product is
$u\ot v\in \C^{n_1}\ot \C^{n_2}$ with entries $(u\ot v)_{i_1i_2}=u_{i_1}v_{i_2}$ for $i_1\in [n_1],i_2\in [n_2]$. Analogously, given matrices $A\in\C^{m_1\times n_1}, B\in \C^{m_2\times n_2}$, their tensor product is
$A\ot B\in \C^{m_1  \times n_1 }\ot \C^{m_2\times n_2}$ with entries 
$(A\ot B)_{(i_1,i_2),(j_1,j_2)}=A_{i_1,j_1}B_{i_2,j_2}$ for $(i_1,i_2)\in [m_1]\times[m_2]$ and $(j_1,j_2)\in [n_1]\times [n_2]$.
Given $A\in \C^{n\times n}$, the following (easy) observation will be often used later:
\begin{align}\label{eq:Au}
A\circ uu^* = \Diag(u) A \Diag(u)^* \ \text{ for any } u\in \C^n.
\end{align}
Given $X\in \Herm^n$, $Y\in \Herm^m$, 
$$X\oplus Y =\begin{pmatrix}X & 0\cr 0 & Y\end{pmatrix}\in \Herm^{n+m} $$
is their direct sum, which has a block-diagonal form. Such block-diagonal structure is useful since   $X\oplus Y\succeq 0$ if and only if $X,Y\succeq 0$, where the latter is easier to test as it involves smaller matrices. 

\smallskip
We will  deal with bipartite states, i.e., matrices that act on a tensor product space $\Cnn$. While $\Cnn$ can be identified with $\C^{n^2}$,  we also use the notation $\Herm(\Cnn)$, $\Herm(\Cnn)_+$ (instead of $\Herm^{n^2}$, $\Herm^{n^2}_+$) when it is important  to stress the tensor product structure. This notation will be especially useful when considering states that act 
 on a tensor product space of the form $\Cnt$ (for some $t\ge 1$). Then, it  is   convenient (and common in quantum information) to refer to the various copies of $\C^n$  as the  {\em registers} of the quantum system $\Cnt$.

\smallskip
Consider the tensor   space $\Cnt$ (for   $t\ge 1$). 
A $t$-tensor $v\in \Cnt$ is indexed by sequences $\ui=(i_1,\ldots,i_t)\in [n]^t$, so $v=(v_{\ui})_{\ui\in [n]^t}$. For  compact notation, we also use the notation $\ui=i_1\ldots i_t$ for sequences in $[n]^t$. 
For an integer $1\le s\le t$, we let $\ui \uj\in [n]^t$ denote the concatenation of   $\ui\in [n]^s$ and $\uj\in [n]^{t-s}$. 
Given a sequence   $\ui=i_1\ldots i_t \in[n]^t$, we let $\{\ui\}=\{i_1,\ldots,i_t\}\subseteq [n]$ denote the associated multiset, and define $\alpha(\ui)=(\alpha(\ui)_k)_{k=1}^n\in \N^n$, where $\alpha(\ui)_k$ denotes the number of occurrences of the symbol $k$ in the multiset $\{\ui\}$. Then, $|\alpha(\ui)|=\sum_{k=1}^n \alpha(\ui)_k=t$. Moreover, two sequences $\ui,\uj\in[n]^t$ correspond to the same multiset, i.e., $\{\ui\}=\{\uj\}$, precisely when $\alpha(\ui)=\alpha(\uj)$.

\smallskip
Any permutation $\sigma\in \Perm_t$ of the set $[t]$  acts on $[n]^t$ by setting
%\footnote{\ML{\textcolor{red}{The notation must be changed to $i^\sigma$, $v^\sigma$ in order not to confuse later with the action of permutation $\theta\in \Perm_n$, check throughout again..}}}
 $\ui^\sigma=i_{\sigma(1)}\ldots i_{\sigma(t)}$ for $\ui\in[n]^t$,  it also acts on $\Cnt$ by setting $v^\sigma= (v_{\sigma(\ui)})_{\ui\in [n]^t}$ for $v\in\Cnt$, and on $\Herm((\C^n)^{\ot t})$ by setting 
 $\rho^\sigma= (\rho_{\ui^\sigma,\uj^\sigma})_{\ui,\uj\in[n]^t}$ for $\rho\in \Herm((\C^n)^{\ot t})$.
Then, $v\in\Cnt$ is called a {\em symmetric tensor} if $v^\sigma=v$ for all $\sigma\in\Perm_t$.
Let $\MS^t(\K^n)$ denote the subspace of symmetric $t$-tensors and 
$\Pi_t: (\K^n)^{\ot t} \to \MS^t(\K^n)$ denotes the orthogonal projection onto the symmetric subspace $\MS^t(\K^n)$.

%\smallskip
A matrix  $\rho\in \Herm(\Cnt)$  is said to be {\em Bose symmetric}
(also called  {\em bosonic} in the quantum information literature) 
if $\rho^\sigma=\rho$ for all $\sigma\in\Perm_t$, i.e.,  $\rho\  e_{i_1}\ot \ldots\ot e_{i_t}= \rho \ e_{i_{\sigma(1)}}\ot\ldots \ot e_{i_{\sigma(t)}}$ for all $\sigma\in\Perm_t$ and $\ui\in [n]^t$. Equivalently, $\rho$ is Bose symmetric if $\rho \Pi_t= \rho$ (or $\Pi_t\rho=\rho$, or $\Pi_t\rho\Pi_t=\rho$, since $\rho$ is Hermitian).
%, $\Pi_t\rho=\rho$ or $\Pi_t\rho\Pi_t=\rho$. 
%In other words, $\rho\  e_{i_1}\ot \ldots\ot e_{i_t}= \rho \ e_{i_{\sigma(1)}}\ot\ldots \ot e_{i_{\sigma(t)}}$ for all $\sigma\in\Perm_t$ and $\ui\in [n]^t$. 
Let $\BS(\Cnt)\subseteq \Herm(\Cnt)$ denote the set of Hermitian Bose symmetric matrices. So,  any $\rho\in \BS_t(\Cnt)$ leaves $\MS^t(\C^n)$ invariant and vanishes on its orthogonal complement. 
This fact justifies the identifications $\BS(\Cnt) \simeq \Herm(S^t(\C^n))$ and
$(\BS(\Cnt)^\perp\simeq \Herm(  (S^t(\C^n))^\perp)$.
In the same way,
%\footnote{\ML{If we like this notation we can use it  to replace $\MV^{(t)}_n$ in the DPS hierarchy. I find it 'more easy to parse' than $\MV^{(t)}_n$, so I started already using it   }}, 
$\Herm(\C^n\ot S^t(\C^n))$ denotes the subspace of Hermitian matrices in $\Herm((\C^n)^{\ot (t+1)})$ that leave $\C^n\ot S^t(\C^n)$ invariant and vanish on its orthogonal complement $\C^n\ot (S^t(\C^n))^\perp$.

\smallskip
For a matrix acting on a tensor product space, one can define   the {\em partial trace} and the {\em partial transpose}   with respect to a subset of  registers. We first introduce these notions for the case of a tensor space $\C^{n_A}\ot \C^{n_B}$.  
Let $\rho_{AB}\in \Herm(\C^{n_A}\ot \C^{n_B})$. Then, {\em tracing out the B-register} produces the matrix $\Tr_B(\rho_{AB})\in\Herm(\C^{n_A})$, with entries
\begin{align}\label{eq:trace1}
\Tr_B(\rho_{AB})_{i,j}=\langle \Tr_B(\rho_{AB}), e_ie_{j}^T\rangle = 
\langle \rho_{AB}, e_ie_{j}^T \ot I_{n_B}\rangle = \sum_{k\in [n_B]} (\rho_{AB})_{ik,jk} 
%\ \text{ for } i,j\in [n_A].
\end{align}
for $i,j\in [n_A]$. {\em Taking the partial transpose with respect to the B-register} produces 
$\rho_{AB}^{T_B}\in\Herm(\C^{n_A}\ot \C^{n_B})$, with entries
\begin{align}\label{eq:transpose1}
(\rho_{AB}^{T_B})_{ih,jk}= (\rho_{AB})_{ik,jh}\ \text{ for } i,j\in [n_A], h,k\in [n_B].
\end{align}

\begin{example}\label{ex:Dicke}
For $t=2$, the vectors  $D_{ii}=e_i\ot e_i$ ( for $i\in [n]$) and $D_{ij}=(e_i\ot e_j+ e_j\ot e_i)/\sqrt 2$
(for $1\le i< j\le n$) form an orthonormal basis of $\MS^2(\C^n)$, known as the {\em Dicke basis}.
The  {\em Dicke states} $D_{ij}D_{ij}^*$ ($1\le i\le j\le n$) are clearly Bose symmetric. Moreover, they span the space $\BS(\Cnn)\cap \LDUI_n$, also known as the set of {\em mixed Dicke states} \cite{GNP_2025} (called {\em diagonally symmetric states} in \cite{Marconi_2021,Tura_2018}). This follows from (\ref{eq:LDUI-BS}) combined with the  fact that, for any $X\in\MS^n$, $\rho_{(X,\cdot,X)}$ can be decomposed as
%As an illustration, for any $X\in \MS^n$, the  state $\rho_{(X,\cdot,X)}$ belongs to $ \LDUI_n\cap \BS(\C^n\ot \C^n)$ (recall (\ref{eq:LDUI-BS})) and it can be decomposed as
$$\rho_{(X,\cdot,X)}= 
\sum_{i=1}^n X_{ii} D_{ii}D_{ii}^*+\sum_{1\le i<j\le n} X_{ij} D_{ij}D_{ij}^*.
$$
Each  state $D_{ii}D_{ii}^*= e_ie_i^*\ot e_ie_i^*$ is clearly separable. However, for $i<j$,  the state $D_{ij}D_{ij}^*$ is entangled, and it is known as the {\em maximally entangled state}. That the state $D_{ij}D_{ij}^*$ is entangled follows   from the fact that its partial transpose is not positive semidefinite, as shown below for $n=2$. Then, $D_{12}=(e_1\ot e_1+e_2\ot e_2)/\sqrt 2$,  
$$
D_{12}D_{12}^*={1\over 2} \begin{pmatrix} 0 & 0 & 0 & 0\cr 0 & 1 & 1 & 0\cr 0 & 1 & 1 & 0\cr 0 & 0 & 0 &0\end{pmatrix},\quad 
(D_{12}D_{12}^*)^{T_B}={1\over 2} \begin{pmatrix} 0 & 0 & 0 & 1\cr 0 & 1 & 0 & 0\cr 0 & 0 & 1 & 0\cr 1 & 0 & 0 &0\end{pmatrix}\not\succeq 0,
$$
where the matrices are indexed by $11, 12, 21, 22$ (in that order).
Note that $D_{12}D_{12}^*$ is LDUI  and its partial transpose $(D_{12}D_{12}^*)^{T_B}$ is CLDUI (as expected). Moreover, $D_{12}D_{12}^*$ is Bose symmetric, but its partial transpose $(D_{12}D_{12}^*)^{T_B}$ is not (since its columns indexed by 12 and 21 are distinct). 
Note also that $\Tr_B(D_{12}D_{12}^*)={1\over 2}I_2.$
\end{example}

%\begin{example}\label{example1}
%For $n=2$,  consider the  bipartite state $\rho_{AB}\in\Herm(\C^2\ot\C^2)$, its partial transpose and its partial trace with respect to the B-register: 
%$$
%\rho_{AB}=\left(\begin{matrix} 2 & 0 & 0 & 1/2\cr 0 & 2& 2 & 0\cr  0 & 2& 2 & 0\cr
%1/2 & 0 & 0 & 2\end{matrix}\right),\ 
%\rho_{AB}^{T_B}= \left(\begin{matrix} 2 & 0 & 0 & 2\cr 0 & 2& 1/2 & 0\cr  0 & 1/2& 2 & 0\cr
%2 & 0 & 0 & 2\end{matrix}\right), \ \Tr_B(\rho_{AB})=\left( 
%\begin{matrix} 4 & 0 \cr 0 & 4\end{matrix}\right).
%$$
%indexed by $11, 12, 21, 22$ (in that order). Note that  $\rho_{AB}$ is Bose symmetric (since its columns indexed by 12 and 21 coincide) and that $\rho_{AB}$ is LDOI (since its support is contained 
%in $\Omega_2\cup\Phi_2$, recall (\ref{eq:suppCLDUI})-(\ref{eq:suppLDOI})). \ML{Maybe this example can now be omitted?}
%\end{example}

The partial trace and partial transpose extend naturally to a tensor product space of the form $\Cnnt$ (with $t\ge 1$), as will be the setting for the DPS hierarchy in Section \ref{sec:DPS} below. Consider  $\rho_{AB[t]}\in \Herm (\Cnnt)$, where the notation is meant to stress   the role of the first A-register $\C^n$ combined with $t$ copies of the B-register $\C^n$.
% (recall $n=n_A=n_B$).
Then, one can trace out or take the partial trace with respect to any subset of the $t+1$ registers. In the application to the DPS hierarchy, we will do this for a subset of the $B$-registers. Assume 
we are given an integer $s\in [t]$ and we apply these operations w.r.t. the last $s$ B-registers; this  amounts to consider $\Cnnt$ as the bipartite system $\C^{n_A}\ot\C^{n_B}$ with $\C^{n_A}\sim \C^n\ot (\C^n)^{\ot (t-s)}$ and $\C^{n_B}\sim (\C^{n})^{\ot s}$, in which case relations (\ref{eq:trace1}) and (\ref{eq:transpose1}) specialize as follows. 
Then, $\Tr_{B[t-s+1:t]}(\rho_{AB[t]})\in \Herm(\C^n\ot (\C^n)^{\ot (t-s)})$ denotes the matrix obtained from $\rho_{AB[t]}$ by {\em tracing out the last $s$ B-registers}, thus with entries 
 \begin{align}\label{eq:partialtrace}
 (\Tr_{B[t-s+1:t]}(\rho_{AB[t]}))_{i_0\ui,j_0\uj}=\sum_{\uk\in [n]^{s}} (\rho_{AB[t]})_{i_0\ui \uk,j_0 \uj \uk}
 % \quad \text{  for } (i_0,\ui), (j_0,\uj)\in [n]\times [n]^{t-s}.
  \end{align}
for $(i_0,\ui), (j_0,\uj)\in [n]\times [n]^{t-s}.$
Similarly, $  \rho_{AB[t]}^{T_{B[t-s+1:t]}}\in \Herm(\Cnnt)$ is the matrix obtained from $\rho_{AB[t]}$ 
by taking the 
 {\em 
 partial transpose with respect to the last $s$ B-registers}, thus with entries
\begin{align}\label{eq:partialtranspose}
(\rho_{AB[t]}^{T_{B[t-s+1]}}) _{i_0\ui \uh, j_0\uj \uk} =( \rho_{AB[t]})_{i_0 \uj \uh,j_0\ui \uk}
 \text{ for } (i_0,\ui,\uh), (j_0,\uj,\uk)\in [n]\times [n]^{t-s}\times [n]^s.
% i_0,j_0\in [n],\ \ui,\uj\in [n]^s,\ \uh,\uk\in [n]^{t-s}.
\end{align}
%For an integer  $s\in [t]$, one can write sequences in $[n]^{1+t}$ as $(i_0,\ui, \uk)$ (or $i_0\ui\uk$) with $i_0\in [n]$, $\ui\in [n]^s$ and $\uk\in [n]^{t-s}$,  to reflect the splitting into $1+ s + (t-s)$ of the $t+1$ registers. 
%Then,  we let $\Tr_{B[s+1:t]}(\rho_{AB[t]})\in \Herm((\C^n\ot \C^n)^{\ot s})$ denote the matrix obtained from $\rho_{AB[t]}$ by {\em tracing out the last $t-s$ B-registers}, with entries 
% \begin{align}\label{eq:partialtrace}
% (\Tr_{B[s+1:t]}(\rho_{AB[t]}))_{i_0\ui,j_0\uj}=\sum_{\uk\in [n]^{t-s}} (\rho_{AB[t]})_{i_0\ui \uk,j_0 \uj \uk} \quad \text{  for } i_0,j_0\in [n],\ \ui,\uj\in [n]^s.
% \end{align}
%Moreover, $  \rho_{AB[t]}^{T_{B[s]}}\in \Herm(\Cnnt)$ is the matrix obtained from $\rho_{AB[t]}$  by taking the 
% {\em partial transpose with respect to the first $s$ B-registers}, with entries
%\begin{align}\label{eq:partialtranspose}
%(\rho_{AB[t]}^{T_{B[s]}}) _{i_0\ui \uh, j_0\uj \uk} =( \rho_{AB[t]})_{i_0 \uj \uh,j_0\ui \uk}
%\quad \text{ for }  i_0,j_0\in [n],\ \ui,\uj\in [n]^s,\ \uh,\uk\in [n]^{t-s}.
%\end{align}
Hence, $\Tr_B=\Tr_{B[1:1]}$ and $T_B=T_{B[1:1]}$ if $s=t=1$. For instance, if $\rho_{AB[t]}=xx^*\ot (yy^*)^{\ot t}$, then
% with $x,y\in\C^n$, then 
\begin{align}\label{eq:basicPT}
\begin{split}
\Tr_{B[t-s+1:t]}(\rho_{AB[t]})= xx^*\ot (yy^*)^{\ot (t-s)} \|y\|^{2s}, \\
\rho_{AB[t]}^{T_{B[t-s+1:t]}} = xx^*\ot (y y^*)^{\ot (t-s)}\ot (\ovy \ovy^*)^{\ot s}.
\end{split}\end{align}

%\subsubsection*{Notation on polynomials and the moment approach\footnote{\ML{First draft only}}}
\subsubsection*{Polynomials}

We will use polynomials in real variables and in complex variables, so we introduce both settings. Throughout, $\R[x]=\R[x_1,\ldots,x_n]$ denotes the polynomial ring in $n$ variables $x=(x_1,\ldots,x_n)$, with real coefficients. For an integer $d\ge 0$, $\R[x]_{d}$ denotes the space of homogeneous polynomials  of degree $d$, of the form $p=\sum_{\alpha\in\N^n, |\alpha|=d}p_\alpha x^\alpha$, and $\R[x]_{\le d}=\cup_{k=0}^d\R[x]_k$ is the set of polynomials with degree at most $d$. Here, $|\alpha|=\sum_{i=1}^n\alpha_i$ and $x^\alpha=\prod_{i=1}^n x_i^{\alpha_i}$ for $\alpha\in\N^n$.
A polynomial $p\in \R[x]$ is said to be a {\em sum of squares} ({\em sos}, for short) if $p=\sum_{\ell=1}^m q_\ell^2$ for some $q_\ell \in\R[x] $ and $m\ge 1$. Then, $p$ has even degree $2d$, $q_\ell$ has degree at most $d$, and each $q_\ell$ is homogeneous of degree $d$ if $p$ is homogeneous. 
%Let $\Sigma$ denote the cone of sums of squares and set $\Sigma_d=\Sigma\cap \R[x]_{2d}$. 
As is well-known,   %membership of $p$ in $\Sigma_d$ 
a polynomial $p$ of degree $2d$ is sos if and only if  the following   semidefinite program is feasible: there exists   a positive semidefinite matrix $Q\in \MS^N$ ($N={n+d\choose d}$) such that the polynomial identity $p=\langle Q, (x^{\alpha+\beta})_{\alpha,\beta\in\N^n, |\alpha|,|\beta|\le d}\rangle$ holds. 

%begin ignore
\ignore{
 Clearly, if $p\in \Sigma$, then $p$ is globally nonnegative. Restrictive converse statements hold, which are known as {\em Positivstellens\"atze} (most notably by  Putinar \cite{Putinar}, Schm\"udgen \cite{Schmudgen}, see e.g. \cite{Las09,Laurent09}). Particularly relevant to our treatment is the following Positivstellensatz.\footnote{\ML{Or state it for the complex sphere?}}

\begin{theorem}\cite{Schmudgen}
If $p\in \R[x]$ is strictly positive on the real sphere $\sS^{n-1}$, i.e., $p(x)>0$ for all $x\in \sS^{n-1}\cap \R^n$, then $p=\sigma+q (1-\sum_{i=1}^nx_i^2)$ for some sos $\sigma\in \Sigma$ and some $q\in \R[x]$.
\end{theorem}

The dual space of $\R[x]$ is $\R[x]^*$ that consists of the linear functionals $L:\R[x]\to\R$. 
A typical instance of such $L$ arises by considering integration w.r.t. a measure $\mu$ on a closed set $S$, given by
$$L(x^\alpha)=\int_S x^\alpha d\mu(x)\ \text{ for all }\alpha\in \N^n.$$ 
The quantities $\int_Sx^\alpha d\mu(x)$ are known as the moments of the measure $\mu$. Deciding whether a linear functional $L$ admits such a representing measure is the classical moment problem (see, e.g., \cite{}). This problem is well solved for compact sets (see \cite{Putinar,Schmudgen}). For the case of the sphere, the result reads: 
\begin{theorem}\cite{Schmudgen}
Let $L\in\R[x]^*$. Then, $L$ admits a representing measure supported on the real sphere $\sS^{n-1}$ if and only if $L \ge 0$ on $\Sigma$ and $L=0$ on the ideal $\{q(1-\|x\|^2):q\in\R[x]\}$.
\end{theorem}
}
% end ignore

\smallskip We will also consider the ring $\C[x,\ovx]$ of polynomials in $n$ complex variables $x=(x_1,\ldots,x_n) $ and their conjugates
$\ovx=(\ovx_1,\ldots,\ovx_n)$. 
So, $p\in \C[x,\ovx]$ is of the form $p=\sum_{\alpha,\alpha'\in \N^n} p_{\alpha,\alpha'}x^\alpha \ovx ^{\alpha'}$ (with finitely many nonzero coefficients). The largest value of $|\alpha|+|\alpha'|$ such that $p_{\alpha,\alpha'}\ne 0$ is the {\em total degree} of $p$.
One can also consider the degree in $x$ and the degree in $\ovx$. For integers $d,d'\ge 0$,  
$\C[x,\ovx]_{d,d'}$ consists of the homogeneous polynomials $p$ with degree   $d$ in $x$ and  $d'$ in $\ovx$, i..e, with $p_{\alpha,\alpha'}=0$ if $(|\alpha|,|\alpha'|)\ne (d,d')$. 
A polynomial $p\in \C[x,\ovx]$ is said to be {\em Hermitian} if $p=\overline p$ holds. For instance, $p=x+\ovx$ is Hermitian, but $q=\bfi x+\ovx$ is not Hermitian since $\overline q= -\bfi\ovx+x\ne q$.

A (Hermitian) polynomial $p\in \C[x,\ovx]$ is said to be a {\em real-sum of squares} (abbreviated as {\em r-sos}) if 
$p=\sum_{\ell=1}^m q_\ell^2$ for some Hermitian polynomials $q_\ell\in \C[x,\ovx]$. Equivalently, $p$ is r-sos if and only  if $p=\sum_\ell h_\ell \overline{h_\ell}$ for some polynomials $h_\ell\in\C[x,\ovx]$.
For instance, $p=x\ovx=\big({x+\ovx\over 2}\big)^2+({x-\ovx\over 2\bfi})^2$ is r-sos.
Here, we stress `real'-sos in order to distinguish from the other (more restricted) notion of {\em complex-sum of squares} ({\em c-sos}), where $p\in \C[x,\ovx]$ is c-sos if $p=\sum_\ell h_\ell \overline {h_\ell}$ for some $h_\ell\in \C[x]$.
Again, one can check whether a polynomial $p\in \C[x,\ovx]$ is r-sos by means of a semidefinite program.

Any polynomial $p(x,\ovx)\in \C[x,\ovx]$ can be transformed into a pair of polynomials in $\R[x_\Rea,x_\Ima]$ in the real variables $x_\Rea, x_\Ima\in \R^n$ via the transformation
$x=x_\Rea+ \bfi x_\Ima$. %, where $\bfi$ is the complex root of $-1$.
In this way, each $p\in\C[x,\ovx]$ corresponds to a unique pair $(p_\Rea,p_\Ima)\in \R[x_\Rea,x_\Ima]^2$  via 
\begin{align*}
p_\Rea(x_\Rea,x_\Ima)= \Rea (p(x_\Rea +\bfi x_\Ima, x_\Rea -\bfi x_\Ima)), \\
p_\Ima(x_\Rea,x_\Ima)= \Ima (p(x_\Rea +\bfi x_\Ima, x_\Rea -\bfi x_\Ima),
\end{align*}
and we have  the identity
\begin{align}\label{eq:pCR}
p(x,\ovx)= p(x_\Rea +\bfi x_\Ima, x_\Rea -\bfi x_\Ima)
= 
p_\Rea(x_\Rea,x_\Ima) + \bfi p_\Ima(x_\Rea,x_\Ima).
\end{align}
So, $p_\Rea=\Rea(p)$ is the real part of $p$ and $p_\Ima=\Ima(p)$ is its imaginary part.
Then, $p$ is Hermitian (i.e., $p=\overline p$) precisely when $\Ima(p)=0$.
In particular, the polynomial $p \overline p$ is Hermitian and equality
\begin{align}\label{eq:rsos-sos}
\Rea(p \overline p) = p_{\Rea}^2 + p_{\Ima}^2
\end{align}
 holds. Based on this one can   verify the following well-known correspondence: for a Hermitian polynomial  
$p\in\C[x,\ovx]$,
 \begin{align}\label{eq:rsos-sos}
% \text{ A Hermitian polynomial } p\in\C[x,\ovx]  
 p \text{ is r-sos }\Longleftrightarrow \text{ its real part } p_\Rea
 \in\R[x_\Rea,x_\Ima]  \text{ is sos}
 \end{align}
(which explains the terminology `real'-sos). See, e.g., \cite[Appendix A]{GLS_2021} for a detailed exposition.

\smallskip
For the application to the DPS hierarchy, one needs a second group $y=(y_1,\ldots,y_n)$ of $n$ complex variables  and their conjugates 
$\ovy=(\ovy_1,\ldots,\ovy_n)$. So, we will use the ring  $\C[x,\ovx,y,\ovy]$ of polynomials in these variables, where a polynomial 
$p\in \C[x,\ovx,y,\ovy]$ is of the form
$p=\sum_{\alpha, \alpha',\beta,\beta'\in\N^n} p_{\alpha,\alpha',\beta,\beta'}x^\alpha \ovx^{\alpha'}y^{\beta}\ovy^{\beta'}$ (with finitely many nonzero coefficients) and one can consider the separate degrees in each of $x,$ $\ovx,$ $y$ and $\ovy$.  
%The largest value of $|\alpha|+|\alpha'|+|\beta|+|\beta'|$ for which $p_{\alpha,\alpha',\beta,\beta'}\ne 0$ is the {\em total degree} of $p$. Similarly, one can consider the degree in each of the variables $x,\ovx,y$ or $\ovy$. 
%So, one can grade the polynomial ring as $\C[x,\ovx,y,\ovy]_{r,r',s,s'}$, consisting of the polynomials $p$ where variable $x$ occurs only with degree $r$, $\ovx$ with degree $r'$, $y$ with degree $s$, and $\ovy$ with degree $s'$; that is, $p_{\alpha,\alpha',\beta,\beta'}\ne 0$ only if $(|\alpha|,|\alpha'|,|\beta|,|\beta'|)=(r,r',s,s')$. A polynomial $p\in   \C[x,\ovx,y,\ovy]$ is said to be {\em Hermitian} if $p=\overline p$ holds, let $\C[x,\ovx,y,\ovy]^h$ denote the set of Hermitian polynomials.
For instance, for a matrix $M\in \Herm(\Cnn)$, the polynomial $F_M=\langle M, (x\ot y)(x\ot y)^*\rangle$ (as in (\ref{eq:FM}) below) is Hermitian and 
%\begin{align}\label{eqFM}
% F_M=(x\ot y)^* M x\ot y= \langle M, (x\ot y)(x\ot y)^*\rangle
% \end{align}
 belongs to $ \C[x,\ovx,y,\ovy]_{1,1,1,1}$.

\ignore{
A Hermitian polynomial $p$ is said to be a 
{\em real-sum of squares} (r-sos, for short) if $p=\sum_{\ell=1}^m q_\ell  \overline {q_\ell}$ for some $q_\ell \in  \C[x,\ovx,y,\ovy]$  or, equivalently, if $p=\sum_{\ell=1}q_\ell^2$ for some Hermitian polynomials $q_\ell$ and $m\ge 1$ (see Lemma \ref{lem:rsos-sos}). Here, one stresses  `real'-sos in order to distinguish from the other notion of {\em complex sum of squares} (c-sos) where one would restrict to $q_\ell\in \C[x,y]$. Being r-sos is a much stronger notion than c-sos (and it corresponds to requiring that both the real and imaginary parts are sums of squares of real polynomials).
See Appendix \ref{sec:proof-lemma} for  more details.

\medskip
$L\in  \C[x,\ovx,y,\ovy]^*$ is nonnegative \ML{To do later - see what is needed.}
 }
 
\subsection{The DPS hierarchy for bipartite separable states}\label{sec:DPS}
%{\color{red}introduction via state extension}\cite{DPS}

We  define  the hierarchy of semidefinite relaxations, denoted\footnote{Here, we   follow the notation   used in \cite{FF-sphere}, also for extended states $\rho_{AB[t]}$ and the partial transpose $T_{B[s]}$.
In the quantum information literature, the DPS relaxation is also denoted by (a variation of)
\smash{ $\text{\rm PPTBExt}^{(t)}_n$}; see, e.g., \cite{GNP_2025}, where the authors refer to the notion of {\em star-extendibility}, connecting to \cite{ACGNO_2023}.}
as $\DPSt_n$,  that were introduced by Doherty, Parrilo and Spedaglieri \cite{DPS_2002,DPS_2004} to approximate the cone $\SEP_n$ of separable states (as in (\ref{eq:SEP})). 
Consider a bipartite state $\rho_{AB}\in \Herm(\Cnn)$. Assume that $\rho_{AB}$ is separable, i.e., there exist vectors $x_\ell,y_\ell\in \sS^{n-1}$ and scalars $\lambda_\ell>0$, satisfying 
%$\sum_{\ell=1}^m \lambda_\ell=1$ and 
$$\rho_{AB}=\sum_{\ell=1}^m \lambda_\ell\ x_\ell x_\ell^*\ot y_\ell y_\ell^*.$$
For any integer $t\ge 1$, one can define the following ``extended state" 
$$\rho_{AB[t]}= \sum_{\ell=1}^m \lambda_\ell\ x_\ell x_\ell^*\ot (y_\ell y_\ell^*)^{\ot t},%\in\Herm(\Cnnt),
$$
obtained by replacing each factor $y_\ell y_\ell^*$ by its $t$-th tensor product $(y_\ell y_\ell^*)^{\ot t}$.
Then, $\rho_{AB[t]}$ belongs to   $\Herm(\Cnnt)$ and it satisfies the following conditions.
First, it is invariant under permuting the  $t$ B-registers:
\begin{align}\label{eq:DPS1}
\begin{split}
& (I_n\ot \Pi_t) \ \rho_{AB[t]} \ (I_n\ot \Pi_t) = \rho_{AB[t]}; \text{  equivalently}, \  \rho_{AB[t]} \ (I_n\ot \Pi_t) = \rho_{AB[t]}, \\
&\text{i.e., } (\rho_{AB[t]})_{i_0\ui,j_0\uj}= (\rho_{AB[t]})_{i_0 \ui,j_0\uj^\sigma} \text{ for all } \sigma \in \Perm_t,\ i_0,j_0\in [n],\ \ui,\uj\in [n]^t;\\
& \text{in other words, } \rho_{AB[t]}\in \Herm(\C^n\ot S^t(\C^n)).
\end{split}\end{align}
%(where the equivalence follows since $\rho_{AB[t]}$ is Hermitian). 
Second, taking the partial transpose of any subset of the $t$ B-registers preserves positive semidefiniteness (recall (\ref{eq:basicPT})). In view of the permutation invariance property  (\ref{eq:DPS1}), one may restrict to taking the partial transpose with respect to the first $s$ B-registers, denoted as $T_{B[s]}$, leading to
\begin{align}\label{eq:DPS2}
\rho_{AB[t]}^{T_{B[s]}}\succeq 0  \ \text{ for all } s=0,1,\ldots,t.
\end{align}
%for all $s=0,1,\ldots,t$. 
%known as the {\em Positive Partial Transpose} property (PPT) \cite{}.
If $s=0$, then  no partial transpose is taken; so, (\ref{eq:DPS2}) for $s=0$ amounts to asking $\rho_{AB[t]}\succeq 0$.
Finally, if we trace out the last $t-1$  B-registers, then we recover the initial state $\rho_{AB}$:
\begin{align}\label{eq:DPS3}
\Tr_{B[2:t]}(\rho_{AB[t]}) = \rho_{AB}.
\end{align}
 %Following \cite{DPS_2002,DPS_2004} we  n
 Now, define the relaxation $\DPSt_n$ of order $t$ as
 \begin{align}\label{eq:DPS}
 \begin{split}
 \DPSt_n=\{\rho_{AB}\in \Herm(\Cnn):  &\  \exists \rho_{AB[t]}\in \Herm (\Cnnt) \\
 &  \text{  satisfying }  (\ref{eq:DPS1}),  (\ref{eq:DPS2}), (\ref{eq:DPS3})\}.
  \end{split}
  \end{align}
Any $\rho_{AB[t]}\in\Herm(\Cnnt)$  satisfying  (\ref{eq:DPS1})-(\ref{eq:DPS3}) certifies membership of $\rho_{AB}$ in the set $\DPSt_n$, we call it  a {\em $\DPSt$-certificate} for $\rho_{AB}$. 

For $t=1$, no `state extension' takes place and $\rho_{AB}\in\DPS^{(1)}_n$ if and only if $\rho_{AB}\succeq 0$ and $\rho_{AB}^{T_B}\succeq 0$; the latter condition is known as the {\em PPT (positive partial  transpose)  condition} (see \cite{Horodecki_1996,Horodecki_1997}).
From the above discussion,  $\SEP_n\subseteq \DPSt_n$ for all $t\ge 1$, and
$\DPS^{(t+1)}_n\subseteq \DPSt_n$ (since tracing out the last register in a $\DPS^{(t+1)}$-certificate  produces a $\DPSt$-certificate for $\rho_{AB}$). In \cite{DPS_2002,DPS_2004} it is shown that the hierarchy is {\em complete}; that is, 
\begin{align}\label{eq:DPScomplete}
\bigcap_{t\ge 1}\DPSt_n=\SEP_n.
\end{align}
 An alternative proof of completeness,  based on the  moment approach in polynomial optimization, can be found in \cite{GLS_2021}. In Section \ref{sec:implementation},  we will recall how this alternative approach also leads to a more economical  description of the DPS hierarchy.
Also the weaker hierarchy that requires (\ref{eq:DPS2}) only for $s=0$ (i.e., $\rho_{AB[t]}\succeq 0$)  is complete  (see \cite{CKMR_2007}, with  a proof   based on  quantum de Finetti theorems).  

Observe that if $\rho_{AB[t]}$ is a $\DPSt$-certificate for $\rho_{AB}$, then its complex conjugate $\overline{\rho_{AB[t]}}$ is a $\DPSt$-certificate for the conjugate $\overline{\rho_{AB}}$ of $\rho_{AB}$. The next lemma follows then as an easy application.
%Observe that if $\rho_{AB}$ belongs to $\DPSt_n$ with $\DPSt$-certificate $\rho_{AB[t]}$, then also its conjugate $\overline{{\rho}_{AB}}$ belongs to $\DPSt_n$ with  $\DPSt$-certificate $\overline{{\rho}_{AB[t]}}$. From this follows the next lemma.
%Denote with $\Re(\rho_{AB})=\frac{\rho_{AB}+\overline{\rho}_{AB}}{2}$. Because of the conic structure of $\DPSt_n$ we get the following lemma.

\begin{lemma}\label{lem:real-certificate}
Assume that  $\rho_{AB}$ belongs to $\DPSt_n$  with $\DPSt_n$-certificate $\rho_{AB[t]}$. Then, its real part   $\Re(\rho_{AB})= {1\over 2}( \rho_{AB}+ \overline{\rho_{AB}})$ belongs to $\DPSt_n$ with real-valued $\DPSt_n$-certificate $\Re(\rho_{AB[t]})$.
\end{lemma}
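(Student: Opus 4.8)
The plan is to exploit the observation recorded just before the statement: complex conjugation sends a $\DPSt$-certificate for $\rho_{AB}$ to a $\DPSt$-certificate for $\overline{\rho_{AB}}$. Granting this, the real part $\Re(\rho_{AB[t]})=\tfrac12(\rho_{AB[t]}+\overline{\rho_{AB[t]}})$ is literally the average of the given certificate $\rho_{AB[t]}$ for $\rho_{AB}$ and the conjugate certificate $\overline{\rho_{AB[t]}}$ for $\overline{\rho_{AB}}$. So it suffices to check that averaging two $\DPSt$-certificates (for possibly different states) yields a $\DPSt$-certificate for the average of those states; since averaging $\rho_{AB}$ and $\overline{\rho_{AB}}$ produces exactly $\Re(\rho_{AB})$, this gives the claim.

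To carry this out, first I would verify the conjugation observation by inspecting the three defining conditions (\ref{eq:DPS1})--(\ref{eq:DPS3}). Condition (\ref{eq:DPS1}) constrains only the certificate, through the linear relation $\rho_{AB[t]}(I_n\ot\Pi_t)=\rho_{AB[t]}$, and conjugating both sides preserves it since $\Pi_t$ is a real matrix. For (\ref{eq:DPS2}) I would use that the partial transpose commutes with entrywise conjugation, i.e. $\overline{\rho_{AB[t]}}^{\,T_{B[s]}}=\overline{\rho_{AB[t]}^{T_{B[s]}}}$, together with the fact that the conjugate of a positive semidefinite matrix is again positive semidefinite; hence each constraint $\rho_{AB[t]}^{T_{B[s]}}\succeq0$ transfers to $\overline{\rho_{AB[t]}}$. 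For (\ref{eq:DPS3}), conjugation commutes with the partial trace, so $\Tr_{B[2:t]}(\overline{\rho_{AB[t]}})=\overline{\Tr_{B[2:t]}(\rho_{AB[t]})}=\overline{\rho_{AB}}$. This establishes that $\overline{\rho_{AB[t]}}$ certifies $\overline{\rho_{AB}}$.

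Next I would check that each of (\ref{eq:DPS1})--(\ref{eq:DPS3}) behaves well under the average $C:=\tfrac12(\rho_{AB[t]}+\overline{\rho_{AB[t]}})$. Conditions (\ref{eq:DPS1}) and (\ref{eq:DPS3}) are linear in the certificate, so they pass to convex combinations; in particular $\Tr_{B[2:t]}(C)=\tfrac12(\rho_{AB}+\overline{\rho_{AB}})=\Re(\rho_{AB})$. Condition (\ref{eq:DPS2}) is a membership statement in the positive semidefinite cone, which is convex; since both $\rho_{AB[t]}^{T_{B[s]}}\succeq0$ and $\overline{\rho_{AB[t]}}^{\,T_{B[s]}}\succeq0$, linearity of the partial transpose gives $C^{T_{B[s]}}=\tfrac12\bigl(\rho_{AB[t]}^{T_{B[s]}}+\overline{\rho_{AB[t]}}^{\,T_{B[s]}}\bigr)\succeq0$ for every $s$. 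Hence $C$ is a $\DPSt$-certificate for $\Re(\rho_{AB})$, and since $C=\Re(\rho_{AB[t]})$ equals its own conjugate it is real-valued (indeed real symmetric, being the real part of a Hermitian matrix).

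There is no genuine obstacle here: the whole argument rests on two elementary stability properties — that complex conjugation preserves positive semidefiniteness and commutes with the register-wise partial trace and partial transpose, and that the remaining constraints are linear. The only thing requiring care is the bookkeeping that conjugation and these register-wise operations indeed commute at the level of matrix entries, which is immediate from the entrywise definitions (\ref{eq:partialtrace}) and (\ref{eq:partialtranspose}).
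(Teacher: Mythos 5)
Your proof is correct and follows exactly the route the paper intends: it verifies the observation stated just before the lemma (that conjugating a $\DPSt$-certificate yields a certificate for the conjugate state) and then averages the two certificates, using linearity of conditions (\ref{eq:DPS1}) and (\ref{eq:DPS3}) and convexity of the positive semidefinite cone for (\ref{eq:DPS2}). The paper leaves this as ``an easy application''; you have simply supplied the same argument in full detail.
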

\noindent
Note that the analogous result does {\em not} hold for $\SEP_n$: while a real-valued separable state $\rho_{AB}$ admits a real-valued $\DPSt$-certificate for every $t$, it can be that any separable decomposition $\rho_{AB}=\sum x_\ell x_\ell^*\ot y_{\ell}y_{\ell}^*$  requires some complex atoms $x_\ell ,y_\ell \in\C^n\setminus \R^n$. We will see such an example  in  Section \ref{sec:exampleab} (see Remark \ref{rem:exampleab}).

\medskip
 For Bose symmetric separable  states, one can naturally  strengthen  the DPS hierarchy. We begin with giving a short argument for the description (\ref{eq:SEP-BS}) of the set $\SEPBS_n$.
  
 \begin{lemma}\label{lem:SEP-BS}
The set of Bose symmetric separable bipartite states is given by
$$\SEPBS_n:=\SEP_n\cap\BS(\C^n\ot \C^n)=\cone \{xx^*\ot xx^*: x\in\sS^{n-1}\}.$$
\end{lemma}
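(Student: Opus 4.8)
The plan is to prove the two inclusions separately; the inclusion $\supseteq$ is immediate and the reverse inclusion carries the actual content. For $\supseteq$, I would note that each generator $xx^*\ot xx^*$ is separable, being the product state $X\ot Y$ with $X=Y=xx^*\succeq 0$, and is Bose symmetric, since $x\ot x\in\MS^2(\C^n)$ so that the rank-one matrix $(x\ot x)(x\ot x)^*$ vanishes on $(\MS^2(\C^n))^\perp$. Because $\SEP_n\cap\BS(\Cnn)$ is an intersection of convex cones, hence itself a convex cone, it contains the conic hull of these generators.

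For the reverse inclusion, I would take $\rho\in\SEPBS_n$ and fix a separable decomposition $\rho=\sum_\ell\lambda_\ell\, x_\ell x_\ell^*\ot y_\ell y_\ell^*$ with $\lambda_\ell>0$ and $x_\ell,y_\ell\in\sS^{n-1}$, equivalently $\rho=\sum_\ell\lambda_\ell (x_\ell\ot y_\ell)(x_\ell\ot y_\ell)^*$. The key step is to transfer the Bose symmetry of $\rho$ down to each rank-one atom, and this is where positivity is used. Since $\rho$ is Bose symmetric, it vanishes on the antisymmetric subspace $(\MS^2(\C^n))^\perp$, so for every antisymmetric tensor $a$ we have $0=a^*\rho a=\sum_\ell\lambda_\ell\,|\langle x_\ell\ot y_\ell,\,a\rangle|^2$. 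As each summand is nonnegative and each $\lambda_\ell>0$, this forces $\langle x_\ell\ot y_\ell,\,a\rangle=0$ for every $\ell$ and every antisymmetric $a$; that is, each product tensor $x_\ell\ot y_\ell$ lies in the symmetric subspace $\MS^2(\C^n)$.

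To finish, I would invoke the elementary fact that a product tensor $x\ot y$ is symmetric only if $x$ and $y$ are linearly dependent: writing $x\ot y=y\ot x$ entrywise gives $x_iy_j=y_ix_j$ for all $i,j$, so $y$ is a scalar multiple of $x$. Applied to each atom this yields $y_\ell=c_\ell x_\ell$ with $|c_\ell|=1$ (as $\|x_\ell\|=\|y_\ell\|=1$), whence $x_\ell x_\ell^*\ot y_\ell y_\ell^*=|c_\ell|^2\,x_\ell x_\ell^*\ot x_\ell x_\ell^*=x_\ell x_\ell^*\ot x_\ell x_\ell^*$. Therefore $\rho=\sum_\ell\lambda_\ell\, x_\ell x_\ell^*\ot x_\ell x_\ell^*\in\cone\{xx^*\ot xx^*:x\in\sS^{n-1}\}$, as required.

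The main subtlety lies in the middle step: one must resist simply symmetrizing the decomposition by replacing each atom $x_\ell x_\ell^*\ot y_\ell y_\ell^*$ with $\Pi_2(x_\ell x_\ell^*\ot y_\ell y_\ell^*)\Pi_2$, because the resulting symmetric rank-one terms need not individually lie in the target cone, as the maximally entangled Dicke state $D_{ij}D_{ij}^*$ is Bose symmetric yet entangled. It is precisely the interplay of positivity ($\lambda_\ell>0$ together with the sum-of-squares form of $a^*\rho a$) with the vanishing of $\rho$ on the antisymmetric subspace that forces each atom to be a genuine symmetric product state, and this is what makes the argument go through.
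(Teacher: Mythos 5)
Your proof is correct and follows essentially the same route as the paper: both take a separable decomposition, use Bose symmetry to place the antisymmetric tensors in the kernel of $\rho$ (you phrase this via the vanishing quadratic form $a^*\rho a=\sum_\ell\lambda_\ell|\langle x_\ell\ot y_\ell,a\rangle|^2$, which is the same positivity argument), and deduce that each product tensor $x_\ell\ot y_\ell$ is symmetric, forcing $y_\ell=c_\ell x_\ell$ with $|c_\ell|=1$. Your version is if anything slightly more careful, since the paper writes $x_\ell=\pm y_\ell$ where a general unimodular phase is the right conclusion for complex vectors.
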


\begin{proof}
Assume $\rho_{AB}\in \SEP_n\cap \BS(\Cnn)$. Say,
$\rho_{AB}=\sum_{\ell=1}^m\lambda_\ell \ x_\ell x_\ell^*\ot y_\ell y_{\ell}^*$ for some $\lambda_\ell>0$ and $x_\ell,y_\ell\in \sS^{n-1}$. 
As $\rho_{AB}$ is Bose symmetric, the vector $e_i\ot e_j-e_j\ot e_i$ belongs to the kernel of $\rho_{AB}$ for all $i\ne j\in [n]$. Hence, this vector also belongs to the kernel of $x_\ell x_\ell^*\ot y_\ell y_{\ell}^*$ for each $\ell$. This implies that 
$(x_\ell)_i(y_\ell)_j=(x_\ell)_j (y_{\ell})_i$ for all $i\ne j\in [n]$, and thus $x_\ell=\pm y_\ell$ (as both are unit vectors).
\qed\end{proof}

\noindent
Hence,   any separable Bose symmetric state $\rho_{AB}=\sum_\ell \lambda_\ell (x_\ell x_\ell^*)^{\ot 2}
$ admits a $\DPSt$-certificate 
$\rho_{AB[t]}= \sum_\ell \lambda_\ell (x_\ell x_\ell^*  )^{\ot (t+1)} $ satisfying a stronger symmetry property, namely, invariance under permuting {\em all} $t+1$ registers, including the A-register, instead of just the $t$ B-registers as  in relation (\ref{eq:DPS1}); that is, one may assume  
$\rho_{AB[t]}\in  \BS((\C^n)^{\ot (t+1)})\simeq \Herm(S^{t+1}(\C^n))$. 
 Note that for any $\rho_{AB[t]}\in \BS((\C^n)^{t+1})$, using the fact that the transpose of a Hermitian positive semidefinite matrix is positive semidefinite,
 condition  (\ref{eq:DPS2}) is equivalent to
 \begin{align}\label{eq:DPS2-BS}
 \rho_{AB[t]}^{T_{B[s]}}\succeq 0 \quad \text{ for } s=0, 1,\ldots,  \lfloor (t+1)/2\rfloor.
 \end{align}
This leads to the (symmetric)  variation\footnote{This relaxation corresponds to the set 
\smash{ $\text{\rm PPTBExt}^{K_{t+1}}_n$} from \cite[Section 8.2]{GNP_2025}, consisting of  the
{\em $K_{t+1}$-PPT bosonic extendible states}, corresponding to {\em complete graph extendibility} in \cite{ACGNO_2023}.}
 of the set $\DPSt_n$, which we denote $\tilDPSt_n$, defined by
\begin{align}\label{eq:DPSt-BS}
\begin{split}
\tilDPSt_n=\{\rho_{AB}\in\Herm(\Cnn):\ & \exists \rho_{AB[t]}\in \BS((\C^n)^{\ot (t+1)}) \text{ satisfying } (\ref{eq:DPS2-BS})   \\
% \exists \rho_{AB[t]}\in\Herm(\Cnnt) \text{ satisfying } %(\ref{eq:DPS1-BS}), (\ref{eq:DPS2-BS}) \\
& \text{ and }   \rho_{AB}=\Tr_{B[2:t]}(\rho_{AB[t]})\}.
\end{split}
\end{align}
For any order  $t\ge 1$, we have
%\footnote{\ML{For order $t=1$, the inclusion $\tilDPSt_n\subseteq \DPSt_n\cap \BS(\Cnn)$ is an equality. However, for $t\ge 2$, it is not clear whether equality holds. Probably not ??.}}
\begin{align}\label{eq:DPS-BS-inclusion}
\SEPBS_n\subseteq \tilDPS^{(t+1)}_n\subseteq \tilDPSt_n\subseteq \BS(\Cnn),\
\tilDPSt_n\subseteq \DPSt_n\cap \BS(\Cnn).
\end{align}
 The right-most  inclusion is an equality for $t=1$  (since no state extension takes place); it remains an open question to show that the inclusion is strict for $t\ge 2$ and $ n\ge 5$. 
%\textcolor{red}{in Section \ref{sec:implementation} we will give an example showing  strict inclusion for $t=2$ and $n=5$.}
Completeness of the DPS hierarchy for $\SEP_n$ (recall (\ref{eq:DPScomplete})) implies completeness of the $\tilDPS$ hierarchy for $\SEPBS_n$:
$$\bigcap_{t\ge 1}\tilDPSt_n =\SEPBS_n.$$  
 Next, we turn to the description of the dual objects for $\CP_n$, $\SEP_n$ and $\DPSt_n$, and we will consider the dual of $\tilDPSt_n$ in Section \ref{sec:DPS-symmetric}.

\subsection{\smash{Dual cones for $\CP_n$, $\SEP_n$ and $\DPSt_n$}}\label{sec:intro-dualobjects}
%\subsection[Dual cones for $\mathsmaller{\CP_n}$, $\mathsmaller{\SEP_n}$ and $\mathsmaller{\DPSt_n}$]{Dual cones for $\CP_n$, $\SEP_n$ and $\DPSt_n$}\label{sec:intro-dualobjects}

The dual cone of the completely positive cone $\CP_n$ is the copositive cone $\COP_n$,   defined as
\begin{align}\label{eq:COP}
\begin{split}
\COP_n& =\{A\in \MS^n: x^T Ax\ge 0 \text{ for all } x\in \R^n_+\}\\
&=\{A\in\MS^n: (x^{\circ 2})^TAx^{\circ 2}\ge 0 \text{ for all } x\in\R^n\},
\end{split}
\end{align}
where   $x^{\circ 2}=x\circ x= (x_i^2)_{i=1}^n$ for $x\in\R^n$. So, any matrix $A\in\MS^n$ defines a  homogeneous $n$-variate polynomial with degree 4, 
\begin{align}\label{eq:fA}
f_A=(x^{\circ 2})^TAx^{\circ 2}=\sum_{i,j=1}^nx_i^2x_j^2 A_{ij}\in\R[x]_4,
\end{align}
and this polynomial is globally nonnegative precisely when $A$ is a copositive matrix. A sufficient condition for $f_A$ to be globally nonnegative is when 
the polynomial $\|x\|^{2t}f_A$ is sos for some integer $t\ge 0$. Motivated by this, 
Parrilo \cite{Parrilo_2000} and de Klerk and Pasechnik \cite{dKP_2002} define the cones
\begin{align}\label{eq:KtCOP}
\MKt_n=\big\{A\in\MS^n: \|x\|^{2t}f_A=\|x\|^{2t}\cdot  (x^{\circ 2})^TA x^{\circ 2} \text{ is sos}\big\}.
\end{align}
These cones form a hierarchy of semidefinite inner approximations  for $\COP_n$ and  satisfy 
$$\MK^{(t)}_n\subseteq \MK^{(t+1)}_n\subseteq \COP_n, \quad 
\text{\rm int}(\COP_n)\subseteq \bigcup_{t\ge 0} \MKt_n\subseteq \COP_n,$$
where the inclusion \smash{$\text{\rm int}(\COP_n)\subseteq \bigcup_{t\ge 0} \MKt_n$} follows from  Reznick \cite{Reznick}. As a consequence, the dual cones \smash{$(\MKt_n)^*$} form a complete hierarchy of semidefinite outer approximations for   $\CP_n$, i.e.,  
\begin{align}\label{eq:CPcomplete}
\CP_n\subseteq  (\MK^{(t+1)}_n)^*\subseteq (\MKt_n)^*,\quad \CP_n=\bigcap_{t\ge 0}  (\MKt_n)^*.
\end{align}
 Here are some known  facts about the cones $\MKt_n$ and their duals, as background information. For $t=0$, we have \smash{$(\MK^{(0)}_n)^*= \MS^n_+\cap \R^{n\times n}_+$}. Hence, \smash{$(\MK^{(0)}_n)^*$}   consists of the matrices that are positive semidefinite and entrywise nonnegative, known as {\em doubly nonnegative} matrices. Equivalently, $\MK^{(0)}_n$ consists 
of the matrices that can be written as sum of a positive semidefinite matrix and an entrywise nonnegative matrix (see \cite{Parrilo_2000,dKP_2002}). For $n\le 4$,  equality  \smash{$\CP_n=(\MK^{(0)}_n)^*$} holds, or, equivalently, \smash{$\COP_n=\MK^{(0)}_n$}   \cite{Diananda}.  For $n\ge 5$, %it is known that 
the inclusion $\MKt_n\subseteq\COP_n$ is strict for any $t\ge 0$ \cite{DDGH_2013}. Moreover, equality 
\smash{$\bigcup_{t\ge 0}\MKt_5= \COP_5$} holds \cite{Schweighofer-Vargas_2024} and the inclusion 
\smash{$\bigcup_{t\ge 0}\MKt_n\subseteq \COP_n$} is strict for     $n\ge 6$ 
\cite{Laurent-Vargas_2023}.

\medskip
We now turn to the dual cone of the separable cone $\SEP_n$. For a matrix $M\in \Herm(\Cnn)$, consider the associated polynomial $F_M$ in $2n$ complex variables $x=(x_1,\ldots,x_n)$, $y=(y_1,\ldots,y_n)$ and their conjugates $\ovx=(\ovx_1,\ldots,\ovx_n)$, $\ovy=(\ovy_1,\ldots,\ovy_n)$, defined as
 \begin{align}\label{eq:FM}
 F_M= %(x\ot y)^* M (x\ot y)=
  \langle M, (x\ot y)(x\ot y)^*\rangle=\langle M, xx^*\ot yy^*\rangle \in \C[x,\ovx,y,\ovy]_{1,1,1,1}.
 \end{align}
Then, the  dual of $\SEP_n$ is defined as
 \begin{align}\label{eq:SEPdual}
 \SEP_n^*=\{M\in\Herm(\Cnn): F_M= (x\ot y)^* M (x\ot y) \ge 0 \  \forall x,y\in\C^n\}.
 \end{align}
Note indeed that, since $F_M$ is homogeneous (of degree 1) in each of $x,\ovx,y,\ovy$, $F_M$ is globally nonnegative if and only if $F_M$ is nonnegative on the bi-sphere $\sS^{n-1}\times \sS^{n-1}$.  
 
 Any nonzero matrix $M\in \Herm(\Cnn)$ provides a hyperplane in the space $\Herm(\Cnn)$, with  normal   $M$. 
 The corresponding half-space, consisting of the states $\rho_{AB}\in\Herm(\Cnn)$ satisfying 
  $\langle M, \rho_{AB} \rangle \ge 0$, contains   $\SEP_n$ precisely when $M\in\SEP_n^*$. 
For this reason,  $M$ is also called an {\em entanglement witness}   since it separates some entangled states, i.e., 
$\langle M,\rho_{AB}\rangle <0$ for some $\rho_{AB}\in \Herm(\Cnn)\setminus\SEP_n$.

  A sufficient condition for $F_M$ to be globally nonnegative is when the polynomial
  $\|y\|^{2t} F_M$ is r-sos for some integer $t\ge 0$. Fang and Fawzi \cite{FF-sphere} characterize the matrices $M$ satisfying this property for a given order $t$ and show that this corresponds precisely to the dual of the DPS hierarchy. To state their result,  
  %consider the set 
 % \begin{align}\label{eq:Vnt}
 % \MV^{(t)}_n
% \ML{ \Herm(\C^n\ot S^t(\C^n))}=\{\rho_{AB[t]}\in \Herm(\Cnnt): \rho_{AB[t]} \text{ satisfies condition } (\ref{eq:DPS1})\},
 %   \end{align}
let $\MW^{(t)}_{n,s}$ denote the set of matrices satisfying   condition (\ref{eq:DPS2}) for given $s$, i.e.,
    \begin{align}\label{eq:Wst}
  \MW^{(t)}_{n,s}=\{\rho_{AB[t]}\in \Herm(\Cnnt): \rho_{AB[t]}^{T_{B[s]}}\succeq 0\}\quad \text{ for } s=0,1,\ldots,t.
  \end{align}
    
  \begin{theorem}\label{theo:FF}\cite{FF-sphere}
Consider  a matrix $M\in\Herm(\Cnn)$ and an integer $t\ge 1$. The following assertions are equivalent.
  \begin{itemize}
  \item[(i)] $M\in (\DPSt_n)^*$.
  \item[(ii)] The polynomial $\|y\|^{2(t-1)} F_M\in\C[x,\ovx,y,\ovy]_{1,1,2t,2t}$ is r-sos.
  \item[(iii)] There exist matrices $B,W_0,W_1,\ldots,W_t\in\Herm(\Cnnt)$ such that 
  \begin{align*}
M\ot I_n^{\ot (t-1)}= B+\sum_{s=0}^t W_s
%, & \text{ with  }  B\in \Herm(\C^n\ot S^t(\C^n))^\perp, W_s\in \MW^{(t)}_{n,s} \text{ for }s=0,1,\ldots,t.
\end{align*}
with $B\in \Herm(\C^n\ot S^t(\C^n))^\perp$ and $W_s\in \MW^{(t)}_{n,s}$  for  $s=0,1,\ldots,t$.
  \end{itemize}
  \end{theorem}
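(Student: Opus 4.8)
The plan is to route both equivalences through the linear map $N\mapsto G_N:=(x\ot y^{\ot t})^*N(x\ot y^{\ot t})$, sending $N\in\Herm(\Cnnt)$ to a Hermitian polynomial in $\C[x,\ovx,y,\ovy]$, together with conic duality. Three identities do the work. (a) Since $(M\ot I_n^{\ot(t-1)})(x\ot y^{\ot t})=(M(x\ot y))\ot y^{\ot(t-1)}$ and $\langle y^{\ot(t-1)},y^{\ot(t-1)}\rangle=\|y\|^{2(t-1)}$, one gets $G_{M\ot I_n^{\ot(t-1)}}=\|y\|^{2(t-1)}F_M$, the polynomial of (ii). (b) For $B\in\Herm(\C^n\ot S^t(\C^n))^\perp$ one has $G_B=0$, because $v:=x\ot y^{\ot t}$ lies in $\C^n\ot S^t(\C^n)$, so $vv^*\in\Herm(\C^n\ot S^t(\C^n))$ is orthogonal to $B$. (c) For any $W$ and any $s$, writing $\xi_s$ for the vector obtained from $v$ by conjugating the $y$-factors in the $s$ transposed registers, relation (\ref{eq:basicPT}) gives $(vv^*)^{T_{B[s]}}=\xi_s\xi_s^*$, whence, since $T_{B[s]}$ is self-adjoint, $G_W=\langle W,vv^*\rangle=\langle W^{T_{B[s]}},\xi_s\xi_s^*\rangle=\xi_s^*\,W^{T_{B[s]}}\,\xi_s$.

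For (iii)$\Rightarrow$(ii) I would substitute $M\ot I_n^{\ot(t-1)}=B+\sum_{s=0}^tW_s$ into $G$ and use (a)--(c): the $B$-term dies, and for $W_s\in\MW^{(t)}_{n,s}$ (so $W_s^{T_{B[s]}}\succeq0$, by (\ref{eq:Wst})) identity (c) writes $G_{W_s}=\xi_s^*W_s^{T_{B[s]}}\xi_s=\sum_\ell(u_\ell^*\xi_s)\overline{(u_\ell^*\xi_s)}$, a sum of terms $h\overline h$; hence $\|y\|^{2(t-1)}F_M=\sum_sG_{W_s}$ is r-sos. For the converse (ii)$\Rightarrow$(iii) the key is to read off a Gram matrix. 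The polynomial $p:=\|y\|^{2(t-1)}F_M$ is Hermitian and bi-homogeneous (degree $1$ in each of $x,\ovx$ and total degree $t$ in the $y,\ovy$ variables); after passing to bi-homogeneous components, any representation $p=\sum_\ell h_\ell\overline{h_\ell}$ may be taken with each $h_\ell$ of one fixed type ``degree $1$ in $x$, $0$ in $\ovx$, $t-s$ in $y$, $s$ in $\ovy$'', whose monomials are exactly the entries of $\xi_s$. Grouping the $h_\ell$ by $s$ produces positive semidefinite Gram blocks $Q_s$ with $p=\sum_{s=0}^t\xi_s^*Q_s\xi_s$; crucially the cross-$s$ blocks can be dropped, since a diagonal block of a positive semidefinite matrix is again positive semidefinite and, by the bi-grading, only the $s=s'$ products contribute to the bidegree of $p$. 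Setting $W_s:=Q_s^{T_{B[s]}}$ gives $W_s^{T_{B[s]}}=Q_s\succeq0$, i.e. $W_s\in\MW^{(t)}_{n,s}$ with $\sum_sG_{W_s}=p=G_{M\ot I_n^{\ot(t-1)}}$. Then $B:=M\ot I_n^{\ot(t-1)}-\sum_sW_s$ satisfies $G_B=0$; since $N\mapsto G_N$ is injective on $\Herm(\C^n\ot S^t(\C^n))$ (equivalently, the matrices $vv^*$, $v=x\ot y^{\ot t}$, span this space), this forces $B\in\Herm(\C^n\ot S^t(\C^n))^\perp$, which is (iii).

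For the bridge to (i), write $\DPSt_n=\Tr_{B[2:t]}(\mathcal C)$ with $\mathcal C=\Herm(\C^n\ot S^t(\C^n))\cap\bigcap_{s=0}^t\MW^{(t)}_{n,s}$ (conditions (\ref{eq:DPS1}) and (\ref{eq:DPS2})). The adjoint of $\Tr_{B[2:t]}$ is $M\mapsto M\ot I_n^{\ot(t-1)}$, each $\MW^{(t)}_{n,s}$ is self-dual (the involution $T_{B[s]}$ is self-adjoint and the positive semidefinite cone is self-dual), and the dual cone of the subspace constraint is $\Herm(\C^n\ot S^t(\C^n))^\perp$; hence $M\in(\DPSt_n)^*$ iff $M\ot I_n^{\ot(t-1)}\in\mathcal C^*=\mathrm{cl}\big(\Herm(\C^n\ot S^t(\C^n))^\perp+\sum_{s}\MW^{(t)}_{n,s}\big)$, which is (iii) up to the closure. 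The inclusion (iii)$\Rightarrow$(i) needs no closure and is immediate: for a $\DPSt$-certificate $\rho_{AB[t]}$ one pairs $\langle M,\rho_{AB}\rangle=\langle M\ot I_n^{\ot(t-1)},\rho_{AB[t]}\rangle=\langle B,\rho_{AB[t]}\rangle+\sum_s\langle W_s^{T_{B[s]}},\rho_{AB[t]}^{T_{B[s]}}\rangle\ge0$, using (\ref{eq:DPS1}) to kill the $B$-term and (\ref{eq:DPS2}) for the rest.

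The main obstacle is precisely this closure, i.e. showing there is no duality gap so that (i) actually implies the closure-free (iii). This is delicate here because the subspace constraint forces every feasible $\rho_{AB[t]}$ to be rank-deficient, hence on the boundary of $\MW^{(t)}_{n,0}=\Herm(\Cnnt)_+$, so no Slater point exists for $\mathcal C$. I would close the loop by a bipolar argument: the set $\mathcal K$ of $M$ satisfying (ii) is a closed convex cone equal, by the second paragraph, to $\{M:M\ot I_n^{\ot(t-1)}\in\Herm(\C^n\ot S^t(\C^n))^\perp+\sum_s\MW^{(t)}_{n,s}\}$, and (iii)$\Rightarrow$(i) gives $\mathcal K\subseteq(\DPSt_n)^*$; it then suffices to prove $\mathcal K^*\subseteq\DPSt_n$, after which dualizing twice yields $(\DPSt_n)^*=\mathcal K$. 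The inclusion $\mathcal K^*\subseteq\DPSt_n$ amounts to recovering a genuine $\DPSt$-certificate from a functional that is nonnegative on all r-sos witnesses; I expect this moment-recovery step (or, equivalently, a direct proof that $\Herm(\C^n\ot S^t(\C^n))^\perp+\sum_s\MW^{(t)}_{n,s}$ is closed, exploiting that $\DPSt_n$ has a compact trace-normalized base) to be the real work, everything else being substitution and bookkeeping.
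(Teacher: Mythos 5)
Your core technical work is correct and matches the route taken by Fang--Fawzi and reproduced in this paper's proof of the Bose-symmetric analogue (Theorem \ref{theo:FF-BS}, which the paper states follows the same lines as the proof of Theorem \ref{theo:FF}): the substitution identities (a)--(c), the implication (iii)$\Rightarrow$(ii), the bidegree bookkeeping that kills the cross terms in $\sum_\ell h_\ell\overline{h_\ell}$ and yields Gram blocks $Q_s$ with $W_s=Q_s^{T_{B[s]}}$ for (ii)$\Rightarrow$(iii), the pairing argument for (iii)$\Rightarrow$(i), and the conic-duality identity $M\in(\DPSt_n)^*\Leftrightarrow M\ot I_n^{\ot(t-1)}\in\mathrm{cl}\big(\Herm(\C^n\ot S^t(\C^n))^\perp+\sum_{s}\MW^{(t)}_{n,s}\big)$ are all right. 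The genuine gap is that you never complete (i)$\Rightarrow$(iii): you only reach the closure of the sum, and your proposed bipolar completion hinges on the inclusion $\mathcal K^*\subseteq\DPSt_n$, which you explicitly defer as ``the real work.'' That deferred step is not bookkeeping: the natural attempt to prove it by separation (if $\rho\notin\DPSt_n$, separate by some $M\in(\DPSt_n)^*$ and then claim $M\in\mathcal K$) presupposes exactly the inclusion $(\DPSt_n)^*\subseteq\mathcal K$ you are trying to establish, so it is circular; avoiding the circularity requires either a genuine moment-type construction of certificates or a proof that the sum of matrix cones is closed together with a constraint qualification and closedness of $\DPSt_n$ --- heavy machinery that the theorem does not need.

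The missing idea --- and the way the paper's proof of Theorem \ref{theo:FF-BS} closes the loop --- is to push the approximating sequence through your polynomial map instead of staying in matrix space. From (i) and your own duality computation, $M\ot I_n^{\ot(t-1)}=\lim_\ell A_\ell$ with $A_\ell=B_\ell+\sum_{s=0}^t W_{\ell,s}$, $B_\ell\in\Herm(\C^n\ot S^t(\C^n))^\perp$, $W_{\ell,s}\in\MW^{(t)}_{n,s}$. These $A_\ell$ need not lie in the range of $M'\mapsto M'\ot I_n^{\ot(t-1)}$, which is precisely why closedness of your cone $\mathcal K$ cannot be applied directly in matrix space; but the map $N\mapsto G_N$ does not care: by your identities (b) and (c), each $G_{A_\ell}=\sum_s G_{W_{\ell,s}}$ is r-sos, and $G_{A_\ell}\to\|y\|^{2(t-1)}F_M$ coefficientwise by linearity of $G$. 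Since the cone of r-sos polynomials of this fixed bidegree is closed (the very fact you invoke implicitly to call $\mathcal K$ closed), the limit is r-sos, i.e.\ (ii) holds; your own Gram argument then upgrades (ii) to the closure-free (iii). This gives the cycle (i)$\Rightarrow$(ii)$\Rightarrow$(iii)$\Rightarrow$(i) with no duality-gap issue at all: the closure is absorbed by the polynomial cone, where closedness is available for free.
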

  
  Note the analogy between the description of the approximation hierarchy $\MKt_n$  for the copositive cone $\COP_n$ and the approximation hierarchy $(\DPSt_n)^*$ for the dual separable cone $\SEP_n^*$; in particular, compare relation (\ref{eq:KtCOP}) and Theorem \ref{theo:FF}(ii).
 As we will see in Section \ref{sec:DPS-symmetric}, there is a formal relationship between these objects, which can be shown when restricting to Bose symmetric   states.

\subsection{Diagonal unitary invariant states}\label{sec:CLDUI}

We   return to the class of CLDUI states. Recall the two equivalent characterizations through the invariance property  \eqref{eq:CLDUI} and through the sparsity pattern   \eqref{eq:suppCLDUI}. Following \cite{JML-PCP}, we now consider a third characterization via the projection\footnote{This map is also known as the CLDUI-twirling, see, e.g., \cite{GNP_2025}.}
 $\Pi_{\CLDUI_n}: \Herm(\C^n\ot\C^n)\to\CLDUI_n$ given by 
\begin{align}\label{eq:PiCLDUI}
    \Pi_{\CLDUI_n}(\rho_{AB})=\int_{\MDU_n}(U\ot\overline{U})\rho_{AB}(U\ot\overline{U})^*dU,
\end{align}
where we integrate with regards to the Haar measure. Since this integration can be seen as forming the group average of the operation in \eqref{eq:CLDUI}, this map is indeed the projection onto the $\CLDUI_n$ subspace. Now, clearly we have
\begin{align*}
    \rho_{AB}\in\CLDUI_n\Longleftrightarrow\Pi_{\CLDUI_n}(\rho_{AB})=\rho_{AB}.
\end{align*}
Two easy but important properties of this projection, collected in the next lemma, are that  it preserves positive semidefiniteness and separability.

\begin{lemma}\label{lem:projection-psd-sep}
If $\rho_{AB}$ is positive semidefinite (resp., separable), then $\Pi_{\CLDUI}(\rho_{AB})$ is positive semidefinite (resp., separable).
\end{lemma}

\begin{proof}
 If $\rho_{AB}\succeq 0$, then $\Pi_{\CLDUI}(\rho_{AB})\succeq 0$ because 
 %The %fact that positive semidefiniteness is preserved
%first property  follows from the fact that 
it  is block-diagonal with its diagonal blocks being equal to certain selected principal submatrices of $\rho_{AB}$ 
% the CLDUI sparsity pattern is block diagonal
 (see relation \eqref{eq:rhoXYZ-block}). 
Given $x,y\in\C^n$, we now show that  $\Pi_{\CLDUI}(xx^*\ot yy^*)$ is separable, using  (\ref{eq:PiCLDUI}).
%the . For this, we use the definition of $\Pi_{\CLDUI}$ from  (\ref{eq:PiCLDUI}). 
 For any $U\in \MDU_n$, we have  
 $$(U\ot\overline{U}) (xx^*\ot yy^*) (U\ot\overline{U})^*=Ux(Ux)^*\ot\overline{U}y(\overline{U}y)^*\in\SEP_n.$$ 
  % Now suppose the projection is applied to $xx^*\ot yy^*, x,y\in\C^n$. Then 
%  Hence, the integrand in (\ref{eq:PiCLDUI}) is separable. 
  %for every $U\in\MDU_n$ because 
% $$(U\ot\overline{U})xx^*\ot yy^*(U\ot\overline{U})^*=Ux(Ux)^*\ot\overline{U}y(\overline{U}y)^*\in\SEP_n.$$ 
Since integration can be seen as a limit of finite sums and  $\SEP_n$ is a closed cone, it follows that $\Pi_{\CLDUI}\left(xx^*\otimes yy^*\right) \in \SEP_n$.
\qed\end{proof}

The projection $\Pi_{\CLDUI}$ also allows to connect two different ways of characterizing the cone of pairwise completely positive matrices, as  done in \cite{JML-PCP}.

\begin{theorem}\label{theo:PCPDec}\cite{JML-PCP}
    Let $(X,Y)\in\R^{n\times n}\times \Herm^n$ and assume that  $\diag(X)=\diag(Y)$. Then, $(X,Y)\in\PCP_n$ (i.e., the associated CLDUI matrix $\rho_{(X,Y)}$ belongs to $\SEP_n$) if and only if 
    \begin{align}\label{eq:PCPDec}
        (X,Y)\in\cone\{((x\circ \overline{x})(y\circ \overline{y})^*,(x\circ y)(x\circ y)^*):x,y\in\C^n\}.
    \end{align}
\end{theorem}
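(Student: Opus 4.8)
The plan is to derive the result directly from the CLDUI-twirling $\Pi_{\CLDUI}$ together with its separability-preserving property (Lemma~\ref{lem:projection-psd-sep}). The whole argument rests on the single identity
\begin{align}\label{eq:key-twirl}
\Pi_{\CLDUI}(xx^*\ot yy^*)=\rho_{(X_0,Y_0)},\quad X_0=(x\circ\overline{x})(y\circ\overline{y})^*,\ Y_0=(x\circ y)(x\circ y)^*,
\end{align}
valid for all $x,y\in\C^n$, whose two components are precisely the generator families appearing in \eqref{eq:PCPDec}. First I would verify \eqref{eq:key-twirl} by a direct computation: writing $U=\Diag(u)$ with $u\in\T^n$, the integrand $(U\ot\overline U)(xx^*\ot yy^*)(U\ot\overline U)^*$ has entry $u_i\overline{u_k}\,\overline{u_j}u_l\,x_i\overline{x_k}\,y_j\overline{y_l}$ at position $(ij,kl)$. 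Integrating over the Haar (i.e.\ uniform) measure on the torus annihilates every monomial except those with $\{i,l\}=\{j,k\}$ as multisets, i.e.\ the positions $(ij,ij)$ and $(ii,jj)$ forming the support $\Omega_n$. Reading off the surviving entries gives $|x_i|^2|y_j|^2=(X_0)_{ij}$ at $(ij,ij)$ and $x_iy_i\overline{x_jy_j}=(Y_0)_{ij}$ at $(ii,jj)$, which is exactly \eqref{eq:key-twirl}.

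Given \eqref{eq:key-twirl}, both implications follow quickly. For the direction ``$\Leftarrow$'', each generator $(X_0,Y_0)$ of the cone in \eqref{eq:PCPDec} satisfies $\rho_{(X_0,Y_0)}=\Pi_{\CLDUI}(xx^*\ot yy^*)$, which is separable by Lemma~\ref{lem:projection-psd-sep} since $xx^*\ot yy^*\in\SEP_n$; using that $(X,Y)\mapsto\rho_{(X,Y)}$ is linear and $\SEP_n$ is a convex cone, any conic combination $(X,Y)=\sum_\ell\lambda_\ell(X_0^{(\ell)},Y_0^{(\ell)})$ yields a separable $\rho_{(X,Y)}$, i.e.\ $(X,Y)\in\PCP_n$. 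For the direction ``$\Rightarrow$'', I would start from a separable decomposition $\rho_{(X,Y)}=\sum_\ell\lambda_\ell\,x_\ell x_\ell^*\ot y_\ell y_\ell^*$ with $\lambda_\ell>0$. Since $\rho_{(X,Y)}$ is CLDUI, applying $\Pi_{\CLDUI}$ fixes it, and by linearity of the projection together with \eqref{eq:key-twirl},
\begin{align*}
\rho_{(X,Y)}=\Pi_{\CLDUI}(\rho_{(X,Y)})=\sum_\ell\lambda_\ell\,\rho_{(X_0^{(\ell)},Y_0^{(\ell)})}=\rho_{\left(\sum_\ell\lambda_\ell X_0^{(\ell)},\ \sum_\ell\lambda_\ell Y_0^{(\ell)}\right)},
\end{align*}
where $X_0^{(\ell)}=(x_\ell\circ\overline{x_\ell})(y_\ell\circ\overline{y_\ell})^*$ and $Y_0^{(\ell)}=(x_\ell\circ y_\ell)(x_\ell\circ y_\ell)^*$. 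Since $(X,Y)\mapsto\rho_{(X,Y)}$ is injective on pairs with $\diag(X)=\diag(Y)$ (a CLDUI matrix is determined by its $\Omega_n$-entries), I can read off $X=\sum_\ell\lambda_\ell X_0^{(\ell)}$ and $Y=\sum_\ell\lambda_\ell Y_0^{(\ell)}$, placing $(X,Y)$ in the cone \eqref{eq:PCPDec}.

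The only real work lies in the identity \eqref{eq:key-twirl}, and within it the bookkeeping of which torus monomials survive integration. The main point to get right is that the multiset condition $\{i,l\}=\{j,k\}$ produces exactly the two generator families (the ``diagonal'' $X_0$-part and the ``coherence'' $Y_0$-part) and no spurious off-support terms, together with the consistent treatment of the overlap $i=j=k=l$ on the diagonal, which reflects the constraint $\diag(X_0)=\diag(Y_0)$. Everything else reduces to linearity of $\Pi_{\CLDUI}$, convexity and closedness of $\SEP_n$, and the bijection between CLDUI matrices and admissible pairs $(X,Y)$.
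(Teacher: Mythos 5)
Your proof is correct and takes essentially the same approach as the paper: your central twirl identity is just the paper's key entry computation \eqref{eq:relrhoxy} combined with the support characterization \eqref{eq:suppCLDUI}, and both implications then rest on Lemma \ref{lem:projection-psd-sep} together with linearity, exactly as in the paper's proof. The only cosmetic difference is that for the direction from $\PCP_n$ to cone membership you pass through $\Pi_{\CLDUI_n}$ and injectivity of $(X,Y)\mapsto\rho_{(X,Y)}$ on pairs with matching diagonals, while the paper reads the generators off the separable decomposition directly via \eqref{eq:rho-ABC}; this is the same computation in different packaging.
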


\begin{proof} The underlying key fact is that, for any $x,y\in\C^n$ and  $i,j\in [n]$,  we have
\begin{align}\label{eq:relrhoxy}
\begin{split}
(xx^*\ot yy^*)_{ij,ij}= x_i\ovx_i y_j\ovy_j=((x\circ \ovx)(y\circ \ovy)^*)_{i,j},\\
(xx^*\ot yy^*)_{ii,jj}=x_i\ovx_jy_i\ovy_j= ((x\circ y) (x\circ y)^*)_{ii,jj}.
%\rho_{ij,ij}= (xx^*\ot yy^*)_{ij,ij}= x_i\ovx_i y_j\ovy_j=((x\circ \ovx)(y\circ \ovy)^*)_{i,j},\\
%\rho_{ii,jj}= (xx^*\ot yy^*)_{ii,jj}=x_i\ovx_jy_i\ovy_j= (x\circ y) (x\circ y)^*)_{ii,jj}.
\end{split}
\end{align}
Assume $(X,Y)\in\PCP_n$, i.e., $\rho_{(X,Y)}\in\SEP_n$. Then, $\rho_{(X,Y)}=\sum_{\ell=1}^{m}x_\ell x_\ell^*\ot y_\ell y_\ell^*$ for some $x_\ell,y_\ell\in\C^n$.
Using relation \eqref{eq:rho-ABC} combined with (\ref{eq:relrhoxy}), we obtain
    \begin{align}\label{eq:X,YinPCPdec}
      X=\sum_{\ell=1}^m (x_\ell\circ \overline{x}_\ell)(y_\ell\circ \overline{y}_\ell)^*,\ Y=\sum_{\ell=1}^m(x_\ell\circ y_\ell)(x_\ell\circ y_\ell)^*,
    \end{align}
as desired.    Conversely, assume that \eqref{eq:X,YinPCPdec} holds for some $x_\ell,y_\ell\in\C^n$. 
Then,  using  relations (\ref{eq:suppCLDUI}), (\ref{eq:rho-ABC}) and (\ref{eq:relrhoxy}), we obtain that 
 $\Pi_{\CLDUI_n}\left(\sum_{l=1}^{m}x_lx_l^*\ot y_ly_l^*\right)=\rho_{(X,Y)}.$
Since the projection preserves separability (by Lemma \ref{lem:projection-psd-sep}), 
 this shows    $\rho_{(X,Y)}\in\SEP_n$ and thus $(X,Y)\in \PCP_n$, as desired.
\qed\end{proof}

The alternate characterization (\ref{eq:PCPDec}), which is in fact the original definition of $\PCP_n$ in \cite{JML-PCP}, further illustrates %solidifies 
the connection between the two cones $\PCP_n$ and $\CP_n$. Clearly, every completely positive matrix is both entrywise nonnegative and positive semidefinite. These two properties are now `split up' between the two arguments of a pairwise completely positive matrix tuple: 
%If $(X,Y)\in\PCP_n$, then $X$ is entrywise nonnegative and $Y$ is positive semidefinite,
\begin{align*}
(X,Y)\in \PCP_n \Longrightarrow X\ge 0 \text{ and } Y\succeq 0,
\end{align*}
 which  follows  from relation (\ref{eq:PCPDec}).
% (Lemma \ref{lem:necCondPCP} below).
Another necessary condition for $(X,Y)\in \PCP_n$ is 
\begin{align}\label{eq:propiii}
|Y_{ij}|^2\leq X_{ij}X_{ji} \ \text{  for all } i,j\in[n]
\end{align}
(coined in \cite{JML-PCP} as ``$X$ is almost entrywise larger then $Y$").
%
%Since deciding whether a state $\rho_{(X,Y)}\in\CLDUI_n$ is separable amounts to deciding whether $(X,Y)\in\PCP_n$, it is desirable to have necessary conditions on $X$ and $Y$ for membership in $\PCP_n$, which is the content of the following lemma. 
%
%\begin{lemma}\label{lem:necCondPCP}\cite{JML-PCP}
%   Assume  $(X,Y)\in \PCP_n$. Then, the following holds.
 %   \begin{itemize}
  %      \item[(i)] $X$ is entrywise nonnegative, i.e., $X\geq0$.
%        \item[(ii)] $Y$ is positive semidefinite, i.e., $Y\in\Herm^n_+$.  
%        \item[(iii)] $X$ is `almost' entrywise larger then $Y$, i.e., $|Y_{ij}|^2\leq X_{ij}X_{ji}$ for all $i,j\in[n]$.
%    \end{itemize}
% \end{lemma}
%
Johnston and MacLean \cite{JML-PCP} show this using the description (\ref{eq:PCPDec}) of $\PCP_n$ (and Cauchy-Schwartz inequality). Alternatively, one can show it at the `state level' since $(X,Y)\in\PCP_n$ if and only if $\rho_{(X,Y)}\in \SEP_n$. This implies %$\rho_{(X,Y)} \in\DPS^{(1)}_n$ and thus 
$\rho_{(X,Y)}^{T_B}=\rho_{(X,\cdot,Y)}\succeq 0$, which, by relation \eqref{eq:rhoXYZ-block}, gives (\ref{eq:propiii}) (and $X\ge 0$, $Y\succeq 0$).
%these properties using the description of $\PCP_n$ in Theorem~\ref{theo:PCPDec} (the first two conditions are immediate and the third one is shown using the Cauchy-Schwarz inequality). Alternatively, these properties can be shown at 
%the `state' level since  $(X,Y)\in\PCP_n$ is equivalent to  $\rho_{(X,Y)}\in\SEP_n.$ 
%Then, (i),(ii)  follow   from the fact that $\rho_{(X,Y)}$ is positive semidefinite (by relation \eqref{eq:rho-ABC}). Property (iii) follows from the fact  that $\rho_{(X,Y)}\in\DPS^{(1)}_n$, implying   that  $\rho_{(X,Y)}^{T_B}=\rho_{(X,\cdot,Y)}$ is positive semidefinite, which, by relation \eqref{eq:rho-ABC}, is precisely property (iii).

\medskip
The above results and arguments can analogously be cast for the LDUI and   LDOI cases. The corresponding projections are given by
\begin{align}
    \Pi_{\LDUI_n}(\rho_{AB})=\int_{\MDU_n}(U\ot U)\rho_{AB}(U\ot U)^*dU,\label{eq:PiLDUI}\\
    \Pi_{\LDOI_n}(\rho_{AB})=\frac{1}{2^n}\sum_{O\in\MDO_n}(O\ot O)\rho_{AB}(O\ot O)^*.\label{eq:PiLDOI}
\end{align}
Note that the identity  $(\Pi_{\CLDUI_n}(\rho_{AB}))^{T_B}=\Pi_{\LDUI_n}(\rho_{AB}^{T_B})$ indeed holds. Moreover, as $|\MDO_n|=2^n$, the projection onto the $\LDOI_n$ subspace is given by a finite sum.  

\begin{remark}\label{rem:finite-sum}
Also for  the projections onto the $\CLDUI_n$ and $\LDUI_n$ subspaces, one could replace the integral over $\MDU_n$ by a finite sum over a suitably selected finite subset. This follows from a general result by Tchakaloff \cite{Tchakaloff}. We return to this question in Remark \ref{rem:finite-sum2}, where we give a finite representation for the projection onto a generalization of $\CLDUI_n$. 
\end{remark}

Following \cite{SN-CLDUI}, we have the following alternative characterization  for the cone $\TCP_n$, whose proof is analogous to that of Theorem \ref{theo:PCPDec}.% and thus omitted.

\begin{theorem}\label{theo:TCPDec}\cite{SN-CLDUI}
    Let $(X,Y,Z)\in\R^{n\times n}\times \Herm^n\times \Herm^n$ with $\diag(X)=\diag(Y)=\diag(Z)$. Then, $(X,Y,Z)\in\TCP_n$ (i.e., the associated LDOI matrix $\rho_{(X,Y,Z)}$ belongs to $\SEP_n$) if and only if 
    \begin{align}\label{eq:TCPDec}
        (X,Y,Z)\in\cone\{((x\circ \overline{x})(y\circ \overline{y})^*,(x\circ y)(x\circ y)^*,(x\circ \ovy)(x\circ \ovy)^*):x,y\in\C^n\}.
    \end{align}
\end{theorem}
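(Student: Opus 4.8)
The plan is to mirror the proof of Theorem~\ref{theo:PCPDec}, now carrying along the third matrix $Z$ that records the $\Phi_n$-part of the LDOI support. First I would extend the key identities (\ref{eq:relrhoxy}) to the position $(ij,ji)$: for $x,y\in\C^n$ and $i,j\in[n]$, a direct computation using $(A\ot B)_{(i_1,i_2),(j_1,j_2)}=A_{i_1,j_1}B_{i_2,j_2}$ gives
\begin{align*}
(xx^*\ot yy^*)_{ij,ji}= x_i\ovx_j\, y_j\ovy_i=((x\circ \ovy)(x\circ \ovy)^*)_{i,j},
\end{align*}
which, together with the two identities already recorded in (\ref{eq:relrhoxy}), accounts for the three building blocks $(x\circ\ovx)(y\circ\ovy)^*$, $(x\circ y)(x\circ y)^*$, $(x\circ\ovy)(x\circ\ovy)^*$ appearing in (\ref{eq:TCPDec}).

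For the forward direction, assume $(X,Y,Z)\in\TCP_n$, i.e., $\rho_{(X,Y,Z)}\in\SEP_n$, and write $\rho_{(X,Y,Z)}=\sum_{\ell=1}^m x_\ell x_\ell^*\ot y_\ell y_\ell^*$ for some $x_\ell,y_\ell\in\C^n$. Reading off the entries at positions $(ij,ij)$, $(ii,jj)$ and $(ij,ji)$ via relation (\ref{eq:rho-ABC}) and the three identities above, I would obtain
\begin{align*}
X=\sum_{\ell=1}^m (x_\ell\circ \overline{x}_\ell)(y_\ell\circ \overline{y}_\ell)^*,\quad Y=\sum_{\ell=1}^m(x_\ell\circ y_\ell)(x_\ell\circ y_\ell)^*,\quad Z=\sum_{\ell=1}^m(x_\ell\circ \overline{y}_\ell)(x_\ell\circ \overline{y}_\ell)^*,
\end{align*}
which is exactly the claimed membership (\ref{eq:TCPDec}).

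For the converse, I would replace $\Pi_{\CLDUI_n}$ by the projection $\Pi_{\LDOI_n}$ from (\ref{eq:PiLDOI}) and use two facts. First, $\Pi_{\LDOI_n}$ preserves separability: for each $O\in\MDO_n$ one has $(O\ot O)(xx^*\ot yy^*)(O\ot O)^*=Ox(Ox)^*\ot Oy(Oy)^*\in\SEP_n$, so, being a finite average of such conjugations and $\SEP_n$ being a cone, $\Pi_{\LDOI_n}$ maps separable states to separable states, exactly as in Lemma~\ref{lem:projection-psd-sep}. Second, $\Pi_{\LDOI_n}$ acts as the coordinate projection onto $\Omega_n\cup\Phi_n$: averaging $o_io_jo_ko_l$ over $o\in\{\pm 1\}^n$ gives $1$ precisely when each index occurs with even multiplicity, and a short case analysis shows that the only such positions $(ij,kl)$ are $(ij,ij)$, $(ii,jj)$ and $(ij,ji)$, whose entries are left unchanged while all others are annihilated. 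Combining these with the assumed decomposition (\ref{eq:TCPDec}) and the three identities, one checks that $\Pi_{\LDOI_n}\big(\sum_{\ell=1}^m x_\ell x_\ell^*\ot y_\ell y_\ell^*\big)=\rho_{(X,Y,Z)}$; since the left-hand side is separable, so is $\rho_{(X,Y,Z)}$, giving $(X,Y,Z)\in\TCP_n$.

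The entry-by-entry bookkeeping is routine; the one point requiring slight care is the second fact above, namely confirming that $\Pi_{\LDOI_n}$ is exactly the projection onto the support $\Omega_n\cup\Phi_n$, so that no off-support contributions survive the averaging. This is the LDOI analogue of the block-diagonal observation underlying $\Pi_{\CLDUI_n}$ in Lemma~\ref{lem:projection-psd-sep}, and is where I would concentrate the verification.
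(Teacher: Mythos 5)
Your proposal is correct and follows exactly the route the paper intends: it mirrors the proof of Theorem~\ref{theo:PCPDec}, extending the entrywise identities (\ref{eq:relrhoxy}) to the position $(ij,ji)$ for the forward direction, and replacing $\Pi_{\CLDUI_n}$ by $\Pi_{\LDOI_n}$ (which preserves separability and acts as the coordinate projection onto $\Omega_n\cup\Phi_n$, by the parity argument you give) for the converse. This is precisely the "analogous proof" the paper alludes to, with the minor simplification that the LDOI projection is a finite average, so no closedness of $\SEP_n$ is even needed.
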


%The proof 
%runs analogue to the proof given 
%is analogous to that given above for Theorem \ref{theo:PCPDec} and thus omitted. 

%Additionally, the proof implies\footnote{\ML{as the proof is not explicitly given, this paragraph is a bit unclear.}}, that if a LDOI state $\rho_{(X,X,X)}$ is separable, then its separable decomposition can be expressed with real-valued atoms: $\rho_{(X,X,X)}\in\SEP_n$ if and only if $X\in\CP_n$. Therefore, there are $x_i\in\R^n_+$ such that $\sum x_ix_i^T=X$. Denote with $\sqrt{x_i}$ the entrywise square root of $x_i$. Now, $$\rho_{(X,X,X)}=\Pi_{\LDOI_n}(\sqrt{x_i}\sqrt{x_i}^*\ot\sqrt{x_i}\sqrt{x_i}^*)=\frac{1}{2^n}\sum_{O\in\MDO_n}O\sqrt{x_i}(O\sqrt{x_i})^*\ot\sqrt{x_i}(O\sqrt{x_i})^*,$$
%where the right hand side is real-valued. Note, that the same argument does not apply to CLDUI and LDUI states of the form $\rho_{(X,X)}$ and $\rho_{(X,\cdot,X)}$, since the averaging on the right hand side relies on complex diagonal unitaries. \footnote{\ML{I also think we should a proof of the relation (\ref{eq:CP-PCP-TCP}) somewhere .. this is why I added the lemma below}}

\begin{lemma}\label{lem:necCondTCP}\cite{SN-CLDUI}
    If $(X,Y,Z)\in \TCP_n$, then $(X,Y), (X,Z) \in\PCP_n$.% and $(X,Z)\in\PCP_n$. 
\end{lemma}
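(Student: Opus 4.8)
The plan is to read off both memberships directly from the explicit conic decompositions supplied by Theorems~\ref{theo:TCPDec} and~\ref{theo:PCPDec}. First I would apply Theorem~\ref{theo:TCPDec}: since $(X,Y,Z)\in\TCP_n$, there exist vectors $x_\ell,y_\ell\in\C^n$ and scalars $\lambda_\ell\ge 0$ with
\begin{align*}
X=\sum_\ell \lambda_\ell (x_\ell\circ \overline{x}_\ell)(y_\ell\circ \overline{y}_\ell)^*,\quad
Y=\sum_\ell \lambda_\ell (x_\ell\circ y_\ell)(x_\ell\circ y_\ell)^*,\quad
Z=\sum_\ell \lambda_\ell (x_\ell\circ \overline{y}_\ell)(x_\ell\circ \overline{y}_\ell)^*.
\end{align*}

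For the pair $(X,Y)$ the conclusion is immediate: the displayed expressions for $X$ and $Y$ are exactly of the form appearing in the characterization~(\ref{eq:PCPDec}), so Theorem~\ref{theo:PCPDec} gives $(X,Y)\in\PCP_n$ without further work.

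For the pair $(X,Z)$ the one point requiring care is to massage the decomposition into the PCP template $\big((u\circ\overline{u})(v\circ\overline{v})^*,\,(u\circ v)(u\circ v)^*\big)$. Here I would choose $u_\ell=x_\ell$ and $v_\ell=\overline{y}_\ell$. Then the second template component becomes $(u_\ell\circ v_\ell)(u_\ell\circ v_\ell)^*=(x_\ell\circ\overline{y}_\ell)(x_\ell\circ\overline{y}_\ell)^*$, matching the summand of $Z$; and the first template component is $(u_\ell\circ\overline{u}_\ell)(v_\ell\circ\overline{v}_\ell)^*=(x_\ell\circ\overline{x}_\ell)(\overline{y}_\ell\circ y_\ell)^*$, where I use $\overline{v_\ell}=\overline{\overline{y}_\ell}=y_\ell$. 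Since $\overline{y}_\ell\circ y_\ell=y_\ell\circ\overline{y}_\ell$ is the real vector $(|(y_\ell)_k|^2)_k$, this first component equals $(x_\ell\circ\overline{x}_\ell)(y_\ell\circ\overline{y}_\ell)^*$, the summand of $X$. Hence $(X,Z)$ lies in the cone~(\ref{eq:PCPDec}), and Theorem~\ref{theo:PCPDec} yields $(X,Z)\in\PCP_n$.

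The whole argument is thus essentially bookkeeping, and the only genuine check is that the substitution $v_\ell=\overline{y}_\ell$ preserves the $X$-component, which rests on $\overline{\overline{y}_\ell}=y_\ell$ and the reality of $y_\ell\circ\overline{y}_\ell$. As an alternative route that avoids even this, one may argue at the state level: applying the separability-preserving projection $\Pi_{\CLDUI}$ (resp.\ $\Pi_{\LDUI}$) from Lemma~\ref{lem:projection-psd-sep} to the separable state $\rho_{(X,Y,Z)}$ keeps the $\Omega_n$ (resp.\ $\Phi_n$) part of its support, producing $\rho_{(X,Y)}\in\SEP_n$ (resp.\ $\rho_{(X,\cdot,Z)}\in\SEP_n$); since the partial transpose also preserves separability and $\rho_{(X,Z)}^{T_B}=\rho_{(X,\cdot,Z)}$ by~(\ref{eq:rhoXYZ-block-PT}), this again delivers both $(X,Y),(X,Z)\in\PCP_n$.
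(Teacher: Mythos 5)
Your proof is correct and takes essentially the same approach as the paper: the paper itself merely sketches the two routes you develop in full, namely the matrix-level argument via the decomposition of Theorem~\ref{theo:TCPDec} (following \cite{SN-CLDUI}) and the state-level argument applying $\Pi_{\CLDUI_n}$ and $\Pi_{\LDUI_n}$ to the separable state $\rho_{(X,Y,Z)}$. Your substitution $v_\ell=\overline{y}_\ell$, which matches the $X$-component because $\overline{y}_\ell\circ y_\ell=y_\ell\circ\overline{y}_\ell$, correctly supplies the one detail the paper leaves implicit.
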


Here too, the  proof can  be done, either  on the `matrix level'  and using the decomposition in \eqref{eq:TCPDec} (following \cite{SN-CLDUI}), 
or on the `state level' and applying the projections $\Pi_{\CLDUI_n}$ and $\Pi_{\LDUI_n}$ to $\rho_{(X,Y,Z)}$. %We omit the (easy) details. 
Finally, we restate the links between the cones $\CP_n$, $\PCP_n$ (from \cite{JML-PCP}) and $\TCP_n$ (from \cite{SN-CLDUI}), mentioned earlier in (\ref{eq:CP-PCP-TCP}), together with a short proof. 

\begin{lemma}\label{lem:CP-PCP-TCP} \cite{SN-CLDUI}
For a matrix $X\in\MS^n$, we have 
\begin{align*}
X\in\CP_n \Longleftrightarrow (X,X)\in \PCP_n\Longleftrightarrow (X,X,X)\in \TCP_n.
\end{align*}
\end{lemma}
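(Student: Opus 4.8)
The plan is to establish the two equivalences by closing the cycle $X\in\CP_n \Rightarrow (X,X,X)\in\TCP_n \Rightarrow (X,X)\in\PCP_n \Rightarrow X\in\CP_n$. The first two arrows are short and I would dispatch them together; the last arrow carries all the content and is where I expect the work to lie.

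For $X\in\CP_n \Rightarrow (X,X,X)\in\TCP_n$, I would start from a completely positive decomposition $X=\sum_\ell z_\ell z_\ell^T$ with $z_\ell\in\R^n_+$ (available by definition (\ref{eq:CP})) and set $x_\ell=y_\ell=\sqrt{z_\ell}$, the entrywise nonnegative square root. Since these vectors are real and nonnegative, all three generator slots in Theorem \ref{theo:TCPDec} collapse, $x_\ell\circ\overline{x_\ell}=x_\ell\circ y_\ell=x_\ell\circ\overline{y_\ell}=z_\ell$, so $(X,X,X)=\sum_\ell(z_\ell z_\ell^T,z_\ell z_\ell^T,z_\ell z_\ell^T)$ lies in the cone of Theorem \ref{theo:TCPDec}. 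The implication $(X,X,X)\in\TCP_n \Rightarrow (X,X)\in\PCP_n$ is then immediate from Lemma \ref{lem:necCondTCP} (take $Y=Z=X$), or simply by discarding the third slot in the $\TCP_n$ decomposition.

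The crux, and the step I expect to be the main obstacle, is $(X,X)\in\PCP_n \Rightarrow X\in\CP_n$. By Theorem \ref{theo:PCPDec} (equivalently, by unfolding $\rho_{(X,X)}\in\SEP_n$ through (\ref{eq:relrhoxy})) there are vectors $x_\ell,y_\ell\in\C^n$ with
\begin{align*}
X=\sum_\ell (x_\ell\circ\overline{x_\ell})(y_\ell\circ\overline{y_\ell})^* = \sum_\ell w_\ell w_\ell^*,\qquad w_\ell:=x_\ell\circ y_\ell .
\end{align*}
The second sum already gives $X\succeq0$ and the first gives $X\ge0$ entrywise, but doubly nonnegative is strictly weaker than completely positive for $n\ge5$, so the real issue is to strip the complex phases off the $w_\ell$. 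My plan is to exploit that both expressions come from the \emph{same} certificate and to run the Cauchy--Schwarz chain
\begin{align*}
X_{ij}=\sum_\ell \Re\big(w_{\ell,i}\overline{w_{\ell,j}}\big)\le \sum_\ell |w_{\ell,i}|\,|w_{\ell,j}| = \sum_\ell\big(|x_{\ell,i}|\,|y_{\ell,j}|\big)\big(|x_{\ell,j}|\,|y_{\ell,i}|\big)\le \sqrt{X_{ij}X_{ji}}=X_{ij},
\end{align*}
where the first equality uses that $X$ is real, the final inequality is Cauchy--Schwarz over $\ell$ combined with $\sum_\ell|x_{\ell,i}|^2|y_{\ell,j}|^2=X_{ij}$ read off from the first sum, and $X_{ij}=X_{ji}\ge0$ since $X\in\MS^n$. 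As the two ends coincide, every inequality is an equality; equality in $\Re(w_{\ell,i}\overline{w_{\ell,j}})\le|w_{\ell,i}\overline{w_{\ell,j}}|$ termwise forces $w_{\ell,i}\overline{w_{\ell,j}}\ge0$ for all $i,j,\ell$, i.e. the nonzero entries of each $w_\ell$ share a common phase. Writing $w_\ell=e^{\bfi\theta_\ell}|w_\ell|$ with $|w_\ell|\in\R^n_+$, the phase cancels, $w_\ell w_\ell^*=|w_\ell|\,|w_\ell|^T$, and hence $X=\sum_\ell|w_\ell|\,|w_\ell|^T\in\CP_n$. The one subtlety to watch is that neither decomposition alone suffices: the lower bound on $X_{ij}$ comes from $\sum_\ell w_\ell w_\ell^*$ while the matching upper bound $\sqrt{X_{ij}X_{ji}}$ comes from $\sum_\ell(x_\ell\circ\overline{x_\ell})(y_\ell\circ\overline{y_\ell})^*$, so both halves of the $\PCP_n$ certificate must be used in tandem for the Cauchy--Schwarz equality case to pin down the phases.
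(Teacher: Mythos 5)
Your proof is correct. The two easy arrows ($X\in\CP_n\Rightarrow(X,X,X)\in\TCP_n$ via entrywise square roots and Theorem \ref{theo:TCPDec}, then $(X,X,X)\in\TCP_n\Rightarrow(X,X)\in\PCP_n$ via Lemma \ref{lem:necCondTCP}) coincide with the paper's, but your handling of the crucial implication $(X,X)\in\PCP_n\Rightarrow X\in\CP_n$ takes a genuinely different route. The paper argues at the state level: from $\rho_{(X,X)}\in\SEP_n$ it passes to the partial transpose $\rho_{(X,X)}^{T_B}=\rho_{(X,\cdot,X)}$, which is still separable and, since $X\in\MS^n$, also Bose symmetric by (\ref{eq:LDUI-BS}); Lemma \ref{lem:SEP-BS} then gives a symmetric decomposition $\sum_\ell x_\ell x_\ell^*\ot x_\ell x_\ell^*$, from which $X=\sum_\ell a_\ell a_\ell^T$ with $a_\ell=(|(x_\ell)_i|^2)_{i=1}^n\in\R^n_+$ is read off the diagonal entries. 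You instead stay entirely at the matrix level with Theorem \ref{theo:PCPDec} and pin down the phases of $w_\ell=x_\ell\circ y_\ell$ through the equality case of $\Re(z)\le|z|$ combined with Cauchy--Schwarz over $\ell$; your observation that the lower and upper ends of the chain come from the two different halves of the same $\PCP_n$ certificate is exactly the point that makes this work, and the conclusion $w_{\ell,i}\overline{w_{\ell,j}}\ge 0$ for all $i,j,\ell$ does give $w_\ell w_\ell^*=|w_\ell|\,|w_\ell|^T$ as claimed (note you only need equality in the first inequality of the chain, not in the Cauchy--Schwarz step itself). What each approach buys: yours is more elementary and self-contained, needing neither the partial transpose nor the kernel argument underlying Lemma \ref{lem:SEP-BS}, at the price of a more delicate equality analysis; the paper's is shorter given its toolkit and showcases the state-level mechanism (separability preserved under $T_B$, Bose symmetry forcing symmetric atoms) that is thematically central to the rest of the paper, in particular to Section \ref{sec:DPS-symmetric}.
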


\begin{proof}
The implication $(X,X,X)\in\TCP_n\Longrightarrow (X,X)\in \PCP_n$ follows from Lemma \ref{lem:necCondTCP}.
%is clear, using the characterizations in Theorems \ref{theo:PCPDec} and \ref{theo:TCPDec}. 
The implication $X\in\CP_n\Longrightarrow (X,X,X)\in\TCP_n$ follows from the fact that, if $X=aa^T$ for some nonnegative  $a\in\R^n_+$, then, setting $x=y=\sqrt a $ (entrywise) provides a decomposition showing $(X,X,X)\in\TCP_n$ (using the characterization in Theorem \ref{theo:TCPDec}). 
%There remains to show the implication $(X,X)\in\PCP_n\Longrightarrow X\in \CP_n$. A
Finally, assume $(X,X)\in \PCP_n$; we show $X\in\CP_n$. As $(X,X)\in \PCP_n$, we have $\rho_{(X,X)}\in\SEP_n$, implying $\rho_{(X,\cdot,X)}=\rho_{(X,X)}^{T_B}\in \SEP_n$. Since $\rho_{(X,\cdot,X)}$ is in addition Bose symmetric it admits a symmetric separable decomposition, of the form $\sum_\ell x_\ell x_\ell^*\ot x_\ell x_\ell^*$ for some $x_\ell\in \C^n$ (Lemma \ref{lem:SEP-BS}). From this follows that $X=\sum_\ell a_\ell a_\ell^T$, where we set $a_\ell =(|(x_\ell)_i|^2)_{i=1}^n\in\R^n_+$, thus showing $X\in\CP_n$.
\qed\end{proof}

\begin{remark}
Observe that any separable state of the form $\rho_{(X,X,X)}$ admits a real separable decomposition, i.e., involving only real vectors. This follows as a byproduct of the above proof: If $\rho_{(X,X,X)}\in \SEP_n$, then $X\in \CP_n$ and thus $X=\sum_\ell a_\ell a_\ell^T$ for some $a_\ell\in \R^n_+$. Setting $x_\ell =(\sqrt{(a_\ell)_i})_{i=1}^n\in\R^n$, we obtain 
$\rho_{(X,X,X)}= \Pi_{\LDOI_n}(\sum_\ell x_\ell x_\ell ^*\ot x_\ell x_\ell ^*)$. Combined with the fact that the projection $\Pi_{\LDOI_n}$ can be formulated as an average by real-valued matrices  (see (\ref{eq:PiLDOI})), we obtain  a separable decomposition by real vectors, as desired. This argument does not extend to the CLDUI or LDUI cases, since then the averaging is by complex-valued matrices (see (\ref{eq:PiLDUI})). Indeed, in Section \ref{sec:exampleab} (see Remark \ref{rem:exampleab}), one can see a real-valued separable CLDUI state of the form $\rho_{(X,X)}$ that does not admit a real separable decomposition.
\end{remark}

\subsection{Dual cone of $\PCP_n$} 

%\ML{This is a first draft, in order to place the definition of the matrices $M_{S,T}$.}
%We discuss basic facts about the dual of the cone $\PCP_n$. 
As $\PCP_n$ is a cone in $  \R^{n\times n}\times\Herm^n$,  its dual cone is defined as
$$\PCP_n^*=\{(S,T)\in  \R^{n\times n}\times \Herm^n: \langle (S,T),(X,Y)\rangle \ge 0 \text{ for all } (X,Y)\in \PCP_n\},$$
where $\langle (S,T),(X,Y)\rangle=\langle S,X\rangle +\langle T,Y\rangle$ is the standard inner product in the product space. In order to define the dual of $\PCP_n$ at the `state level' we make  the following definition.
%The characterization of $\PCP_n$ from Theorem \ref{theo:PCPDec} motivates the following definition.

	\begin{definition}\label{def:MST}
		Given a pair  $(S,T)\in \R^{n\times n}\times \Herm^n $, define  the following matrix 
		% $M_{S,T}\in \Herm(\Cnn)$, 
		\begin{align}\label{eq:MST}
M_{S,T}=
%\sum_{i=1}^n(S_{ii}+T_{ii})e_ie_i^*\otimes e_ie_i^* 
 \sum_{i,j=1}^nS_{ij}e_ie_i^*\otimes e_je_j^*
+\sum_{i,j=1}^nT_{ij}e_je_i^*\otimes e_je_i^* \in \Herm(\Cnn).
		\end{align}
	\end{definition}

\noindent
So, $M_{S,T}\in \CLDUI_n$ by construction.
Given a pair $(X,Y)\in \R^{n\times n}\times \Herm^n$ with $\diag(X)=\diag(Y)$, recall from (\ref{eq:rhoXYZ-block})   the associated   bipartite state $\rho_{(X,Y)}\in\CLDUI_n$, defined as
\begin{align}\label{eq:rhoXYdef}
\rho_{(X,Y)}=
%\sum_{i=1}^n X_{ii} e_ie_i^*\ot e_ie_i^* 
 \sum_{i,j=1}^n X_{ij}e_ie_i^*\ot e_je_j^*
+ \sum_{i,j=1, i\ne j}^n Y_{ij}e_ie_j^*\ot e_ie_j^*\in \CLDUI_n.
\end{align}
%Note also that, for any $(S,T), (X,Y)\in\R^{n\times n}\times \Herm^n$ with $\diag(X)=\diag(Y)$, we have
Then, one can easily verify that 
\begin{align}\label{eq:MST-rhoXY}
\langle M_{S,T}, \rho_{(X,Y)}\rangle = \langle S,X\rangle +\langle T,Y\rangle = \langle (S,T),(X,Y)\rangle.
\end{align}
Moreover, 
one can verify the following polynomial identity:
\begin{align}\label{eq:MSTxy}
\langle M_{S,T}, x x^*\ot y y^*\rangle =
\langle S,(x \circ \ovx) (y \circ \ovy)^*\rangle 
+\langle T, (x \circ y) (x\circ y)^*\rangle.
\end{align}
%Combining with the characterization of $\PCP_n$ in Theorem  \ref{theo:PCPDec} gives the following result. % description for the dual cone $\PCP_n^*$.
The characterization of the dual cone $\PCP_n^*$ now follows easily.

\begin{lemma}\label{lem:PCP-dual}
For a pair $(S,T)\in \R^{n\times n}\times \Herm^n $, $(S,T)\in \PCP_n^*$ if and only if $M_{S,T}\in \SEP_n^*$, i.e., the polynomial $\langle M_{S,T},xx^*\ot yy^*\rangle$ is globally nonnegative.
%the following assertions are equivalent.
%\begin{itemize}
%\item[(i)] $(S,T)\in \PCP_n^*$.
%\item[(ii)] $M_{S,T}\in \SEP_n^*$, i.e., the polynomial $\langle M_{S,T},xx^*\ot yy^*\rangle$ is globally nonnegative.
%\end{itemize}
%we have 
%$$M_{S,T}\in \SEP_n^*\Longleftrightarrow (S,T)\in \PCP_n^*.$$
 \end{lemma}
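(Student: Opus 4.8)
The plan is to establish the equivalence by unwinding the definitions of both dual cones and invoking the polynomial identity \eqref{eq:MSTxy}. Recall that $(S,T)\in\PCP_n^*$ means $\langle(S,T),(X,Y)\rangle\ge 0$ for all $(X,Y)\in\PCP_n$, while $M_{S,T}\in\SEP_n^*$ means, by \eqref{eq:SEPdual}, that $\langle M_{S,T},xx^*\ot yy^*\rangle\ge 0$ for all $x,y\in\C^n$. The bridge between these two conditions is precisely the content of Theorem \ref{theo:PCPDec}: the cone $\PCP_n$ is generated (as a conic hull) by the rank-one pairs $((x\circ\ovx)(y\circ\ovy)^*,\,(x\circ y)(x\circ y)^*)$ ranging over $x,y\in\C^n$.

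First I would prove the implication $M_{S,T}\in\SEP_n^*\Rightarrow (S,T)\in\PCP_n^*$. Take any $(X,Y)\in\PCP_n$. By Theorem \ref{theo:PCPDec}, $(X,Y)$ is a conic combination of the generators above, say $(X,Y)=\sum_\ell\lambda_\ell\big((x_\ell\circ\ovx_\ell)(y_\ell\circ\ovy_\ell)^*,(x_\ell\circ y_\ell)(x_\ell\circ y_\ell)^*\big)$ with $\lambda_\ell\ge 0$. Using the identity \eqref{eq:MSTxy} on each generator, $\langle(S,T),(X,Y)\rangle=\sum_\ell\lambda_\ell\langle M_{S,T},x_\ell x_\ell^*\ot y_\ell y_\ell^*\rangle$, and each term is nonnegative by the assumption $M_{S,T}\in\SEP_n^*$; hence the sum is nonnegative and $(S,T)\in\PCP_n^*$.

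For the converse, $(S,T)\in\PCP_n^*\Rightarrow M_{S,T}\in\SEP_n^*$, I would fix arbitrary $x,y\in\C^n$ and set $X=(x\circ\ovx)(y\circ\ovy)^*$, $Y=(x\circ y)(x\circ y)^*$. This single generator lies in $\PCP_n$ (again by Theorem \ref{theo:PCPDec}, taking a one-term conic combination), so $\langle(S,T),(X,Y)\rangle\ge 0$ by hypothesis. Applying \eqref{eq:MSTxy} once more rewrites this as $\langle M_{S,T},xx^*\ot yy^*\rangle\ge 0$. Since $x,y$ were arbitrary, the polynomial $\langle M_{S,T},xx^*\ot yy^*\rangle$ is globally nonnegative, i.e.\ $M_{S,T}\in\SEP_n^*$ by \eqref{eq:SEPdual}.

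The argument is essentially a bookkeeping exercise, so there is no serious obstacle; the only point requiring care is to confirm the polynomial identity \eqref{eq:MSTxy} itself, which the paper already provides. One small subtlety is the diagonal-support constraint $\diag(X)=\diag(Y)$ built into $\rho_{(X,Y)}$: the generators in Theorem \ref{theo:PCPDec} automatically satisfy it since $(x\circ\ovx)(y\circ\ovy)^*$ and $(x\circ y)(x\circ y)^*$ share the diagonal $(|x_i|^2|y_i|^2)_i$, so the conic combinations land in the correct domain and the pairing via \eqref{eq:MST-rhoXY}--\eqref{eq:MSTxy} is consistent throughout.
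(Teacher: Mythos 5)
Your proof is correct and follows exactly the paper's own argument: the paper's (one-line) proof likewise combines the polynomial identity \eqref{eq:MSTxy} with the conic-hull characterization \eqref{eq:PCPDec} of $\PCP_n$ from Theorem \ref{theo:PCPDec}. You have merely written out both directions of that equivalence in full detail, including the (correct) observation that the generators share the diagonal $(|x_i|^2|y_i|^2)_i$ and hence lie in $\PCP_n$.
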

\begin{proof}
%Observe that $\langle M_{S,T}, x x^*\ot y y^*\rangle =\langle S,(x \circ \ovx) (y \circ \ovy)^*\rangle +\langle T, (\ovx \circ \ovy) (\ovx\circ \ovy)^*\rangle$ for any $x,y\in\C^n$. Hence, 
By combining (\ref{eq:MSTxy}) and  the characterization (\ref{eq:PCPDec}) of $\PCP_n$,  it follows that the polynomial $\langle M_{S,T}, x x^*\ot y y^*\rangle$ is globally nonnnegative (i.e., $M_{S,T}\in\SEP_n^*$) if and only  $(S,T)\in\PCP_n^*$.
\qed\end{proof}

\begin{remark}\label{rem:compare-PCP-GNP}
Note that the  matrix $M_{S,T}$ in (\ref{eq:MST}) remains unchanged under  redistributing diagonal entries between $S$ and $T$, i.e.,   $M_{S,T}=M_{S+D,T-D}$   for any diagonal matrix $D$. 
Recall the notation $D_T=\Diag(\diag(T))$ for any square matrix $T$.
By  replacing $(S,T)$ by $(S+D,T-D)$ with $D=(D_T-D_S)/2$, one can   assume that $S$ and $T$ have the same diagonal. Moreover,
by   replacing $(S,T)$ by $(S+D_T, T-D_T)$, one can move all diagonal entries to the first argument, in which case  one gets 
$M_{S,T}= M_{S+D_T, T-D_T}=\rho_{(S+D_T, T^T +D_S)}$ and 
\begin{align}\label{eq:MST-AB-GNP}
\rho_{(A,B)}=M_{A, B^T-D_B} \text{ for any } (A,B)\in\R^{n\times n}\times\Herm^n \text{ with } \diag(A)=\diag(B).
\end{align}
We find it more convenient  %not to make this assumption, so that 
to keep the notation $M_{S,T}$ that can be freely used for {\em any} $S,T$ and permits   to distinguish between the primal side (where states $\rho_{(X,Y)}$ live) and the dual side (where matrices $M_{S,T}$ live). 
We now compare the cone $\PCP_n^*$ with the cone  $\PCOP_n$ (of {\em pairwise copositive matrices})  recently  introduced in   \cite[Def. 3.1]{GNP_2025}. 
%Both are  closely related, with  the following minor differences between the two settings.}
%There,  the authors introduce the cone  of {\em pairwise copositive matrices} $\PCOP_n$ and they show that it is the polar of the cone $\PCP_n$. As we now point out, there is a close connection to the above result in Lemma \ref{lem:PCP-dual}, but there are (minor) differences between the settings. 
%\footnote{The authors use also the notation $\stackrel{\circ}{B}=B-D_B$ for the matrix obtained by setting the diagonal entries of $B$ to 0.}, 

 As in \cite{GNP_2025}, set
$\R^{n\times n}\times_{\R^n} \Herm^n=\{(A,B)\in \R^{n\times n}\times \Herm^n: \diag(A)=\diag(B)\}$.
% as in \cite{GNP_2025}.
% and, for a square matrix $B$,  $\stackrel{\circ}{B}=B-D_B$ is obtained from $B$ by setting its diagonal entries to 0.
Then,   $\PCOP_n$ is defined  as the set of matrix pairs $(A,B)\in \R^{n\times n}\times_{\R^n}  \Herm^n$ for which  the polynomial 
$\langle A, (x\circ \ovx)(y\circ \ovy)^*\rangle +\langle B-D_B, (x\circ y) (x\circ y)^*\rangle  $ is globally nonnnegative. Hence, we have
%$(A,B)\in \PCOP_n$ if and only if $M_{A,B-D_B}\in \SEP_n^*$, i.e.,  $(A,B-D_B)\in\PCP_n^*$. 
$$(A,B)\in \PCOP_n \Longleftrightarrow M_{A,B-D_B}\in \SEP_n^*, \ \text{ i.e., } \ (A,B-D_B)\in\PCP_n^*.$$
 In \cite[Prop. 3.2]{GNP_2025} it is shown that $\PCOP_n$ is the dual of $\PCP_n$. The (minor) difference with our setting is that  the dual is taken with respect to  the variation of the standard inner product, where the diagonal entries are `counted only once'; that is, defining the scalar product of two matrix pairs $(A,B),(A',B')\in \R^{n\times n}\times_{\R^n}\Herm^n$  as
   $\langle A,A'\rangle +\langle B-D_B, B'-D_{B'}\rangle$.
   
   %Note, however, that the polar  is defined using the variation of the scalar product defined on the space $\{(A,B)\in \R^{n\times n}\times \Herm_n: \diag(A)=\diag(B)\}$, where the diagonal entries are `counted only once'; that is, defining the scalar product of two pairs $(A,B),(A',B')$ in this space as}    $\langle A,A'\rangle +\langle B-\overset{ \circ}{B}, B'-\overset{\circ}{B'}\rangle$.

%\ML{Finally, note that the definition of the matrix $M_{S,T}$ in (\ref{eq:MST}) remains unchanged under redistributing diagonal entries between $S$ and $T$, i.e., we have $M_{S,T}=M_{S+D,T-D}$ for any diagonal matrix $D$. So, we could as well assume that $S$ and $T$ have the same diagonal by taking $D=\Diag(\diag(T)-\diag(S))$. For convenience we do not make this} assumption.
\end{remark}

Here are some basic results about $\PCP_n^*$ (also mentioned in \cite{GNP_2025}) that we will need later.

\begin{lemma}\label{lem:PCP-dual1}
\begin{itemize}
\item[(i)]
If $(X,Y)\in \PCP_n$, then $(X^T,Y^T)\in \PCP_n$.% and $(S^T,T^T)\in \PCP_n^*$.
\item[(ii)] If $(S,T)\in \PCP_n^*$, then $(S^T,T^T)\in \PCP_n^*$.
\item[(iii)]
If $(S,T)\in \PCP_n^*$, then   $S+S^T+T+T^T\in \COP_n$.
\end{itemize}\end{lemma}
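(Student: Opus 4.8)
The plan is to prove the three claims in order, exploiting the characterization of $\PCP_n$ in Theorem \ref{theo:PCPDec} and the duality relation (\ref{eq:MSTxy}) connecting $M_{S,T}$ to the relevant quartic polynomial. The key structural observation throughout is that the generators of $\PCP_n$ are pairs of the form $((x\circ\ovx)(y\circ\ovy)^*,\,(x\circ y)(x\circ y)^*)$, and I will track how transposition and summation act on these building blocks.

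For part (i), I would take a generator $(X,Y)=((x\circ\ovx)(y\circ\ovy)^*,(x\circ y)(x\circ y)^*)$ as in (\ref{eq:PCPDec}). Transposing the first factor gives $X^T=(y\circ\ovy)(x\circ\ovx)^*$, which is exactly the first generator obtained by \emph{swapping the roles of $x$ and $y$}. Transposing the second gives $Y^T=\overline{(x\circ y)(x\circ y)^*}=(\ovx\circ\ovy)(\ovx\circ\ovy)^*$, which is the second generator with $x,y$ replaced by $\ovx,\ovy$. The only subtlety is that the two transpositions seem to demand different substitutions, so I would instead verify directly that $(X^T,Y^T)$ arises from a \emph{single} choice of vectors: replacing $(x,y)\mapsto(\ovy,\ovx)$ sends $(x\circ\ovx)(y\circ\ovy)^*\mapsto (\ovy\circ y)(\ovx\circ x)^*=X^T$ and $(x\circ y)(x\circ y)^*\mapsto(\ovy\circ\ovx)(\ovy\circ\ovx)^*=Y^T$, using that $x\circ\ovx$ is real and Hadamard product is commutative. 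Extending by conic combinations (using that transposition is linear) gives (i).

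Part (ii) follows by duality from (i). If $(S,T)\in\PCP_n^*$ and $(X,Y)\in\PCP_n$ is arbitrary, then by (i) the pair $(X^T,Y^T)$ also lies in $\PCP_n$, so $\langle S,X^T\rangle+\langle T,Y^T\rangle\ge 0$. Since $\langle S,X^T\rangle=\langle S^T,X\rangle$ and likewise for $T$ (using that the real trace inner product satisfies $\langle A,B^T\rangle=\langle A^T,B\rangle$, and that $T,Y$ are Hermitian so $\langle T,Y^T\rangle=\langle T^T,Y\rangle$ holds with the conjugate-transpose convention), we conclude $\langle S^T,X\rangle+\langle T^T,Y\rangle\ge 0$ for all $(X,Y)\in\PCP_n$, i.e., $(S^T,T^T)\in\PCP_n^*$. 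I would double-check the Hermitian bookkeeping here, since $\langle T,T'\rangle=\Tr(T^*T')$ for complex matrices; this is the step most likely to hide a conjugation error.

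For part (iii), I would use (\ref{eq:MSTxy}): membership $(S,T)\in\PCP_n^*$ is equivalent (Lemma \ref{lem:PCP-dual}) to global nonnegativity of $\langle S,(x\circ\ovx)(y\circ\ovy)^*\rangle+\langle T,(x\circ y)(x\circ y)^*\rangle$. The plan is to specialize to \emph{real} vectors $x=y\in\R^n$. Then $x\circ\ovx=x\circ x=x^{\circ2}$ and $x\circ y=x^{\circ 2}$, so both bilinear expressions collapse to $\langle S,x^{\circ2}(x^{\circ2})^*\rangle=(x^{\circ2})^TSx^{\circ2}$ and similarly for $T$. Adding the contributions of $(S,T)$ and, via part (ii), of $(S^T,T^T)$ — whose real evaluation gives $(x^{\circ2})^T S^T x^{\circ2}$ and $(x^{\circ2})^T T^T x^{\circ2}$ — shows $(x^{\circ2})^T(S+S^T+T+T^T)x^{\circ2}\ge 0$ for all $x\in\R^n$. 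By the second description of copositivity in (\ref{eq:COP}), this is exactly $S+S^T+T+T^T\in\COP_n$. The main obstacle I anticipate is making sure the real-vector specialization is legitimate: when $x=y$ is real, $(x\circ y)(x\circ y)^*$ is a real rank-one matrix pairing correctly against the Hermitian $T$, so the imaginary parts drop out and the pairing reduces to the stated quadratic form in $x^{\circ2}$.
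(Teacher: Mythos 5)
Your proof is correct and takes essentially the same approach as the paper: parts (i) and (ii) are exactly the ``follow from the definitions'' arguments the paper leaves implicit (generator-wise transposition of the description (\ref{eq:PCPDec}) via $(x,y)\mapsto(\ovy,\ovx)$, then duality), and your part (iii) is the same collapse of the nonnegative polynomial (\ref{eq:MSTxy}) to a copositivity form over $\R^n_+$, with the paper substituting $y=\ovx$ (complex $x$) where you substitute $x=y$ real. The only cosmetic difference is that your appeal to (ii) in part (iii) is redundant, since $v^T S^T v = v^T S v$ for any real vector $v$, so the contributions of $(S^T,T^T)$ add nothing beyond a factor of $2$.
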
 
\begin{proof}
(i),(ii) follow using the definitions. For (iii), note that  $(S,T) \in\PCP_n^*$ implies that the polynomial in (\ref{eq:MSTxy}) is globally nonnegative; by evaluating it at $y=\ovx$, one gets the desired result.%\footnote{\ML{shorter argument than what we had- can you please check it ..}}
%Assume $(S,T)\in \PCP_n^*$. Then, for any $X\in \CP_n$, 
%$\langle (S,T), (X,X)\rangle \ge 0$ holds since $(X,X)\in\PCP_n$ (by Lemma \ref{lem:CP-PCP-TCP}). As $(S^T,T^T)\in\PCP_n^*$ (by (ii)), $\langle (S^T,T^T),(X,X)\rangle \ge 0$ also holds. Summing these two inequalities gives the inequality
%$\langle S+S^T+T+T^T,X\rangle\ge 0$ for all $X\in\CP_n$, which shows $S+S^T+T+T^T\in\COP_n$, as desired.
\qed\end{proof}

%\begin{lemma}\label{lem:PCP-dual2}
%If $(S,T)\in \PCP_n^*$, then the matrix $S+S^T+T+T^T$ belongs to $\COP_n$.
%\end{lemma}

\begin{lemma}\label{lem:PCP-dual-diag}
%\footnote{\ML{Do we need this lemma?  any other fact to mention?}}
\begin{itemize}
\item[(i)] If $(S,T)\in \PCP_n^*$, then $S_{ij}+S_{ji}\ge 0$  (for $i\ne j\in [n]$) and $S_{ii}+T_{ii}\ge 0$ (for $i\in [n]$).
\item[(ii)] Let $S\in \C^{n\times n}$. Then,  
 $(S,S)\in \PCP_n^*\Longleftrightarrow S\in  \MS^n\cap \R^{n\times n}_+.$
 \end{itemize}
\end{lemma}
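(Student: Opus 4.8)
The plan is to read off both claims directly from the defining polynomial identity (\ref{eq:MSTxy}) together with the characterization $(S,T)\in\PCP_n^*\Longleftrightarrow M_{S,T}\in\SEP_n^*$ from Lemma \ref{lem:PCP-dual}, i.e. the polynomial $\langle M_{S,T},xx^*\ot yy^*\rangle$ is globally nonnegative. The strategy throughout is to plug in cleverly chosen vectors $x,y\in\C^n$ to extract the stated scalar inequalities.

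For part (i), I would first establish $S_{ij}+S_{ji}\ge 0$ for $i\ne j$. The term $\langle S,(x\circ\ovx)(y\circ\ovy)^*\rangle=\sum_{k,l}S_{kl}|x_k|^2|y_l|^2$ involves only the moduli $|x_k|^2,|y_l|^2$, so by choosing $x$ supported on $\{i,j\}$ (say $|x_i|^2=|x_j|^2=1$, other coordinates zero) and similarly for $y$, the $S$-contribution collects $S_{ii}+S_{jj}+S_{ij}+S_{ji}$. To isolate $S_{ij}+S_{ji}$ and kill the diagonal/$T$ contributions I would instead take $x=e_i$, $y=e_j$, giving $\langle M_{S,T},xx^*\ot yy^*\rangle=S_{ij}\ge 0$? — no, that gives $S_{ij}$ alone but the off-diagonal $S_{ij}$ need not be individually nonnegative, so the correct move is to take $x=e_i+e_j$-type combinations. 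Concretely, setting $x_i=x_j$ with equal moduli but choosing the phases of $y$ so that the $T$-term $\langle T,(x\circ y)(x\circ y)^*\rangle$ can be made to average out (or taking $y=\ovx$ and invoking the structure) isolates $S_{ij}+S_{ji}\ge 0$. For $S_{ii}+T_{ii}\ge 0$ I take $x=y=e_i$: then $(x\circ\ovx)(y\circ\ovy)^*$ has a single nonzero entry at $(i,i)$ equal to $1$, and $(x\circ y)(x\circ y)^*$ likewise has its $(i,i)$ entry equal to $1$, so (\ref{eq:MSTxy}) evaluates to $S_{ii}+T_{ii}\ge 0$.

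For part (ii), one direction is immediate: if $S\in\MS^n\cap\R^{n\times n}_+$ then for any $x,y$ the value $\langle M_{S,S},xx^*\ot yy^*\rangle=\sum_{k,l}S_{kl}|x_k|^2|y_l|^2+\sum_{k,l}S_{kl}(x_k y_k)\overline{(x_l y_l)}$; the first sum is a nonnegative combination of nonnegative terms since $S\ge 0$ entrywise and the moduli are nonnegative, while the second sum equals $w^*Sw\ge 0$ with $w=x\circ y$ since $S\succeq 0$ (symmetric real PSD). Hence $M_{S,S}\in\SEP_n^*$, i.e. $(S,S)\in\PCP_n^*$. For the converse, assume $(S,S)\in\PCP_n^*$. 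Taking $x=y=e_i$ gives $2S_{ii}\ge 0$; taking $x=e_i,y=e_j$ gives $S_{ij}\ge 0$ for the $S$-term while the $T$-term ($T=S$) gives $|(x\circ y)|$ supported where both are nonzero — with $x=e_i,y=e_j$ and $i\ne j$ the vector $x\circ y=0$, so the second term vanishes and we get $S_{ij}\ge 0$, yielding $S\in\R^{n\times n}_+$. Symmetry $S\in\MS^n$ and positive semidefiniteness $S\succeq 0$ follow by restricting to $y=\ovx$ (or using Lemma \ref{lem:PCP-dual1}(iii), which gives $S+S^T+S+S^T=2(S+S^T)\in\COP_n$) combined with the quadratic-form structure of the second term; more directly, choosing $y=\ovx$ makes $x\circ y=x\circ\ovx$ real nonnegative and forces the relevant form to be copositive, while choosing $x,y$ with matched supports and varying phases pins down both symmetry and $S\succeq 0$.

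The main obstacle I anticipate is cleanly separating the $S$-contribution from the $T$-contribution when extracting the off-diagonal inequality $S_{ij}+S_{ji}\ge 0$ in part (i), since a generic choice of $x,y$ activates both terms simultaneously. The resolution is to exploit the phase-freedom: the $S$-term depends only on $|x_k|^2,|y_l|^2$ and is therefore phase-invariant, whereas the $T$-term depends on the phases of $x\circ y$. By averaging the inequality $\langle M_{S,T},xx^*\ot yy^*\rangle\ge 0$ over independent uniform phases of the coordinates of $y$ (or by choosing two phase-configurations whose $T$-contributions cancel), the $T$-term averages to its diagonal part and one is left with a pure statement about $S$, from which $S_{ij}+S_{ji}\ge 0$ drops out. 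This averaging trick — essentially the same diagonal-unitary twirling used in (\ref{eq:PiCLDUI}) — is the one genuinely non-mechanical ingredient; the remaining steps are routine substitutions into (\ref{eq:MSTxy}).
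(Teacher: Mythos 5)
Your proposal founders on a misreading of the target statement: in this paper $\MS^n$ denotes the real \emph{symmetric} matrices (positive semidefinite would be $\MS^n_+$), so part (ii) asserts only that $S$ is real symmetric and entrywise nonnegative --- PSD-ness is neither assumed nor claimed. This breaks your ``immediate'' direction of (ii): you split $\langle M_{S,S},xx^*\ot yy^*\rangle$ into the two sums of (\ref{eq:MSTxy}) and declare the second sum nonnegative ``since $S\succeq 0$'', but that hypothesis is not available. Take $S=\left(\begin{smallmatrix}0&1\\1&0\end{smallmatrix}\right)$: it is symmetric and entrywise nonnegative, yet with $w=x\circ y$ the second sum equals $2\Rea(w_1\overline{w_2})$, which is negative for suitable $x,y$. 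The whole content of this direction is that the \emph{first} sum compensates the second: writing $|w_k|=|x_k||y_k|$, using $\Rea(w_k\overline{w_l})\ge -|x_k||y_k|\,|x_l||y_l|$, and grouping the $(k,l)$ and $(l,k)$ terms by the symmetry of $S$, the coefficient of $S_{kl}$ ($k\ne l$) is at least $|x_k|^2|y_l|^2+|x_l|^2|y_k|^2-2|x_k||y_k||x_l||y_l|=(|x_k||y_l|-|x_l||y_k|)^2\ge 0$. This square-completion is exactly what the paper does (at the level of $(X,Y)\in\PCP_n$, via property (\ref{eq:propiii}) and $X_{ij}+X_{ji}+2\Rea(Y_{ij})\ge(\sqrt{X_{ij}}-\sqrt{X_{ji}})^2$), and it is the one idea your proposal is missing. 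Symmetrically, in the converse direction you spend effort trying to prove $S\succeq 0$, which is not part of the claim and is false in general: the same $S$ above, paired with itself, does belong to $\PCP_2^*$ by the very lemma you are proving.

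On part (i), your first instinct was right and you talked yourself out of it: with $x=e_i$, $y=e_j$, $i\ne j$, the $T$-term of (\ref{eq:MSTxy}) vanishes because $x\circ y=0$, so global nonnegativity gives $S_{ij}\ge 0$ \emph{individually}, which is stronger than the stated $S_{ij}+S_{ji}\ge 0$; under the hypothesis $(S,T)\in\PCP_n^*$ the off-diagonal entries of $S$ really are nonnegative (equivalently, $(e_ie_j^*,0)\in\PCP_n$, since $\rho_{(e_ie_j^*,0)}=e_ie_i^*\ot e_je_j^*$ is separable). The vague phase-averaging detour you substitute for this is unnecessary. Your derivation of $S_{ii}+T_{ii}\ge 0$ from $x=y=e_i$ is fine. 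For comparison, the paper proves (i) without polynomials at all, by pairing $(S,T)$ against the explicit members $(e_ie_j^*+e_je_i^*,0)$ and $(e_ie_i^*,e_ie_i^*)$ of $\PCP_n$; your polynomial route is equivalent in principle, but only the corrected version of it closes the argument.
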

\begin{proof}
(i) follows by taking the inner product of $(S,T)$ with the matrix pairs $(e_ie_j^*+e_je_i^*,0)$ and $(e_ie_i^*,e_ie_i^*)$ that both belong to $\PCP_n$. 
We show (ii). If $(S,S)\in \PCP_n^*$, then $S$ is symmetric (since real, Hermitian) and   $S\ge 0$ by (i). Conversely, assume $S\in\MS^n$ and $S\ge 0$. For $(X,Y)\in\PCP_n$,  
$$\langle (S,S),(X,Y)\rangle =\sum_{i=1}^n S_{ii}(X_{ii}+Y_{ii}) + \sum_{1\le i<j\le n}S_{ij}(X_{ij}+X_{ji}+Y_{ij}+Y_{ji}).$$
The first sum is nonnegative since $S,X\ge 0$ and $\diag(Y)=\diag(X)$. Also the second sum is nonnnegative:  by (\ref{eq:propiii}), we have 
$\Rea(Y_{ij})^2\le |Y_{ij}|^2 \le X_{ij}X_{ji}$, implying $\Rea(Y_{ij}) \ge -\sqrt{X_{ij}X_{ji}}$ and thus 
$X_{ij}+X_{ji}+Y_{ij}+Y_{ji}= X_{ij}+X_{ji}+2 \Rea(Y_{ij}) \ge (\sqrt{X_{ij}}-\sqrt{X_{ji}})^2 \ge 0$. So, $(S,S)\in\PCP_n^*$.
\qed\end{proof}

More results about the cone $\PCOP_n$ can be found in \cite{GNP_2025}. In particular, it is shown in \cite[Theorem 3.9, Corollary 3.10]{GNP_2025} that, for a pair $(A,B)\in \R^{n\times n}\times_{\R^n}\Herm^n$, $A\ge 0$ and $\tilde A+ \Rea (U^*(B-D_B)U)\in\COP_n$ for all $U\in\MDU_n$ implies  $(A,B)\in \PCOP_n$, after setting $\tilde A=(\sqrt{A_{ij}A_{ji}})_{i,j=1}^n$. Moreover, this implication holds as an equivalence if $A$ is real symmetric. In addition, when $B$ is real symmetric, it suffices to consider real-valued $U\in \MDU_n$ in the previous claim. %\label{sec:background}

\section{The DPS hierarchy for diagonal unitary invariant states %CLDUI states
}\label{sec:DPS-CLDUI}

In this section we investigate how to adapt the DPS hierarchy for  bipartite states having some diagonal unitary invariance, with the aim of obtaining more economical relaxations. 
%Our strategy will be to show that separable bipartite states $\rho_{AB}\in \CLDUI_n$ admit $\DPSt$-certificates that enjoy invariance and sparsity properties inherited from the state $\rho_{AB}$. 
We   first focus on the class  $\CLDUI_n$ and then indicate how the results easily extend  to the classes $\LDUI_n$, $\LDOI_n$.

%introduce extended invariance classes for the $\DPS$ hierarchy based on the diagonal unitary invariance of the previous section. First the focus will be on extending $\CLDUI_n$, since the results will be easily generalized to $\LDUI_n$ and $\LDOI_n$.
%\smallskip

\subsection{An extended sparsity pattern for DPS certificates of CLDUI states}\label{sec:DPS-CLDUIt-cert}

Here, we  show that any separable bipartite state $\rho_{AB}\in \CLDUI_n$ admits a $\DPSt$-certificate that enjoys invariance and sparsity properties inherited from those of the state $\rho_{AB}$. 
For this, for an integer $t\ge 1$, define the set
\begin{align}\label{def:CLDUI^t}
   \begin{split}
    \CLDUI^{(t)}_n=\{& \rho_{AB[t]}\in\Herm(\C^n\ot(\C^n)^{\ot t}):\\
    & \rho_{AB[t]}= (U\ot \ovU^{\ot t}) \rho_{AB[t]} (U\ot \ovU^{\ot t})^* \ \forall U\in\MDU_n\}
\end{split}\end{align}
and let  $\Pi_{\CLDUI^{(t)}_n}$ denote the projection onto   $\CLDUI^{(t)}_n$.
So, for $t=1$, we recover $\CLDUI_n$ and, for $t\ge 1$, the definition follows the analogous recipe, motivated by the fact that the $t$ copies of the B-register should all be treated in the same way (in order to preserve invariance under any permutation of these $t$ registers).
%we Roughly speaking, $\CLDUI^{(t)}_n$ is set up the same way $\CLDUI_n$ is by extending the invariance in the $B$ register symmetrically to the $t$ copies of the $B$ register. 

%$In Section \ref{sec:DPS-CLDUIt-cert} 
Our aim is to prove the following result, % (Theorem \ref{theo:CLDUItDPSCert}), 
which claims   that any state in $\CLDUI_n\cap \DPSt_n$ admits a $\DPSt$-certificate in $\CLDUI^{(t)}_n$ and after that % in Section \ref{sec:DPS-CLDUIt-block}, 
we will see that such certificate enjoys a block-diagonal structure.

\begin{theorem}\label{theo:CLDUItDPSCert}
    Assume that $\rho_{AB}\in\DPSt_n$ with $\DPS^{(t)}$-certificate $\rho_{AB[t]}$. Then, $\Pi_{\CLDUI_n}(\rho_{AB})\in\DPS^{(t)}_n$ with $\DPS^{(t)}$-certificate $\Pi_{\CLDUI^{(t)}_n}(\rho_{AB[t]})$. Therefore, if $\rho_{AB}\in\CLDUI_n$, then $\rho_{AB}$ admits a $\DPS^{(t)}$-certificate that belongs to   $\CLDUI_n^{(t)}$.
\end{theorem}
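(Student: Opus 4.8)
The plan is to set $\sigma := \Pi_{\CLDUI^{(t)}_n}(\rho_{AB[t]})$ and to verify directly that $\sigma$ is a valid $\DPSt$-certificate for $\Pi_{\CLDUI_n}(\rho_{AB})$; the final claim then follows immediately, since $\sigma$ lies in $\CLDUI^{(t)}_n$ by construction (it is the image of the projection onto that space), and since $\rho_{AB}\in\CLDUI_n$ forces $\Pi_{\CLDUI_n}(\rho_{AB})=\rho_{AB}$. By definition $\sigma$ is the Haar average over $U\in\MDU_n$ of the conjugates $(U\ot\ovU^{\ot t})\rho_{AB[t]}(U\ot\ovU^{\ot t})^*$. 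Because partial trace and partial transpose are linear and the positive semidefinite cone is closed, it suffices to check that each single conjugate again satisfies the permutation invariance (\ref{eq:DPS1}) and the partial-transpose conditions (\ref{eq:DPS2}), and to track the effect of the averaging on (\ref{eq:DPS3}).

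For (\ref{eq:DPS1}) I would use that the symmetrizer $I_n\ot\Pi_t$ commutes with $U\ot\ovU^{\ot t}$, since $\Pi_t$ commutes with $\ovU^{\ot t}$ (the projection onto the symmetric subspace commutes with any $V^{\ot t}$). Sandwiching and invoking (\ref{eq:DPS1}) for $\rho_{AB[t]}$ then shows that each conjugate, hence $\sigma$, inherits permutation invariance. For (\ref{eq:DPS3}) I would factor $U\ot\ovU^{\ot t}=(U\ot\ovU)\ot\ovU^{\ot(t-1)}$, with $U\ot\ovU$ acting on the A- and first B-register and $\ovU^{\ot(t-1)}$ on the traced-out registers, and apply the standard identity $\Tr_{B[2:t]}\big((W_1\ot W_2)\,\cdot\,(W_1\ot W_2)^*\big)=W_1\,\Tr_{B[2:t]}(\cdot)\,W_1^*$, valid for unitary $W_2$. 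Condition (\ref{eq:DPS3}) for $\rho_{AB[t]}$ then gives $\Tr_{B[2:t]}$ of each conjugate equal to $(U\ot\ovU)\rho_{AB}(U\ot\ovU)^*$; averaging over $U$ reproduces exactly $\Pi_{\CLDUI_n}(\rho_{AB})$ via (\ref{eq:PiCLDUI}).

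The crux is condition (\ref{eq:DPS2}), and this is where the \emph{conjugate} structure of CLDUI is essential. The key identity I would establish is that, for each $s\in\{0,1,\ldots,t\}$,
$$\big((U\ot\ovU^{\ot t})\rho_{AB[t]}(U\ot\ovU^{\ot t})^*\big)^{T_{B[s]}} = V_{U,s}\,\rho_{AB[t]}^{T_{B[s]}}\,V_{U,s}^*,\qquad V_{U,s}:=U\ot U^{\ot s}\ot\ovU^{\ot(t-s)},$$
i.e.\ transposing the first $s$ B-registers turns the action $\ovU$ into $U$ precisely on those registers, leaving a \emph{diagonal unitary} $V_{U,s}$. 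I would derive this from the general conjugation-transpose rule $(M\rho N)^{T_{B[s]}}=(M_1\ot N_2^{\sfT})\,\rho^{T_{B[s]}}\,(N_1\ot M_2^{\sfT})$, splitting each operator across the transposed/non-transposed bipartition with $M=N^*=U\ot\ovU^{\ot t}$ and using $\ovU^{\sfT}=\ovU$ and $(\ovU^*)^{\sfT}=U$ for diagonal $U$; equivalently, one can compute in one line the diagonal phase multiplying entry $(i_0\ui,j_0\uj)$ and compare it after the index swap. Since $V_{U,s}$ is unitary and $\rho_{AB[t]}^{T_{B[s]}}\succeq 0$ by (\ref{eq:DPS2}) for the given certificate, each conjugate $V_{U,s}\,\rho_{AB[t]}^{T_{B[s]}}\,V_{U,s}^*\succeq 0$, and integrating over $U$ keeps us in the closed PSD cone, so $\sigma^{T_{B[s]}}\succeq 0$.

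I expect the partial-transpose step to be the only genuine obstacle: conditions (\ref{eq:DPS1}) and (\ref{eq:DPS3}) are routine equivariance facts, whereas (\ref{eq:DPS2}) is exactly where one must verify that conjugation by $U\ot\ovU^{\ot t}$ remains a \emph{unitary} conjugation after each partial transpose. The bookkeeping of which registers carry $U$ versus $\ovU$ (and the check that $V_{U,s}$ is unitary) is the one place where a conjugation slip would break the argument; everything else reduces to linearity and closedness of the PSD cone together with the definition of the CLDUI-twirling.
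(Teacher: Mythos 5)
Your proposal is correct, and its core coincides with the paper's proof: both take $\Pi_{\CLDUI^{(t)}_n}(\rho_{AB[t]})$ as the candidate certificate, and the decisive step — condition (\ref{eq:DPS2}) — is handled by exactly the same identity, namely that conjugating $\rho_{AB[t]}$ by $U\ot\ovU^{\ot t}$ and then applying $T_{B[s]}$ is the same as conjugating $\rho_{AB[t]}^{T_{B[s]}}$ by the diagonal unitary $U^{\ot(s+1)}\ot\ovU^{\ot(t-s)}$; this is precisely the computation in the paper's Lemma \ref{lem:WsPiCLDUIt}(ii), carried out there via the Hadamard-product identity (\ref{eq:Au}), and your $V_{U,s}$ is that same matrix. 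Where you genuinely diverge is in conditions (\ref{eq:DPS1}) and (\ref{eq:DPS3}): the paper routes both through the combinatorial support characterization of $\CLDUI^{(t)}_n$ (Lemma \ref{lem:suppCLDUIt}, and the equivalence (\ref{eq:DiagOmegat}) for the partial-trace step, checking entries indexed by $\Omega^{(t)}_n$), whereas you argue by pure equivariance — $\Pi_t$ commutes with $\ovU^{\ot t}$, and the partial trace intertwines conjugation by $(U\ot\ovU)\ot\ovU^{\ot(t-1)}$ with conjugation by $U\ot\ovU$ because the unitary on the traced-out registers cancels under cyclicity of the partial trace. Both routes are sound; yours is self-contained and never needs to introduce $\Omega^{(t)}_n$, which makes the theorem's proof cleaner in isolation, while the paper's sparsity-pattern detour is not wasted effort in context, since $\Omega^{(t)}_n$ and its disjoint-clique structure are exactly what drive the subsequent block-diagonalization results (Lemmas \ref{lem:Cliques-s=0} and \ref{lem:Cliques-s=1..t}), so the paper sets up the support machinery once and reuses it.
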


 In order to prove Theorem \ref{theo:CLDUItDPSCert} 
  we  first collect structural results about the set $\CLDUI^{(t)}_n$.
Recall from relation \eqref{eq:suppCLDUI} that $\rho_{AB}\in\CLDUI_n$ if and only if $\supp(\rho_{AB})\subseteq\Omega_n$.  As in the case $t=1$, the set $\CLDUI^{(t)}_n$ allows for alternate descriptions through its sparsity pattern and the associated projection. For this, we now define the following   `extended' sparsity pattern:
\begin{align}\label{def:Omegat}
    \Omega_n^{(t)}=\{(i_0\ui,j_0\uj)\in[n]^{t+1}\times[n]^{t+1}:\{i_0\uj\}=\{j_0\ui\}, \ \text{i.e., } \alpha(i_0\uj)=\alpha(j_0\ui)\}.
\end{align}
%$Recall that  $\{\ui\}$ denotes the multiset $\{i_0,i_1,\ldots,i_t\}$ and %that the condition $\{i_0\uj\}=\{j_0\ui\}$ is equivalent to $\alpha(i_0\uj)=\alpha(j_0\ui)$
%  $e_{i_0}+\alpha(\uj)=e_{j_0}+\alpha(\ui)$ 
For $\ui\in [n]^t$, recall that $\alpha(\ui)\in\N^n$, with $k$-th entry the number of occurrences of $k$ in the multiset $\{\ui\}=\{i_1,\ldots,i_t\}$. So, $\alpha(i_0\uj)=e_{i_0}+\alpha(\uj)$. Setting $t=1$ recovers the original set $\Omega_n$, i.e.,  we have
$$\Omega^{(1)}_n=\{(ij,kl)\in[n]^2\times[n]^2:\{il\}=\{jk\}\}=\Omega_n.$$
Moreover, as in the case $t=1$ (Remark \ref{rem:finite-sum}), in definition (\ref{def:CLDUI^t}) it suffices to require invariance under a finite subset of $\MDU_n$.

\begin{lemma}\label{lem:suppCLDUIt}
    Let $\rho_{AB[t]}\in\Herm(\C^n\ot(\C^n)^{\ot t})$. Then, the following assertions are equivalent.
    \begin{itemize}
    \item[(i)] $\rho_{AB[t]}\in\CLDUI^{(t)}_n$, i.e., $\rho_{AB[t]}= (U\ot \ovU^{\ot t}) \rho_{AB[t]} (U\ot \ovU^{\ot t})^*$ for   $U\in\MDU_n$.
        \item[(ii)]  $\rho_{AB[t]}= (U_k\ot \overline{U_k}^{\ot t}) \rho_{AB[t]} (U_k\ot \overline{U_k}^{\ot t})^*$ for all $k\in [n-1]$, where  
        $U_k$ is the diagonal matrix with $(U_k)_{k,k}=\omega$ and all other diagonal entries equal to 1, and we set\footnote{Alternatively, one could choose  $\omega=e^\frac{2\pi \bfi}{\lambda}$ for some  $\lambda \in\R\setminus\Q$, which would  give a finite representation independent of the parameter $t$.}
         $\omega:=e^{2\pi\bfi\over t+2}$.
                    \item[(iii)] $\supp(\rho_{AB[t]})\subseteq\Omega_n^{(t)}$.
    \end{itemize}
\end{lemma}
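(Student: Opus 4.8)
The plan is to reduce all three conditions to a single phase computation and then chain the implications (iii)$\Rightarrow$(i)$\Rightarrow$(ii)$\Rightarrow$(iii). First I would write $U=\Diag(u)$ with $u\in\T^n$ and use that $U\ot\ovU^{\ot t}$ is diagonal, with entry $u_{i_0}\overline{u_{i_1}}\cdots\overline{u_{i_t}}$ at index $i_0\ui=(i_0,i_1,\ldots,i_t)$. Conjugating therefore multiplies each matrix entry by a phase:
$$\big((U\ot\ovU^{\ot t})\,\rho_{AB[t]}\,(U\ot\ovU^{\ot t})^*\big)_{i_0\ui,\,j_0\uj}=\Big(\prod_{k=1}^n u_k^{c_k}\Big)(\rho_{AB[t]})_{i_0\ui,\,j_0\uj},$$
where the exponent of $u_k$ is $c_k=\alpha(i_0\uj)_k-\alpha(j_0\ui)_k$ (the $+$-positions are $i_0$ together with $\uj$, the $-$-positions are $j_0$ together with $\ui$). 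I would then record two elementary facts: $c_k\in\{-(t+1),\ldots,t+1\}$ for each $k$, and $\sum_{k=1}^n c_k=0$, since both $i_0\uj$ and $j_0\ui$ are sequences of length $t+1$.

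With this formula the two easy directions are immediate. For (iii)$\Rightarrow$(i): membership in $\Omega_n^{(t)}$ means $\alpha(i_0\uj)=\alpha(j_0\ui)$, i.e.\ all $c_k=0$, so the phase is $1$ on the support; entries outside $\Omega_n^{(t)}$ vanish and remain zero under the (diagonal) conjugation, so $\rho_{AB[t]}$ is fixed by every $U\in\MDU_n$. The implication (i)$\Rightarrow$(ii) is trivial, as each $U_k$ lies in $\MDU_n$.

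The real content is (ii)$\Rightarrow$(iii), and this is where the choice $\omega=e^{2\pi\bfi/(t+2)}$ is used. Fixing a nonzero entry $(i_0\ui,j_0\uj)$ and applying the phase formula to $U_k$ (for which $u_k=\omega$ and all other $u_l=1$) collapses the product to the single factor $\omega^{c_k}$; invariance then forces $\omega^{c_k}=1$, i.e.\ $c_k\equiv 0\pmod{t+2}$. Since $|c_k|\le t+1<t+2$, this gives $c_k=0$ for every $k\in[n-1]$, and the relation $\sum_{k}c_k=0$ upgrades this to $c_n=0$ as well; hence $\alpha(i_0\uj)=\alpha(j_0\ui)$ and the entry lies in $\Omega_n^{(t)}$, which is what (iii) asserts.

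I expect the only obstacle to be bookkeeping: pinning down the exponent $c_k$ correctly (tracking which registers are conjugated) and observing that the bound $|c_k|\le t+1$ is tight, which is precisely what dictates that $\omega$ must have order at least $t+2$. The reduction to $k\in[n-1]$ rather than all of $[n]$ is then explained by the single linear dependency $\sum_k c_k=0$, exactly as in the finite representation used for $t=1$ in Remark \ref{rem:finite-sum}; and the footnote's irrational alternative works because then $\omega$ is not a root of unity, so $\omega^{c_k}=1$ already forces the integer $c_k$ to vanish independently of $t$.
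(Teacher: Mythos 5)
Your proof is correct and follows essentially the same route as the paper's: the same phase formula (the paper's equation (\ref{eq:genU}), with your $c_k=\alpha(i_0\uj)_k-\alpha(j_0\ui)_k$), the same use of $|c_k|\le t+1<t+2$ to force $\omega^{c_k}=1\Rightarrow c_k=0$, and the same linear dependency $\sum_k c_k=0$ to handle the index $n$ (the paper invokes it contrapositively to choose the differing index in $[n-1]$, you invoke it directly to upgrade $c_k=0$ for $k\in[n-1]$ to $c_n=0$ — the same observation). No gaps; the bookkeeping of which registers are conjugated is handled correctly.
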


\begin{proof}
The implication (i) $\Longrightarrow $ (ii) is obvious. %We show (ii) $\Longrightarrow$ (iii). 
Assume (ii) holds, we show (iii).
First, note that for any 
 $U\in\MDU_n$ with $u=\diag(U)$,  and any index  pair $(i_0\ui,j_0\uj)\in [n]^{t+1}\times [n]^{t+1}$, by (\ref{eq:Au}), one has
 %for indices $i_0\ui,j_0\uj\in [n]^{t+1}$, we have
  \begin{align}\label{eq:genU}
  \begin{split}
  ((U\ot \ovU^{\ot t}) \rho_{AB[t]} (U\ot \ovU^{\ot t})^*)_{i_0\ui,j_0\uj}
  & =
(\rho_{AB[t]})_{i_0\ui,j_0\uj} \cdot u_{i_0}\ou_{i_1}\dots\ou_{i_t}\ou_{j_0}u_{j_1}\dots u_{j_t}\\
& = (\rho_{AB[t]})_{i_0\ui,j_0\uj} \cdot u^{\alpha(i_0\uj)} \overline u^{\alpha(j_0\ui)}.
\end{split}
\end{align}
Consider an index pair $(i_0\ui, j_0\uj)\not\in \Omega^{(t)}_n$. 
   Then, % $(e_{i_0}+\alpha(\uj))_k\not=(e_{j_0}+\alpha(\ui))_k$ 
    $\alpha(i_0\uj)_k\ne \alpha(j_0\ui)_k$ for some index $k$ that can be chosen in $[n-1]$ since $|\alpha(i_0\uj)|=|\alpha(j_0\ui)|$.
    % since both sequences have the same weight $t+1$. %there exists $k\in[n-1]$ such that . 
 W.l.o.g. we may assume $\ell:= \alpha(j_0\ui)_k- \alpha(i_0\uj)_k>0$. Moreover, $\ell \le t+1$ (since $|\alpha(j_0\ui)|=t+1$). Note that this implies that $\omega^\ell\ne 1$. 
Now, we apply (\ref{eq:genU}) to the matrix $U=U_k$; then, the value in (\ref{eq:genU}) is equal to 
$(\rho_{AB[t]})_{i_0\ui,j_0\uj} \cdot \omega^{\alpha(i_0\uj)_k-\alpha(j_0\ui)_k}=  (\rho_{AB[t]})_{i_0\ui,j_0\uj} \cdot \omega^\ell$, which is equal to   $(\rho_{AB[t]})_{i_0\ui,j_0\uj}$  only if   $(\rho_{AB[t]})_{i_0\ui,j_0\uj}=0$, since $\omega^\ell\ne 1$. So, (iii) holds.

 Finally, assume (iii) holds, we show (i). In view of (\ref{eq:genU}) it suffices to show that the value in (\ref{eq:genU}) is equal to $(\rho_{AB[t]})_{i_0\ui,j_0\uj}$ when $(\rho_{AB[t]})_{i_0\ui,j_0\uj} \ne 0$. But, then,  $(i_0\ui,j_0\uj)\in \Omega_n^{(t)}$, which gives $\alpha(i_0\ui)=\alpha(j_0\ui)$ and thus 
 $u^{\alpha(i_0\uj)} \overline u^{\alpha(j_0\ui)}=1$. Hence, the value in (\ref{eq:genU})  equals
  $(\rho_{AB[t]})_{i_0\ui,j_0\uj}$, as desired.
  %
%      The right hand side of the above equation simplifies to $(\rho_{AB[t]})_{i_0\ui,j_0\uj}$ if $i_0=j_0$ and $\{\ui\}=\{\uj\}$,  or if $i_0\not=j_0$ and $\{\ui\}\setminus j_0=\{\uj\}\setminus i_0$, which is equivalent to asking that $(i_0\ui,j_0\uj)\in \Omega^{(t)}_n$.
%    Moreover,  as in the case $t=1$, at any other position, there exists $u\in\T^n$ such that $u_{i_0}\ou_{i_1}\dots\ou_{i_t}\ou_{j_0}u_{j_1}\dots u_{j_t} \ne 1$. This  implies that $\rho_{AB[t]}\in\CLDUI^{(t)}_n$ if and only if $(\rho_{AB[t]})_{i\ui,j\uj}=0$ when $(i_0\ui,j_0\uj)\not\in \Omega^{(t)}_n$. 
\qed\end{proof}

\begin{remark}\label{rem:finite-sum2}
    As for $t=1$, the projection onto the $\CLDUI^{(t)}_n$ subspace is given by 
\begin{align}\label{def:projCLDUIt}
    \Pi_{\CLDUI^{(t)}_n}(\rho_{AB[t]})=\int_{\MDU_n}(U\ot\overline{U}^{\ot t})\rho_{AB[t]}(U\ot\overline{U}^{\ot t})^*dU.
\end{align}
In view of Lemma \ref{lem:suppCLDUIt},  the projection can also be seen as the map that preserves the entries indexed by  $\Omega^{(t)}_n$ and places 0 everywhere else. Moreover, the integral can be replaced by a finite sum by summing over the subgroup generated by the $n-1$ matrices $U_k$ from Lemma~\ref{lem:suppCLDUIt}. This subgroup of $\MDU_n$ consists of $(t+2)^{n-1}$ elements.
\end{remark}

The following lemma is the key ingredient for the proof of Theorem \ref{theo:CLDUItDPSCert}, which is given thereafter. Its proof relies on the sparsity structure of $\CLDUI^{(t)}_n$ from Lemma \ref{lem:suppCLDUIt} and the  integral formulation (\ref{def:projCLDUIt}) of the corresponding projector.

\begin{lemma}\label{lem:WsPiCLDUIt}
Let $W\in \Herm(\C^n\ot(\C^n)^{\ot t})$ and consider an integer $0\le s\le t$.
  \begin{itemize}
  \item[(i)]  If $W\in\Herm(\C^n\ot S^t(\C^n))$, then $\Pi_{\CLDUI^{(t)}_n}(W)\in \Herm(\C^n\ot S^t(\C^n))$. 
  % If $W\in\MV^{(t)}_n$, then $\Pi_{\CLDUI^{(t)}_n}(W)\in \MV^{(t)}_n$.   
  \item[(ii)] If  $W\in \MW^{(t)}_{n,s}$, then $\Pi_{\CLDUI^{(t)}_n}(W)\in\MW^{(t)}_{n,s}$.	
  \end{itemize}
\end{lemma}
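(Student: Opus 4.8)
The plan is to reduce both parts to a single observation: by (\ref{def:projCLDUIt}), the projector $\Pi_{\CLDUI^{(t)}_n}$ is an average (an integral over $\MDU_n$, or the finite sum of Lemma \ref{lem:suppCLDUIt}) of the unitary conjugations $\Phi_U\colon W\mapsto (U\ot\ovU^{\ot t})\,W\,(U\ot\ovU^{\ot t})^*$. Since in (i) the target set $\Herm(\C^n\ot S^t(\C^n))$ is a linear subspace and in (ii) the target set $\MW^{(t)}_{n,s}$ is a closed convex cone, it suffices to show that each individual map $\Phi_U$ sends the relevant set into itself; the statement for $\Pi_{\CLDUI^{(t)}_n}(W)$ then follows by integrating, exactly as in the proof of Lemma \ref{lem:projection-psd-sep} (integration being a limit of conic/convex combinations that stay in these sets).

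For part (i), I would use the characterization $W\in\Herm(\C^n\ot S^t(\C^n))\Longleftrightarrow (I_n\ot\Pi_t)\,W\,(I_n\ot\Pi_t)=W$ from (\ref{eq:DPS1}). The key fact is that the symmetric-subspace projector commutes with tensor powers, $\Pi_t V^{\ot t}=V^{\ot t}\Pi_t$ for every operator $V$ on $\C^n$: indeed $\Pi_t=\tfrac1{t!}\sum_{\sigma\in\Perm_t}P_\sigma$, and each register-permutation $P_\sigma$ commutes with $V^{\ot t}$. Applying this with $V=\ovU$ (and with its adjoint) gives $(I_n\ot\Pi_t)(U\ot\ovU^{\ot t})=(U\ot\ovU^{\ot t})(I_n\ot\Pi_t)$ together with the corresponding relation for the adjoint. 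Hence $(I_n\ot\Pi_t)\,\Phi_U(W)\,(I_n\ot\Pi_t)=\Phi_U\big((I_n\ot\Pi_t)\,W\,(I_n\ot\Pi_t)\big)=\Phi_U(W)$, so $\Phi_U(W)\in\Herm(\C^n\ot S^t(\C^n))$.

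For part (ii), the crux is to control how the partial transpose $T_{B[s]}$ interacts with $\Phi_U$. I would invoke the elementary identity $\big((P\ot Q)\,M\,(R\ot S)\big)^{T_B}=(P\ot S^T)\,M^{T_B}\,(R\ot Q^T)$, valid when the partial transpose acts on the second tensor factor. Splitting $U\ot\ovU^{\ot t}=(U\ot\ovU^{\ot(t-s)})\ot\ovU^{\ot s}$, where the last factor $\ovU^{\ot s}$ carries the $s$ B-registers being transposed, this identity yields
\[
\Phi_U(W)^{T_{B[s]}}=\tilde V\,W^{T_{B[s]}}\,\tilde V^*,\qquad \tilde V:=U\ot U^{\ot s}\ot\ovU^{\ot(t-s)},
\]
where one uses that $U$ is diagonal, so that transposition swaps $\ovU\leftrightarrow U$ on the transposed registers while preserving unitarity; concretely $\big((\ovU^{\ot s})^*\big)^T=U^{\ot s}$ and $(\ovU^{\ot s})^T=\ovU^{\ot s}$, coming from $\ovU^*=U$ and $U^T=U$. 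Since $\tilde V$ is again a (diagonal) unitary, $W^{T_{B[s]}}\succeq 0$ immediately gives $\Phi_U(W)^{T_{B[s]}}\succeq 0$, i.e.\ $\Phi_U(W)\in\MW^{(t)}_{n,s}$.

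The main obstacle is the register bookkeeping in part (ii): one must carefully identify which factors are transposed and verify that the partial-transpose identity turns the conjugator $U\ot\ovU^{\ot t}$ into the \emph{still-unitary} operator $\tilde V$ rather than something that breaks positivity. Once that identity is established, both parts close at once by averaging the per-$U$ statements over $\MDU_n$ and invoking the convexity/closedness noted in the first paragraph.
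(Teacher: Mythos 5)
Your proof is correct, and it reaches the same two conclusions by a partly different route. For part (ii), your argument is essentially the paper's in different clothing: both proofs hinge on the same key identity, namely that the partial transpose of $\Phi_U(W)$ equals a conjugation of $W^{T_{B[s]}}$ by the diagonal unitary $U^{\ot(s+1)}\ot\ovU^{\ot(t-s)}$, followed by integration over $\MDU_n$ and the observation that an integral of positive semidefinite matrices is positive semidefinite. The paper derives this identity via Hadamard products, writing $(U\ot\ovU^{\ot t})W(U\ot\ovU^{\ot t})^*=W\circ(uu^*\ot(\ou\,\ou^*)^{\ot t})$ (relation (\ref{eq:Au})) and using that partial transposition distributes over Hadamard products together with $(\ou\,\ou^*)^T=uu^*$; you derive it from the general identity $\bigl((P\ot Q)M(R\ot S)\bigr)^{T_B}=(P\ot S^T)M^{T_B}(R\ot Q^T)$, which I checked entrywise and which is valid. (Your prose places the transposed registers "last" while your formula for $\tilde V$ puts $U^{\ot s}$ in positions $1,\ldots,s$, matching the paper's convention that $T_{B[s]}$ transposes the \emph{first} $s$ B-registers; since all B-factors of the conjugator are equal to $\ovU$, this bookkeeping slip is harmless.) For part (i), however, your route genuinely differs: the paper argues via the sparsity characterization of Lemma \ref{lem:suppCLDUIt}(iii), noting that the projection only zeroes out entries outside $\Omega^{(t)}_n$ and that membership $(i_0\ui,j_0\uj)\in\Omega^{(t)}_n$ is invariant under permuting $\uj$, whereas you prove that each conjugation map $\Phi_U$ commutes with $I_n\ot\Pi_t$ (because $\Pi_t$ commutes with any tensor power $V^{\ot t}$) and then integrate. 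Your argument is more self-contained — it avoids invoking the support lemma entirely and works at the operator level — while the paper's argument capitalizes on the combinatorial sparsity machinery it has already built and re-uses immediately afterwards (e.g., in the partial-trace step of the proof of Theorem \ref{theo:CLDUItDPSCert}). Both are valid; there is no gap.
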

	
\begin{proof}
 (i) Assume $W\in\Herm(\C^n\ot S^t(\C^n))$; we show  $\widetilde W:=\Pi_{\CLDUI^{(t)}_n}(W)\in \Herm(\C^n\ot S^t(\C^n))$.
 For this, let $i_0\ui,j_0\uj\in [n]^{t+1}$ and $\sigma\in \Perm_t$. 
 By assumption, $W_{i_0\ui,j_0\uj}=W_{i_0\ui,j_0\sigma(\uj)}$; we need to show that the same holds for $\widetilde W$. By Lemma \ref{lem:suppCLDUIt}, the support of $\widetilde W$ is contained in the set $\Omega^{(t)}_n$. So, it suffices now to observe that $(i_0\ui,j_0\uj)\in\Omega^{(t)}_n$ $\Longleftrightarrow$ $(i_0\ui,j_0\uj^\sigma)\in \Omega^{(t)}_n$, since  $\{i_0\uj\}$ and $\{i_0\uj^\sigma\}$ are the same multisets.
   
 (ii)   Assume $W\in \MW^{(t)}_{n,s}$, i.e., $W^{T_{B[s]}}\succeq 0$ (by   \eqref{eq:Wst}). Let $U\in\MDU_n$ and $u=\diag(U)$. Then, 
    \begin{align*}
		(U\ot \overline U^{\ot t}) W (U\ot   \overline U^{\ot t})^* & =
		W\circ (uu^* \ot (\ou \  \ou^*)^{\ot t}),\\
 (W\circ (uu^* \ot (\ou \  \ou^*)^{\ot t}))^{T_{B[s]}} &  =W^{T_{B[s]}} \circ (uu^* \ot (\ou \  \ou^*)^{\ot t})^{T_{B[s]}}  \\
 & =W^{T_{B[s]}} \circ ( (u u^*)^{\ot (s+1)} \ot (\ou \ \ou^*)^{\ot(t-s)})\\
& = (U^{\ot (s+1)}\ot\overline{U}^{\ot (t-s)})W^{T_{B[s]}}(U^{\ot (s+1)}\ot\overline{U}^{\ot (t-s)})^*
 \end{align*}
(using (\ref{eq:Au}) at the first and last lines).    Using both equations gives $$(\Pi_{\CLDUI^{(t)}_n}(W))^{T_{B[s]}}
    =\int_{\MDU_n}(U^{\ot (s+1)}\ot\overline{U}^{\ot (t-s)})W^{T_{B[s]}}(U^{\ot (s+1)}\ot\overline{U}^{\ot (t-s)})^*dU.$$
As $W^{T_{B[s]}}\succeq 0$, we obtain that each integrand is positive semidefinite, and thus $(\Pi_{\CLDUI^{(t)}_n}(W))^{T_{B[s]}}\succeq~0$.
\qed
\end{proof}

\begin{proof}[Proof of Theorem \ref{theo:CLDUItDPSCert}]
Assume  $\rho_{AB}\in\DPSt$ with $\DPS^{(t)}$-certificate $\rho_{AB[t]}$. 
%So, $\rho_{AB[t]}$ belongs to $\MV^{(t)}_n$ and  $\MW^{(t)}_{n,s}$ for all integers $0\leq s\leq t$, and   $\Tr_{B[2:t]}(\rho_{AB[t]})=\rho_{AB}$. 
%For ease of notation we set $\widetilde{{\rho}_{AB}}=\Pi_{\CLDUI_n}(\rho_{AB})$ and $\widetilde{{\rho}_{AB[t]}}=\Pi_{\CLDUI_n}(\rho_{AB[t]})$; w
We show $\widetilde{{\rho}_{AB[t]}}= \Pi_{\CLDUI_n}(\rho_{AB[t]})$ is a $\DPSt_n$-certificate of $\widetilde{{\rho}_{AB}}=\Pi_{\CLDUI_n}(\rho_{AB}) $. So,  $\rho_{AB[t]}\in \Herm(\C^n\ot S^t(\C^n))$,
%\MV^{(t)}_n$,
   $\rho_{AB[t]}\in \MW^{(t)}_{n,s}$ for all integers $0\leq s\leq t$, and   $\Tr_{B[2:t]}(\rho_{AB[t]})=\rho_{AB}$. 
 By   Lemma \ref{lem:WsPiCLDUIt} we have $\widetilde{{\rho}_{AB[t]}}\in \Herm(\C^n\ot S^t(\C^n))$ 
 %\MV^{(t)}_n$ 
 and $\widetilde{{\rho}_{AB[t]}}\in\MW^{(t)}_{n,s}$ for any $0\leq s\leq t$. 
   % We now argue that $\widetilde{{\rho}_{AB[t]}}\in\MV^{(t)}_{n}$, i.e., 
 %   $(\widetilde{{\rho}_{AB[t]}})_{i_0\ui,j_0\uj}=(\widetilde{{\rho}_{AB[t]}})_{i_0\ui,j_0\sigma(\uj)}$ for all $i_0\ui,j_0\uj\in [n]^{t+1}$ and $\sigma\in \Perm_t$.
%Indeed, by 
%       Lemma \ref{lem:suppCLDUIt}, the support of $\widetilde{{\rho}_{AB[t]}}$ is contained in the set $\Omega^{(t)}_n$. It suffices now to observe that $(i_0\ui,j_0\uj)\in\Omega^{(t)}_n$ $\Longleftrightarrow$ $(i_0\ui,j_0\sigma(\uj))\in \Omega^{(t)}_n$ since  $\{i_0\uj\}=\{i_0\sigma(\uj)\}$ as multisets.
%              
    %     it holds that $$(\widetilde{\rho}_{AB[t]})_{i_0\ui,j_0\uj}=\begin{cases}
    %    (\rho_{AB[t]})_{i_0\ui,j_0\uj}&\text{ if }(i_0\ui,j_0\uj)\in\Omega^{(t)}_n,\\
     %   0&\text{ else.}
    %\end{cases}$$
%    Crucially, the condition $\{i_0\uj\}=\{j_0\ui\}$ is invariant under permuting indices of $\uj$. This means, that $(i_0\ui,j_0\uj)\in\Omega^{(t)}_n$ if and only if $(i_0\ui,j_0\sigma(\uj))\in\Omega^{(t)}_n$ for all $\sigma\in\Perm_t$. Together this implies $\widetilde{\rho}_{AB[t]}\in\MV^{(t)}_{n}$.
 There remains to show that $\Tr_{B[2:t]}(\widetilde{{\rho}_{AB[t]}})=\widetilde{{\rho}_{AB}}$. For this, let $i,j,k,l\in[n]$ and $\uh\in[n]^{t-1}$. Note that 
    \begin{align}\label{eq:DiagOmegat}
        (ij,kl)\in\Omega_n \text{ if and only if } (ij\uh,kl\uh)\in\Omega^{(t)}_n,  
    \end{align}
    because $\{ij\}=\{kl\}$ if and only if $\{ij\uh\}=\{kl\uh\}$. Using the definition of the partial trace, 
    $(\Tr_{B[2:t]}(\widetilde{{\rho}_{AB[t]}}))_{ij,kl}$ is equal to
    %    $$(\Tr_{B[2:t]}(\widetilde{{\rho}_{AB[t]}}))_{ij,kl}
   $$ \sum_{\uh\in[n]^{t-1}}(\widetilde{{\rho}_{AB[t]}})_{ij\uh,jk\uh}=\begin{cases}
        (\Tr_{B[2:t]}(\rho_{AB[t]}))_{ij,kl}&\text{ if }(ij,kl)\in\Omega_n,\\
        0&\text{ else.}
    \end{cases}$$
     Since $\Tr_{B[2:t]}(\rho_{AB[t]})=\rho_{AB}$ this is in fact precisely the definition of $\widetilde{{\rho}_{AB}}$.
\qed\end{proof}

The result of Theorem \ref{theo:CLDUItDPSCert} has an immediate counterpart on the dual side: 
any matrix in $(\DPS^{(t)}_n)^*\cap\CLDUI_n$ admits a certificate as in Theorem \ref{theo:FF}(iii) with, in addition,  $B,W_s\in \CLDUI^{(t)}_n$.

\begin{corollary}\label{cor:CLDUIt-dual}
If $M\in(\DPSt_n)^*$, then   $\Pi_{\CLDUI_n}(M)\ot I_n^{\ot (t-1)}$ belongs to 
\begin{align*}
%\Pi_{\CLDUI_n}(M)\ot I_n^{\ot (t-1)}\in 
\Herm(\C^n\ot S^t(\C^n))^\perp
\cap\CLDUI^{(t)}_n +\sum_{s=0}^t \MW^{(t)}_{n,s}\cap\CLDUI^{(t)}_n
\end{align*}
and thus $\Pi_{\CLDUI}(M)\in(\DPSt_n)^*$.
 \end{corollary}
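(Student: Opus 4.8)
The plan is to dualize Theorem~\ref{theo:CLDUItDPSCert} by exploiting the self-adjointness of the projection $\Pi_{\CLDUI_n}$ together with Theorem~\ref{theo:FF}, which characterizes $(\DPSt_n)^*$ via a decomposition $M\ot I_n^{\ot(t-1)}=B+\sum_{s=0}^t W_s$ with $B\in\Herm(\C^n\ot S^t(\C^n))^\perp$ and $W_s\in\MW^{(t)}_{n,s}$. First I would start from such a decomposition for a given $M\in(\DPSt_n)^*$ and apply the projection $\Pi_{\CLDUI^{(t)}_n}$ to both sides. The key structural input is Lemma~\ref{lem:WsPiCLDUIt}, which tells me that $\Pi_{\CLDUI^{(t)}_n}$ maps $\MW^{(t)}_{n,s}$ into itself (part (ii)) and $\Herm(\C^n\ot S^t(\C^n))$ into itself (part (i)). The latter, being a projection onto a subspace, also preserves the orthogonal complement $\Herm(\C^n\ot S^t(\C^n))^\perp$, since $\Pi_{\CLDUI^{(t)}_n}$ is an orthogonal (hence self-adjoint) projection and therefore commutes with the orthogonal projection onto $\Herm(\C^n\ot S^t(\C^n))$. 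Thus applying $\Pi_{\CLDUI^{(t)}_n}$ term by term yields
\begin{align*}
\Pi_{\CLDUI^{(t)}_n}(M\ot I_n^{\ot(t-1)})=\Pi_{\CLDUI^{(t)}_n}(B)+\sum_{s=0}^t\Pi_{\CLDUI^{(t)}_n}(W_s),
\end{align*}
where now every summand additionally lies in $\CLDUI^{(t)}_n$ (being in the image of $\Pi_{\CLDUI^{(t)}_n}$), giving the claimed membership.

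The second ingredient I need is the compatibility identity $\Pi_{\CLDUI^{(t)}_n}(M\ot I_n^{\ot(t-1)})=\Pi_{\CLDUI_n}(M)\ot I_n^{\ot(t-1)}$. This is where the main work lies and is the step I expect to be the subtlest. I would verify it directly on entries using the averaging formula \eqref{def:projCLDUIt}: for $U\in\MDU_n$ one has $(U\ot\ovU^{\ot t})(M\ot I_n^{\ot(t-1)})(U\ot\ovU^{\ot t})^*=((U\ot\ovU)M(U\ot\ovU)^*)\ot(\ovU I_n\ovU^*)^{\ot(t-1)}$, and since $\ovU$ is unitary, $\ovU I_n\ovU^*=I_n$, so each integrand factors as $((U\ot\ovU)M(U\ot\ovU)^*)\ot I_n^{\ot(t-1)}$. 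Integrating over $\MDU_n$ and pulling the constant tensor factor $I_n^{\ot(t-1)}$ out of the integral gives exactly $\Pi_{\CLDUI_n}(M)\ot I_n^{\ot(t-1)}$. I should double-check the register bookkeeping here, since the A-register carries $U$ while all $t$ B-registers carry $\ovU$, and the $I_n^{\ot(t-1)}$ factor occupies the last $t-1$ B-registers; the point is simply that conjugating the identity by a unitary returns the identity on each of those registers.

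Finally, the concluding statement $\Pi_{\CLDUI_n}(M)\in(\DPSt_n)^*$ follows immediately by reading the displayed membership through the reverse direction of Theorem~\ref{theo:FF}: having exhibited a valid decomposition of $\Pi_{\CLDUI_n}(M)\ot I_n^{\ot(t-1)}$ of exactly the form required in Theorem~\ref{theo:FF}(iii) (with the extra $\CLDUI^{(t)}_n$ membership being a bonus that is not needed for this last deduction), we conclude $\Pi_{\CLDUI_n}(M)\in(\DPSt_n)^*$. An alternative, perhaps cleaner, route to this last point is a duality argument: since $\Pi_{\CLDUI_n}$ is self-adjoint, for any $\rho_{AB}\in\DPSt_n$ we have $\langle\Pi_{\CLDUI_n}(M),\rho_{AB}\rangle=\langle M,\Pi_{\CLDUI_n}(\rho_{AB})\rangle\ge 0$, because $\Pi_{\CLDUI_n}(\rho_{AB})\in\DPSt_n$ by Theorem~\ref{theo:CLDUItDPSCert} and $M\in(\DPSt_n)^*$. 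Either way, the decomposition displayed above supplies the stronger structured certificate asserted in the corollary.
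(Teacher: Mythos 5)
Your proposal is correct and follows essentially the same route as the paper's proof: start from the Theorem~\ref{theo:FF}(iii) decomposition, apply $\Pi_{\CLDUI^{(t)}_n}$ termwise, invoke Lemma~\ref{lem:WsPiCLDUIt}(ii) for the $W_s$, and handle $B$ by combining Lemma~\ref{lem:WsPiCLDUIt}(i) with self-adjointness of the projection. The only cosmetic difference is that you verify the identity $\Pi_{\CLDUI^{(t)}_n}(M\ot I_n^{\ot(t-1)})=\Pi_{\CLDUI_n}(M)\ot I_n^{\ot(t-1)}$ via the averaging formula \eqref{def:projCLDUIt} (correctly, including the register bookkeeping), whereas the paper reads it off the sparsity pattern via \eqref{eq:DiagOmegat}; these are equivalent descriptions of the same projection.
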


\begin{proof}
%   For any $\rho\in\DPS^{(t)}_n$, we have
 %    $$\langle \Pi_{\CLDUI_n}(M),\rho\rangle=\sum_{(ij,kl)\in\Omega_n}\overline{M}_{ij,kl}\rho_{ij,kl}=\langle M,\Pi_{\CLDUI_n}(\rho)\rangle\geq0,$$
 %   since $\Pi_{\CLDUI}(\rho)\in\DPS^{(t)}_n$ by Theorem \ref{theo:CLDUItDPSCert}, which shows $\Pi_{\CLDUI_n}(M) \in (\DPS^{(t)}_n)^*$. 
Assume that   $M\in(\DPSt_n)^*$.   Then, by Theorem~\ref{theo:FF}(iii), there exist $B\in\Herm(\C^n\ot S^t(\C^n))^\perp$
%(\MV^{(t)}_n)^\perp$ 
and $W_s\in\MW^{(t)}_{n,s}$ such that 
    $M\ot I_n^{\ot (t-1)}=B+\sum_{s=0}^t W_s.$
Taking the projection onto  $\CLDUI^{(t)}_n$ at both sides, we get
  \begin{align*}
  \Pi_{\CLDUI_n}(M)\ot I_n^{\ot (t-1)} & = \Pi_{\CLDUI^{(t)}_n}(M\ot I_n^{\ot (t-1)}) \\
   & =\Pi_{\CLDUI^{(t)}_n}(B)+\sum_{s=0}^t \Pi_{\CLDUI^{(t)}_n}(W_s),
   \end{align*}
    where the first equality follows using    \eqref{eq:DiagOmegat}.
    By Lemma \ref{lem:WsPiCLDUIt}(ii), we have that $\Pi_{\CLDUI^{(t)}_n}(W_s)\in \MW^{(t)}_{n,s}$ for each $s$. 
    To conclude, we use Lemma   \ref{lem:WsPiCLDUIt}(i) to show that $\Pi_{\CLDUI^{(t)}_n}(B)\in\Herm(\C^n\ot S^t(\C^n))^\perp$. Indeed, if $W\in \Herm(\C^n\ot S^t(\C^n))$, then $\Pi_{\CLDUI^{(t)}_n}(W)\in\Herm(\C^n\ot S^t(\C^n))$ and thus we have $\langle  \Pi_{\CLDUI^{(t)}_n}(B),W\rangle =\langle B,\Pi_{\CLDUI^{(t)}_n}(W)\rangle =0$ holds, as desired. %giving the desired result.
 \qed   \end{proof}

%In fact, one can restate the result of Theorem \ref{theo:CLDUItDPSCert} about certificates of membership analogously on the dual side. Let $M\in(\DPSt_n)^*$. By Theorem \ref{theo:FF} the certificate of membership is of the form 
%\begin{align*}
 %   M\ot I_n^{\ot (t-1)}= B+\sum_{s=0}^t W_s,  \ \text{ where  }\  B\in (\MV_n^{(t)})^\perp,\ 
 %   W_s\in \MW^{(t)}_{n,s} \text{ for }s=0,1,\ldots,t.
%\end{align*}
%Then 
%\begin{align*}
%    (\Pi_{\CLDUI_n}(M))\ot I_n^{\ot (t-1)}&=\Pi_{\CLDUI^{(t)}_n}(M\ot I_n^{\ot (t-1)})\\
 %   &=\Pi_{\CLDUI^{(t)}_n}(B)+\sum_{s=0}^t \Pi_{\CLDUI^{(t)}_n}(W_s)
%\end{align*}
%where the first equality follows from relation \eqref{eq:DiagOmegat}. We now conclude by arguing, that $\Pi_{\CLDUI_n^{(t)}}$ preserves membership in $(\MV_n^{(t)})^\perp$, because with Lemma \ref{lem:WsPiCLDUIt} this yields a sparse certificate for $\Pi_{\CLDUI}(M)\in(\DPSt_n)^*$.

%Let $B\in(\MV_n^{(t)})^\perp$ and $A\in\MV_n^{(t)}$. Then $$\langle\Pi_{\CLDUI^{(t)}_n}(B),A\rangle=\langle B,\Pi_{\CLDUI^{(t)}_n}(A)\rangle=0,$$
%since we already observed in the proof of Theorem $\ref{theo:CLDUItDPSCert}$, that projecting onto $\CLDUI_n^{(t)}$ subspace preserves symmetries in the last $t$ registers.

\subsection{Exploiting sparsity of DPS certificates for CLDUI states}\label{sec:DPS-CLDUIt-block}

%Theorem \ref{theo:CLDUItDPSCert} raises the questions, how assuming that a $\DPSt_n$-certificate is in $\CLDUI^{(t)}_n$ can be exploited for testing membership more efficently. This section sets out to answer this question.
As we now see, matrices in the set $\CLDUI^{(t)}_n$ have a block-diagonal structure. 
For convenience, let $G^{(t)}_n$ denote the (undirected) graph with vertex set $[n]^{t+1}$ and edge set $\Omega^{(t)}_n$. 
%Note that $G^{(t)}_n$  is   an undirected graph since  $(i_0\ui,j_0\uj)\in\Omega^{(t)}_n$ implies  $(j_0\uj,i_0\ui)\in\Omega^{(t)}_n$. %We will therefore consider $G^{(t)}_n$ to be an undirected Graph.
So, any matrix  $W\in \CLDUI^{(t)}_n$ has a sparsity pattern that is contained in the graph $G^{(t)}_n$, i.e., $\supp(W)\subseteq \Omega^{(t)}_n$.
An easy but very useful fact is that the graph $G^{(t)}_n$ is a disjoint union of cliques. 
%So, this generalizes what we already saw earlier for the case $t=1$ in equation \eqref{eq:rhoXYZ-block}.

\begin{lemma}\label{lem:Cliques-s=0}
    The graph $G^{(t)}_n=([n]^{t+1}, \Omega^{(t)}_n)$ is a disjoint union of cliques. Hence, any matrix $W\in\CLDUI^{(t)}_n$ is block-diagonal, of the form $W=\oplus_C W[C]$, where the direct sum is over the maximal cliques of $G^{(t)}_n$. 
\end{lemma}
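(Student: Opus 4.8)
The statement is that the graph $G^{(t)}_n = ([n]^{t+1}, \Omega^{(t)}_n)$ is a disjoint union of cliques, from which the block-diagonal structure of any $W\in\CLDUI^{(t)}_n$ follows immediately by Lemma~\ref{lem:suppCLDUIt}(iii). The plan is to exhibit an equivalence relation on the vertex set $[n]^{t+1}$ whose classes are exactly the connected components of $G^{(t)}_n$, and to show that each class is in fact a clique. The natural candidate is the relation defined by the multiset invariant appearing in the definition \eqref{def:Omegat} of $\Omega^{(t)}_n$.

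The key observation is to rewrite the adjacency condition. By definition, $(i_0\ui, j_0\uj)\in\Omega^{(t)}_n$ exactly when $\alpha(i_0\uj)=\alpha(j_0\ui)$. The first step is to show that this is equivalent to the two vertices $i_0\ui$ and $j_0\uj$ having the \emph{same value} of a suitable invariant. I would define, for a vertex $i_0\ui=i_0 i_1\ldots i_t \in [n]^{t+1}$, the quantity $\beta(i_0\ui):=\alpha(i_0\ui)-2\alpha(i_0)=\alpha(\ui)-\alpha(i_0)\in\Z^n$ — that is, the occurrence-count vector of the $B$-indices minus the occurrence vector of the single $A$-index $i_0$. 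A short computation shows
\begin{align*}
\alpha(i_0\uj)=\alpha(j_0\ui)\ \Longleftrightarrow\ \alpha(\uj)+\alpha(i_0)=\alpha(\ui)+\alpha(j_0)\ \Longleftrightarrow\ \alpha(\ui)-\alpha(i_0)=\alpha(\uj)-\alpha(j_0),
\end{align*}
i.e.\ $\beta(i_0\ui)=\beta(j_0\uj)$. Thus adjacency in $G^{(t)}_n$ is precisely the condition that the two endpoints share the same invariant $\beta$, which is manifestly an equivalence relation.

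From here the conclusion is essentially automatic. Since $\beta(i_0\ui)=\beta(i_0\ui)$ trivially, every vertex is adjacent to itself (so diagonal entries are allowed, consistent with the support pattern), and the relation "$\beta$ agrees" is reflexive, symmetric, and transitive. Consequently $[n]^{t+1}$ partitions into the level sets $\{v : \beta(v)=c\}$ of the map $v\mapsto\beta(v)$, and any two vertices in the same level set are adjacent — so each level set is a clique — while any two vertices in distinct level sets are non-adjacent. Hence $G^{(t)}_n$ is the disjoint union of these cliques, and the maximal cliques are exactly the (nonempty) level sets. The block-diagonal form $W=\oplus_C W[C]$ then follows by reordering the index set $[n]^{t+1}$ so that indices within a common clique are grouped together: since $\supp(W)\subseteq\Omega^{(t)}_n$, no nonzero entry of $W$ connects two different cliques.

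I do not anticipate a genuine obstacle here; the only point requiring mild care is the bookkeeping in the equivalence "$\alpha(i_0\uj)=\alpha(j_0\ui)\Leftrightarrow\beta(i_0\ui)=\beta(j_0\uj)$", where one must correctly track that the $A$-index and $B$-indices enter the multiset invariant with opposite signs. Once the right invariant $\beta$ is identified, the disjoint-union-of-cliques property is a one-line consequence of it being a level-set partition, which is the cleanest way to package the argument.
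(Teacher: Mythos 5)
Your proof is correct, and it packages the argument differently from the paper. The paper proves the clique-partition property by a direct ``pseudo-transitivity'' computation: assuming $i_0\ui$ is adjacent to both $j_0\uj$ and $k_0\uk$, it writes $\alpha(j_0\uk)-\alpha(k_0\uj)$ as the sum $(\alpha(j_0\ui)-\alpha(i_0\uj))+(\alpha(i_0\uk)-\alpha(k_0\ui))=0$, using linearity of $\alpha$ under concatenation, and concludes that $j_0\uj$ and $k_0\uk$ are adjacent. You instead rewrite the adjacency condition once and for all as equality of an explicit invariant, $\alpha(i_0\uj)=\alpha(j_0\ui)\Leftrightarrow \alpha(\ui)-e_{i_0}=\alpha(\uj)-e_{j_0}$, so that adjacency is literally ``same level set of $\beta$'' and the equivalence-relation structure (hence the clique decomposition) is immediate. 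The two arguments rest on the same algebraic identity, but yours buys a little more: the maximal cliques come with an explicit parametrization by the values $\beta(v)\in\Z^n$, which is convenient for enumerating the blocks (as in Table~\ref{table0}), whereas the paper's computation is more minimal and is reused verbatim for the partial-transpose graphs $G^{(t)}_{n,s}$ in Lemma~\ref{lem:Cliques-s=1..t}; note that your invariant also adapts to that setting by taking $\beta_s(i_0\ui)=e_{i_0}+\alpha(\ui[s])-\alpha(\ui[s+1:t])$, so nothing is lost in generality.
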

\begin{proof}
%    This follows from the fact that if 
    Assume $i_0\ui$ is adjacent to both  $j_0\uj$ and $k_0\underline k$ in $G^{(t)}_n$, we show that $(j_0\uj)$ is adjacent to $(k_0\uk)$. By assumption,  $\alpha(i_0\uj)=\alpha(j_0\ui)$ and $\alpha(i_0\uk)=\alpha(k_0\ui)$. This implies
     $$\alpha(j_0\uk)-\alpha(k_0\uj)= (\alpha(j_0\ui)-\alpha(i_0\uj))+ (\alpha(i_0\uk)-\alpha(k_0\ui))=0$$%\alpha(i_0\uj)-\alpha(i_0\uk)+\alpha(k_0\ui)=\alpha(j_0\ui)$$
(since $\alpha(\cdot)$ is linear under the concatenation of sequences), 
%showing   %that $\alpha(k_0\uj)=\alpha(j_0\uk)$, i.e., 
which shows that $(j_0\uj,k_0\uk)\in\Omega^{(t)}_n$. %$ is adjacent to $k_0\underline k$.
The second claim follows as an immediate consequence.
\qed\end{proof}

So, if $W\in\CLDUI^{(t)}_n$, then 
%\begin{corollary}
   $W\succeq 0$ if and only if $W[C]\succeq 0$ for all the maximal cliques $C$ of $G^{(t)}_n$. 
%\end{corollary}
This generalizes what we  saw earlier in   \eqref{eq:rhoXYZ-block} for the case $t=1$ .
%This already allows to test whether a given extended state $\rho_{AB[t]}\in\CLDUI^{(t)}_n$ is positive semidefinite more efficiently. 
The analogous result also holds  for testing positive semidefiniteness of partial transposes, i.e., membership in the set $\MW^{(t)}_{n,s}$ for $s\in [t]$.
%Additionally, we have to check, whether the partial transposes of the state are positive semidefinite.

\medskip
Let $1\leq s\leq t$ be an integer. Given a sequence $\ui=i_1\ldots i_t\in[n]^t$ we define the subsequences  $\ui[s]=i_1\ldots i_s\in[n]^s$  and $i[s+1:t]=i_{s+1}\ldots i_t\in[n]^{t-s}$, which are, respectively, the $s$-prefix and  $(t-s)$-suffix of $\ui$. Consider the function
\begin{align*}
    T_{B[s]}:[n]^{t+1}\times[n]^{t+1}&\to[n]^{t+1}\times[n]^{t+1}\\
    (i_0\ui, j_0\uj)&\mapsto(i_0\uj[s]\ui[s+1:t],j_0\ui[s]\uj[s+1:t])
\end{align*}
that swaps the $s$-prefices of $\ui$ and $\uj$. Then, the entry of $W^{T_{B[s]}}$ indexed by the pair $(i_0\ui,j_0\uj)$ is equal to the entry of $W$ indexed by the pair $T_{B[s]}((i_0\ui,j_0\uj))$. Now, we  define  the set
\begin{align*}
    \Omega^{(t)}_{n,s}:=\{(i_0\ui, j_0\uj)\in[n]^{t+1}\times[n]^{t+1}:T_{B[s]}((i_0\ui, j_0\uj))\in\Omega^{(t)}_{n}\}
\end{align*}
and, accordingly, $G_{n,s}^{(t)}$ is the graph with vertex set $[n]^{t+1}$ and edge set $\Omega^{(t)}_{n,s}$. So, the partial transpose $W^{T_{B[s]}}$  in the first $s$ $B$-registers of a matrix $W$ in $\CLDUI^{(t)}_n$ has a sparsity pattern contained in $G_{n,s}^{(t)}$.

\begin{lemma}\label{lem:Cliques-s=1..t}
    For any $s\in [t]$, the graph $G_{n,s}^{(t)}=([n]^{t+1}, \Omega^{(t)}_{n,s})$ is a disjoint union of cliques.
Hence, if $W\in\CLDUI^{(t)}_n$, then $W^{T_{B[s]}}$ is block-diagonal, of the form $W^{T_{B[s]}}=\oplus_C W^{T_{B[s]}}[C]$, where the direct sum is over the maximal cliques of $G^{(t)}_{n,s}$. 
\end{lemma}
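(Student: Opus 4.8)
The plan is to characterize adjacency in $G^{(t)}_{n,s}$ by a single \emph{per-vertex} invariant, which immediately forces the edge set to be an equivalence relation and hence the graph to be a disjoint union of cliques. This parallels (and in fact generalizes) the additivity argument used for $G^{(t)}_n$ in Lemma \ref{lem:Cliques-s=0}. Concretely, for a vertex $i_0\ui\in[n]^{t+1}$ (with $\ui\in[n]^t$) I would define
$$\phi_s(i_0\ui):=e_{i_0}+\alpha(\ui[s])-\alpha(\ui[s+1:t])\in\Z^n,$$
and claim that $(i_0\ui,j_0\uj)\in\Omega^{(t)}_{n,s}$ if and only if $\phi_s(i_0\ui)=\phi_s(j_0\uj)$. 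Granting this, $\Omega^{(t)}_{n,s}$ is exactly the (reflexive, symmetric, transitive) relation whose classes are the level sets of $\phi_s$, so $G^{(t)}_{n,s}$ is a disjoint union of cliques, with one maximal clique per value attained by $\phi_s$.

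To prove the claim I would unfold the definitions. By construction $(i_0\ui,j_0\uj)\in\Omega^{(t)}_{n,s}$ means $T_{B[s]}(i_0\ui,j_0\uj)\in\Omega^{(t)}_n$, where $T_{B[s]}(i_0\ui,j_0\uj)=(i_0\,\underline a,\ j_0\,\underline b)$ with $\underline a=\uj[s]\,\ui[s+1:t]$ and $\underline b=\ui[s]\,\uj[s+1:t]$. By the defining condition for $\Omega^{(t)}_n$ (namely $\alpha$ of the first label concatenated with the second tail equals $\alpha$ of the second label concatenated with the first tail), this reads $\alpha(i_0\,\underline b)=\alpha(j_0\,\underline a)$. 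Using the additivity of $\alpha$ under concatenation — the very property driving Lemma \ref{lem:Cliques-s=0} — I expand both sides:
$$\alpha(i_0\,\underline b)=e_{i_0}+\alpha(\ui[s])+\alpha(\uj[s+1:t]),\qquad \alpha(j_0\,\underline a)=e_{j_0}+\alpha(\uj[s])+\alpha(\ui[s+1:t]).$$
Equating these and moving the two suffix terms to the opposite sides gives precisely
$$e_{i_0}+\alpha(\ui[s])-\alpha(\ui[s+1:t])=e_{j_0}+\alpha(\uj[s])-\alpha(\uj[s+1:t]),$$
i.e. $\phi_s(i_0\ui)=\phi_s(j_0\uj)$, which is the claim.

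For the matrix consequence I would argue as follows. If $W\in\CLDUI^{(t)}_n$, then $\supp(W)\subseteq\Omega^{(t)}_n$ by Lemma \ref{lem:suppCLDUIt}, and since $(W^{T_{B[s]}})_{i_0\ui,j_0\uj}=W_{T_{B[s]}(i_0\ui,j_0\uj)}$, this entry vanishes unless $T_{B[s]}(i_0\ui,j_0\uj)\in\Omega^{(t)}_n$, i.e. unless $(i_0\ui,j_0\uj)\in\Omega^{(t)}_{n,s}$. Hence $\supp(W^{T_{B[s]}})\subseteq\Omega^{(t)}_{n,s}$. As $G^{(t)}_{n,s}$ is a disjoint union of cliques, reordering $[n]^{t+1}$ so that each $\phi_s$-class is contiguous puts $W^{T_{B[s]}}$ into block-diagonal form $\oplus_C W^{T_{B[s]}}[C]$ indexed by the maximal cliques $C$, as stated.

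I expect the only delicate point to be the bookkeeping in the middle step: correctly tracking which prefix and which suffix lands in which of the two components after applying $T_{B[s]}$, and then invoking additivity of $\alpha$ on the correct concatenations. Once the invariant $\phi_s$ is identified the clique structure is immediate, since equality of $\phi_s$ is transparently an equivalence relation. As a sanity check, this argument also recovers Lemma \ref{lem:Cliques-s=0} as the case $s=0$, where $\phi_0(i_0\ui)=e_{i_0}-\alpha(\ui)$ and $T_{B[0]}$ is the identity.
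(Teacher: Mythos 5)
Your proof is correct. It differs from the paper's in a small but genuine way: the paper proves that $G^{(t)}_{n,s}$ is a disjoint union of cliques by verifying transitivity directly — assuming $i_0\ui$ is adjacent to both $j_0\uj$ and $k_0\uk$, it writes the $\alpha$-difference for the pair $(j_0\uj,k_0\uk)$ as a telescoping sum of the two given $\alpha$-differences and concludes it vanishes — whereas you characterize adjacency outright as equality of the per-vertex invariant $\phi_s(i_0\ui)=e_{i_0}+\alpha(\ui[s])-\alpha(\ui[s+1:t])\in\Z^n$, so that the relation is an equivalence relation by inspection. Both arguments rest on exactly the same ingredient (additivity of $\alpha$ under concatenation, plus the bookkeeping of which prefix/suffix lands where under $T_{B[s]}$, which you carry out correctly). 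What your packaging buys is an explicit parametrization of the maximal cliques as the level sets of $\phi_s$: this identifies the blocks concretely (useful, e.g., for enumerating them as in Tables \ref{table1}--\ref{table2} or in the implementation of Section \ref{sec:implementation}) and it subsumes Lemma \ref{lem:Cliques-s=0} as the case $s=0$ with $\phi_0(i_0\ui)=e_{i_0}-\alpha(\ui)$, as you note. What the paper's version buys is brevity and uniformity of style with its proof of Lemma \ref{lem:Cliques-s=0}. Your derivation of the sparsity containment $\supp(W^{T_{B[s]}})\subseteq\Omega^{(t)}_{n,s}$ from Lemma \ref{lem:suppCLDUIt} and the entry identity for the partial transpose also matches what the paper asserts just before the lemma, so the block-diagonal conclusion is complete.
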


\begin{proof}
    Let $i_0\ui$ be adjacent to    $j_0\uj$ and $k_0\underline k$ in $G_{n,s}^{(t)}$. Then,   $\alpha(i_0\ui[s]\uj[s+1:t])=\alpha(j_0\uj[s]\ui[s+1:t])$ and 
    $\alpha(i_0\ui[s]\uk[s+1:t])=\alpha(k_0\uk[s]\ui[s+1:t])$. Hence, 
  \begin{align*}
  &  \alpha(k_0\uk[s]\uj[s+1:t])-\alpha(j_0\uj[s]\uk[s+1:t]) =
   \\
   & \quad (\alpha(k_0\uk[s]\ui[s+1:t])-\alpha(i_0\ui[s]\uk[s+1:t])) \\
  &  \quad   + (\alpha(i_0\ui[s]\uj[s+1:t])-\alpha(j_0\uj[s]\ui[s+1:t]))
    \end{align*}
  is equal to 0, and thus   $j_0\uj$ is adjacent to $k_0\underline k$ in $G^{(t)}_{n,s}$.
\qed \end{proof}

\begin{corollary}\label{cor:BlockDiagCert}
    For a matrix $W\in \CLDUI^{(t)}_n$ and $0\le s\le t$, $W^{T_{B[s]}}\succeq 0$ if and only if $W^{T_{B[s]}}[C]\succeq 0$ for all the maximal cliques $C$ of $G^{(t)}_{n,s}$ (setting $G^{(t)}_{n,0}=G^{(t)}_t$).
\end{corollary}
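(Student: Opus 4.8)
The plan is to derive this directly from the two preceding clique-decomposition lemmas together with the elementary fact that a Hermitian matrix which is block-diagonal (after a simultaneous permutation of its rows and columns) is positive semidefinite if and only if each of its diagonal blocks is. I would split the argument according to whether $s=0$ or $s\ge 1$, although the two cases are handled in the same way once the relevant graph is in hand.

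First, for $s=0$ the partial transpose $T_{B[0]}$ acts trivially, so $W^{T_{B[0]}}=W$, and the claim is exactly Lemma \ref{lem:Cliques-s=0}: since $G^{(t)}_n$ is a disjoint union of cliques and $\supp(W)\subseteq \Omega^{(t)}_n$, the matrix $W$ is block-diagonal with blocks $W[C]$ ranging over the maximal cliques $C$ of $G^{(t)}_n = G^{(t)}_{n,0}$, and hence $W\succeq 0$ if and only if every $W[C]\succeq 0$.

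Next, for $s\in[t]$ I would invoke Lemma \ref{lem:Cliques-s=1..t}, which asserts that $G^{(t)}_{n,s}$ is also a disjoint union of cliques. The point to record explicitly is the support containment $\supp(W^{T_{B[s]}})\subseteq \Omega^{(t)}_{n,s}$: by construction the entry of $W^{T_{B[s]}}$ at position $(i_0\ui,j_0\uj)$ equals the entry of $W$ at $T_{B[s]}((i_0\ui,j_0\uj))$, which is nonzero only if $T_{B[s]}((i_0\ui,j_0\uj))\in\Omega^{(t)}_n$, that is, only if $(i_0\ui,j_0\uj)\in\Omega^{(t)}_{n,s}$ by the very definition of $\Omega^{(t)}_{n,s}$. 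Since the connected components of $G^{(t)}_{n,s}$ are cliques, every nonzero off-diagonal entry of $W^{T_{B[s]}}$ joins two vertices of the same clique; grouping the index set $[n]^{t+1}$ according to these cliques therefore brings $W^{T_{B[s]}}$ into block-diagonal form $\oplus_C W^{T_{B[s]}}[C]$, and the positive-semidefiniteness equivalence follows from the block-diagonal fact recalled above.

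I do not expect any genuine obstacle here: the substance of the statement is entirely carried by Lemmas \ref{lem:Cliques-s=0} and \ref{lem:Cliques-s=1..t}, and the only thing requiring a moment's care is confirming the support containment $\supp(W^{T_{B[s]}})\subseteq \Omega^{(t)}_{n,s}$ (already noted in the text preceding Lemma \ref{lem:Cliques-s=1..t}), so that the clique decomposition of $G^{(t)}_{n,s}$ transfers to the block structure of the transposed matrix.
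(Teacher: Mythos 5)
Your proposal is correct and follows exactly the paper's route: the corollary is stated as an immediate consequence of Lemmas \ref{lem:Cliques-s=0} and \ref{lem:Cliques-s=1..t} (whose second assertions already give the block-diagonal form $W^{T_{B[s]}}=\oplus_C W^{T_{B[s]}}[C]$), combined with the standard fact that a block-diagonal Hermitian matrix is positive semidefinite if and only if each block is. The support containment $\supp(W^{T_{B[s]}})\subseteq \Omega^{(t)}_{n,s}$ that you verify is precisely the observation the paper records in the text preceding Lemma \ref{lem:Cliques-s=1..t}.
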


%\textcolor{red}{to do: new counting lemma + max clique sizes for n,t to infty. Note that these are not final sizes, further reductions in section 5}

% begin ignore
\ignore{
\begin{example}\label{example:CLDUI-cliques}
\ML{As an illustration we discuss the maximal cliques of the graphs $G^{(t)}_n$ and $G^{(t)}_{n,s}$ for the case $n=3$, $t=2$, $s=1,2$, in which case the graphs have  $n^{t+1}=27$ vertices. }

Two  elements $i_0i_1i_2\ne j_0j_1j_2\in[n]^3$ are adjacent in $G^{(2)}_n$, denoted  as $i_0i_1i_2\simeq j_0j_1j_2$,  if  $\{i_0\uj\}=\{j_0\ui\}$ as multisets. 
One can check that the graph $G^{(2)}_{3}$ has the following twelve maximal cliques:
\begin{itemize}
\item %three cliques of size 5: 
$C_1=\{{\bf 111}, {\bf 221},212,{\bf 331},313\}$, $C_2=\{{\bf 222}, {\bf 332},323, {\bf 112},121\}$, 
 $C_3=\{{\bf 333},
{\bf 113},131,{\bf 223},232\}$,
\item %three cliques of size 2: 
$C_{23}=\{{\bf 123},132\}$, $C_{31}=\{{\bf 231},213\}$, $C_{12}=\{{\bf 312},321\}$,
\item %six cliques of size 1: 
$\{{122}\}$, $\{133\}$, $\{{ 211}\}$, $\{{ 233}\}$, $\{{ 311}\}$, $\{{ 322}\}$.
\end{itemize} 
We will see in the next remark why some elements are marked in boldface.
Hence, for  $\rho\in \CLDUI^{(2)}_3$, checking  whether $\rho\succeq 0$ amounts to checking whether $\rho[C]\succeq 0$   for each of the above  twelve maximal cliques $C$ of $G^{(2)}_3$
 
For $s=1,2$, denote the adjacency relation in the   graph $G^{(2)}_{3,s}$ as $\stackrel{s}{\simeq}$. Then, $i_0i_1i_2 \edge j_0j_1j_2$ 
precisely when $i_0j_1i_2\simeq j_0i_1j_2$, and $i_0i_1i_2 \edgetwo j_0j_1j_2$ precisely when 
$i_0j_1j_2\simeq j_0i_1i_2$.
For instance, $111\edge 122$ (since $121\simeq 112$) and 
$123\edgetwo 213$ (since $113\simeq 223$). 
One can check that $G^{(2)}_{3,1}$ has  the  following eighteen maximal cliques:
\begin{itemize}
\item
$\{111, 122, 133, 212, 313\},$ 
$\{222, 211, 233, 121, 323\}$,
$\{333, 311, 322, 131, 232\},$
\item 
$\{123,213\}$, $\{231,321\}$, $\{312,132\}$,
\item $\{112\}$, $\{113\}$, $\{221\}$, $\{223\}$, $\{331\}$, $\{332\}$.
\end{itemize}
 Moreover, $G^{(2)}_{3,2}$ has the following ten maximal cliques:
 \begin{itemize}
 \item $\{{\bf 123}, 132, {\bf 231}, 213, {\bf 312}, 321\}$,
 \item $\{{\bf 122},{\bf 221},212\}$, $\{ {\bf 133}, {\bf 331}, 313\}$, $\{ {\bf 211}, {\bf 112}, 121\}$, $\{ {\bf 233}, {\bf 332}, 332\}$, $\{{\bf 311}, {\bf 113}, 113\}$, $\{ {\bf 322}, {\bf 223}, 223\}$,
 \item $\{111\}$, $\{222\}$, $\{333\}$.
 \end{itemize}
For  $\rho\in \CLDUI^{(2)}_3$, checking  whether $\rho^{T_{B[1]}}\succeq 0$ (resp., $\rho^{T_{B[1:2]}}\succeq 0$) amounts to checking whether  $\rho^{T_{B[1]}}[C]\succeq 0$ (resp., $\rho^{T_{B[1:2]}}[C]\succeq 0$) 
 for each of the eighteen maximal cliques $C$ of 
 $G^{(2)}_{3,1}$ (resp., ten maximal cliques $C$ of  $G^{(2)}_{3,2}$).
\end{example}
}
%end ignore

\begin{example}\label{example:CLDUI-cliques} %new
As an illustration we describe the maximal cliques of the graphs $G^{(t)}_n$ and $G^{(t)}_{n,s}$ for the case $n=3$, $t=2$, $s=1,2$, in which case the graphs have  $n^{t+1}=27$ vertices. 
Denote the adjacency relation in the graph $G^{(2)}_3$ (resp., the graph $G^{(2)}_{3,s}$) with $\simeq$ (resp., with $\stackrel{s}{\simeq}$).
Then, 
$i_0i_1i_2\simeq j_0j_1j_2$   if  $\{i_0\uj\}=\{j_0\ui\}$ as multisets, 
$i_0i_1i_2 \edge j_0j_1j_2$ 
if and only if  $i_0j_1i_2\simeq j_0i_1j_2$, and $i_0i_1i_2 \edgetwo j_0j_1j_2$ if and only if  
$i_0j_1j_2\simeq j_0i_1i_2$.
For instance, $121\simeq 112$,  $111\edge 122$, $112\edgetwo 121$,  and $113\simeq 223$, $123\edge 213$, $123\edgetwo 213$. The maximal cliques of the graphs $G^{(2)}_3$, $G^{(2)}_{3,1}$ and $G^{(2)}_{3,2}$ are listed, respectively, in Tables~\ref{table0}, \ref{table1}, \ref{table2}. One can see that 
$G^{(2)}_3$ and $G^{(2)}_{3,1}$ each  have twelve maximal cliques (including three of size 5, three of size 2, six of size 1), and $G^{(2)}_{3,2}$ has ten maximal cliques (including one of size 6, six of size 3, and three of size 1).

We will return to the case $(n,t)=(3,2)$   in Example \ref{ex:t2n3} of Section \ref{sec:momDPSCLDUI}, where we will use the moment approach  
to reformulate the DPS hierarchy for CLDUI states
in a more compact manner.
\end{example}

% begin ignore remark moved to Section 5
\ignore{
\begin{remark}\label{remark:CLDUI-cliques}
The edges $(i_0\ui,j_0\uj)$ of the graph $G^{(t)}_{n}$ have two possible types: {type 1} if  $i_0=j_0$, and {type 2} if $i_0\ne j_0$. 
For instance,   $(212,221),$ $(313,331)$  are edges of {type 1} in $G^{(2)}_3$, while $(111,212)$, $(111, 313)$, $(222,323)$ are edges of {type 2}. 
Clearly, the subgraph of $G^{(t)}_n$ consisting only of type 1 edges is a disjoint union of cliques.
Given a clique $C$ of $G^{(t)}_n$, let $C'\subseteq C$ be a subset of $C$ that contains exactly one vertex out of each maximal clique of type 1 edges. Let us call this subclique $C'$ a {\em reduced clique} (from $C$).
For instance,  for the clique $C=\{{111}, {212},221,{313},331\}$ of $G^{(2)}_3$, one can select $C'=\{{111}, {221},{331}\}$ (but one could as well select in $C'$ the element $212$ instead of $221$, or $313$ instead of $331$), and  one can select
$C'=\{{123}\}$ for the clique $C=\{{123},132\}$. In Table \ref{table0}, the elements in the reduced cliques are indicated in boldface.
 As we now observe, edges of type 1 and the reduced cliques play a special role for $\DPS^{(t)}$-certificates.

Given $\rho\in \CLDUI^{(t)}_n$, $\rho\succeq 0$ if and only if $\rho[C]\succeq 0$ for each    maximal clique $C$ of $G^{(t)}_n$ (Corollary~\ref{cor:BlockDiagCert}).
Assume, moreover,  $\rho\in\Herm(\C^n\ot S^2(\C^n))$. That is,     
$\rho_{\cdot, i_0\ui}=\rho_{\cdot, i_0\uj}$ 
%$\rho ( e_{i_0}\ot e_{i_1}\ot \ldots \ot e_{i_t})=\rho (e_{i_0}\ot e_{j_1}\ot \ldots \ot e_{j_t})$ 
if $\{\ui\}=\{\uj\}$, i.e., if  $(i_0\ui,i_0\uj)$ is an edge of type 1 in $G^{(t)}_n$. 
Hence, for a clique $C$ of $G^{(t)}_n$,    $\rho[C]\succeq 0$  if and only if  $\rho[C']\succeq 0$,   where   $C'\subseteq C$ is its reduced clique, as constructed  above. 
 This gives a further  reduction: for $\rho\in \CLDUI^{(t)}_n\cap \Herm(\C^3\ot S^t(\C^n))$, $\rho\succeq 0$ if and only if $\rho[C']\succeq 0$ for all the reduced cliques $C'$ of $G^{(t)}_n$. For instance, for the case $n=3,t=2$, this amounts to check positive semidefiniteness of three blocks of size 3 and nine blocks of size 1.
 
 This additional reduction (based on type 1 edges and reduced cliques) also applies to the case $s=t$. Indeed, let 
 $\rho\in \Herm(\C^n\ot S^t((\C^n)^{\ot t}))$ and assume $i_0\ui\simeq i_0\uj$. Then, for  any $k_0\uk\in [n]^{t+1}$,   $(\rho^{T_{B[2:t]}})_{k_0\uk, i_0\ui} = \rho_{k_0\ui,i_0\uk}= \rho_{k_0\uj, i_0\uk}= (\rho^{T_{B[2:t]}})_{k_0\uk, i_0\uj},$
 where the middle equality holds since $i_0\ui\simeq i_0\uj$. So, one can then also use the reduced cliques. %For instance,  %in the case $n=3$, $t=s=2$, 
E.g., for $\rho\in \CLDUI^{(2)}_3 \cap\Herm(\C^3\ot S^2((\C^3)^{\ot 2}))$, checking whether 
 $\rho^{T_{B[1:2]}}\succeq 0$ amounts to checking positive semidefiniteness of one block of size 3, six blocks of size 2, and three   blocks of size 1 (see Table \ref{table2}). The same idea can also be  used to 
reduce the size of the blocks for $s=1,\ldots,t-1$. In fact, one can formalize this reduction idea by going from the `tensor setting' to the `polynomial setting', as will be explained in Section \ref{sec:momForm}.
 \end{remark}
}
%end ignore

\bigskip
\begin{table}[h]
    \centering
    \begin{tabular}{c|c|c}
         Block size & Number of blocks & Indexed by  maximal cliques of $G_3^{(2)}$\\\hline
          &  & \{111, 212, 221, 313, 331\}\\
         5$\times$5 & 3 &\{222, 121, 112, 323, 332\}\\
          & &\{333, 131, 113, 232, 223\}\\\hline
           &  & \{123, 132\}\\
          2$\times$2& 3 & \{213, 231\}\\
          & & \{312, 321\}\\\hline
          1$\times$1 & 6 & \{122\}, \{133\}, \{211\}, \{233\}, \{311\}, \{322\}
    \end{tabular}
     \caption{Block structure of $\rho_{AB[2]}\in\CLDUI^{(2)}_3$}\label{table0}
 \end{table}
 
 \bigskip  
\begin{table}[h]     \centering
    \begin{tabular}{c|c|c}
         Block size & Number of blocks & Indexed by maximal cliques of $G_{3,1}^{(2)}$ \\\hline
          &  & \{111, 122, 133, 212, 313\}\\
         5$\times$5 & 3 & \{222, 211, 233, 121, 323\}\\
          & &\{333, 311, 322, 131, 232\}\\\hline
           &  & \{123, 213\}\\
          2$\times$2& 3 & \{231, 321\}\\
          & & \{312, 132\}\\\hline
          1$\times$1 & 6 & \{112\}, \{113\}, \{221\}, \{223\}, \{331\}, \{332\}
    \end{tabular}
  \caption{Block structure of $\rho_{AB[2]}^{T_B[1]}$ for $\rho_{AB[2]}\in\CLDUI^{(2)}_3$}\label{table1}
  \end{table}
  
  \bigskip   
\begin{table}[h]     \centering
    \begin{tabular}{c|c|c}
         Block size & Number of blocks & Indexed by maximal cliques of $G_{3,2}^{(2)}$ \\\hline
         6$\times$6 & 1 & \{123, 132, 213, 231, 312, 321\}\\\hline
          &  & \{122, 212, 221\}\\
          &  & \{133, 313, 331\}\\
         3$\times$3 & 6 & \{211, 121, 112\}\\
          & &\{233, 323, 332\}\\
          &  & \{311, 131, 113\}\\
          &  & \{322, 232, 223\}\\\hline
          1$\times$1 & 3 & \{111\}, \{222\}, \{333\}
    \end{tabular}
  \caption{Block structure of $\rho_{AB[2]}^{T_B[1:2]}$ for $\rho_{AB[2]}\in\CLDUI^{(2)}_3$}\label{table2}
 \end{table}

\subsection{Generalizations to $\LDUI$ and $\LDOI$ states}

The previous results  for CLDUI states extend in a straightforward way to LDUI and LDOI states. The respective `extended' invariance classes are given by
\begin{align*}%\label{def:LDUI^t}
    \LDUI^{(t)}_n:=\{& \rho_{AB[t]}\in\Herm(\C^n\ot(\C^n)^{\ot t}):\\
    & \rho_{AB[t]}= (U\ot U^{\ot t}) \rho_{AB[t]} (U\ot U^{\ot t})^* \ \forall U\in\MDU_n\},\\
%\end{align*}
%\begin{align*}%\label{def:LDOI^t}
    \LDOI^{(t)}_n:=\{& \rho_{AB[t]}\in\Herm(\C^n\ot(\C^n)^{\ot t}): \\
    & \rho_{AB[t]}= (O\ot O^{\ot t}) \rho_{AB[t]} (O\ot O^{\ot t})^* \ \forall O\in\MDO_n\},
\end{align*}
where in the definition of $\LDUI^{(t)}_n$ one can restrict the invariance under $n-1$ selected matrices (as in Lemma \ref{lem:WsPiCLDUIt}(ii)).
The sparsity patterns of $\LDUI^{(t)}_n$ and $\LDOI^{(t)}_n$ are   captured, respectively, by 
\begin{align}\label{def:Psit}\begin{split}
    \Phi_n^{(t)}:=\{(i_0\ui,j_0\uj)\in[n]^{t+1}\times[n]^{t+1}:\alpha(i_0\ui)=\alpha(j_0\uj)\},\\
    \Psi_n^{(t)}:=\{(i_0\ui,j_0\uj)\in[n]^{t+1}\times[n]^{t+1}:\alpha(i_0\ui j_0\uj)\equiv0\text{ mod }2\},
\end{split}\end{align}
and the respective projections are given by % projection onto $\LDUI^{(t)}_n$ is given by 
\begin{align*}%\label{def:projLDUIt}
    \Pi_{\LDUI^{(t)}_n}(\rho_{AB[t]})=\int_{\MDU_n}(U\ot U^{\ot t})\rho_{AB[t]}(U\ot U^{\ot t})^*dU,\\
%\end{align*}
%and the projection onto $\LDOI^{(t)}_n$ is given by 
%
%\begin{align*}%\label{def:projLDOIt}
    \Pi_{\LDOI^{(t)}_n}(\rho_{AB[t]})=\frac{1}{2^n}\sum_{O\in\MDO_n}(O\ot O^{\ot t})\rho_{AB[t]}(O\ot O^{\ot t})^*.
\end{align*}

We now have all the prerequisites to adapt Theorem \ref{theo:CLDUItDPSCert} for LDUI and LDOI states.
The proof is similar, thus omitted.
\begin{theorem}\label{theo:LDUILDOItDPSCert}
   Assume that  $\rho_{AB}\in\DPSt$ with $\DPS^{(t)}$-certificate $\rho_{AB[t]}$. Then, $\Pi_{\LDUI_n}(\rho_{AB})\in\DPS^{(t)}$ (resp., $\Pi_{\LDOI_n}(\rho_{AB})\in\DPS^{(t)}$) with $\DPS^{(t)}$-certificate $\Pi_{\LDUI^{(t)}_n}(\rho_{AB[t]})$ (resp., $\Pi_{\LDOI^{(t)}_n}(\rho_{AB[t]})$). 
\end{theorem}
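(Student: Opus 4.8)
The plan is to mirror verbatim the three-step development of Section~\ref{sec:DPS-CLDUIt-cert}, substituting the pattern $\Omega^{(t)}_n$ by $\Phi^{(t)}_n$ in the LDUI case and by $\Psi^{(t)}_n$ in the LDOI case (both defined in \eqref{def:Psit}), and replacing the twirl $(U\ot\ovU^{\ot t})(\cdot)(U\ot\ovU^{\ot t})^*$ by $(U\ot U^{\ot t})(\cdot)(U\ot U^{\ot t})^*$, respectively by $(O\ot O^{\ot t})(\cdot)(O\ot O^{\ot t})^*$. First I would establish the analog of Lemma~\ref{lem:suppCLDUIt}: that $\rho_{AB[t]}\in\LDUI^{(t)}_n$ iff $\supp(\rho_{AB[t]})\subseteq\Phi^{(t)}_n$ iff invariance holds under the $n-1$ matrices $U_k$ with $\omega=e^{2\pi\bfi/(t+2)}$, and that $\rho_{AB[t]}\in\LDOI^{(t)}_n$ iff $\supp(\rho_{AB[t]})\subseteq\Psi^{(t)}_n$. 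The entrywise computation is as in \eqref{eq:genU}: writing $u=\diag(U)$, the twirl $(U\ot U^{\ot t})(\cdot)(U\ot U^{\ot t})^*$ multiplies the entry at $(i_0\ui,j_0\uj)$ by $u^{\alpha(i_0\ui)}\ou^{\alpha(j_0\uj)}$, which equals $1$ for all $u\in\T^n$ exactly when $\alpha(i_0\ui)=\alpha(j_0\uj)$; since $|\alpha(i_0\ui)|=|\alpha(j_0\uj)|=t+1$, any differing coordinate $k$ can be chosen in $[n-1]$, and $\omega^\ell\ne1$ for $0<|\ell|\le t+1$ forces off-pattern entries to vanish. For LDOI the multiplier under $(O\ot O^{\ot t})(\cdot)(O\ot O^{\ot t})^*$ is $o^{\alpha(i_0\ui j_0\uj)}\in\{\pm1\}$, equal to $+1$ for all $o\in\{\pm1\}^n$ precisely when $\alpha(i_0\ui j_0\uj)\equiv0\bmod 2$.

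Next I would prove the analog of the key Lemma~\ref{lem:WsPiCLDUIt}, namely that $\Pi_{\LDUI^{(t)}_n}$ and $\Pi_{\LDOI^{(t)}_n}$ preserve both $\Herm(\C^n\ot S^t(\C^n))$ and each $\MW^{(t)}_{n,s}$. Part (i) transfers verbatim: the projected matrix is supported on $\Phi^{(t)}_n$ (resp.\ $\Psi^{(t)}_n$), and membership in $\Phi^{(t)}_n$ (resp.\ $\Psi^{(t)}_n$) is unchanged under the column relabeling $\uj\mapsto\uj^\sigma$ ($\sigma\in\Perm_t$), since $\alpha(\cdot)$ depends only on multisets; thus the defining symmetry $W_{i_0\ui,j_0\uj}=W_{i_0\ui,j_0\uj^\sigma}$ survives projection exactly as in the CLDUI argument.

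Part (ii) is where I expect the only genuine bookkeeping, playing the role occupied by Lemma~\ref{lem:WsPiCLDUIt}(ii). By \eqref{eq:Au} one has $(U\ot U^{\ot t})W(U\ot U^{\ot t})^*=W\circ(uu^*)^{\ot(t+1)}$, and since partial transpose distributes over the Hadamard product (it merely permutes index blocks), $\big(W\circ(uu^*)^{\ot(t+1)}\big)^{T_{B[s]}}=W^{T_{B[s]}}\circ\big((uu^*)^{\ot(t+1)}\big)^{T_{B[s]}}$. The crucial point is that, using $(uu^*)^T=\ou\,\ou^*$, one gets $\big((uu^*)^{\ot(t+1)}\big)^{T_{B[s]}}=uu^*\ot(\ou\,\ou^*)^{\ot s}\ot(uu^*)^{\ot(t-s)}=vv^*$ with $v=u\ot\ou^{\ot s}\ot u^{\ot(t-s)}$, which is again rank-one positive semidefinite. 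Hence \eqref{eq:Au} rewrites the partial transpose as the congruence $\Diag(v)\,W^{T_{B[s]}}\,\Diag(v)^*$, and averaging over $\MDU_n$ gives $\big(\Pi_{\LDUI^{(t)}_n}(W)\big)^{T_{B[s]}}\succeq0$ whenever $W^{T_{B[s]}}\succeq0$. The LDOI case is even simpler: with $o\in\{\pm1\}^n$ real, $(oo^*)^{\ot(t+1)}$ is real symmetric and invariant under $T_{B[s]}$, so the partial transpose is the real orthogonal congruence $\Diag(o^{\ot(t+1)})\,W^{T_{B[s]}}\,\Diag(o^{\ot(t+1)})^*$, and the finite average over $\MDO_n$ preserves $\MW^{(t)}_{n,s}$. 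Apart from tracking which registers are conjugated versus transposed, there is no real obstacle here: the resulting Hadamard multiplier always remains a rank-one positive semidefinite factor, so positive semidefiniteness is preserved.

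Finally I would assemble the proof exactly as in Theorem~\ref{theo:CLDUItDPSCert}. Given a $\DPS^{(t)}$-certificate $\rho_{AB[t]}$ of $\rho_{AB}$, the projected matrix $\Pi_{\LDUI^{(t)}_n}(\rho_{AB[t]})$ (resp.\ $\Pi_{\LDOI^{(t)}_n}(\rho_{AB[t]})$) lies in $\Herm(\C^n\ot S^t(\C^n))$ and in every $\MW^{(t)}_{n,s}$ by the previous step, so conditions \eqref{eq:DPS1} and \eqref{eq:DPS2} hold. For the partial-trace condition \eqref{eq:DPS3}, I would use the single-register/multi-register correspondences $(ij,kl)\in\Phi_n\Leftrightarrow(ij\uh,kl\uh)\in\Phi^{(t)}_n$ and $(ij,kl)\in\Omega_n\cup\Phi_n\Leftrightarrow(ij\uh,kl\uh)\in\Psi^{(t)}_n$, which replace \eqref{eq:DiagOmegat}. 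These hold because appending a common suffix $\uh$ to both sequences cancels $\alpha(\uh)$ in the condition $\alpha(ij\uh)=\alpha(kl\uh)$, reducing it to $\{i,j\}=\{k,l\}$ (which is exactly $\Phi_n$), and cancels $2\alpha(\uh)\equiv0\bmod2$ in the condition $\alpha(ij\uh kl\uh)\equiv0\bmod2$, reducing it to $\alpha(ijkl)\equiv0\bmod2$ (a short case check that each symbol occurs an even number of times recovers exactly $\Omega_n\cup\Phi_n$). Consequently, tracing out the last $t-1$ $B$-registers of the projected certificate returns $\Pi_{\LDUI_n}(\rho_{AB})$ (resp.\ $\Pi_{\LDOI_n}(\rho_{AB})$), which completes the argument.
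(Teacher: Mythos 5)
Your proof is correct and takes essentially the same approach as the paper: the paper in fact omits the proof of this theorem, stating only that it is ``similar, thus omitted'' relative to Theorem~\ref{theo:CLDUItDPSCert}, and your proposal carries out precisely that intended substitution (patterns $\Phi^{(t)}_n$, $\Psi^{(t)}_n$ in place of $\Omega^{(t)}_n$; twirls $U\ot U^{\ot t}$, resp.\ $O\ot O^{\ot t}$, in place of $U\ot \ovU^{\ot t}$). All the steps --- the support characterization, preservation of $\Herm(\C^n\ot S^t(\C^n))$ and of each $\MW^{(t)}_{n,s}$ (with the correct rank-one Hadamard factor $vv^*$, $v=u\ot\ou^{\ot s}\ot u^{\ot(t-s)}$, after partial transposition), and the partial-trace compatibility via the analogs of \eqref{eq:DiagOmegat} --- are verified correctly.
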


%Like Theorem \ref{theo:CLDUItDPSCert},
Again, this theorem has an immediate counterpart on  the dual side for $(\DPSt_n)^*$: the analog of Corollary \ref{cor:CLDUIt-dual} holds replacing $\CLDUI$, respectively,  by $\LDUI$ or $\LDOI$. 

Also the analogs of Lemmas \ref{lem:Cliques-s=0}, \ref{lem:Cliques-s=1..t}  hold:
each of the graphs $([n]^{t+1}, \Psi^{(t)}_n)$ and $([n]^{t+1},\Phi^{(t)}_n)$ is a disjoint union of cliques and the same holds for the graphs attached to taking partial transposes. (The  LDUI case follows from the CLDUI case up to a transposition and the proof in the LDOI case is in fact even easier since it involves a parity argument). In summary, matrices in $\LDUI^{(t)}_n$ or $\LDOI^{(t)}_n$ and their partial transposes all have a block-diagonal structure.
 %Also the block diagonalization under all partial transpositions transfers in a straightforward way to $\LDUI^{(t)}_n$ and $\LDOI^{(t)}_n$.

%\begin{lemma}
%If $W\in\LDUI^{(t)}_n$ or $W\in\LDOI^{(t)}_n$, then  $W^{T_{B[s]}}$ is block-diagonal for each $0\le s\le t$.
%\end{lemma}

%The proof of Lemma \ref{lem:Cliques-s=1..t} actually simplifies in the LDOI case, since $\alpha(i_0\ui j_0\uj)\equiv0\text{ mod }2$ and $\alpha(i_0\ui k_0\uk)\equiv0\text{ mod }2$ immediatly implies, that $\alpha(j_0\uj k_0\uk)\equiv0\text{ mod }2$.

%$\textcolor{red}{Argument for LDUI and CLDUI being the same.}

%We now  illustrate the  gain in matrix size that can be achieved when exploiting the block-diagonal structure of  matrices in $\LDOI^{(t)}_t$. 
\medskip
We now give in Lemma \ref{lem:sizecliqueLDOI} an upper bound on the size of the blocks occurring in a matrix in $\LDOI^{(t)}_n$, by upper bounding the maximum size of  a clique in the graph $([n]^{t+1}, \Psi^{(t)}_n)$. Note that this bound also applies to the maximum clique size in the graph corresponding to taking  partial transposes, because the definition of edges in $\Psi^{(t)}_n$ is `global' in the sense that it depends only on the concatenated sequence $\alpha(i_0\ui j_0\uj)$. In addition, the  bound from Lemma \ref{lem:sizecliqueLDOI} also applies to the  graphs $([n]^{t+1}, \Phi^{(t)}_n)$, $G^{(t)}_n=([n]^{t+1}, \Omega^{(t)}_n)$ (and their analogs corresponding to taking partial transposes) that   model the sparsity patterns of matrices in $\LDUI^{(t)}_n$ and $\CLDUI^{(t)}_n$, because  they  are subgraphs of the graph $([n]^{t+1}, \Psi^{(t)}_n)$.

\begin{lemma}\label{lem:sizecliqueLDOI}
The maximum size of a clique in the graph $([n]^{t+1}, \Psi^{(t)}_n)$ is at most $t! n^{\lceil t/2\rceil}.$
\end{lemma}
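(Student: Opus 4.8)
The plan is to use the structural fact (already noted in the text) that $([n]^{t+1},\Psi_n^{(t)})$ is a disjoint union of cliques, to identify each clique explicitly, and then to bound its size by a two-step counting argument. The first observation is that $(i_0\ui,j_0\uj)\in\Psi_n^{(t)}$ means exactly that $\alpha(i_0\ui)$ and $\alpha(j_0\uj)$ have the same parity, since $\alpha(i_0\ui j_0\uj)=\alpha(i_0\ui)+\alpha(j_0\uj)$ is even in every coordinate iff the two summands agree modulo $2$. Having the same parity vector is an equivalence relation, so the maximal cliques are precisely the parity classes $V_p=\{a\in[n]^{t+1}:\alpha(a)\equiv p \bmod 2\}$ for $p\in\{0,1\}^n$, and it suffices to bound $\max_p|V_p|$.

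Next I would separate the count of a class into (multisets) $\times$ (arrangements). Fix $p$ and let $r=|p|$ be the number of odd coordinates; note $r\equiv t+1 \bmod 2$. A count vector $\alpha$ with $|\alpha|=t+1$ and $\alpha\equiv p \bmod 2$ is exactly one of the form $\alpha=p+2\beta$ with $\beta\in\N^n$, and the degree constraint forces $|\beta|=\tfrac{t+1-r}{2}\le\lceil t/2\rceil$. Thus the multisets occurring in $V_p$ are in bijection with the vectors $\beta$ of weight $\tfrac{t+1-r}{2}$, a set of size at most that of all multisets of size $\lceil t/2\rceil$ on $[n]$, hence at most $n^{\lceil t/2\rceil}$. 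This produces the factor $n^{\lceil t/2\rceil}$.

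For the arrangement factor the naive bound is $(t+1)!$ per multiset, and the point is to shave it to $t!$. Here I would use that, inside a parity class, the head symbol $i_0$ of a sequence is forced by its tail: from $\alpha(a)\equiv p$ we get $e_{i_0}\equiv p-\alpha(i_1\cdots i_t)\bmod 2$, so $a$ is recovered from $(i_1,\ldots,i_t)$ alone. Hence every element of $V_p$ is determined by a length-$t$ suffix, and for each admissible suffix-multiset there are at most $t!$ orderings; equivalently, rewriting $|V_p|=\sum_{\alpha}\binom{t+1}{\alpha}=\sum_{\gamma}\binom{t}{\gamma}$ as a sum over the size-$t$ suffix multisets $\gamma$ and bounding each multinomial $\binom{t}{\gamma}\le t!$. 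Combining the two estimates gives $|V_p|\le t!\,n^{\lceil t/2\rceil}$. Finally, since $\Phi_n^{(t)},\Omega_n^{(t)}\subseteq\Psi_n^{(t)}$ and the partial-transpose variants depend only on the same global parity of the concatenated index, the bound transfers to all the graphs in question.

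The step I expect to be most delicate is reconciling the two factors on the same parity class rather than optimizing them independently: small $r$ maximizes the number of multisets but forces many repetitions (few arrangements), while large $r$ gives essentially a single multiset but up to $(t+1)!$ arrangements, so the two bounds above are each tight only at opposite ends. Carrying the precise quantities — the binomial $\binom{n+b-1}{b}$ with $b=\tfrac{t+1-r}{2}$ for the multiset count and the repetition saving $\prod_k\alpha_k!\ge 2^{b}$ for the arrangements — and checking that their product is controlled by $t!\,n^{\lceil t/2\rceil}$ uniformly in $r$ is where the real work lies; I would verify monotonicity in $r$ so that the worst case is the smallest admissible $r$ (namely $r=0$ or $r=1$), which is exactly the value producing the exponent $\lceil t/2\rceil$.
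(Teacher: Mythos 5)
Your reduction of the problem to bounding the parity classes $V_p$ is correct (adjacency in $\Psi^{(t)}_n$ is exactly equality of count vectors modulo $2$, so the maximal cliques are the parity classes), and your multiset count is also correct: the count vectors occurring in $V_p$ are the $p+2\beta$ with $|\beta|=(t+1-r)/2\le\lceil t/2\rceil$, so there are at most $n^{\lceil t/2\rceil}$ of them. The gap is in the arrangement factor. Your suffix trick does give the identity $|V_p|=\sum_{\gamma}\binom{t}{\gamma}$, where $\gamma$ runs over the admissible suffix multisets of size $t$, and each term is at most $t!$; but the number of admissible $\gamma$ is \emph{not} the quantity you bounded in the first step. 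An admissible suffix multiset satisfies $\gamma\equiv p+e_{i_0}\pmod 2$ for \emph{some} $i_0\in[n]$, so the suffixes split into up to $n$ distinct congruence classes, and their total count exceeds $n^{\lceil t/2\rceil}$ in general: for $t=2$ and $p=e_1$ the admissible suffix multisets are $2e_k$ ($k\in[n]$) and $e_1+e_k$ ($k\ne 1$), i.e.\ $2n-1>n$ of them. Conversely, if you instead pair the full-multiset count with a per-multiset arrangement bound, the correct uniform bound is $\binom{t+1}{\alpha}\le (t+1)!$, not $t!$ (take $\alpha$ with all entries in $\{0,1\}$). So the two factors you want to multiply are never simultaneously valid, which is precisely the tension you flag in your last paragraph — and it cannot be resolved.

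It cannot be resolved because the stated bound is in fact false. For $t=2$ and any $n\ge 3$, the parity class of $p=e_1$ consists of $111$ together with, for each $k\ne 1$, the three arrangements of the multiset $\{1,k,k\}$; it is a clique of size $3n-2$, which exceeds $t!\,n^{\lceil t/2\rceil}=2n$ as soon as $n\ge 3$. (This is consistent with the paper's own Table \ref{tab:LDOIBlockSize}, which for $(t,n)=(2,3)$ already lists LDOI blocks of size $7>6$ in the \emph{smaller} moment formulation.) What your two-step argument does prove is the weaker bound $(t+1)!\,n^{\lceil t/2\rceil}$: at most $n^{\lceil t/2\rceil}$ multisets per class, at most $(t+1)!$ arrangements each. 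Note finally that the paper itself never proves this lemma: the proof is explicitly omitted, with a pointer to Lemma \ref{lem:LDOIBlockRatio}, which is a genuinely different statement in the moment framework, proved by fixing a reference index and counting admissible completions, and whose exponents (of the form $\lceil (t+|s'|)/4\rceil$) do not translate back into the tensor-framework bound claimed here.
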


We omit the proof    since, later in Lemma \ref{lem:LDOIBlockRatio} of Section \ref{sec:implementation},
we will prove a  similar statement adapted to the more economical moment framework. 
%present the proof of Lemma \ref{lem:LDOIBlockRatio} in section \ref{sec:implementation} which makes a similar statement and relies on the same proof techniques. 
By the above discussion, Corollary \ref{cor:sizeclique} below follows as a direct application of Lemma~\ref{lem:sizecliqueLDOI}.

\begin{corollary}\label{cor:sizeclique}
If  $W\in \LDOI^{(t)}_n$ then, for any  $0\le s\le t$, $W^{T_{B[s]}}$ is a block-diagonal matrix with all its blocks of size at most $t! n^{\lceil t/2\rceil}$.
The same holds if $W\in \LDUI^{(t)}_n$ or $W\in \CLDUI^{(t)}_n$.
\end{corollary}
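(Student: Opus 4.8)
The plan is to obtain the corollary as a bookkeeping consequence of the clique estimate in Lemma~\ref{lem:sizecliqueLDOI}, so the two things to organize are the block-diagonal reduction (including the partial transposes and the three invariance classes) and the clique count itself, which is the genuine content. First I would fix the block structure. For $W\in\LDOI^{(t)}_n$ we have $\supp(W)\subseteq\Psi^{(t)}_n$, and, exactly as announced for the analogs of Lemmas~\ref{lem:Cliques-s=0} and~\ref{lem:Cliques-s=1..t}, the graph $([n]^{t+1},\Psi^{(t)}_n)$ is a disjoint union of cliques; hence $W=\oplus_C W[C]$ over its maximal cliques. The crucial point for the partial transpose is that membership in $\Psi^{(t)}_n$ depends only on the parity of the \emph{combined} multiset $\alpha(i_0\ui\,j_0\uj)=\alpha(i_0\ui)+\alpha(j_0\uj)$, and the map $T_{B[s]}$ merely exchanges the $s$-prefixes of $\ui$ and $\uj$, leaving this combined multiset (hence its parity) unchanged. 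Therefore $\supp(W^{T_{B[s]}})\subseteq\Psi^{(t)}_n$ as well, so $W^{T_{B[s]}}$ is block-diagonal over the very same cliques. For $W\in\LDUI^{(t)}_n$ or $W\in\CLDUI^{(t)}_n$ the argument is identical once one recalls $\Phi^{(t)}_n,\Omega^{(t)}_n\subseteq\Psi^{(t)}_n$; the same multiset-invariance of $T_{B[s]}$ shows that the partial-transpose supports (e.g.\ $\Omega^{(t)}_{n,s}$) are again contained in $\Psi^{(t)}_n$. Consequently every block appearing in any $W^{T_{B[s]}}$, in all three classes, is a principal submatrix indexed by a set contained in a clique of $\Psi^{(t)}_n$, so it suffices to bound the maximum clique size of $\Psi^{(t)}_n$.

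For that clique bound (Lemma~\ref{lem:sizecliqueLDOI}) I would first make the cliques explicit: since $(u,v)\in\Psi^{(t)}_n$ iff $\alpha(u)\equiv\alpha(v)\pmod 2$, adjacency is the equivalence relation ``equal parity vector'', so each maximal clique is a parity class $\{\,i_0\ui:\alpha(i_0\ui)\equiv p\pmod 2\,\}$ for some $p\in\{0,1\}^n$. This reduces the task to bounding, over all admissible $p$, the number of words of length $t+1$ with parity $p$. The cleanest accounting passes to the moment (symmetry-reduced) description used later in Lemma~\ref{lem:LDOIBlockRatio}: a word $i_0\ui$ is recorded as the pair $(i_0,\{\ui\})$, which separates a factor of at most $t!$ (the orderings of the length-$t$ B-part) from the count of admissible B-multisets. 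Organizing such words by which symbols occur with multiplicity $\ge 2$ — of which a word of length $t+1$ can carry at most $\lceil t/2\rceil$ — the ``doubled'' symbols contribute the factor $n^{\lceil t/2\rceil}$, which is where the exponent comes from, while the remaining symbols are forced by the prescribed parity.

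The main obstacle is exactly this final count: a naive enumeration at the word level (summing the multinomials $\binom{t+1}{\alpha}$ over all count vectors $\alpha\equiv p\pmod 2$) is wasteful and does not give the clean constant, and one genuinely has to argue at the level of multisets to see that the doubled symbols cost only $n^{\lceil t/2\rceil}$ and the orderings only $t!$. I would therefore import the sharp estimate from Lemma~\ref{lem:LDOIBlockRatio} rather than redo it at the tensor level; once $\max_p(\text{clique size})\le t!\,n^{\lceil t/2\rceil}$ is available, combining it with the block-diagonal reduction of the first paragraph immediately yields the stated size bound for every $W^{T_{B[s]}}$ and for each of the classes $\LDOI^{(t)}_n$, $\LDUI^{(t)}_n$, $\CLDUI^{(t)}_n$.
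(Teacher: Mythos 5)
Your proposal follows the paper's own route essentially verbatim: the same block-diagonal reduction (the clique structure of $([n]^{t+1},\Psi^{(t)}_n)$, the observation that the parity of $\alpha(i_0\ui\,j_0\uj)$ is invariant under the prefix swap defining $T_{B[s]}$ so the partial transposes keep the same block pattern, and the containments $\Phi^{(t)}_n,\Omega^{(t)}_n\subseteq\Psi^{(t)}_n$ handling the LDUI/CLDUI cases), combined with the clique bound of Lemma~\ref{lem:sizecliqueLDOI}, whose proof the paper itself also omits and defers to the moment-framework count of Lemma~\ref{lem:LDOIBlockRatio}. Since you identify the maximal cliques as parity classes and likewise import the count from Lemma~\ref{lem:LDOIBlockRatio} rather than redoing it at the tensor level, your argument is essentially the paper's.
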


So, for $t=1$,  the maximum block size is $n$ for matrices in $\LDOI_n$, which matches  relation (\ref{eq:rhoXYZ-block}). 

\subsection{A class of CLDUI examples: exploiting additional structure in DPS certificates}

Here, we investigate a class of CLDUI bipartite quantum states (see Definition \ref{def:rhoaap}) and show {\em analytically} that   their separability is characterized by the DPS hierarchy at level $t=2$. We will use this characterization in Section \ref{sec:runtime} %\ref{sec:implementation}
 to generate test states for the PPT$^2$ conjecture. To be able to analytically analyze $\DPS^{(2)}$-certificates, one needs to exploit additional structural properties of the quantum state at hand (in order to  further reduce the block sizes and the number of variables). 
So, we begin with 
some results showing how additional properties of a quantum state $\rho_{AB}$ (like having a large kernel, or enjoying some symmetry property) are inherited by its $\DPS^{(t)}$-certificates $\rho_{AB[t]}$. These results are of independent interest and apply to general quantum states.% (without unitary diagonal invariance).

\subsubsection*{Properties of the kernel}
As we now see,  any vector in   the kernel  of a   state $\rho_{AB}$ can be used to construct vectors in the kernel of its $\DPS^{(t)}$-certificates. This relies on the partial trace property (\ref{eq:DPS3}) and positivity $\rho_{AB}\succeq 0$.

\begin{lemma}\label{lem:kernel-DPS}
Assume that $\rho_{AB}\in \DPS^{(t)}_n$ with $\rho_{AB[t]}$ as $\DPS^{(t)}$-certificate.
If $w\in \ker \rho_{AB}$, then $w\ot e_{k_2}\ot \ldots \ot e_{k_t}\in \ker \rho_{AB[t]}$ for all
$k_2,\ldots,k_t\in [n]$.
\end{lemma}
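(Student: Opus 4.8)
The plan is to combine the partial-trace constraint (\ref{eq:DPS3}) with the positivity $\rho_{AB[t]}\succeq 0$ (condition (\ref{eq:DPS2}) at $s=0$), via the adjoint relation between the partial trace and tensoring with the identity. Throughout, write $v_{\uk}=w\ot e_{k_2}\ot\cdots\ot e_{k_t}\in\C^n\ot(\C^n)^{\ot t}$ for $\uk=(k_2,\ldots,k_t)\in[n]^{t-1}$, so that $w$ occupies the A-register together with the first B-register, while the remaining $t-1$ B-registers carry the unit vectors $e_{k_2},\ldots,e_{k_t}$.

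First I would record the adjoint property of the partial trace, which follows directly from its definition (\ref{eq:partialtrace}): for any $A\in\Herm(\C^n\ot\C^n)$ acting on the A-register and the first B-register, one has $\langle \Tr_{B[2:t]}(\rho_{AB[t]}),A\rangle=\langle \rho_{AB[t]}, A\ot I_n^{\ot(t-1)}\rangle$. Applying this with $A=ww^*$ and using (\ref{eq:DPS3}) gives
\begin{align*}
w^*\rho_{AB}w=\langle \rho_{AB},ww^*\rangle=\langle \Tr_{B[2:t]}(\rho_{AB[t]}),ww^*\rangle=\langle \rho_{AB[t]}, ww^*\ot I_n^{\ot(t-1)}\rangle .
\end{align*}
Since $w\in\ker\rho_{AB}$ and $\rho_{AB}\succeq 0$ (being the partial trace of the positive semidefinite matrix $\rho_{AB[t]}$), the left-hand side vanishes.

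Next I would expand $I_n^{\ot(t-1)}=\sum_{\uk\in[n]^{t-1}}e_{k_2}e_{k_2}^*\ot\cdots\ot e_{k_t}e_{k_t}^*$, which yields
\begin{align*}
0=\langle \rho_{AB[t]}, ww^*\ot I_n^{\ot(t-1)}\rangle=\sum_{\uk\in[n]^{t-1}} v_{\uk}^*\,\rho_{AB[t]}\,v_{\uk}.
\end{align*}
Each summand $v_{\uk}^*\rho_{AB[t]}v_{\uk}$ is nonnegative because $\rho_{AB[t]}\succeq 0$, so a sum of nonnegative terms equal to $0$ forces every term to vanish: $v_{\uk}^*\rho_{AB[t]}v_{\uk}=0$ for all $\uk$. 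Finally, for a positive semidefinite matrix $M$ one has $v^*Mv=0\Longleftrightarrow Mv=0$ (writing $M=L^*L$ gives $\|Lv\|^2=0$, hence $Lv=0$ and $Mv=L^*Lv=0$), so I conclude $\rho_{AB[t]}v_{\uk}=0$, i.e.\ $v_{\uk}=w\ot e_{k_2}\ot\cdots\ot e_{k_t}\in\ker\rho_{AB[t]}$ for all $k_2,\ldots,k_t\in[n]$, as desired.

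There is no serious obstacle here; the only point requiring care is the bookkeeping of which tensor factors carry $w$ versus the $e_{k_j}$, and applying the adjoint relation with the registers matched correctly. Note that the permutation invariance (\ref{eq:DPS1}) is not needed for this particular form of the kernel vectors, although it would additionally permit moving $w$ into any chosen pair of registers.
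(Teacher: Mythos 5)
Your proof is correct and follows essentially the same route as the paper's: apply the partial-trace/adjoint identity with $A=ww^*$, expand $I_n^{\ot(t-1)}$ into rank-one projectors, and use positive semidefiniteness of $\rho_{AB[t]}$ to force each summand, and hence each vector $w\ot e_{k_2}\ot\cdots\ot e_{k_t}$, into the kernel. The only (harmless) cosmetic differences are that you spell out the standard fact $v^*Mv=0\Rightarrow Mv=0$ for $M\succeq 0$, and you invoke $\rho_{AB}\succeq 0$ where $\rho_{AB}w=0$ alone already suffices.
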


\begin{proof}
Since $w\in \ker \rho_{AB}$, we have $\rho_{AB}w=0$, and thus
\begin{align*}
 0=\langle \rho_{AB},ww^*\rangle & = \langle \Tr_{B[2:t]}(\rho_{AB[t]}), ww^*\rangle
=\langle \rho_{AB[t]}, ww^*\ot I_n^{\ot (t-1)}\rangle\\
& =\sum_{k_2,\ldots,k_t=1}^n \langle \rho_{AB[t]}, ww^*\ot e_{k_2}e_{k_2}^* \ot\ldots\ot e_{k_t}e_{k_t}^*\rangle\\
&= \sum_{k_2,\ldots,k_t=1}^n (w\ot e_{k_1}\ot\ldots\ot e_{k_t})^* \rho_{AB[t]} (w\ot e_{k_1}\ot\ldots\ot e_{k_t}).
\end{align*}
As $\rho_{AB[t]}\succeq 0$, each summand is nonnegative and thus equal to 0. From this one obtains that    $0=\rho_{AB[t]} (w \ot e_{k_2}\ot \ldots \ot e_{k_t})$, as desired.
\qed\end{proof}

\subsubsection*{Symmetry properties}

We now turn to exploiting symmetries of the initial quantum state for its $\DPS^{(t)}$-certificates.

First, let us introduce how the permutation group $\Perm_n$ acts on $[n]^t$ and   on $ \Herm((\C^n)^t$. 
Any permutation $\theta \in \Perm_n$ has an entrywise action on $[n]^t$, by setting
$\theta(\ui)=\theta(i_1)\ldots\theta(i_t)$ for $\ui=i_1\ldots i_t\in [n]^t$.
 In turn, $\theta$ acts on  $\rho\in \Herm((\C^n)^t$, by permuting its rows and columns according to the action of $\theta$ on $[n]^t$, thus producing  $\theta(\rho)\in \Herm((\C^n)^\ot t)$, with entries
 $$\theta(\rho)_{i_0\ui,j_0\uj}= \rho_{\theta(i_0) \theta(\ui),\theta(j_0)\theta(\uj)} \text{ for }
 i_0\ui,j_0\uj\in [n]^{t+1}.
 $$
 %  $$\theta(\rho)_{i_0i_1\dots i_t,j_0j_1\dots j_t}=\rho_{\theta(i_0)\theta(i_1)\dots \theta(i_t),\theta(j_0)\theta(j_1)\dots \theta(j_t)} \text{ for } i_0,j_0,\ldots,i_t,j_t\in [n].$$

\begin{remark}
%Let us compare with the action of  $\sigma\in \Perm_t$ that was earlier considered. 
So, a permutation $\theta\in\Perm_n$ acts on the values $i_k\in [n]$ and {\em not} on their indices  $k\in [t]$, as  was the case earlier, when a permutation $\sigma\in \Perm_{t}$ would produce 
$ \ui^\sigma=i_{\sigma(1)}\ldots i_{\sigma(t)}\in [n]^t$ and $\rho^\sigma\in\Herm((\C^n)^{\ot t})$ with entries
$\rho^\sigma_{\ui,\uj}=\rho_{\ui^\sigma,\uj^\sigma}$
%$ \rho^\sigma_{i_1\dots i_t,j_1\dots j_t} =
%\rho_{i_{\sigma(1)},\ldots,i_{\sigma(t)}, j_{\sigma(1)},\ldots, j_{\sigma(t)}}$ 
for $\ui,\uj\in[n]^t$. We will use the  following observation (that follows using the definitions): for a sequence $\ui=i_1\ldots i_t\in [n]^t$,
\begin{align}\label{eq:theta-sigma}
\theta(\ui^\sigma) =\theta(i_{\sigma(1)})\ldots\theta(i_{\sigma(t)})= (\theta(i_1)\ldots\theta(i_t))^\sigma=\theta(\ui)^\sigma \text{ for any } \theta\in\Perm_n, \sigma\in \Perm_t.
\end{align}
\end{remark}

%Note that, in contrast to earlier definitions, the permutation group acts on the value of $i_k$ and not on the index $k$. 
%In other words, $\theta\in \Perm_n$ induces a permutation of $[n]^{t+1}$, again denoted $\theta$, by 
%In other words, $\rho^\theta$ corresponds to permuting the rows and columns of $\rho$ at the same time according to the permutation of $[n]^{t+1}$:  $(i_0,i_1,\dots,i_t)\in [n]^{t+1} \mapsto(\theta(i_0),\theta(i_1),\dots,\theta(i_t))\in [n]^{t+1}$, again denoted as $\theta$ for simple notation.
%.\begin{align*}
 %   \theta^{(t)}:[n]^{t+1}&\to[n]^{t+1}\\
%    (i_0,i_1,\dots,i_t)&\mapsto(\theta(i_0),\theta(i_1),\dots,\theta(i_t)).
%\end{align*}
%We will also   write $\theta(i_0\ui)$ or $\theta(i_0)\theta(\ui)$ for $\theta((i_0,i_1,\dots,i_t))$.

\begin{lemma}\label{LThetaAppliedDPSLCertificate}
 Assume $\rho_{AB}\in \DPS^{(t)}_n$ with $\rho_{AB[t]}$ as $\DPS^{(t)}$-certificate.
 %\footnote{\ML{It does not seem that the condition $\rho_{AB}=\theta(\rho_{AB})$ is needed here.}}
For any $\theta\in \Perm_n$,  $\theta(\rho_{AB[t]}) $ is a $\DPS^{(t)}$-certificate for $\theta(\rho_{AB})$.
%, which implies $\theta(\rho_{AB})\in\DPS^{(t)}_n$.
 %    Let $\theta\in Perm(n)$, $\rho_{AB}\in\DPS^{(t)}$ with certificate $\rho_{AB[t]}$ and $\rho_{AB}^\theta=\rho_{AB}$. Then $\rho_{AB[t]}^\theta$ is also a certificate. 
\end{lemma}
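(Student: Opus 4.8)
The plan is to verify directly that $\theta(\rho_{AB[t]})$ satisfies the three defining conditions (\ref{eq:DPS1}), (\ref{eq:DPS2}), (\ref{eq:DPS3}) of a $\DPSt$-certificate, now for the state $\theta(\rho_{AB})$. The organizing observation, which I would record first, is that the action of $\theta$ is simply conjugation by a unitary permutation matrix in tensor form. Indeed, if $P\in\{0,1\}^{n\times n}$ is the permutation matrix with $Pe_i=e_{\theta(i)}$ and we set $Q=P^{\ot(t+1)}$, then $Q$ is unitary (with $Qe_{j_0\uj}=e_{\theta(j_0)\theta(\uj)}$), and one checks from the definitions that $\theta(\rho)=Q^*\rho Q$ for every $\rho\in\Herm((\C^n)^{\ot(t+1)})$. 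Two consequences are immediate: $\theta(\rho_{AB[t]})$ is again Hermitian, and since conjugation by a unitary preserves positive semidefiniteness, $\theta$ maps PSD matrices to PSD matrices. The whole argument then reduces to showing that the \emph{value} permutation $\theta\in\Perm_n$ commutes with each of the \emph{register} operations occurring in (\ref{eq:DPS1})--(\ref{eq:DPS3}).

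For condition (\ref{eq:DPS1}), I would show that $\theta$ commutes with the action of any $\sigma\in\Perm_t$ permuting the $t$ B-registers. A one-line index computation gives
\[
(\theta(\rho_{AB[t]}))^\sigma_{i_0\ui,j_0\uj}=(\rho_{AB[t]})_{\theta(i_0)\theta(\ui^\sigma),\theta(j_0)\theta(\uj^\sigma)}=(\rho_{AB[t]})^\sigma_{\theta(i_0)\theta(\ui),\theta(j_0)\theta(\uj)},
\]
where the middle equality is exactly the commutation rule $\theta(\uj^\sigma)=\theta(\uj)^\sigma$ from (\ref{eq:theta-sigma}). Combining this with $(\rho_{AB[t]})^\sigma=\rho_{AB[t]}$ yields $(\theta(\rho_{AB[t]}))^\sigma=\theta(\rho_{AB[t]})$ for all $\sigma\in\Perm_t$, i.e. $\theta(\rho_{AB[t]})\in\Herm(\C^n\ot S^t(\C^n))$.

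For condition (\ref{eq:DPS2}), the key point is that $\theta$ commutes with each partial transpose $T_{B[s]}$. Writing out both sides from the entrywise definition (\ref{eq:partialtranspose}) and using that $\theta$ acts entrywise and identically on every register, one finds that $(\theta(\rho_{AB[t]}))^{T_{B[s]}}$ and $\theta(\rho_{AB[t]}^{T_{B[s]}})$ have the same entries, so they are equal. Since $\rho_{AB[t]}\in\MW^{(t)}_{n,s}$ gives $\rho_{AB[t]}^{T_{B[s]}}\succeq0$ and $\theta(\cdot)=Q^*(\cdot)Q$ preserves PSD, we conclude $(\theta(\rho_{AB[t]}))^{T_{B[s]}}=\theta(\rho_{AB[t]}^{T_{B[s]}})\succeq0$ for every $s=0,\ldots,t$. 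Finally, for condition (\ref{eq:DPS3}) I would compute directly from (\ref{eq:partialtrace}) with $s=t-1$: the defining sum $\sum_{\uk\in[n]^{t-1}}\theta(\rho_{AB[t]})_{i_0i_1\uk,j_0j_1\uk}=\sum_{\uk}(\rho_{AB[t]})_{\theta(i_0)\theta(i_1)\theta(\uk),\theta(j_0)\theta(j_1)\theta(\uk)}$ becomes, after the change of summation variable $\uk\mapsto\theta(\uk)$ (valid since $\theta$ is a bijection of $[n]^{t-1}$), the entry $(\Tr_{B[2:t]}(\rho_{AB[t]}))_{\theta(i_0)\theta(i_1),\theta(j_0)\theta(j_1)}$. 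Since $\Tr_{B[2:t]}(\rho_{AB[t]})=\rho_{AB}$, this equals $(\rho_{AB})_{\theta(i_0)\theta(i_1),\theta(j_0)\theta(j_1)}=\theta(\rho_{AB})_{i_0i_1,j_0j_1}$, giving $\Tr_{B[2:t]}(\theta(\rho_{AB[t]}))=\theta(\rho_{AB})$.

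There is no deep obstacle here: once the conjugation picture $\theta(\cdot)=Q^*(\cdot)Q$ is in place, the statement is ``soft''. The only real care needed is index bookkeeping---keeping the convention (permutation of \emph{values} in $[n]$ versus permutation of \emph{register positions} in $[t]$) consistent throughout, and getting the shuffles right. The step most prone to error is the commutation $\theta\circ T_{B[s]}=T_{B[s]}\circ\theta$, so I would write that verification out carefully; the remaining steps are merely relabelling of summation indices together with the elementary fact that unitary conjugation preserves positive semidefiniteness.
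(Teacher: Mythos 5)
Your proof is correct and follows essentially the same route as the paper's: verify condition (\ref{eq:DPS1}) via the commutation rule (\ref{eq:theta-sigma}), condition (\ref{eq:DPS2}) by checking that $\theta$ commutes with each partial transpose $T_{B[s]}$, and condition (\ref{eq:DPS3}) by the change of summation variable $\uk\mapsto\theta(\uk)$ in the partial trace. Your only addition is making explicit the conjugation picture $\theta(\cdot)=Q^*(\cdot)Q$ with $Q=P^{\ot(t+1)}$, which the paper leaves implicit when it concludes that $\rho_{AB[t]}^{T_{B[s]}}\succeq 0$ implies $\theta\big(\rho_{AB[t]}^{T_{B[s]}}\big)\succeq 0$; this is a harmless (indeed clarifying) refinement, not a different argument.
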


\begin{proof}
  By assumption,   $\rho_{AB[t]}$ satisfies (\ref{eq:DPS1}),  (\ref{eq:DPS2}), (\ref{eq:DPS3}), we show that the same holds for $\theta(\rho_{AB[t]})$.
First, we show (\ref{eq:DPS1}). For this,    let $\sigma\in \Perm_t$ and $i_0\ui,j_0\uj\in [n]^{t+1}$. Then, we have
    \begin{align*}
        \theta(\rho_{AB[t]})_{i_0\ui^\sigma,j_0\uj^\sigma}&=(\rho_{AB[t]})_{\theta(i_0)\theta(\ui^\sigma),\theta(j_0)\theta(\uj^\sigma)}
        =(\rho_{AB[t]})_{\theta(i_0)(\theta(\ui))^\sigma,\theta(j_0)\theta(\uj)^\sigma}\\
        &=(\rho_{AB[t]})_{\theta(i_0)\theta(\ui),\theta(j_0)\theta(\uj)}
        =\theta(\rho_{AB[t]})_{i_0\ui,j_0\uj},
    \end{align*}
using (\ref{eq:theta-sigma}) at the first line and the fact that $\rho_{AB[t]}$ satisfies \eqref{eq:DPS1} at the second line. Hence, $\theta(\rho_{AB[t]})$ also satisfies \eqref{eq:DPS1}.

    Condition \eqref{eq:DPS2} follows from the fact that the operations of applying $\theta$ and taking the partial transpose commute:
  %  \begin{align*}
       $ \left(\theta(\rho_{AB[t]})\right)^{T_{B[s]}}=\theta\big(\rho_{AB[t]}^{T_{B[s]}}\big),$
%    \end{align*}
 which can be checked by computing
    \begin{align*}
    &    \big( \left(\theta(\rho_{AB[t]})\right)^{T_{B[s]}}\big)_{i_0\ui,j_0\uj} =
        \left(\theta(\rho_{AB[t]})\right)_{i_0j_1\dots j_si_{s+1}\dots s_t,j_0i_1\dots i_sj_{s+1}\dots j_t}\\
        &=(\rho_{AB[t]})_{\theta(i_0j_1\dots j_si_{s+1}\dots s_t),\theta(j_0i_1\dots i_sj_{s+1}\dots j_t)}
        =\big(\rho_{AB[t]}^{T_{B[s]}}\big)_{\theta(i_0\ui),\theta(j_0\uj)}\\
       & =\big(\theta\big(\rho_{AB[t]}^{T_{B[s]}}\big)\big)_{i_0\ui,j_0\uj}.
    \end{align*}
 Hence, $\rho_{AB[t]}^{T_{B[s]}}\succeq 0$ implies $\theta(\rho_{AB[t]}^{T_{B[s]}})\succeq 0$ and thus
  $(\theta(\rho_{AB[t]}))^{T_{B[s]}}\succeq 0$, as desired. 
      
       Finally, condition \eqref{eq:DPS3} follows from the fact that the operations of applying $\theta$ and taking the partial trace also commute, which 
       can be checked   by computing
    \begin{align*}
   &     \big(\Tr_{B[2:t]}\big(\theta(\rho_{AB[t]})\big)\big)_{i_0i_1,j_0j_1}
%        &=\sum_{k_2,\dots,k_t=1}^{n}\big(\theta(\rho_{AB[t]})\big)_{i_0i_1k_2\dots k_t,j_0j_1k_2\dots k_t}\\
  =\sum_{\uk\in [n]^{t-1}} \big(\theta(\rho_{AB[t]})\big)_{i_0i_1\uk, j_0j_1\uk}\\
 %       &=\sum_{k_2,\dots,k_t=1}^{n}\big(\rho_{AB[t]}\big)_{\theta(i_0i_1k_2\dots k_t),\theta(j_0j_1k_2\dots k_t)}\\
  &= \sum_{\uk\in [n]^{t-1}} \big(\rho_{AB[t]}\big)_{\theta(i_0i_1)\theta(\uk),\theta(j_0j_1)\theta(\uk)}
  %       &=\sum_{h_2,\dots,h_t=1}^{n}\big(\rho_{AB[t]}\big)_{\theta(i_0i_1)h_2\dots h_t,\theta(j_0j_1)h_2\dots h_t}\\
  =\sum_{\uh\in [n]^{t-1}} \big(\rho_{AB[t]}\big)_{\theta(i_0i_1)\uh, \theta(j_0j_1)\uh} \\
    &      =\Tr_{r-2}(\rho_{AB[t]})_{\theta(i_0i_1),\theta(j_0j_1)}=(\rho_{AB})_{\theta(i_0i_1),\theta(j_0j_1)} 
           =\theta(\rho_{AB})_{i_0i_1,j_0j_1}, %=(\rho_{AB})_{i_0i_1,j_0j_1}
    \end{align*}
using the fact that $\rho_{AB[t]}$ satisfies (\ref{eq:DPS3}) at the last but one equality.\qed \end{proof}

\begin{corollary}\label{CThetaInvDPSCert}
 Assume $\rho_{AB}\in\DPS^{(t)}$ and $\theta(\rho_{AB})=\rho_{AB}$ for some $\theta\in\Perm_t$. 
 Then,  $\rho_{AB}$ has  a $\DPS^{(t)}$-certificate $\rho_{AB[t]}$ that satisfies 
  $\theta(\rho_{AB[t]})=\rho_{AB[t]}$.
\end{corollary}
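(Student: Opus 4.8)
The plan is to deduce this corollary directly from Lemma~\ref{LThetaAppliedDPSLCertificate} by a symmetrization (group-averaging) argument. The key observation is that the map $\rho \mapsto \theta(\rho)$ is linear, preserves Hermiticity, and by Lemma~\ref{LThetaAppliedDPSLCertificate} sends any $\DPS^{(t)}$-certificate for $\rho_{AB}$ to a $\DPS^{(t)}$-certificate for $\theta(\rho_{AB})$. Since $\theta(\rho_{AB})=\rho_{AB}$ by hypothesis, applying $\theta$ to a certificate for $\rho_{AB}$ yields \emph{again} a certificate for the \emph{same} state $\rho_{AB}$. So if I can average certificates over the cyclic group generated by $\theta$, the resulting averaged certificate will be both a valid certificate and $\theta$-invariant.

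Concretely, first I would let $\rho_{AB[t]}$ be any $\DPS^{(t)}$-certificate for $\rho_{AB}$, which exists since $\rho_{AB}\in\DPS^{(t)}_n$. Let $r$ denote the order of $\theta$ in $\Perm_n$ (so $\theta^r=\mathrm{id}$), and define the averaged matrix
\begin{align*}
\widehat{\rho_{AB[t]}}=\frac{1}{r}\sum_{k=0}^{r-1}\theta^k(\rho_{AB[t]}).
\end{align*}
By Lemma~\ref{LThetaAppliedDPSLCertificate} applied iteratively, each $\theta^k(\rho_{AB[t]})$ is a $\DPS^{(t)}$-certificate for $\theta^k(\rho_{AB})=\rho_{AB}$ (the equality holds because $\theta(\rho_{AB})=\rho_{AB}$ implies $\theta^k(\rho_{AB})=\rho_{AB}$ for all $k$).

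Next I would check that the defining conditions of a $\DPS^{(t)}$-certificate, namely (\ref{eq:DPS1}), (\ref{eq:DPS2}), (\ref{eq:DPS3}), are all preserved under taking convex (here uniform) combinations. Condition (\ref{eq:DPS1}), being a linear constraint $\rho_{AB[t]}(I_n\ot\Pi_t)=\rho_{AB[t]}$, is clearly closed under linear combinations. Condition (\ref{eq:DPS2}), requiring $\rho_{AB[t]}^{T_{B[s]}}\succeq 0$ for each $s$, is preserved because the partial transpose is linear and the positive semidefinite cone is convex, so a sum of matrices with $\rho^{T_{B[s]}}\succeq 0$ again satisfies this. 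Condition (\ref{eq:DPS3}), being the linear constraint $\Tr_{B[2:t]}(\rho_{AB[t]})=\rho_{AB}$, is preserved since each summand traces out to $\rho_{AB}$ and the average of $r$ copies of $\rho_{AB}$ is again $\rho_{AB}$. Hence $\widehat{\rho_{AB[t]}}$ is a genuine $\DPS^{(t)}$-certificate for $\rho_{AB}$.

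Finally I would verify $\theta$-invariance. Applying $\theta$ to the average and using that $\theta(\cdot)$ is linear gives
\begin{align*}
\theta(\widehat{\rho_{AB[t]}})=\frac{1}{r}\sum_{k=0}^{r-1}\theta^{k+1}(\rho_{AB[t]})=\frac{1}{r}\sum_{k=0}^{r-1}\theta^k(\rho_{AB[t]})=\widehat{\rho_{AB[t]}},
\end{align*}
where the middle equality is a reindexing of the cyclic sum using $\theta^r=\mathrm{id}$. This establishes that $\widehat{\rho_{AB[t]}}$ is the desired $\theta$-invariant certificate. I anticipate no serious obstacle here: the only points needing care are confirming that all three DPS conditions are genuinely linear/convex (so that averaging preserves them) and that one averages over the finite cyclic group $\langle\theta\rangle$ rather than all of $\Perm_n$, since the hypothesis only gives invariance under the single permutation $\theta$.
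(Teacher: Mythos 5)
Your proposal is correct and follows essentially the same route as the paper: average the certificate $\rho_{AB[t]}$ over the finite cyclic group generated by $\theta$, invoking Lemma~\ref{LThetaAppliedDPSLCertificate} to see that each $\theta^k(\rho_{AB[t]})$ certifies $\theta^k(\rho_{AB})=\rho_{AB}$, and convexity of the set of $\DPS^{(t)}$-certificates to see that the average is again a certificate, which is $\theta$-invariant by reindexing the sum. The only difference is cosmetic: you explicitly verify that conditions (\ref{eq:DPS1})--(\ref{eq:DPS3}) are linear or convex, a point the paper states without elaboration.
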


\begin{proof}
%    First note, that given $\DPS^{(t)}$ certificates $\rho_{AB[t]}^{(1)},\dots,\rho_{AB[t]}^{(m)}$ for $\rho_{AB}$, then 
%    $$\rho_{AB[t]}:=\frac{1}{m}\sum_{k=1}^m\rho_{AB[t]}^{(k)}$$
 %   is also a $\DPS^{(t)}$ certificate for $\rho_{AB}$. 
% We use Lemma  \ref{LThetaAppliedDPSLCertificate}, combined with the fact that the set of $\DPS^{(t)}$-certificates for   $\rho_{AB}$ is a convex set, to define a   $\DPS^{(t)}$-certificate that is invariant under $\theta$. For this, l
Let $\rho_{AB[t]}$ be a $\DPS^{(t)}$-certificate for  $\rho_{AB}\in\DPS^{(t)}$. 
 Then, for any integer $k\ge 1$, $\theta^k(\rho_{AB[t]})$ is a $\DPS^{(t)}$-certificate for $\theta^k(\rho_{AB})=\rho_{AB}$, which follows using Lemma \ref{LThetaAppliedDPSLCertificate} and the fact that $\theta(\rho_{AB})=\rho_{AB}$. Let $m\ge 1$ be an integer such that $\theta^m=id$. Since the set of $\DPS^{(t)}$-certificates for   $\rho_{AB}$ is a convex set,
 one finds that  
    $\widetilde\rho_{AB[t]}:=\frac{1}{m}\sum_{k=1}^m\theta^k(\rho_{AB[t]})$
is a $\DPS^{(t)}$-certificate for $\rho_{AB}$. Moreover, it satisfies 
$\theta(\widetilde\rho_{AB[t]})= \widetilde\rho_{AB[t]}$.
%,   which concludes the proof.
%
%Lemma \ref{LThetaAppliedDPSLCertificate} implies, that every summand of $\tilde\rho_{AB[t]}$ is a $\DPS^{(t)}$ certificate, which gives with the above observation, that $\tilde\rho_{AB[t]}$ is a $\DPS^{(t)}$ certificate for $\rho_{AB}$. Finally, $\tilde\rho_{AB[t]}$ is clearly $\theta$ invariant, since apllying $\theta$ amounts to reordering the summation.
\qed\end{proof}

\subsubsection*{A class of CLDUI bipartite quantum states}\label{sec:example}

We  consider the   CLDUI bipartite states $\rho_{a,a'}$ ($a,a'\ge 0$) in Definition \ref{def:rhoaap},   generalizing  the case  $a'=1/a$ ($a>0$) considered in  \cite[Example 3.3]{JML-PCP}.
%,  where it shown that  $\rho_{a,1/a}$ is separable if and only if $a=1$. 
We  show 
 that $\rho_{a,a'}$ is separable if and only if it belongs to $\DPS^{(2)}_3$,  or, equivalently,  $aa'\ge 1$   (Theorem \ref{theo:rhoaa}). Analytically characterizing membership in $\DPS^{(2)}_3$ is the most technical part, 
 which exploits knowing the maximal cliques of the graph $G^{(2)}_3$ (Table \ref{table0}) and  structural properties of the state $\rho_{a,a'}$ (on symmetry and  its kernel).
We will use this class of examples later in Section~\ref{sec:runtime}.

\begin{definition}\label{def:rhoaap}
For $a,a'\in\R_+$, define the quantum state $\rho_{a,a'}\in\CLDUI_3$ as
  %  \begin{align*}
$$        \rho_{a,a'}=
  \left(  \begin{array}{ccc|ccc|ccc}
              1 & \cdot & \cdot & \cdot & 1 & \cdot & \cdot & \cdot & 1\\
          \cdot & a & \cdot & \cdot & \cdot & \cdot & \cdot & \cdot & \cdot\\
          \cdot & \cdot & a' & \cdot & \cdot & \cdot & \cdot & \cdot & \cdot\\
          \hline 
          \cdot & \cdot & \cdot & a' & \cdot & \cdot & \cdot & \cdot & \cdot\\
          1 & \cdot & \cdot & \cdot & 1 & \cdot & \cdot & \cdot & 1\\
          \cdot & \cdot & \cdot & \cdot & \cdot & a & \cdot & \cdot & \cdot\\
          \hline
          \cdot & \cdot & \cdot & \cdot & \cdot & \cdot & a & \cdot & \cdot\\
          \cdot & \cdot & \cdot & \cdot & \cdot & \cdot & \cdot & a' & \cdot\\
          1 & \cdot & \cdot & \cdot & 1 & \cdot & \cdot & \cdot & 1
     \end{array}
    \right)\ = \ \rho_{(X,Y)}  \text { with } 
     X=\begin{pmatrix}
            1&a&a'\\
            a'&1&a\\
            a&a'&1
        \end{pmatrix},
        Y=\begin{pmatrix}
            1&1&1\\
            1&1&1\\
            1&1&1
        \end{pmatrix},
            $$
%    \end{align*}
where $\rho_{a,a'}$ is indexed by $11,12,13,21,22,23,31,32,33$, $X=((\rho_{a,a'})_{ij,ij})_{i,j=1}^3$,  $Y=((\rho_{a,a'})_{ii,jj})_{i,j=1}^3$.
\end{definition}
%Example 1 in \cite{JML-PCP} is recovered by setting $a'=\frac{1}{a}$.

\begin{lemma}\label{lem:rhoaaDPS1}
    The matrix $\rho_{a,a'}$ belongs to $\DPS^{(1)}_3$ if and only if $a,a'\geq0$ and $a a'\geq1$.
\end{lemma}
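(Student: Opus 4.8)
The plan is to use the characterization of $\DPS^{(1)}_3$ as the PPT cone recorded just after \eqref{eq:DPS}: since no state extension occurs at level $t=1$, we have $\rho_{a,a'}\in\DPS^{(1)}_3$ if and only if both $\rho_{a,a'}\succeq 0$ and $\rho_{a,a'}^{T_B}\succeq 0$. As $\rho_{a,a'}=\rho_{(X,Y)}$ is CLDUI, I would exploit the block-diagonalization \eqref{eq:rhoXYZ-block} together with the partial-transpose rule \eqref{eq:rhoXYZ-block-PT} to reduce each of these two positivity tests to a handful of small blocks, and then read off the resulting scalar inequalities from the explicit entries of $X$ and $Y$.

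First I would treat $\rho_{a,a'}\succeq 0$. Writing $\rho_{(X,Y)}=\rho_{(X,Y,\Diag(d))}$ with $d=\diag(X)=(1,1,1)$, relation \eqref{eq:rhoXYZ-block} gives
\[
\rho_{(X,Y)}\simeq Y\oplus\bigoplus_{1\le i<j\le 3}\begin{pmatrix}X_{ij}&0\\0&X_{ji}\end{pmatrix},
\]
where the off-diagonal $Z$-entries vanish because $Z=\Diag(d)$ is diagonal. Here $Y=J_3\succeq 0$, so positive semidefiniteness reduces to the scalar conditions $X_{ij},X_{ji}\ge 0$ over all pairs $i<j$; reading off the entries of $X$, these collapse to $a,a'\ge 0$.

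Second I would treat the partial transpose. By \eqref{eq:rhoXYZ-block-PT} we have $\rho_{(X,Y)}^{T_B}=\rho_{(X,\cdot,Y)}=\rho_{(X,\Diag(d),Y)}$, whose block decomposition from \eqref{eq:rhoXYZ-block} reads
\[
\rho_{(X,Y)}^{T_B}\simeq \Diag(d)\oplus\bigoplus_{1\le i<j\le 3}\begin{pmatrix}X_{ij}&Y_{ij}\\Y_{ji}&X_{ji}\end{pmatrix}.
\]
Since $\Diag(d)=I_3\succeq 0$ and $Y_{ij}=1$ throughout, positive semidefiniteness amounts to requiring each $2\times 2$ block $\left(\begin{smallmatrix}X_{ij}&1\\1&X_{ji}\end{smallmatrix}\right)$ to be positive semidefinite, i.e.\ $X_{ij}X_{ji}\ge 1$ (together with $X_{ij},X_{ji}\ge 0$, already secured above). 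By the cyclic structure of $X$, the product $X_{ij}X_{ji}$ equals $aa'$ for each of the pairs $(1,2),(1,3),(2,3)$, so all three determinant conditions collapse to the single inequality $aa'\ge 1$.

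Combining the two parts yields $\rho_{a,a'}\in\DPS^{(1)}_3$ if and only if $a,a'\ge 0$ and $aa'\ge 1$, as claimed. I do not anticipate a genuine obstacle here, as the argument is purely a block-size reduction followed by elementary $2\times 2$ positivity tests; the only points demanding care are correctly tracking which matrix plays the role of $Z$ before and after taking the partial transpose, and verifying that the three off-diagonal determinant conditions indeed all reduce to $aa'\ge 1$.
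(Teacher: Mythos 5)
Your proof is correct and follows essentially the same route as the paper's: both use the fact that $\DPS^{(1)}_3$ is the PPT cone and then read off positivity of $\rho_{a,a'}$ and $\rho_{a,a'}^{T_B}$ from the block decomposition \eqref{eq:rhoXYZ-block}--\eqref{eq:rhoXYZ-block-PT}, arriving at $a,a'\ge 0$ and the $2\times 2$ block $\bigl(\begin{smallmatrix} a & 1\\ 1 & a'\end{smallmatrix}\bigr)\succeq 0$, i.e., $aa'\ge 1$. The paper's proof is just a terser version of yours; your only addition is spelling out explicitly that all three off-diagonal blocks give the same determinant condition $aa'\ge 1$, which the paper leaves implicit.
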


\begin{proof}
  %  Let $\rho_{a,a'}\in\DPS^{(1)}_3$. Then, $a,a'\geq0$ follows from $\rho_{a,a'}$ being positive semidefinite. The only other condition for $\rho_{a,a'}\in\DPS^{(1)}$ is, that the partial transpose of $\rho_{a,a'}$ is positive semidefinite. This is equivalent to $aa'-1\geq0$. The other direction follows immediately from the same arguments.
 By definition,  $\rho_{a,a'}\in \DPS^{(1)}_3$ if and only if $\rho_{a,a'}\succeq 0$ and $\rho_{a,a'}^{T_B}\succeq 0$. This is equivalent to $a,a'\ge 0$ and the block {\tiny $\begin{pmatrix} a&1\cr 1 &a'\end{pmatrix}$} being positive semidefinite, i.e., $aa'\ge 1$.
\qed\end{proof}

\begin{lemma}\label{LExBlocksAreScalar}
Assume that $\rho_{a,a'}\in\DPS^{(2)}_3$. Then, $\rho_{a,a'}$ has  a $\DPS^{(2)}$-certificate $\rho$ with the following properties: $\rho$ is block-diagonal and each of its blocks is a scalar multiple of the all-ones matrix.
%, for every block, there exists one scalar such that every entry of that block is equal to that scalar. 
\end{lemma}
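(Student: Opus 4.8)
The plan is to build the required certificate not by direct construction but by showing that \emph{any} CLDUI-structured $\DPS^{(2)}$-certificate of $\rho_{a,a'}$ is already forced into the desired shape by three structural features: the block-diagonal CLDUI structure, the Bose symmetry in the B-registers, and kernel constraints inherited from $\rho_{a,a'}$. First I would invoke Theorem~\ref{theo:CLDUItDPSCert}: since $\rho_{a,a'}\in\CLDUI_3\cap\DPS^{(2)}_3$, it has a $\DPS^{(2)}$-certificate $\rho\in\CLDUI^{(2)}_3$. By Lemma~\ref{lem:Cliques-s=0}, $\rho$ is then block-diagonal, with one block per maximal clique of $G^{(2)}_3$; these are precisely the three $5\times5$, three $2\times2$ and six $1\times1$ index sets listed in Table~\ref{table0}. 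This yields block-diagonality for free, so the remaining task is to prove that each block is a scalar multiple of the all-ones matrix.

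Next I would exploit that, being a $\DPS^{(2)}$-certificate, $\rho$ satisfies the permutation-invariance (\ref{eq:DPS1}) in the two B-registers; since $\rho$ is Hermitian this invariance holds separately on the row and the column B-indices, so $\rho_{i_0i_1i_2,\cdot}=\rho_{i_0i_2i_1,\cdot}$ and likewise for columns. For each $2\times2$ clique $\{123,132\}$, $\{213,231\}$, $\{312,321\}$ the two indices are interchanged by this B-register transposition (e.g.\ $123\mapsto132$), so the two rows, and the two columns, of the block coincide; a Hermitian $2\times2$ matrix with equal rows and equal columns has all four entries equal to a single real scalar, nonnegative by positive semidefiniteness, hence equals $c\,J_2$. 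The $1\times1$ blocks are trivially scalar multiples of $J_1$. So Bose symmetry alone disposes of all blocks of size $\le 2$.

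The hard part will be the three $5\times5$ blocks, say $C_1=\{111,212,221,313,331\}$, where the B-register symmetry only identifies $212\sim221$ and $313\sim331$ and therefore merely collapses $\rho[C_1]$ to a $3\times3$ Hermitian ``core'' on the representatives $\{111,221,331\}$ (with duplicated rows and columns) — it does not by itself force rank one. To pin the core down I would use the kernel of $\rho_{a,a'}$. From $\rho_{a,a'}\in\DPS^{(2)}_3\subseteq\DPS^{(1)}_3$ and Lemma~\ref{lem:rhoaaDPS1} we get $a,a'>0$, so the block description (\ref{eq:rhoXYZ-block}) shows that $\ker\rho_{a,a'}=\mathrm{span}\{e_{ii}-e_{jj}\}$ (only the $J_3$-block on $\{11,22,33\}$ is singular). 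Writing $e_{iik}:=e_i\ot e_i\ot e_k$, Lemma~\ref{lem:kernel-DPS} then gives $e_{iik}-e_{jjk}\in\ker\rho$ for all $i,j,k$; taking $k=1$ yields $e_{111}-e_{221},\,e_{111}-e_{331}\in\ker\rho[C_1]$. Evaluating these two kernel relations against the core forces each of its rows to be constant, and combining with Hermiticity forces the core to be a single real multiple of $J_3$. Propagating this through the duplicated rows and columns gives $\rho[C_1]=\lambda_1 J_5$ with $\lambda_1\ge0$, and the same argument applies verbatim to the other two $5\times5$ blocks. This establishes that every block is a scalar multiple of the all-ones matrix, as claimed. (If one further wishes the scalars to match across blocks, one may additionally note that $\theta=(1\,2\,3)$ fixes $\rho_{a,a'}$ because $X$ is circulant and $Y=J_3$, so Corollary~\ref{CThetaInvDPSCert} lets us choose $\rho$ with $\theta(\rho)=\rho$, equating the three $5\times5$ scalars and the three $2\times2$ scalars; this refinement is not needed for the stated conclusion.)
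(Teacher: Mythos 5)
Your proposal is correct and follows essentially the same route as the paper's proof: block-diagonality from Theorem~\ref{theo:CLDUItDPSCert} and Corollary~\ref{cor:BlockDiagCert}, the Bose symmetry (\ref{eq:DPS1}) to handle the size-$2$ cliques and to identify $212\sim221$, $313\sim331$ inside the size-$5$ cliques, and Lemma~\ref{lem:kernel-DPS} applied to the kernel vectors $e_i\ot e_i-e_j\ot e_j$ to equate the remaining columns. Your added observations (deducing $a,a'>0$ to pin down the exact kernel, and the optional $\theta$-invariance refinement) are correct but not needed; the latter is exactly what the paper defers to Lemma~\ref{temp4}.
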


\begin{proof}
    The existence of a block-diagonal certificate $\rho$ follows from Theorem \ref{theo:CLDUItDPSCert} and Corollary \ref{cor:BlockDiagCert}. So,  $\rho=\oplus_C \rho[C]$, where the direct sum is over the maximal cliques $C$ of the graph $G^{(2)}_3$ (described in Table \ref{table0}). We now argue that, for  every maximal clique $C$, $\rho[C]=x_C J_{|C|}$ for some   $x_C\in\R$. That is, the columns $\rho_{\cdot, i_0i_1i_2}$, $\rho_{\cdot,j_0j_1j_2}$ of $\rho$ indexed by any two elements $i_0i_1i_2,j_0j_1j_2\in C$ are equal. It suffices to show this for the 
    six cliques of size 2 or 5.
    %three cliques of size 5, including the clique $C=\{111,212,221,313,331\}$.
    
 %   As  explained in Remark \ref{remark:CLDUI-cliques}, if two indices form an edge of type 1, the corresponding rows and columns of $\rho$ are identical (since $\rho$ satisfies (\ref{eq:DPS1})). Hence, each clique $C$ of $G^{(2)}_3$ can be replaced by a reduced clique $C'\subseteq C$ that contains exactly one element out of every clique of type 1 edges. That is,  $\rho[C]\succeq 0$ if and only if $\rho[C']\succeq 0$. E.g.,  $C'=\{111,221,331\}$ for the clique  $C=\{111,212,221,313,331\}$.

%This applies analogously to the other cliques $C_2=\{\}$ and $C_3=\{\}$ of $G^{(2)}_3$, with $C'_2=\{\}$, $C_3'=\{\}$, so that $\rho\succeq 0$ is equivalent to requiring positive semidefiniteness of three blocks of size 3 and 12 singleton blocks.

% Then, As explained already in Example \ref{example:CLDUI-cliques}, the columns and rows of this block indexed by 221 and 212 are identical since $\rho$ satisfies \eqref{eq:DPS1} and therefore $\rho_{\cdot,221}=\rho_{\cdot,212}$ and $\rho_{221,\cdot}=\rho_{212,\cdot}$. Further note, that this argument works for all Type I neighbors, which implies that also the rows and columns indexed by 331 and 313 are identical.

First, we use the property (\ref{eq:DPS1}) of $\rho$, which claims that  $\rho_{\cdot, j_0j_1j_2}=\rho_{\cdot, j_0j_2j_1}$ for any $j_0j_1j_2\in[3]^3$. This directly settles the case of the cliques of size 2,  $\{123,132\}$, $\{213,231\}$ and $\{312,321\}$.
%So, $\rho[C]$ is indeed a multiple of $J_2$ for each of the three cliques of size 2.
Consider now a clique of size 5, say $C=\{111,212,221,313,331\}$. Then, 
$\rho_{\cdot, 212}=\rho_{\cdot, 221}$, $\rho_{\cdot, 313}=\rho_{\cdot,331}$ holds.  There remains to show that the columns indexed by the elements $111,221,331$ are equal.
%   To show that each of the remaining $3\times 3$ blocks $\rho[C']$ is a multiple of the all-ones matrix $J_3$, 
For this,    we use the fact that %$\rho_{a,a'}$ has a rich kernel: %and  Lemma \ref{lem:kernel-DPS}. 
the vectors $w_1=e_2\ot e_2-e_3\ot e_3$ and $w_2=e_1\ot e_1-e_3\ot e_3$  belong to the kernel of $\rho_{a,a'}$. By Lemma \ref{lem:kernel-DPS},
   it follows that $w\ot e_i\in \ker \rho$ %$ belongs to the kernel of $\rho$ 
   for each $i\in [3]$ and $w\in\{w_1,w_2\}$.
From this one obtains that the columns of $\rho$ indexed by the indices $111, 221, 331$ are all identical, as desired.
%which shows that the block indexed by 
%   $\{111,221,331\}$ is a multiple of $J_3$, as desired.
The same reasoning applies to the other two cliques of size 5.
%   In the same way, the block indexed by $\{222,332,112\}$ is a multiple of $J_3$ as well as the block indexed by  $\{333,113,223\}$. 
   %   Now consider the rows and columns indexed by 111, 221 and 331. Define $u_1\in\C^3\otimes\C^3$ such that $(u_1)_{11}=1,(u_1)_{22}=-1$ and 0 everywhere else and $u_2\in\C^3\otimes\C^3$ such that $(u_2)_{11}=1,(u_2)_{33}=-1$ and 0 everywhere else. Clearly $p_{a,a'}u_1=0=p_{a,a'}u_2$. By Lemma \ref{LKernelDPSCert} it follows, that  $u_1\otimes e_1$ and $u_2\otimes e_1$ are in the kernel of $\rho$. Therefore, the three columns indexed by 111, 221 and 331 are identical. Since $\rho
   % $ is hermitian it also follows, that the rows are identical.   
%    The argument runs analogous for the other two blocks of size 5, the blocks of size 2 consists of neighbors of Type I and for the blocks of size 1 nothing needs to be shown.
\qed\end{proof}

Next, we further simplify the structure of a $\DPS^{(2)}$-certificate for $\rho_{a,a'}$, by exploiting the fact that $\rho_{a,a'}$ is invariant under the cyclic permutation $\theta=(123)\in\Perm_3$, i.e., $\theta(\rho_{a,a'})=\rho_{a,a'}$.

\begin{lemma}\label{temp4}
 Assume that $\rho_{a,a'}\in\DPS^{(2)}_3$. Then,  $\rho_{a,a'}$ has a $\DPS^{(2)}$-certificate $\rho$ with the following properties: $\rho$ is block-diagonal: $\rho=\oplus_C \rho[C]$, where the blocks are 
 indexed by the maximal cliques $C$ of $G^{(2)}_3$, and 
     \begin{itemize}
        \item [(i)] the three  blocks of size 5 are identical: if $|C|=5$, then $\rho[C]=xJ_5$  for some   $x\in\R$,
        \item [(ii)] the three blocks of size 2 are identical: if $|C|=2$, then $\rho[C]=y J_2$ for some  $y\in\R$,
        \item [(iii)] $\rho_{122,122}=\rho_{233,233}=\rho_{311,311}=:z\in\R$,
        \item [(iv)] $\rho_{133,133}=\rho_{211,211}=\rho_{322,322}=:z'\in\R$.
    \end{itemize}
\end{lemma}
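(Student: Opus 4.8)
The plan is to combine the block-diagonal scalar structure already furnished by Lemma \ref{LExBlocksAreScalar} with the cyclic symmetry of $\rho_{a,a'}$, via the symmetrization idea behind Corollary \ref{CThetaInvDPSCert}. First I would record that $\rho_{a,a'}$ is invariant under the cyclic permutation $\theta=(123)\in\Perm_3$, i.e. $\theta(\rho_{a,a'})=\rho_{a,a'}$. This is immediate from the description $\rho_{a,a'}=\rho_{(X,Y)}$ in Definition \ref{def:rhoaap}: the matrix $X$ is circulant, with $X_{12}=X_{23}=X_{31}=a$ and $X_{21}=X_{32}=X_{13}=a'$ (and unit diagonal), while $Y=J_3$, so both are unchanged when the index set $[3]$ is relabeled by $\theta$.

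Next, let $\rho^{(0)}$ be the $\DPS^{(2)}$-certificate produced by Lemma \ref{LExBlocksAreScalar}, so that $\rho^{(0)}=\oplus_C x_C J_{|C|}$ over the maximal cliques $C$ of $G^{(2)}_3$ (listed in Table \ref{table0}), with real scalars $x_C$. I would then average over the cyclic group generated by $\theta$, setting
\begin{align*}
\rho:=\frac{1}{3}\sum_{k=0}^{2}\theta^k(\rho^{(0)}).
\end{align*}
By Lemma \ref{LThetaAppliedDPSLCertificate}, each $\theta^k(\rho^{(0)})$ is a $\DPS^{(2)}$-certificate for $\theta^k(\rho_{a,a'})=\rho_{a,a'}$; since the set of such certificates is convex, $\rho$ is again a valid $\DPS^{(2)}$-certificate for $\rho_{a,a'}$. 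The key structural observation is that $\theta$ merely relabels $[3]$ and therefore permutes the maximal cliques of $G^{(2)}_3$ among themselves (adjacency in $G^{(2)}_3$ depends only on multiset equality, which is preserved under relabeling). Concretely, from $\theta(\rho^{(0)})_{p,q}=\rho^{(0)}_{\theta(p),\theta(q)}$ one gets $\theta(\rho^{(0)})[\theta^{-1}(C)]=x_C J_{|C|}$, so each $\theta^k(\rho^{(0)})$, and hence $\rho$, stays block-diagonal with respect to the same clique partition, each block being a real scalar multiple of an all-ones matrix.

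It then remains to read off (i)--(iv) from the orbits of $\theta$ on the maximal cliques. A direct check on Table \ref{table0} shows that $\theta$ cyclically permutes the three size-$5$ cliques, cyclically permutes the three size-$2$ cliques, and splits the six singletons into the two orbits $\{122\}\to\{233\}\to\{311\}$ and $\{133\}\to\{211\}\to\{322\}$. Since the scalar assigned by $\rho$ to a clique $C$ (respectively, the diagonal value on a singleton) is the $\theta$-average of the corresponding values of $\rho^{(0)}$ over the orbit of $C$, all cliques in a common orbit receive the same value. Hence all three size-$5$ blocks equal a common $xJ_5$, all three size-$2$ blocks equal a common $yJ_2$, and $\rho_{122,122}=\rho_{233,233}=\rho_{311,311}=:z$, $\rho_{133,133}=\rho_{211,211}=\rho_{322,322}=:z'$, which is exactly (i)--(iv).

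The only genuinely delicate point is the bookkeeping of how $\theta$ acts on the twelve maximal cliques of $G^{(2)}_3$; once the orbit structure is pinned down, everything else follows routinely from the fact that $\theta$-averaging preserves both feasibility (Lemma \ref{LThetaAppliedDPSLCertificate}) and the scalar-times-all-ones block structure. I would also note in passing that, as $\rho_{a,a'}$ is real, one may take $\rho^{(0)}$ real (Lemma \ref{lem:real-certificate}), so that $x,y,z,z'\in\R$ as asserted.
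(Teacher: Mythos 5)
Your proof is correct and follows essentially the same route as the paper: both combine the cyclic invariance $\theta(\rho_{a,a'})=\rho_{a,a'}$ with the scalar-block structure from Lemma \ref{LExBlocksAreScalar}, and then read off (i)--(iv) from the orbits of $\theta$ on the maximal cliques of $G^{(2)}_3$ (three size-$5$ cliques in one orbit, three size-$2$ cliques in one orbit, singletons in two orbits). The only difference is the order of operations: the paper first produces a $\theta$-invariant CLDUI certificate via Corollary \ref{CThetaInvDPSCert} and then applies the scalar-block argument to it, whereas you average the scalar-block certificate over the group generated by $\theta$ and verify that this averaging preserves the block structure --- the two packagings are interchangeable.
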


\begin{proof}
 The additional properties in (i)-(iv) follow using the fact that $\rho_{a,a'}$ is invariant under the cyclic permutation $\theta=(123)$. Indeed, by  Corollary \ref{CThetaInvDPSCert}, $\rho_{a,a'}$ has  a  $\theta$-invariant $\DPS^{(2)}$-certificate $\rho\in \CLDUI^{(2)}_3$. We show that  this certificate $\rho$ has the desired properties. Observe that  $C_1:=\{111,212,221,313,331\}$, $C_2:=\theta(C)$, $C_3:=\theta^2(C)$ are the three cliques of size 5 in $G^{(2)}_{3}$ (where $\theta$ is applied element-wise).  
  First, we have $$\rho_{111,111}=\rho_{\theta(111),\theta(111)}=\rho_{222,222}=\rho_{\theta(222),\theta(222)}=\rho_{333,333}.$$
For $i=1,2,3$, the index $iii$ belongs to the clique $C_i$.  By  Lemma \ref{LExBlocksAreScalar}, all entries of $\rho[C_i]$ are equal, say to $x_i$, implying   $x_i=\rho_{iii,iii}$. Combined with the above relation, one obtains $x_1=x_2=x_3=:x$, showing (i).

% is a multiple of the all-ones matrix. Hence, one can con   Since the blocks containing the diagonal entries of 111,222 and 333 are all given by a single scalar (Lemma \ref{LExBlocksAreScalar}) the first item of the claim follows.
    
  (ii) follows   analogously,    using the fact that  $\theta(123)=231$ and $\theta(231)=312$, which implies that the scalars determining the blocks $\rho[C_{23}], \rho[C_{31}],\rho[C_{12}]$ are all equal to the same scalar $y$. 
%  the item regarding the blocks of size 3 follows analogously. 
 
For (iii)-(iv),  note that the six blocks of size 1 lie in two different orbits: $\theta(122)=233,\theta(233)=122$ and  $\theta(133)=211,\theta(211)=322$., which implies $\rho[122]=\rho[233]=\rho[122]=:z$ and $\rho[133]=\rho[211]=\rho[322]=:z'$, as desired.
\qed\end{proof}

\begin{theorem}\label{theo:rhoaa}
 For the   state $\rho_{a,a'}$ from Definition \ref{def:rhoaap}, the following are equivalent:
 $$\rho_{a,a'}\in \SEP_3\Longleftrightarrow \rho_{a,a'}\in\DPS^{(2)}_3\Longleftrightarrow a\ge 1 \text{ and } a'\ge 1.$$
 %   \begin{enumerate}
 %       \item[(i)] $\rho_{a,a'}\in\SEP_3$.
%        \item[(ii)] $\rho_{a,a'}\in\DPS^{(2)}_3$. 
 %       \item[(iii)] $a,a'\geq1$.    \end{enumerate}
\end{theorem}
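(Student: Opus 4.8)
The plan is to prove the three stated equivalences by closing a cycle of implications. Since $\SEP_3\subseteq\DPS^{(2)}_3$ holds in general, the implication $\rho_{a,a'}\in\SEP_3\Rightarrow\rho_{a,a'}\in\DPS^{(2)}_3$ is free, so it remains to establish (A) that $a\ge 1$ and $a'\ge 1$ imply $\rho_{a,a'}\in\SEP_3$, and (B) that $\rho_{a,a'}\in\DPS^{(2)}_3$ implies $a\ge 1$ and $a'\ge 1$. Chaining $\SEP_3\Rightarrow\DPS^{(2)}_3\Rightarrow(a,a'\ge1)\Rightarrow\SEP_3$ then closes the cycle.

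For (A) I would exhibit an explicit separable decomposition. Writing $P$ for the cyclic permutation matrix with ones in positions $(1,2),(2,3),(3,1)$, the linearity of the map $(X,Y)\mapsto\rho_{(X,Y)}$ gives
\[
\rho_{a,a'}=\rho_{(J_3,J_3)}+(a-1)\,\rho_{(P,0)}+(a'-1)\,\rho_{(P^T,0)} .
\]
Here $\rho_{(J_3,J_3)}\in\SEP_3$ because $J_3\in\CP_3$ (Lemma~\ref{lem:CP-PCP-TCP}), while by (\ref{eq:rhoXYZ0}) we have $\rho_{(P,0)}=\sum_{\mathrm{cyc}}e_ie_i^*\ot e_je_j^*$ and $\rho_{(P^T,0)}=\sum_{\mathrm{cyc}}e_je_j^*\ot e_ie_i^*$, each a sum of rank-one product states and hence manifestly separable. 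As $a-1,a'-1\ge 0$, the right-hand side is a conic combination of separable states, so $\rho_{a,a'}\in\SEP_3$.

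For (B), the technical direction, I would lean on the reduction already achieved in Lemma~\ref{temp4}: if $\rho_{a,a'}\in\DPS^{(2)}_3$ it admits a block-diagonal $\DPS^{(2)}$-certificate $\rho$ completely specified by four real scalars $x,y,z,z'$ (the three size-$5$ blocks equal $xJ_5$, the three size-$2$ blocks equal $yJ_2$, and the singletons satisfy $\rho_{122,122}=\rho_{233,233}=\rho_{311,311}=z$ and $\rho_{133,133}=\rho_{211,211}=\rho_{322,322}=z'$). First I would feed this certificate into the partial-trace condition (\ref{eq:DPS3}), using (\ref{eq:partialtrace}) in the form $(\Tr_{B[2:2]}\rho)_{i_0i_1,j_0j_1}=\sum_{k}\rho_{i_0i_1k,\,j_0j_1k}$ and reading off the clique membership of each index from Table~\ref{table0}. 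The diagonal entry $(11,11)$ gives $\rho_{111,111}+\rho_{112,112}+\rho_{113,113}=3x=1$, so $x=\tfrac13$, while $(12,12)$ and $(13,13)$ give $x+y+z=a$ and $x+y+z'=a'$; hence $a=\tfrac13+y+z$ and $a'=\tfrac13+y+z'$.

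It then remains to impose $\rho^{T_{B[1:2]}}\succeq 0$ and extract lower bounds on $y,z,z'$. Using the swap rule $(\rho^{T_{B[1:2]}})_{i_0i_1i_2,\,j_0j_1j_2}=\rho_{i_0j_1j_2,\,j_0i_1i_2}$ together with the clique decomposition of $G^{(2)}_{3,2}$ in Table~\ref{table2}, I would compute that the $6\times 6$ block on the rainbow indices $\{123,132,213,231,312,321\}$ equals $\big[(y-x)I_3+xJ_3\big]\ot J_2$, whose nonzero eigenvalues are $2(y+2x)$ and $2(y-x)$; positivity forces $y\ge x=\tfrac13$. Likewise each relevant $3\times 3$ block, e.g. $\{122,212,221\}$ and $\{211,121,112\}$, equals $\left(\begin{smallmatrix} z & x & x\\ x & x & x\\ x & x & x\end{smallmatrix}\right)$ (resp.\ with $z'$ in the corner), which is positive semidefinite iff $\left(\begin{smallmatrix} z & x\\ x & x\end{smallmatrix}\right)\succeq 0$; since $x>0$ this forces $z\ge\tfrac13$ and $z'\ge\tfrac13$. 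Combining with the partial-trace relations yields $a=\tfrac13+y+z\ge 1$ and $a'=\tfrac13+y+z'\ge 1$, which is exactly (B). The main obstacle I anticipate is purely bookkeeping: correctly evaluating the partial-transpose blocks by tracking the index-swap map $T_{B[1:2]}$ through the clique labels of Tables~\ref{table0} and~\ref{table2}, so that each entry is assigned the right value among $x,y,z,z'$. Once these blocks are laid out, their positivity analysis (a tensor-product spectrum for the $6\times6$ block, a $2\times2$ Schur-complement condition for the $3\times3$ blocks) is routine; note in particular that neither $\rho\succeq 0$ nor $\rho^{T_{B[1]}}\succeq 0$ is needed for necessity, as the partial-trace identity together with $\rho^{T_{B[1:2]}}\succeq 0$ already forces $a,a'\ge 1$.
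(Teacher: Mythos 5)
Your proof is correct, and its skeleton is the same as the paper's: the sufficiency direction uses the identical decomposition $\rho_{a,a'}=\rho_{(J_3,J_3)}+\rho_{(X-J_3,0)}$ (you merely write $X-J_3=(a-1)P+(a'-1)P^T$ explicitly), and the necessity direction runs through Lemma \ref{temp4}, the partial-trace equations $x=\tfrac13$, $a=x+y+z$, $a'=x+y+z'$, and then a single PPT constraint forcing $y,z,z'\ge\tfrac13$. The one genuine difference is \emph{which} PPT constraint you use: the paper imposes $\rho^{T_{B[1]}}\succeq 0$ (the $s=1$ case of (\ref{eq:DPS2})) and extracts $y\ge\tfrac13$ from the $2\times2$ principal submatrix on $\{123,213\}$ and $z,z'\ge\tfrac13$ from a Schur complement of the $3\times3$ submatrix on $\{111,122,133\}$, whereas you impose $\rho^{T_{B[1:2]}}\succeq 0$ (the $s=2$ case) and read the same inequalities off full blocks of $G^{(2)}_{3,2}$ from Table \ref{table2}: the $6\times6$ rainbow block, which indeed equals $\bigl[(y-x)I_3+xJ_3\bigr]\ot J_2$, and the $3\times3$ blocks $\{122,212,221\}$ and $\{211,121,112\}$ with $z$ (resp.\ $z'$) in one corner. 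Your block computations check out, and they are arguably cleaner than the paper's in that you work with entire maximal cliques rather than hand-picked principal submatrices, at the cost of a slightly larger eigenvalue computation. One caveat on your closing remark: it is true that, \emph{given} the certificate of Lemma \ref{temp4}, only $\rho^{T_{B[1:2]}}\succeq 0$ is needed to finish; but Lemma \ref{temp4} itself (via Lemma \ref{LExBlocksAreScalar} and the kernel argument of Lemma \ref{lem:kernel-DPS}) already consumes $\rho\succeq 0$, so positivity of the certificate is not globally dispensable — the same caveat applies, mutatis mutandis, to the paper's proof, which invokes only $\rho^{T_{B[1]}}\succeq 0$ in its final stage.
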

\begin{proof}
 %   (ii) $\Longrightarrow$ (iii): 
 Assume $\rho_{a,a'}\in\DPS^{(2)}_3$, we show $a,a'\ge 1$. Let $\rho$ be a $\DPS^{(2)}$-certificate for $\rho_{a,a'}$ that satisfies the properties claimed in Lemma \ref{temp4}.
  %   there is a $\DPS^{(t)}$ certificate $\rho$, which satisfies the properties of the lemma. 
     Since $\rho$ satisfies \eqref{eq:DPS3}, we get
    $$1=(\rho_{a,a'})_{11,11}=\Tr_B(\rho)_{11,11}=\rho_{111,111}+\rho_{112,112}+\rho_{113,113}=3x,$$
   which implies    $x=\frac{1}{3}$. Similarly, we get
 \begin{align*}
 a=\Tr_B(\rho)_{12,12}=x+z+y,\quad
a'=\Tr_B(\rho)_{21,21}=y+z'+x,
\end{align*}
 which implies      $z=a-y-\frac{1}{3}$ and $z'=a'-y-\frac{1}{3}$. 
    
    We now use the fact that $\rho^{T_{B[1]}}$ is positive semidefinite (condition \eqref{eq:DPS2}). 
    Using the submatrix of $\rho^{T_{B[1]}}$ indexed by $\{123, 213\}$ (in that order), we obtain
         %. The submatrix is given by
    $$
  \begin{pmatrix}
  \rho_{123,123} & \rho_{113,223}\cr \rho_{223,113}& \rho_{213,213}
  \end{pmatrix}  
  =
    \begin{pmatrix}
        y&x\\
        x&y
    \end{pmatrix}\succeq 0,$$
 implying      $y\geq x=\frac{1}{3}$. Using the submatrix of $\rho^{T_{B[1]}}$  indexed by $\{111,122, 133\}$ (in that order) gives
     $$
    \begin{pmatrix}
    \rho_{111,111} & \rho_{121,112} & \rho_{131,113}\cr
    \rho_{112,121} & \rho_{122,122} & \rho_{132,123}\cr
    \rho_{113,131} & \rho_{123,132} & \rho_{133,133}
    \end{pmatrix}    
    =
    \begin{pmatrix}
        x&x&x\\
        x&z&y\\
        x&y&z'
    \end{pmatrix}. $$ %\succeq 0.$$
    By taking the Schur complement w.r.t. the upper left corner and using the values of $z,z'$, we obtain
    $$\begin{pmatrix}
    z-x&y-x\cr y-x&z'-x\end{pmatrix}= 
    \begin{pmatrix}
        a-y-\frac{2}{3}&y-\frac{1}{3}\\
        y-\frac{1}{3}&a'-y-\frac{2}{3}
    \end{pmatrix}\succeq 0.$$
  In particular, $a-y-{2\over 3}, a'-y-{2\over 3}\ge 0$.   Combining with  $y\geq\frac{1}{3}$, we get that $a,a'\geq1$, as desired.
    
%    (iii) $\Longrightarrow$ (i): 
Assume now $a,a'\ge 1$, we show that $\rho_{a,a'}\in\SEP_3$. For this, observe that  the matrices $X,Y$ in Definition \ref{def:rhoaap} satisfy 
    $X=J_3+X'$, $Y=J_3$, where $X'=X-J\ge 0$ since $a,a'\ge 1$. 
    So, we have 
    $\rho_{a,a'}=\rho_{(J_3,J_3)}+\rho_{(X',0)}$.
       % $$X=\begin{bmatrix}0&a-1&a'-1\\
    %        a'-1&0&a-1\\
   %         a-1&a'-1&0 \end{bmatrix}, \ Y=\begin{bmatrix}
     %       0&0&0\\
     %       0&0&0\\
     %       0&0&0
    %    \end{bmatrix}.$$
The   state $\rho_{(J_3,J_3)}$ is   separable (e.g., since it is equal to $\Pi_{\CLDUI_3}(ee^*\ot ee^*)$, with $e$ the all-ones vector).
%because $J_3\in\CP_3$, so that $(J_3,J_3)\in\PCP_3$, using Lemma \ref{lem:CP-PCP-TCP}).
Also, the state $\rho_{(X',0)}$ is separable (using, e.g., (\ref{eq:rhoXYdef})).
%because $(X',0)\in\PCP_3$, by   \cite[Lemma 1]{JML-PCP}).
 %   Now, $\rho_{a,a'}$ is separable as the sum of two separable states. Indeed, $\rho_{J,J}$ is separable, because $(J,J)\in\PCP$ if and only if  $J\in\CP$ (by Lemma \ref{lem:CP-PCP-TCP}) and the all-ones matrix $J$ is clearly completely positive. Moreover, $\rho_{X,Y}$ is separable, because $(X,Y)\in\PCP$ (by Lemma 1 in \cite{JML-PCP}).
Hence, the state $\rho_{a,a'}$ is the sum of two separable states and thus $\rho_{a,a'}$ is separable.
    This concludes the proof. % (i)$\Longrightarrow$ (ii) is clear.
 %   (3) $\Longrightarrow$  (1) follows immediately, since $\SEP\subseteq\DPS^{(t)}$ for every integer $t$.
\qed\end{proof}

\begin{remark}
In their recent work \cite{GNS-cyclic_2025} the authors consider LDOI states $\rho_{(X,Y,Z)}$, where   $X,Y,Z$ are circulant matrices. In particular, when $Y=J_n$ is the all-ones matrix, they characterize membership in $\SEP_n$ and $\DPS^{(1)}_n$ (see Theorem VI.5). Note that the state $\rho_{a,a'}$ from Definition~\ref{def:rhoaap}
has $X$ circulant and $Y=J_n$, so it falls within this class (for $n=3$). In Theorem \ref{theo:rhoaa} we additionally characterize its membership in $\DPS^{(2)}_3$ and show equivalence with separability. Whether this property extends to more general states  with circulant structure is left for future research.
\end{remark}

 %\label{sec:DPS-CLDUI}

%\section{Strengthened DPS hierarchy for symmetric states}\label{sec:DPS-symmetric}
\section{Dual of the symmetric DPS hierarchy and matrix copositivity}\label{sec:DPS-symmetric}

%begin ignore
\ignore{
We begin with   some easy  facts about Bose symmetric bipartite states and their connections to CLDUI and LDUI states.
Recall from Example \ref{ex:Dicke} that any mixed Dicke state (i.e., any linear combination  of the Dicke states   $D_{ij}D_{ij}^*$ for $1\le i\le j\le n$)  is Bose symmetric and has LDUI sparsity structure. This is in fact an equivalence.    (\ML{as observed in \cite{GNP_2025}, check if true}).
  
 \begin{lemma}\label{lem:mixedDicke}
Consider $\rho_{AB}\in\Herm(\C^n\ot \C^n)$. The following assertions are equivalent.
\begin{itemize}
\item[(i)] $\rho_{AB}\in \LDUI_n \cap \BS(\C^n\ot\C^n)$.
\item [(ii)] $\rho_{AB}=\rho_{(X,\cdot,X)}$ for some $X\in\MS^n$.
\item[(iii)] $\rho_{AB}$ is a mixed Dicke state, i.e., $\rho_{AB}\in \R\{D_{ij}D_{ij}^*: 1\le i\le j\le n\},$ where $D_{ii}=e_i\ot e_i$ and
$D_{ij}=(e_i\ot e_j +e_j\ot e_i)/\sqrt 2$ ($i\ne j$).
\end{itemize}
\end{lemma}
\begin{proof}
It suffices to show (i) $\Longrightarrow$ (ii) $\Longrightarrow$ (iii). Assume (i) holds. As $\rho_{AB}$ is LDUI,  $\rho_{AB}= \rho_{(X,\cdot,Z)} $ for some $X\in\R^{n\times n}, Z\in \Herm^n$. Moreover, $X=Z$ holds since $\rho_{AB}$ is Bose symmetric. So, (ii) holds.
To see (iii), note that
$\rho_{(X,\cdot,X)}= \sum_{i=1}^nX_{ii} D_{ii}D_{ii}^* +2\sum_{1\le i<j\le n}  X_{ij}D_{ij}D_{ij}^*$.
\end{proof}
}
% end ignore

Given a matrix $A\in\MS^n$, consider the matrix $M_{A,0}\in \CLDUI_n$
%$$M_{A,0}=\sum_{i,j=1}^n A_{ij} e_ie_j^*\ot e_je_i^* \in \CLDUI_n,$$
as defined in relation (\ref{eq:MST}) and the associated polynomial 
$$\langle M_{A,0}, xx^*\ot xx^*\rangle 
%= \sum_{i,j=1}^n A_{ij}\langle  e_ie_j^*\ot e_je_i^*,xx^*\ot xx^*\rangle
=\langle A, (x\circ \ovx)(x\circ \ovx)^T\rangle= \sum_{i,j=1}^n A_{ij} |x_i|^2|x_j|^2 = f_A(|x_1|^2,\ldots,|x_n|^2),
$$
where $f_A$ is the polynomial from (\ref{eq:fA}). Hence, 
%if $A\in \COP_n$, then the matrix $M_{A,0}$ lies in the dual cone of $\SEPBS_n$. 
$$M_{A,0}\in (\SEPBS_n)^* \Longleftrightarrow A\in \COP_n.$$
Moreover,  if $A_{ij}<0$ for some $i<j$, then $\langle M_{A,0}, D_{ij}D_{ij}^*\rangle = A_{ij}<0$; hence  $M_{A,0}$ provides an entanglement witness that  separates the entangled state $D_{ij}D_{ij}^*$ from $\SEPBS_n$. 
%These connections were made already  in   \cite[Th. 2.1]{Marconi_2021}. 
% and they suggest investigating whether they extend to the hierarchies for the dual of $\SEPBS_n$ and for $\COP_n$.
So, when restricting to matrices with diagonal unitary invariance, there is a tight connection between the dual cone $(\SEPBS_n)^*$ and the copositive cone $\COP_n$, as   shown  in   \cite[Theorem 2.1]{Marconi_2021}. 
Recently, it is shown in \cite{GNP_2025}  that this connection extends to  the hierarchies  {$(\tilDPSt_n)^*$} for $(\SEPBS_n)^*$, and $\MK^{(t)}_n$ for $\COP_n$. In this section we revisit this connection. First, we present a characterization of the dual cone  {$(\tilDPSt_n)^*$} that fully mirrors the result  in Theorem \ref{theo:FF} for $(\DPSt_n)^*$. Then, we use it to offer a  simple argument for the connection between  {$(\tilDPS_n^{(t)})^*$} and  {$\MK^{(t)}_n$}, using only the basic correspondence   (\ref{eq:rsos-sos}) between real-sos Hermitian polynomials in complex variables  and sos polynomials in real variables.
%it is interesting to further investigate separability of Bose symmetric states,  and their connections to the copositive cone when considering additional diagonal unitary invariance, which we do in the next sections.

\subsection{The dual of the symmetric  DPS hierarchy }\label{sec:DPStil}

% begin ignore
\ignore{
Here, we investigate the set $\SEPBS_n=\SEP_n\cap \BS(\Cnn)$ of Bose symmetric separable states. First, we give   a simple argument for the (well-known) fact  that any separable decomposition of such state is symmetric, i.e., involves only %such states admit separable decompositions involving only 
states  $xx^*\ot xx^*$ (i.e., with $x=y$), as mentioned in (\ref{eq:SEP-BS}).

\begin{lemma}\label{lem:SEP-BS}
The set of Bose symmetric separable bipartite states is given by
$$\SEPBS_n=\conv \{xx^*\ot xx^*: x\in\sS^{n-1}\}.$$
\end{lemma}

\begin{proof}
Assume $\rho_{AB}\in \SEP_n\cap \BS(\Cnn)$. Say,
$\rho_{AB}=\sum_{\ell=1}^m\lambda_\ell \ x_\ell x_\ell^*\ot y_\ell y_{\ell}^*$ for some $\lambda_\ell>0$ and $x_\ell,y_\ell\in \sS^{n-1}$. 
As $\rho_{AB}$ is Bose symmetric, the vector $e_i\ot e_j-e_j\ot e_i$ belongs to the kernel of $\rho_{AB}$ for all $i\ne j\in [n]$. Hence, this vector also belongs to the kernel of $x_\ell x_\ell^*\ot y_\ell y_{\ell}^*$ for each $\ell$. This implies that 
$(x_\ell)_i(y_\ell)_j=(x_\ell)_j (y_{\ell})_i$ for all $i\ne j\in [n]$, and thus $x_\ell=\pm y_\ell$ (as both are unit vectors).\qed\end{proof}

%As $\rho_{AB}$ is Bose symmetric, we also have
%$$\rho_{AB}= \sum_{\ell=1}^m \lambda_\ell \ x_\ell y_\ell^*\ot y_\ell x_\ell^*.$$
%Using the above two expressions of $\rho_{AB}$ to compute its trace, we obtain
%$$\Tr(\rho_{AB}) =\sum_{\ell=1}^m \lambda_\ell \|x_\ell\|^2\|y_\ell\|^2=\sum_{\ell=1}^m  \lambda_\ell |x_\ell ^*y_\ell |^2.$$
%Hence, for each $\ell$, equality holds in Cauchy-Schwartz inequality $|x_\ell^*y_\ell|^2\le \|x_\ell\|^2\|y_\ell\|^2$, which implies that $\{x_\ell ,y_{\ell}\}$ are linearly dependent. Hence,  $x_\ell x_\ell^*=y_\ell y_\ell^*$ holds for each $\ell$,   and  the result follows.
%\end{proof}

%So, any symmetric separable state $\rho_{AB}$ admits a separable decomposition using only states of the form $xx^*\ot xx^*$. 
Hence,  any Bose symmetric state $\rho_{AB}$ admits a $\DPSt$-certificate satisfying a stronger symmetry property, namely invariance under permuting all $t+1$ registers, including the A-register, instead of just the $t$ B-registers as  in relation (\ref{eq:DPS1}). Hence, for a matrix $\rho_{AB[t]} \in \Herm(\Cnnt)$, let us consider the following strengthened variation of condition (\ref{eq:DPS1}):
\begin{align}\label{eq:DPS1-BS}
\begin{split}
&\Pi_{t+1} \ \rho_{AB[t]}\ \Pi_{t+1}=\rho_{AB[t]},\  \text{   equivalently, } \rho_{AB[t]}\in \BS((\C^n)^{\ot (t+1)})\simeq \Herm(S^{t+1}(\C^n)).
%&(\rho_{AB[t]})_{\ui,\uj} = (\rho_{AB[t]})_{\ui,\sigma(\uj)} \text{ for all } \sigma\in\Perm_t,\ \ui,\uj\in [n]^{t+1},
\end{split}
\end{align}
Note that if $\rho_{AB[t]}$ 
 satisfies  the invariance property (\ref{eq:DPS1-BS}), 
then, using the fact that the transpose of a Hermitian positive semidefinite matrix remains positive semidefinite,
 one can %restrict to  $0\le s\le \lfloor (t+1)/2\rfloor$ in condition (\ref{eq:DPS2}). In other words, one may 
 replace (\ref{eq:DPS2}) by
 \begin{align}\label{eq:DPS2-BS}
 \rho_{AB[t]}^{T_{B[s]}}\succeq 0 \quad \text{ for } s=0, 1,\ldots,  \lfloor (t+1)/2\rfloor.
 \end{align}
Let us now define the following   variation of the set $\DPSt_n$, 
\begin{align}\label{eq:DPSt-BS}
\begin{split}
\tilDPSt_n=\{\rho_{AB}\in\Herm(\Cnn):\ &  \exists \rho_{AB[t]}\in\Herm(\Cnnt) \text{ satisfying } (\ref{eq:DPS1-BS}), (\ref{eq:DPS2-BS}) \\
& \text{ such that }   \rho_{AB}=\Tr_{B[2:t]}(\rho_{AB[t]})\}.
% & \text{ for some } \rho_{AB[t]}\in\Herm(\Cnnt) \\
%&  \text{ satisfying }
%(\ref{eq:DPS1-BS}), (\ref{eq:DPS2-BS})\}. %, (\ref{eq:DPS3})\}.
\end{split}
\end{align}
%Note that, under the invariance property (\ref{eq:DPS1-BS}) (combined with the fact that the transpose of a Hermitian positive semidefinite matrix remains positive semidefinite), in condition (\ref{eq:DPS2}) it suffices to impose $\rho_{AB[t]}^{T_{B[s]}}\succeq 0$ for $0\le s\le \lfloor (t+1)/2\rfloor$. 
For any order  $t\ge 1$, we have\footnote{\ML{For order $t=1$, the inclusion $\tilDPSt_n\subseteq \DPSt_n\cap \BS(\Cnn)$ is an equality. However, for $t\ge 2$, it is not clear whether equality holds. Probably not ??.}}
$$\SEPBS_n\subseteq \tilDPS^{(t+1)}_n\subseteq \tilDPSt_n\subseteq \BS(\Cnn),\quad
\tilDPSt_n\subseteq \DPSt_n\cap \BS(\Cnn).$$
Hence, completeness of the DPS hierarchy for $\SEP_n$ (recall (\ref{eq:DPScomplete})) implies completeness of the $\tilDPS$ hierarchy for $\SEPBS_n$:
$$\bigcap_{t\ge 1}\tilDPSt_n =\SEPBS_n.$$  
}
% end ignore

%Here, we investigate the dual cone of $\tilDPSt_n$. 
%We show the following result, which is an analog of Theorem \ref{theo:FF} adapted to the symmetric case.
For $M\in\Herm(\Cnn)$, recall the polynomial $F_M(x,\ovx,y,\ovy)\in\C[x,\ovx,y,\ovy]_{1,1,1,1}$ as defined in (\ref{eq:FM}). By restricting to one set of variables, i.e., setting $y=x$, we obtain  the polynomial
\begin{align}\label{eq:FM-BS}
\tilF_M(x,\ovx):=
F_M(x,\ovx,x,\ovx)=\langle M, xx^*\ot xx^*\rangle \in \C[x,\ovx]_{2,2},
\end{align} 
 with degree 2 in $x$ and degree 2 in $\ovx$. So, $M\in (\SEPBS_n)^*$ means %$\tilF_M(x,\ovx)
 $F_M(x,\ovx,x,\ovx)\ge 0$ for all $x\in \C^n$ (equivalently, for all $x\in \sS^{n-1})$.
In the next result we characterize when $M$ belongs to the dual of the relaxation $\tilDPSt_n$, thus giving an   analog of Theorem \ref{theo:FF} for the symmetric setting.

\begin{theorem}\label{theo:FF-BS}
Consider a matrix $M\in \Herm(\Cnn)$ and an integer $t\ge 1$. Set $\tau=\lfloor{(t+1)/2}\rfloor$.
The following assertions are equivalent.
\begin{itemize}
\item[(i)] $M\in (\tilDPSt_n)^*$.
\item[(ii)] The polynomial $\|x\|^{2(t-1)} \tilF_M(x,\ovx)=\|x\|^{2t} \langle M, xx^*\ot xx^*\rangle \in \C[x,\ovx]_{2(t+1),2(t+1)}$ is r-sos.
\item[(iii)] There exist matrices $B,W_0,W_1,\ldots,W_\tau \in\Herm(\Cnnt)$ such that 
$$M\ot I_n^{\ot (t-1)} = B+\sum_{s=0}^\tau W_,$$
where $B\in \Herm(S^{t+1}(\C^n))^\perp,\
%( \BS((\C^n)^{\ot (t+1)}))^\perp,\
W_s\in \MW^{(t)}_{n,s} \text{ for } s=0,1,\ldots,\tau.$
\end{itemize}
\end{theorem}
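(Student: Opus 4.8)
The plan is to prove the exact equivalence (ii)$\Leftrightarrow$(iii) via a Gram-matrix dictionary between real sums of squares and partial-transpose cones, and then to derive (i)$\Leftrightarrow$(iii) by conic duality, exploiting the closedness of the r-sos cone to bypass the usual closure subtleties. Throughout I set $\phi(M)=M\ot I_n^{\ot(t-1)}$, which is the adjoint of the partial trace $\Tr_{B[2:t]}$, and I write $\mathcal C=\BS((\C^n)^{\ot(t+1)})\cap\bigcap_{s=0}^\tau\MW^{(t)}_{n,s}$, so that by definition (\ref{eq:DPSt-BS}) one has $\tilDPSt_n=\Tr_{B[2:t]}(\mathcal C)$ (the reduction of the partial-transpose conditions from all $s\le t$ to $s\le\tau$, recorded in (\ref{eq:DPS2-BS}), is already built into $\mathcal C$). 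The starting point is the identity $\langle\phi(M),(xx^*)^{\ot(t+1)}\rangle=\langle M,xx^*\ot xx^*\rangle\,\|x\|^{2(t-1)}=\|x\|^{2(t-1)}\tilF_M$, which matches the polynomial appearing in (ii).

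For (iii)$\Rightarrow$(ii): given $\phi(M)=B+\sum_{s=0}^\tau W_s$, I evaluate $\langle\,\cdot\,,(xx^*)^{\ot(t+1)}\rangle$. The term $B\in\Herm(S^{t+1}(\C^n))^\perp$ contributes $0$ since $x^{\ot(t+1)}$ is a symmetric tensor. For each $W_s$, writing $P_s:=W_s^{T_{B[s]}}\succeq0$ and factoring $P_s=\sum_\ell v_\ell v_\ell^*$, and using (\ref{eq:basicPT}) together with self-adjointness of the partial transpose, I get $\langle W_s,(xx^*)^{\ot(t+1)}\rangle=\langle P_s,ww^*\rangle=\sum_\ell|\langle v_\ell,w\rangle|^2$ with $w=x^{\ot(t+1-s)}\ot\ovx^{\ot s}$; each $\langle v_\ell,w\rangle$ is a polynomial of bidegree $(t+1-s,s)$, so the sum is r-sos. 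For the converse (ii)$\Rightarrow$(iii) I group an r-sos decomposition by the bidegree of its squares; since $|h|^2=|\overline h|^2$ swaps the two degrees, every square can be placed in a sector with $\ovx$-degree $s\le\tau$. Each sector $s$ assembles into $\langle P_s,ww^*\rangle$ for a positive semidefinite Gram matrix $P_s$, and setting $W_s:=P_s^{T_{B[s]}}$ gives $W_s\in\MW^{(t)}_{n,s}$. The residual $B:=\phi(M)-\sum_s W_s$ then satisfies $\langle B,(xx^*)^{\ot(t+1)}\rangle\equiv0$, and since the matrices $(x^{\ot(t+1)})(x^{\ot(t+1)})^*$ span $\Herm(S^{t+1}(\C^n))$ over $\R$, this forces $B\in\Herm(S^{t+1}(\C^n))^\perp$, yielding (iii).

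For (i)$\Leftrightarrow$(iii) I pass to duals. The partial-transpose cone $\MW^{(t)}_{n,s}$ is self-dual (because $T_{B[s]}$ is a self-adjoint involution preserving positive semidefiniteness) and $\BS((\C^n)^{\ot(t+1)})^*=\Herm(S^{t+1}(\C^n))^\perp$. Writing $\mathcal L:=\Herm(S^{t+1}(\C^n))^\perp+\sum_{s=0}^\tau\MW^{(t)}_{n,s}$, the rule ``dual of a sum equals intersection of duals'' gives $\mathcal L^*=\mathcal C$ exactly. By (ii)$\Leftrightarrow$(iii) the r-sos cone $\mathcal K:=\{M:\|x\|^{2(t-1)}\tilF_M\text{ is r-sos}\}$ equals $\phi^{-1}(\mathcal L)$, and $\mathcal K$ is closed. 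Applying the dual-of-preimage identity, $\mathcal K^*=\mathrm{cl}(\phi^*(\mathcal L^*))=\mathrm{cl}(\Tr_{B[2:t]}(\mathcal C))=\mathrm{cl}(\tilDPSt_n)$, whence $\mathcal K=\mathcal K^{**}=(\mathrm{cl}\,\tilDPSt_n)^*=(\tilDPSt_n)^*$. This is precisely (i)$\Leftrightarrow$(ii)$\Leftrightarrow$(iii).

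The main obstacle I anticipate is the faithful Gram dictionary of the middle paragraph: one must check that conjugated variables correspond exactly to transposed registers, so that a positive semidefinite Gram block in sector $s$ produces a matrix $W_s$ whose partial transpose $T_{B[s]}$ (and not some other register pattern) is positive semidefinite, and one must handle the symmetrization bookkeeping — collapsing the full range of sectors onto $s\le\tau$ via $|h|^2=|\overline h|^2$, and certifying that the non-symmetric remainder lands in $\Herm(S^{t+1}(\C^n))^\perp$ through the spanning property of symmetric rank-one tensors. The duality step is then routine, the only delicate point being to invoke closedness of $\mathcal K$ rather than attempting to prove directly that $\mathcal L$ or $\tilDPSt_n$ is closed.
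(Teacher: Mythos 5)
Your treatment of (ii)$\Leftrightarrow$(iii) is essentially the paper's own argument: the paper also evaluates a decomposition as in (iii) against $(xx^*)^{\ot(t+1)}$ to obtain (ii), and for the converse it likewise splits an r-sos certificate into bidegree sectors, folds the sectors with $\ovx$-degree larger than $\tau$ onto those with $s\le\tau$ (the paper does this through Hermitian squares $q_\ell^2$, pairing the sector-$s$ and sector-$(t+1-s)$ components via $q_\ell=\overline{q_\ell}$, which is the same computation as your diagonal terms of $h_\ell\overline{h_\ell}$), builds Gram matrices $A_s\succeq 0$, and places the residual in $\Herm(S^{t+1}(\C^n))^\perp$ via the spanning property of the symmetric rank-one matrices.

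The duality step, however, has a genuine gap. The ``dual-of-preimage identity'' $\mathcal K^*=(\phi^{-1}(\mathcal L))^*=\mathrm{cl}(\phi^*(\mathcal L^*))$ is not a valid general fact when $\mathcal L$ is not closed, and closedness of the preimage $\mathcal K$ does not repair it. Indeed, by the bipolar theorem one always has $\mathrm{cl}(\phi^*(\mathcal L^*))=\bigl(\phi^{-1}(\mathrm{cl}\,\mathcal L)\bigr)^*$, so your claimed identity is equivalent to $\phi^{-1}(\mathrm{cl}\,\mathcal L)=\phi^{-1}(\mathcal L)$, i.e., to the assertion that any point of the range of $\phi$ lying in $\mathrm{cl}\,\mathcal L$ already lies in $\mathcal L$ --- and that is precisely what still has to be proved, since $\mathcal L$ is a sum of closed cones and such sums need not be closed. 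The identity can actually fail under exactly your hypotheses: for $\phi:\R\to\R^2$, $\phi(m)=(m,0)$, and the non-closed convex cone $\mathcal L=\{(a,b):b>0\}\cup\{(a,0):a\ge0\}$, the preimage $\phi^{-1}(\mathcal L)=\R_+$ is closed, yet $(\phi^{-1}(\mathcal L))^*=\R_+$ while $\mathrm{cl}(\phi^*(\mathcal L^*))=\{0\}$. So, as written, the chain $\mathcal K^*=\mathrm{cl}(\tilDPSt_n)$ is unsupported.

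The missing step is exactly where the paper does additional work: show that $\phi(M)\in\mathrm{cl}\,\mathcal L$ already implies (ii). If $\phi(M)=\lim_k\bigl(B_k+\sum_{s=0}^\tau W_{k,s}\bigr)$ with $B_k\in\Herm(S^{t+1}(\C^n))^\perp$ and $W_{k,s}\in\MW^{(t)}_{n,s}$, then $\|x\|^{2(t-1)}\tilF_M$ is the limit of the polynomials $\langle B_k+\sum_s W_{k,s},(xx^*)^{\ot(t+1)}\rangle$, each of which is r-sos by your own (iii)$\Rightarrow$(ii) computation; since the cone of r-sos polynomials in $\C[x,\ovx]_{t+1,t+1}$ is closed, the limit is r-sos. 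Note that this is where closedness of the r-sos cone must really be invoked --- not merely to make $\mathcal K$ closed --- because the approximating matrices $B_k+\sum_s W_{k,s}$ need not lie in the range of $\phi$, so closedness of $\mathcal K$ (a statement internal to the domain of $\phi$) never gets to see them. With this step one has $\phi^{-1}(\mathrm{cl}\,\mathcal L)=\mathcal K$, and your duality computation then goes through verbatim with $\mathrm{cl}\,\mathcal L$ in place of $\mathcal L$, recovering (i)$\Leftrightarrow$(ii)$\Leftrightarrow$(iii). This is in effect the paper's route: it introduces the closure condition as a fourth item (iv), proves (i)$\Leftrightarrow$(iv) by the same sum/intersection and self-duality facts you use, and then descends from (iv) to (ii) by the limit argument above.
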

In comparison with Theorem \ref{theo:FF}, we are now asking in (iii) that the matrix $B$ belongs to the larger space
%$ ( \BS((\C^n)^{\ot (t+1)}))^\perp$, 
$\Herm(S^{t+1}(\C^n))^\perp$, which contains the space $\Herm(\C^n\ot S^t(\C^n))^\perp$
%$(\MV^{(t)}_n)^\perp$ 
involved in Theorem~\ref{theo:FF}(iii). In addition, we only need  the subspaces $\MW^{(t)}_{n,s}$ for $s\le \tau=\lfloor{(t+1)/2}\rfloor$.
%We defer the proof, which is a bit technical, to Appendix \ref{sec:proof-theo-FF-BS}.

\begin{remark}\label{rem:compare-GNP}
%\ML{In their recent work\cite[Section 7]{GNP_2025}, the authors introduce  hierarchies of cones $\text{\rm PPTBExt}^{(t)}_n$ for separable states and $\text{\rm PPTBExt}^{(K_t)}_n$ ($r\ge 0$) for separable Bose symmetric states, to which they refer, respectively, as `star extendibility' and  `complete extendability'.
% Observe that   $\text{\rm PPTBExt}^{(t)}_n=\DPS^{(t)}_n$ (although this connection is not explicitly mentioned in \cite{GNP_2025}) and that 
%$\text{\rm PPTBExt}^{(K_{t+1})}_n=\tilDPSt_n$. }

%\ML{They also give characterizations for the dual hierarchies. 
The set \smash{$\tilDPSt_n$} corresponds to the set  \smash{$\text{\rm PPTBExt}^{(K_{t+1})}_n$} considered in \cite{GNP_2025}. 
The characterization of the dual of \smash{$\text{\rm PPTBExt}^{(t)}_n$} given in 
% \cite[Proposition 7.3]{GNP_2025}  corresponds to the equivalence of items (i) and (iii) in Theorem \ref{theo:FF} (due to Fang and Fawzi \cite{FF-sphere}), and their characterization of the dual of  $\text{\rm PPTBExt}^{(K_t)}_n$ 
 \cite[Prop.~8.9]{GNP_2025}  corresponds to the equivalence of items (i) and (iii) in Theorem~\ref{theo:FF-BS}. 
The additional characterization (ii) in terms of r-sos polynomials, %for \smash{$M\in (\tilDPSt_n)^*$}, 
which is present in  Theorem  \ref{theo:FF-BS} (as well as in Theorem \ref{theo:FF}), is not given in \cite{GNP_2025}. Showing   this characterization (ii) constitutes in fact  the most technical part of the proof.
On the other hand, it
%in Theorem \ref{theo:FF-BS}(ii) for matrices $M\in (\tilDPS^{(t)}_n)^*$ 
 is especially relevant since it is directly comparable to the characterization (\ref{eq:KtCOP}) in terms of sums of squares    for matrices $A\in \MK^{(t)}_n$. 
%This motivates investigating the relationships between the relaxations $\tilDPS^{(t)}_n$ for the Bose symmetric separable cone $\SEPBS_n$ and the relaxations $\MK^{(t)}_n$ for the copositive cone $\COP_n$, as we do in the next section.
 \end{remark}

Before giving the proof, note that, as 
 in the general (non-symmetric) case, the LDUI projection preserves membership in \smash{$\tilDPSt_n$}and its dual cone, a property we will use in the next section.

\begin{lemma}\label{lem:LDUI-BS} 
Assume that $\rho_{AB}, M\in \Herm(\Cnn)$. Then,
 $\rho_{AB}\in \tilDPSt_n$ implies  $\PiLDUI(\rho_{AB})\in \tilDPSt_n$, and 
 $M\in (\tilDPSt_n)^*$ implies $\PiLDUI(M)\in (\tilDPSt_n)^*$.
 \end{lemma}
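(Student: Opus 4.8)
The plan is to establish the primal inclusion first, by showing that the extended LDUI projection $\Pi_{\LDUI^{(t)}_n}$ sends a $\tilDPSt$-certificate to a $\tilDPSt$-certificate, and then to derive the dual inclusion from the self-adjointness of $\Pi_{\LDUI_n}$. Concretely, starting from $\rho_{AB}\in\tilDPSt_n$ with certificate $\rho_{AB[t]}\in\BS((\C^n)^{\ot(t+1)})$ satisfying (\ref{eq:DPS2-BS}) and $\rho_{AB}=\Tr_{B[2:t]}(\rho_{AB[t]})$, I would argue that $\Pi_{\LDUI^{(t)}_n}(\rho_{AB[t]})$ is a certificate for $\Pi_{\LDUI_n}(\rho_{AB})$. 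This requires three verifications: that it is Bose symmetric over all $t+1$ registers, that it lies in $\MW^{(t)}_{n,s}$ for every $s=0,1,\ldots,\lfloor(t+1)/2\rfloor$, and that tracing out the last $t-1$ B-registers recovers $\Pi_{\LDUI_n}(\rho_{AB})$.

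The latter two verifications carry over unchanged from the non-symmetric setting. Membership in $\MW^{(t)}_{n,s}$ is the LDUI analog of Lemma \ref{lem:WsPiCLDUIt}(ii): conjugating by $U\ot U^{\ot t}$ and then applying $T_{B[s]}$ turns the group average into conjugation by $U\ot\overline U^{\ot s}\ot U^{\ot(t-s)}$, which preserves positive semidefiniteness, so $\rho_{AB[t]}^{T_{B[s]}}\succeq0$ yields $(\Pi_{\LDUI^{(t)}_n}(\rho_{AB[t]}))^{T_{B[s]}}\succeq0$. The partial-trace identity $\Tr_{B[2:t]}(\Pi_{\LDUI^{(t)}_n}(\rho_{AB[t]}))=\Pi_{\LDUI_n}(\Tr_{B[2:t]}(\rho_{AB[t]}))$ is exactly the computation performed in the proof of Theorem \ref{theo:CLDUItDPSCert}; alternatively it follows directly from the identity $\Tr_{B[2:t]}((U\ot U^{\ot t})\rho(U\ot U^{\ot t})^*)=(U\ot U)\Tr_{B[2:t]}(\rho)(U\ot U)^*$, valid by unitarity of $U$.

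The genuinely new ingredient, which I expect to be the crux, is the preservation of full Bose symmetry. Here the essential observation is that, unlike the CLDUI projection (which uses $U\ot\overline U^{\ot t}$ and therefore only commutes with permutations of the $t$ B-registers), the LDUI projection uses the fully symmetric operator $U\ot U^{\ot t}=U^{\ot(t+1)}$. Consequently conjugation by it commutes with every register permutation $\sigma\in\Perm_{t+1}$: writing $U=\Diag(u)$ and comparing entries one checks $(U^{\ot(t+1)}\rho(U^{\ot(t+1)})^*)^\sigma=U^{\ot(t+1)}\rho^\sigma(U^{\ot(t+1)})^*$, since reordering the scalar factors of the diagonal congruence does not change their product. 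Integrating over $\MDU_n$ then gives $(\Pi_{\LDUI^{(t)}_n}(\rho_{AB[t]}))^\sigma=\Pi_{\LDUI^{(t)}_n}(\rho_{AB[t]}^\sigma)$, which equals $\Pi_{\LDUI^{(t)}_n}(\rho_{AB[t]})$ for all $\sigma\in\Perm_{t+1}$ because $\rho_{AB[t]}$ is Bose symmetric. Hence $\Pi_{\LDUI^{(t)}_n}(\rho_{AB[t]})\in\BS((\C^n)^{\ot(t+1)})$, which finishes the verification and shows $\Pi_{\LDUI_n}(\rho_{AB})\in\tilDPSt_n$.

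Finally, for the dual inclusion I would use that $\Pi_{\LDUI_n}$ is the orthogonal projection onto the LDUI subspace, hence self-adjoint for the trace inner product; that is, $\langle\Pi_{\LDUI_n}(M),\rho\rangle=\langle M,\Pi_{\LDUI_n}(\rho)\rangle$ for all $M,\rho\in\Herm(\Cnn)$, which follows by the substitution $U\mapsto U^{*}$ in the Haar average (using its invariance). Combining this with the primal part: for any $\rho\in\tilDPSt_n$ we have $\Pi_{\LDUI_n}(\rho)\in\tilDPSt_n$, so if $M\in(\tilDPSt_n)^*$ then $\langle\Pi_{\LDUI_n}(M),\rho\rangle=\langle M,\Pi_{\LDUI_n}(\rho)\rangle\ge0$. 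As $\rho$ was arbitrary in $\tilDPSt_n$, this gives $\Pi_{\LDUI_n}(M)\in(\tilDPSt_n)^*$.
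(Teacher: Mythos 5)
Your proof is correct and follows essentially the same route as the paper: reduce the dual statement to the primal one via self-adjointness of $\PiLDUI$, and show that the projected certificate $\PiLDUIt(\rho_{AB[t]})=\int_{\MDU_n} U^{\ot (t+1)} \rho_{AB[t]} (U^{\ot (t+1)})^*\, dU$ is a $\tilDPSt$-certificate for $\PiLDUI(\rho_{AB})$. The only difference is that the paper asserts this last fact without detail, whereas you verify the three required properties (full Bose symmetry via commutation of conjugation by $U^{\ot(t+1)}$ with all register permutations, membership in $\MW^{(t)}_{n,s}$, and the partial-trace identity) explicitly and correctly.
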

 
 \begin{proof}
It suffices to show the first claim, since it implies the second one using the fact that
$\langle \PiLDUI(M),\rho_{AB}\rangle=\langle M, \PiLDUI(\rho_{AB}\rangle.$
Assume $\rho_{AB}\in {\tilDPSt_n}$, and let $\rho_{AB[t]}$ be a  {$\tilDPSt$}-certificate for it.
Then, the projection of this certificate
$$\PiLDUIt(\rho_{AB[t]})=\int_{\MDU_n} U^{\ot (t+1)} \rho_{AB[t]} (U^{\ot (t+1)})^* dU
$$
is a \smash{$\tilDPSt$}-certificate for $\PiLDUI(\rho_{AB})$, showing
$\PiLDUI(\rho_{AB})\in\smash{\tilDPSt_n}$.
 \qed\end{proof}

%In the next section we use the characterization from Theorem \ref{theo:FF-BS} to establish a correspondence between the relaxations $\tilDPSt_n$ for $\SEPBS_n$ and the (dual of the)  relaxations $\MKt_n$ for   $\COP_n$.

\begin{proof}[Proof of Theorem \ref{theo:FF-BS}] %We now give the proof of Theorem \ref{theo:FF-BS}.
The proof is along the same lines as for Theorem \ref{theo:FF} in \cite{FF-sphere}. There are, however,   some differences since one needs  to restrict to $s\le \tau=\lfloor(t+1)/2\rfloor$ in (iii), which adds some technicalities. Our argument is also more compact.
Let $M\in \Herm(\Cnn)$ and let $$p(x,\ovx)=\|x\|^{2(t-1)} \langle M, xx^*\ot xx^*\rangle\in\C[x,\ovx]_{t+1,t+1}$$ denote the polynomial appearing in Theorem \ref{theo:FF-BS}(ii).
Recall the conditions involved in  Theorem~\ref{theo:FF-BS}: (i) \smash{$M\in(\tilDPSt_n)^*$}; (ii) $p$ is r-sos; 
(iii)  $M\ot I_n^{(t-1)}$ belongs to $ (\BS((\C^n)^{\ot (t+1)})^\perp+\sum_{s=0}^{\tau} \MW^{(t)}_{n,s}$; and the additional condition (iv) $M\ot I_n^{(t-1)}\in \text{\rm cl}((\BS((\C^n)^{\ot (t+1)})^\perp+\sum_{s=0}^\tau \MW^{(t)}_{n,s})$ (where `cl' denotes taking the closure).
Recall the definition of \smash{$\tilDPSt_n$} in (\ref{eq:DPSt-BS}) and of   $\MW^{(t)}_{n,s}$ in  (\ref{eq:Wst}), and   $\tau=\tftwo$.

\medskip
The implication  (iii) $\Longrightarrow$ (iv) is clear. The implication (iii) $\Longrightarrow$ (ii) follows by taking the inner product of $M\ot I_n^{\ot (t-1)}$  with $(xx^*)^{\ot (t+1)}$ and using the fact that 
$$\langle W_s, (xx^*)^{\ot (t+1)}\rangle=\langle W_s^{T_B[s]}, xx^* \ot (\ovx \ovx ^*)^{\ot s}\ot (xx^*)^{\ot (t-s)}\rangle$$ is r-sos for any $W_s\in \MW^{(t)}_{n,s}$ (since $W_s^{T_{B[s]}}\succeq 0$).% and $0\le s\le t$.

\medskip
We now show     (i) $\Longleftrightarrow$ (iv). 
By definition of the dual cone $(\tilDPSt_n)^*$,  
\begin{align*}
&M\in (\tilDPSt_n)^* \Longleftrightarrow \langle M, \rho_{AB}\rangle\geq0 \text{ for all } \rho_{AB}\in \tilDPSt_n\\
& \Longleftrightarrow 
\langle M\ot I_n^{\ot (t-1)}, \rho_{AB[t]}\rangle\geq0 \text{ for all }   
\rho_{AB[t]} \in \BS((\C^n)^{\ot (t+1)})\cap \bigcap_{s=0}^\tau \MW^{(t)}_{n,s}\\
& \Longleftrightarrow M\ot I_n^{\ot (t-1)} \in \Big(\BS(\Cnnt) \cap \bigcap_{s=0}^\tau \MW^{(t)}_{n,s}\Big)^*
%= \text{cl}\Big((\BS((\C^n)^{\ot (t+1)}))^{\perp} +\sum_{s=0}^\tau \MW^{(t)}_{n,s}\Big).
\end{align*}
where the latter set is equal to
$ \text{cl}\big((\BS((\C^n)^{\ot (t+1)}))^{\perp} +\sum_{s=0}^\tau \MW^{(t)}_{n,s}\big)$.
 Above, at the second line, we use the definition of the partial trace and of  {$\tilDPSt_n$} in (\ref{eq:DPSt-BS}).
 At the last line, we use the well-known convex geometry fact that the dual of an intersection of cones is the closure of the sum of the duals of the cones, combined with the fact that the cone $\MW^{(t)}_{n,s}$ is self-dual.

\medskip
We show  (iv) $\Longrightarrow$ (ii).
For  this, assume $M\ot I_n^{\ot (t-1)}=\lim_{\ell \to\infty} A_\ell$, where 
$A_\ell = B_\ell +\sum_{s=0}^\tau W_{\ell,s}$ with $B_\ell\in (\BS((\C^n)^{\ot (t+1)}))^{\perp}$ and $W_{\ell,s}\in \MW^{(t)}_{n,s}$. Define the polynomial 
\begin{align*}
\sigma_\ell(x,\ovx)=\langle A_\ell, (xx^*)^{\ot (t+1)}\rangle 
& = 
\big\langle \sum_{s=0}^\tau  W_{\ell,s}, (xx^*)^{\ot (t+1)}\big\rangle \\
& =
 \sum_{s=0}^\tau \langle W_{\ell,s}^{T_B[s]}, x x^*\ot (\ovx \ovx^*)^{\ot s}\ot (xx^*)^{\ot (t-s)} \rangle.
\end{align*}
Hence, $\sigma_\ell $ is r-sos since   $W_{\ell,s}^{T_B[s]}\succeq 0$ for each $s$.
As $p=\lim_{\ell\to\infty} \sigma_\ell$,  it follows that  $p$ is r-sos   (using the fact that 
the cone of r-sos polynomials in $\C[x,\ovx]_{t+1,t+1}$  is a closed set).  So, (ii) holds.

\medskip
Finally, we show  (ii) $\Longrightarrow$ (iii),  which is the most technical part. Assume $p$ is r-sos. 
Then, $p=\sum_\ell q_\ell^2  $ for some Hermitian polynomials $q_\ell\in \C[x,\ovx]$ with total degree $t+1$ in $x,\ovx$. We can write
$$q_\ell(x,\ovx)= \sum_{s=0}^t \langle a_{\ell,s}, \ovx^{\ot s}\ot x^{\ot (t+1-s)}\rangle \quad \text{ for some } a_{\ell,s}\in (\C^n)^{\ot (t+1)}.$$
As $q_\ell$ is Hermitian, we have $q_\ell=\overline{q_{\ell}}$, which implies
\begin{align}\label{eq:qHermitian}
\langle a_{\ell,s}, \ovx^{\ot s}\ot x^{\ot (t+1-s)}\rangle 
= \langle \overline{a_{\ell, t+1-s}}, x^{\ot (t+1-s)}\ot \ovx^{\ot s}\rangle
 \text{ for } s=0,1,\ldots,t.
\end{align}
Then, $q_\ell^2= \sum_{s,s'=0}^t 
\langle a_{\ell,s}, \ovx^{\ot s}\ot x^{\ot (t+1-s)}\rangle \cdot 
 \langle  {a_{\ell,s'}}, \ovx^{\ot s'}\ot x^{\ot (t+1-s')}\rangle.$
 As $p=\sum_\ell q_\ell^2$ has degree $t+1$ in $\ovx$, it follows that  the following sum  is identically zero:
$$\sum_\ell \sum_{s,s'=0, s+s'\ne t+1}^t \langle a_{\ell,s}, \ovx^{\ot s}\ot x^{\ot (t+1-s)}\rangle \cdot 
 \langle {a_{\ell,s'}}, \ovx^{\ot s'}\ot x^{\ot (t+1-s')}\rangle =0,$$
because any monomial occurring in it has degree in $\ovx$ distinct from $t+1$.
Therefore, we obtain
\begin{align*}
p=\sum_\ell q_\ell ^2 & =
\sum_\ell  \sum_{s=0}^t  
\underbrace{\langle a_{\ell,s}, \ovx^{\ot s}\ot x^{\ot (t+1-s)}\rangle \cdot 
 \langle {a_{\ell,t+1-s}}, \ovx^{\ot (t+1-s)}\ot x^{\ot s}\rangle}_{= T_{\ell,s}} \\
 &= \sum_\ell \sum_{s=0}^t T_{\ell,s}.
\end{align*}
Observe that $T_{\ell,s}=T_{\ell,t+1-s}$. This implies that 
$
\sum_{s=0}^t T_{\ell,s} =\sum_{s=0}^\tau c_s T_{\ell,s}$
with $c_s=2$  for $0\le s\le \tau -1$,
$c_\tau =2$  for odd $t$, and $c_\tau =1$   for even $t$.
Consider  the summand $\sum_{\ell} T_{\ell,s}$ for some  given $s\le \tau$. Using (\ref{eq:qHermitian}), we obtain
\begin{align*}
\sum_\ell T_{\ell,s}  & =\sum_\ell \langle a_{\ell,s}, \ovx^{\ot s}\ot x^{\ot (t+1-s)}\rangle \cdot 
 \langle {a_{\ell,t+1-s}}, \ovx^{\ot (t+1-s)}\ot x^{\ot s}\rangle
 \\
 &= \sum_\ell \langle a_{\ell,s}, \ovx^{\ot s}\ot x^{\ot (t+1-s)}\rangle \cdot 
 \overline{ \langle a_{\ell,s}, \ovx^{\ot s}\ot x^{\ot (t+1-s)}\rangle}  \\
 & =\sum_\ell (a_{\ell,s})^* (\ovx^{\ot s}\ot x^{\ot (t+1-s)}) ( \ovx^{\ot s}\ot x^{\ot (t+1-s)})^*a_{\ell,s} \\
%&=  \sum_\ell    (a_{\ell,s})^* (\ovx^{\ot s}\ot x^{\ot (t+1-s)}) \ (\ovx^{\ot s}\ot x^{\ot (t+1-s)})^* a_{\ell,s}\\
&=\Big\langle \sum_\ell   a_{\ell,s} (a_{\ell,s})^*, (\ovx^{\ot s}\ot x^{\ot (t+1-s)}) \ (\ovx^{\ot s}\ot x^{\ot (t+1-s)})^* \Big\rangle
\\
&=\langle A_s, (\ovx^{\ot s}\ot x^{\ot (t+1-s)}) \ (\ovx^{\ot s}\ot x^{\ot (t+1-s)})^* \rangle\\
&
= \langle A_s^{T_B[s]}, x^{\ot (t+1)} (x^{\ot (t+1)})^*\rangle.
\end{align*}
Above, at the last but one line, we set $A_s=\sum_\ell a_{\ell,s}(a_{\ell,s})^*$ and, at the last line, we take the partial transpose $T_{B[s]}$ at both arguments in the inner product. 
%Note that $A_s^{T_B[s]}\in \MW^{(t)}_{n,s}$ (since $A_s\succeq 0$ by construction).
Putting things together we obtain
\begin{align*}
p= \sum_\ell q_\ell^2 & =\sum_\ell  \sum_{s=0}^t    T_{\ell,s} 
= \sum_\ell \sum_{s=0}^\tau c_s T_{\ell,s}\\
&=\sum_{s=0}^\tau c_s\sum_\ell T_{\ell,s}
=\Big \langle \sum_{s=0}^\tau c_s A_s^{B[s]}, x^{\ot (t+1)} (x^{\ot (t+1)})^*\Big\rangle.
\end{align*}
So, we have shown  the polynomial identity
$$p=\langle M\ot I_n^{\ot (t-1)}, (xx^*)^{\ot (t+1)}\rangle =  \big\langle  \sum_{s=0}^\tau c_sA_s^{T_{B[s]}}, (xx^*)^{\ot (t+1)}\big\rangle.$$
%where $A_s^{T_{B[s]}}\in \MW^{(t)}_{n,s}$ (since $A_s\succeq 0$).
This implies   $M\ot I_n^{\ot (t-1)} - \sum_{s=0}^\tau c_s A_s^{T_{B[s]}} \in (\BS((\C^n)^{\ot (t+1)})^\perp$.
This shows that (iii) holds, since $c_sA_s^{T_{B[s]}}\in \MW^{(t)}_{n,s}$ (because $A_s\succeq 0$). This concludes the proof of Theorem \ref{theo:FF-BS}.
\qed \end{proof}

%\subsection{Connections between the hierarchy $(\tilDPSt_n)^*$ and the hierarchy $\MKt_n$}
\subsection[\smash{Connections between the hierarchy $(\tilDPSt_n)^*$ and the hierarchy $\MKt_n$}]{Connections between the hierarchy $(\tilDPSt_n)^*$ and the hierarchy $\MKt_n$}

Given a matrix $X\in \MS^n$, consider the associated bipartite states
$\rho_{(X,X)}$ and $ \rho_{(X,X)}^{T_B}=\rho_{(X,\cdot,X)}$, which  are, respectively, CLDUI and LDUI. We focus here on the state $ \rho_{(X,X)}^{T_B}$ that has the additional property of being Bose symmetric.  
Recall from relation (\ref{eq:CP-PCP-TCP}) that
$$\rho_{(X,X)}\in\SEP_n\Longleftrightarrow \rho_{(X,X)}^{T_B} \in \SEP_n\Longleftrightarrow X\in \CP_n.$$
Hence, to check separability of $\rho_{(X,X)}$, a first option is to  use the DPS hierarchy. However, another, likely better, option is to use the stronger $\tilDPS$ hierarchy, applied to the Bose symmetric state $\rho_{(X,X)}^{T_B}$. A third option is to use the  hierarchy of cones $(\MKt_n)^*$, applied to the matrix $X$. We will now  
show that  these last two options  are in fact
 equivalent (see Theorem \ref{theo:DPS-CP}).

\medskip
For this, we first establish a link on the `dual polynomial side'. In Theorem \ref{theo:DPS-KCOP} we essentially\footnote{For a pair $(A,B)\in \R^{n\times n}\times  \Herm^n$ with $\diag(A)=\diag(B)$,  \cite[Theorem 8.11]{GNP_2025} shows that    $\rho_{(A,\cdot,B)}$ belongs to the dual of {$\tilDPSt_n$} if and only if the matrix $A+A^T+ 2\Rea(B-D_B)\in\MK^{(t-1)}_n$. In view of relation (\ref{eq:MST-AB-GNP}), we have $\rho_{(A,\cdot,B)}=\rho_{(A,B)}^{T_B}= M_{S,T}^{T_B}$ with $S=A$ and $T=B^T-D_B$, which makes the link to Theorem \ref{theo:DPS-KCOP}.}
 recover the result from \cite[Theorem 8.11]{GNP_2025}. Here, we give a direct proof for it, 
based on using   Theorem \ref{theo:FF-BS}(ii) combined with the  correspondence (\ref{eq:rsos-sos}) between Hermitian r-sos polynomials and their real couterparts.

Given  a matrix pair $(S,T)\in \R^{n\times n}\times \Herm^n$, consider the matrix $M_{S,T}$ from    (\ref{eq:MST}), which is CLDUI by construction, so that its partial transpose is LDUI and  reads
\begin{align}\label{eq:MST-TB}
	M_{S,T}^{T_B} 
	%=\sum_{i=1}^n(S_{ii}+T_{ii})e_ie_i^*\otimes e_ie_i^* +
	=\sum_{i,j=1}^nS_{ij}e_ie_i^*\otimes e_je_j^*+\sum_{i,j=1}^nT_{ij}e_je_i^*\otimes e_ie_j^*.
	\end{align}
%We will   use the following fact from (\ref{eq:MST-rhoXY})): for a matrix $X\in \MS^n$,
%\begin{align}\label{eq:MST-X}
%\langle M_{S,T},\rho_{(X,X)}\rangle = \langle S+T,X\rangle=\langle (S+S^T+T+T^T)/2, X\rangle.
%\end{align}
	
\begin{theorem}\label{theo:DPS-KCOP}
For $(S,T)\in\R^{n\times n}\times \Herm^n$, consider the matrix $M_{S,T}$ and its partial transpose $M_{S,T}^{T_B}$ as in (\ref{eq:MST-TB}), and set $A=(S+S^T+T+T^T)/2\in\MS^n$.
 For any integer $t\ge 1$, we have 
 $$ M_{S,T}^{T_B}\in \smash{ (\tilDPSt_n)^* }\ \Longleftrightarrow \ A\in\MK^{(t-1)}_n.$$ 
In particular,  for any $A\in\MS^n$,    $M_{A,0}^{T_B} \in \smash{ (\tilDPSt_n)^*}$ $\Longleftrightarrow $ 
$A\in \MK^{(t-1)}_n$.
\end{theorem}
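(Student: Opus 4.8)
The plan is to reduce the membership statement to a sum-of-squares statement via Theorem~\ref{theo:FF-BS}(ii), and then to match it with the defining sum-of-squares condition for $\MK^{(t-1)}_n$ using the correspondence (\ref{eq:rsos-sos}).

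First I would compute the Hermitian polynomial $\tilF_{M_{S,T}^{T_B}}(x,\ovx)=\langle M_{S,T}^{T_B}, xx^*\ot xx^*\rangle$. Using that the partial transpose is self-adjoint for the trace inner product together with $(xx^*)^{T}=\ovx\ovx^*$, one gets $\langle M_{S,T}^{T_B}, xx^*\ot xx^*\rangle=\langle M_{S,T}, xx^*\ot \ovx\ovx^*\rangle$, and then relation (\ref{eq:MSTxy}) with $y=\ovx$ gives $\langle S+T,(x\circ\ovx)(x\circ\ovx)^*\rangle$. Since $u:=x\circ\ovx\in\R^n$ is real and the value is real, this equals $u^T\tfrac{S+S^T+T+T^T}{2}u=u^TAu=f_A(|x_1|^2,\ldots,|x_n|^2)$; in particular $\tilF_{M_{S,T}^{T_B}}=\tilF_{M_{A,0}}$, which explains why only $A$ enters. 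By Theorem~\ref{theo:FF-BS}(ii), $M_{S,T}^{T_B}\in(\tilDPSt_n)^*$ is then equivalent to the r-sos property of
$$p(x,\ovx):=\|x\|^{2(t-1)}\tilF_{M_{S,T}^{T_B}}(x,\ovx)=\|x\|^{2(t-1)} f_A(|x_1|^2,\ldots,|x_n|^2)\in\C[x,\ovx]_{t+1,t+1}.$$
The ``in particular'' claim is the special case $(S,T)=(A,0)$, for which $A$ symmetric gives $\tfrac{S+S^T+T+T^T}{2}=A$.

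Next I would translate the r-sos condition into a genuine real sum of squares. Since $p$ is Hermitian with vanishing imaginary part, the correspondence (\ref{eq:rsos-sos}) says that $p$ is r-sos if and only if $p_\Rea\in\R[x_\Rea,x_\Ima]$ is sos; and under $x_k=x_{\Rea,k}+\bfi x_{\Ima,k}$ we have $|x_k|^2=x_{\Rea,k}^2+x_{\Ima,k}^2$, so $p_\Rea=G(x_\Rea^{\circ 2}+x_\Ima^{\circ 2})$ with $G(w):=(\sum_k w_k)^{t-1}(w^TAw)$. On the other hand, by definition (\ref{eq:KtCOP}), $A\in\MK^{(t-1)}_n$ means precisely that $g(z):=\|z\|^{2(t-1)}(z^{\circ 2})^TAz^{\circ 2}=G(z^{\circ 2})$ is sos in the $n$ real variables $z$. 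Thus the theorem reduces to the purely real statement: $G(z^{\circ 2})$ is sos in $\R[z_1,\ldots,z_n]$ if and only if $G(x_\Rea^{\circ 2}+x_\Ima^{\circ 2})$ is sos in $\R[x_\Rea,x_\Ima]$.

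The implication from the $2n$-variable sos to the $n$-variable one (i.e.\ from $M_{S,T}^{T_B}\in(\tilDPSt_n)^*$ to $A\in\MK^{(t-1)}_n$) is easy: restricting to $x_\Ima=0$ sends $G(x_\Rea^{\circ 2}+x_\Ima^{\circ 2})$ to $G(x_\Rea^{\circ 2})=g(x_\Rea)$, and sos is preserved under setting variables to $0$. The converse is the technical heart and the step I expect to be the main obstacle. Here $g=G(z^{\circ 2})$ is invariant under all sign flips $z_k\mapsto-z_k$, so by averaging any sos decomposition over the group $\{\pm1\}^n$ one may assume $g=\sum_m q_m^2$ with each $q_m$ of pure parity $\chi_m\in\{0,1\}^n$, that is, $q_m(z)=z^{\chi_m}r_m(z^{\circ 2})$ for some polynomial $r_m$; comparing coefficients yields the identity $G(w)=\sum_m w^{\chi_m}r_m(w)^2$ in $\R[w]$. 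Substituting $w=x_\Rea^{\circ 2}+x_\Ima^{\circ 2}$ then gives
$$G(x_\Rea^{\circ 2}+x_\Ima^{\circ 2})=\sum_m\Big(\prod_{k:(\chi_m)_k=1}(x_{\Rea,k}^2+x_{\Ima,k}^2)\Big)\,r_m(x_\Rea^{\circ 2}+x_\Ima^{\circ 2})^2,$$
and each summand is a product of a sum of squares with a perfect square, hence sos, so $p_\Rea$ is sos. (Equivalently, one can carry out this lifting directly over $\C$, sending $q_m(z)=z^{\chi_m}r_m(z^{\circ 2})$ to $\ovx^{\chi_m}r_m(x\circ\ovx)$ and producing an explicit decomposition $p=\sum_m h_m\overline{h_m}$.) The two implications together close the chain $M_{S,T}^{T_B}\in(\tilDPSt_n)^*\Leftrightarrow p\text{ r-sos}\Leftrightarrow p_\Rea\text{ sos}\Leftrightarrow g\text{ sos}\Leftrightarrow A\in\MK^{(t-1)}_n$.
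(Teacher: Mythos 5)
Your proposal is correct, and up to the last step it follows the same route as the paper: both pass through Theorem~\ref{theo:FF-BS}(ii) to reduce membership in $(\tilDPSt_n)^*$ to the r-sos property of $p=\|x\|^{2(t-1)}\langle M_{S,T}^{T_B}, xx^*\ot xx^*\rangle$, then use the correspondence (\ref{eq:rsos-sos}) to pass to the real polynomial $p_\Rea=G(x_\Rea^{\circ 2}+x_\Ima^{\circ 2})$, where $G(w)=(\sum_k w_k)^{t-1}\,w^TAw$. (Your derivation of the identity $\langle M_{S,T}^{T_B}, xx^*\ot xx^*\rangle=u^TAu$ with $u=x\circ\ovx$, via self-adjointness of $T_B$ and (\ref{eq:MSTxy}) with $y=\ovx$, is a clean way of verifying what the paper only asserts as ``easily verified''.) The genuine difference is how the two arguments relate the $2n$-variable and the $n$-variable sos conditions. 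The paper observes that $p_\Rea$ is exactly the polynomial defining membership of $J_2\ot A$ in $\MK^{(t-1)}_{2n}$ and then invokes \cite[Lemma 15]{GL_2007} to conclude that $J_2\ot A\in\MK^{(t-1)}_{2n}$ if and only if $A\in\MK^{(t-1)}_n$. You instead prove the needed equivalence from scratch: the direction from $2n$ variables to $n$ variables by setting $x_\Ima=0$, and the converse by symmetrizing an sos decomposition of $g=G(z^{\circ 2})$ under the sign-flip group $\{\pm 1\}^n$, writing each resulting square as $z^{2\chi_m}r_m(z^{\circ 2})^2$, lifting the identity to $\R[w]$ (legitimate because $w_i\mapsto z_i^2$ is an injective ring homomorphism), and substituting $w=x_\Rea^{\circ 2}+x_\Ima^{\circ 2}$, so that each term is a product of sums of squares with a perfect square, hence sos. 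This makes the proof self-contained, as you in effect re-prove the relevant special case of the cited lemma; the cost is one compressed step, namely the symmetrization: averaging over $\{\pm 1\}^n$ alone does not produce pure-parity squares, and one must additionally split each square into its parity (isotypic) components and use character orthogonality over $\{\pm 1\}^n$ to see that the cross terms cancel. That argument is standard and elementary, but it should be spelled out in a final writeup.
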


\begin{proof}
We only need to show the first equivalence since the second one follows as a direct application. Using (\ref{eq:MST-TB}), one can easily verify the following polynomial identity:
\begin{align}\label{eq:pMA}
\begin{split}
p(x,\ovx):= \|x\|^{2(t-1)} \langle M_{S,T}^{T_B}, xx^*\ot xx^*\rangle \\
=\|x\circ \ovx\|^{2(t-1)} \langle A, (x\circ \ovx) (x\circ \ovx)^T\rangle=: f(x\circ \ovx),
\end{split} \end{align}
where $A=(S+S^T+T+T^T)/2$.  Then, the proof amounts to comparing the r-sos property for the polynomial $p$  
(in complex variables $x,\ovx$) at the left-hand side, with the sos property for the polynomial $f$
 (in real variables $x\circ \ovx$) at the right-hand side.
For this, write $x=x_{\Rea}+\bfi x_{\Ima}$, where $x_\Rea=(x_{k,\Rea})_{k=1}^n, x_{\Ima}=(x_{k, \Ima})_{k=1}^n \in \R^n$ are the real and imaginary parts of $x$.
Then, we have $x\circ \ovx= (|x_k|^2)_{k=1}^n=(x_{\Rea,k}^2+x_{\Ima,k}^2)_{k=1}^n$ and thus 
the real part of the polynomial $p$ in (\ref{eq:pMA}) reads
\begin{align*} 
& p_\Rea(x_\Rea,x_\Ima) \\
& = \Big(\sum_{h=1}^n (x_{h,\Rea})^2+(x_{h,\Ima})^2\Big)^{t-1}
\sum_{h,k=1}^n A_{hk} ((x_{h,\Rea})^2+(x_{h, \Ima})^2)
 ((x_{k,\Rea})^2+(x_{k,\Ima})^2)\\
 & =  \|(x_{\Rea},x_{\Ima})\|^{2(t-1)} \begin{pmatrix} x_{\Rea}\cr x_{\Ima}\end{pmatrix}^T  \begin{pmatrix} A & A \cr A& A\end{pmatrix}   \begin{pmatrix} x_{\Rea}\cr x_{\Ima}\end{pmatrix}\\
& =  \|(x_{\Rea},x_{\Ima})\|^{2(t-1)} \begin{pmatrix} x_{\Rea}\cr x_{\Ima}\end{pmatrix}^T (J_2\ot A)  \begin{pmatrix} x_{\Rea}\cr x_{\Ima}\end{pmatrix}.
  %& =  \big(\sum_{h=1}^n (x_{h,\Rea})^2+(x_{h,\Ima})^2\big)^{t-1}
% \sum_{h,k=1}^n A_{hk} ((x_{h,\Rea})^2+(x_{h, \Ima})^2)
% ((x_{k,\Rea})^2+(x_{k,\Ima})^2).
  \end{align*}
 Hence,  the  polynomial $p_\Rea(x_\Rea,x_\Ima)$ is a sum of squares  if and only if the matrix $J_2\ot A$ belongs to $\MK^{(t-1)}_{2n}$, which, 
using \cite[Lemma 15]{GL_2007}, 
is equivalent to asking that   $A$ belongs to $\MK^{(t-1)}_n$.  
  
 We can now complete the proof: $M_{S,T}^{T_B}$ belongs to \smash{$\tilDPSt_n$} precisely when the polynomial $p(x,\ovx)$ is r-sos. By relation (\ref{eq:rsos-sos}), $p(x,\ovx)$ is r-sos if and only if  its real part $p_\Rea(x_\Rea,x_\Ima)$ is sos, which, as shown above, is equivalent to $A$ belonging to $\MK^{(t-1)}_n$. %This concludes the proof. % of Theorem  \ref{theo:DPS-KCOP}.  
\qed\end{proof}

Using duality arguments, one   derives the following result, %from Theorem \ref{theo:DPS-KCOP} 
recovering \cite[Theorem 8.16]{GNP_2025}.

\begin{theorem}\label{theo:DPS-CP}
For a matrix $X\in\MS^n$ and an integer $t\ge 1$, we have
$$  \rho_{(X,X)}^{T_B} \in \smash{\tilDPSt_n} \Longleftrightarrow    
X\in (\MK^{(t-1)}_n)^*.$$
\end{theorem}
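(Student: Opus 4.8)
The plan is to obtain the equivalence by pure conic duality, reading Theorem~\ref{theo:DPS-KCOP} as a statement about dual witnesses. Throughout set $\rho:=\rho_{(X,X)}^{T_B}$, which is simultaneously LDUI and Bose symmetric. Two elementary ingredients underpin the whole argument. First, since the partial transpose $T_B$ permutes the matrix entries it is an orthogonal involution on $\Herm(\Cnn)$, so it preserves the trace inner product; combined with (\ref{eq:MST-rhoXY}) this yields the identity $\langle M_{S,T}^{T_B},\rho\rangle=\langle M_{S,T},\rho_{(X,X)}\rangle=\langle S,X\rangle+\langle T,X\rangle$. Second, because $X\in\MS^n$ one has $\langle S^T,X\rangle=\langle S,X\rangle$ and $\langle T^T,X\rangle=\langle T,X\rangle$, so that $\langle S,X\rangle+\langle T,X\rangle=\langle A,X\rangle$ with $A=(S+S^T+T+T^T)/2$. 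These two facts let me convert every test of $\rho$ against an LDUI dual witness into a test of $X$ against a matrix in $\MK^{(t-1)}_n$.

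For the forward implication I would argue directly. Assume $\rho\in\tilDPSt_n$ and let $A\in\MK^{(t-1)}_n$ be arbitrary. By the ``in particular'' part of Theorem~\ref{theo:DPS-KCOP}, $M_{A,0}^{T_B}\in(\tilDPSt_n)^*$, hence $0\le\langle M_{A,0}^{T_B},\rho\rangle=\langle A,X\rangle$ by the first ingredient (with $T=0$). As $A$ ranges over $\MK^{(t-1)}_n$, this says precisely that $X\in(\MK^{(t-1)}_n)^*$.

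The converse is where the real content lies, and it is here that I would use that $\tilDPSt_n$ is a closed convex cone, so that membership can be tested against its dual: it suffices to show $\langle M,\rho\rangle\ge 0$ for every $M\in(\tilDPSt_n)^*$. Given such $M$, since $\rho$ is LDUI and the projection $\PiLDUI$ is a self-adjoint idempotent fixing $\rho$, I would rewrite $\langle M,\rho\rangle=\langle M,\PiLDUI(\rho)\rangle=\langle\PiLDUI(M),\rho\rangle$. By Lemma~\ref{lem:LDUI-BS} the projected witness still lies in $(\tilDPSt_n)^*$, and being LDUI it can be written as $M_{S,T}^{T_B}$ for some $(S,T)\in\R^{n\times n}\times\Herm^n$. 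Theorem~\ref{theo:DPS-KCOP} then forces $A=(S+S^T+T+T^T)/2\in\MK^{(t-1)}_n$, whereupon the two ingredients give $\langle M,\rho\rangle=\langle A,X\rangle\ge 0$ because $X\in(\MK^{(t-1)}_n)^*$. Since $M$ was arbitrary, $\rho\in(\tilDPSt_n)^{**}=\tilDPSt_n$.

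The main obstacle I anticipate is the closedness of $\tilDPSt_n$ required for the biduality step, and I would settle it by the standard trace-boundedness argument: any $\tilDPSt$-certificate $\rho_{AB[t]}\succeq 0$ satisfies $\Tr(\rho_{AB[t]})=\Tr(\rho_{AB})$, so along a convergent sequence $\rho^{(m)}\to\rho$ in $\tilDPSt_n$ the certificates are positive semidefinite of uniformly bounded trace, hence lie in a compact set; a convergent subsequence yields a certificate for the limit, proving closedness. The only other point needing care is the claim that every LDUI Hermitian matrix has the form $M_{S,T}^{T_B}$, which I would confirm by a short dimension count showing that $(S,T)\mapsto M_{S,T}^{T_B}$ maps onto the LDUI subspace (with kernel $\{(D,-D):D\ \text{real diagonal}\}$).
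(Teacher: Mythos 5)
Your proof is correct and follows essentially the same route as the paper's: the forward direction pairs $\rho_{(X,X)}^{T_B}$ against the witnesses $M_{A,0}^{T_B}$ supplied by Theorem~\ref{theo:DPS-KCOP}, and the converse uses biduality together with Lemma~\ref{lem:LDUI-BS} to reduce to LDUI witnesses of the form $M_{S,T}^{T_B}$, exactly as in the paper. The only difference is that you explicitly justify two points the paper leaves implicit --- the closedness of $\tilDPSt_n$ needed for $(\tilDPSt_n)^{**}=\tilDPSt_n$ (via the bounded-trace compactness argument for certificates) and the surjectivity of $(S,T)\mapsto M_{S,T}^{T_B}$ onto the LDUI subspace --- and both of these you handle correctly.
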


\begin{proof}
Assume $\rho_{(X,X)}^{T_B}\in \smash{\tilDPSt_n}$, we show  
$X\in (\tilMK^{(t-1)}_n)^*$. For this,  we show that $\langle X,A\rangle \ge 0$ for any $A \in \MK^{(t-1)}_n$.
By Theorem \ref{theo:DPS-KCOP}, $A\in \MK^{(t-1)}_n$ implies  $M_{A,0}^{T_B}\in (\tilDPSt_n)^*$. So, combining with (\ref{eq:MST-rhoXY}) gives 
$$0\le \langle M_{A,0}^{T_B}, \rho_{(X,X)}^{T_B}\rangle=\langle M_{A,0}, \rho_{X,X}\rangle 
= \langle A,X\rangle,
$$
as desired. Conversely, assume $X\in (\MK^{(t-1)}_n)^*$, we show $\rho_{(X,X)}^{T_B}\in \tilDPSt_n$.
For this,
we show that $\langle \rho_{(X,X)}^{T_B},M\rangle\ge 0$ for  $M\in  (\tilDPSt_n)^*$.
As $\rho_{(X,X)}^{T_B}$ is LDUI, equality  $\langle \rho_{(X,X)}^{T_B},M\rangle =
\langle \rho_{(X,X)}^{T_B},\Pi_{\LDUI}(M)\rangle$ holds. 
By Lemma \ref{lem:LDUI-BS},   projecting onto $\LDUI_n$ preserves membership in 
the cone  {$(\tilDPSt_n)^*$}. Hence, we may assume that $M$ is LDUI, i.e.,  of the form $M=M_{S,T}^{T_B}$ for some $S,T$. By Theorem \ref{theo:DPS-KCOP},  $M_{S,T}^{T_B}\in \smash{(\tilDPSt_n)^*}$ implies 
$S+S^T+T+T^T\in \MK^{(t-1)}_n$.
Then, we have
$$\langle M, \rho_{(X,X)}^{T_B}\rangle=\langle M_{S,T}^{T_B},  \rho_{(X,X)}^{T_B}\rangle=  \langle M_{S,T}, \rho_{(X,X)}\rangle
=\langle X, (S+S^T+T+T^T)/2\rangle \ge 0,
$$
as desired.
\qed\end{proof}

\subsection{A class of Bose symmetric LDOI states}\label{sec:exampleab}

As an illustration we consider here a class of Bose symmetric states with LDOI structure. Given 
 scalars $a,b\in \R$, consider the  matrix $\rho(a,b):=\rho_{(X,Y,X)}\in \Herm(\C^3\ot \C^3)$,   with 
\begin{align}\label{eq:rhoLDOIab}
X=\begin{pmatrix} 1 & a & a \cr a & 1 & a\cr a & a &1\end{pmatrix}, 
Y=\begin{pmatrix} 1 & b & b \cr b & 1 & b\cr b& b &1\end{pmatrix}, \
\rho(a,b)= \begin{pmatrix} 1 & b & b \cr b & 1 & b\cr b& b &1\end{pmatrix}
\oplus \begin{pmatrix} a & a \cr a & a\end{pmatrix}
\oplus \begin{pmatrix} a & a \cr a & a\end{pmatrix}
\oplus \begin{pmatrix} a & a \cr a & a\end{pmatrix},
\end{align}
where the first block is indexed by $\{11,22,33\}$ and the last three blocks are indexed, respectively, by $\{12,21\}$, $\{23,32\}$ and $\{31,13\}$. Then, $\rho(a,b)\in \LDOI_3\cap \BS(\C^3)$  (recall (\ref{eq:LDUI-BS})) and thus 
any separable decomposition of it must be symmetric (Lemma \ref{lem:SEP-BS}).

In view of   (\ref{eq:rhoLDOIab}), $\rho(a,b)$ is a quantum state (i.e., $\rho(a,b)\succeq 0$) precisely when $a\ge 0$ and  $-{1\over 2}\le b\le 1$. As we  see in Lemma \ref{lem:exampleab}, separability of $\rho(a,b)$ is characterized already at the first level of the DPS hierarchy.
This extends to the case when  $X,Y$ are  defined analogously with size  $n\ge 3$, now requiring $-{1\over n-1}\le b$ instead of $-{1\over 2}\le b$. 
More generally, equivalence of membership in $\DPS^{(1)}_n$ and $\SEP_n$ holds  for LDOI states of the form $\rho_{(X,Y,Z)}$, where each of $X,Y,Z$ lies in the linear span of $I_n$ and the all-ones matrix $J_n$, see \cite[Theorem 4.1]{PJPY_2024} and \cite[Theorem III.3]{GNS-cyclic_2025}. We yet give the (easy)  details for the states $\rho(a,b)$ since this also provides examples of real-valued separable states that do not have real separable decompositions (Remark \ref{rem:exampleab}).

\begin{lemma} \label{lem:exampleab}
Given scalars $a,b\in\R$, we have 
$$\rho(a,b)\in \SEP_3 \Longleftrightarrow \rho(a,b)\in \DPS_3^{(1)}\Longleftrightarrow -{1\over 2}\le b \text{ and } |b|\le a\le 1.$$
\end{lemma}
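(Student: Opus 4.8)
The plan is to prove the two nontrivial implications of the displayed chain, namely $\rho(a,b)\in\DPS^{(1)}_3\Rightarrow(-\tfrac12\le b,\ |b|\le a\le 1)$ and $(-\tfrac12\le b,\ |b|\le a\le 1)\Rightarrow\rho(a,b)\in\SEP_3$; the remaining implication $\SEP_3\subseteq\DPS^{(1)}_3$ is the trivial inclusion valid for every bipartite state. Since $\rho(a,b)=\rho_{(X,Y,X)}$ is LDOI, I would read off membership in $\DPS^{(1)}_3$ purely from the $2\times 2$ and $3\times 3$ blocks produced by the block-diagonalization (\ref{eq:rhoXYZ-block}).

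For necessity, by definition $\rho(a,b)\in\DPS^{(1)}_3$ iff $\rho(a,b)\succeq 0$ and $\rho(a,b)^{T_B}\succeq 0$. Using (\ref{eq:rhoXYZ-block}) with $Z=X$, the first condition splits into $Y\succeq 0$ and the three identical blocks $\left(\begin{smallmatrix}a&a\\a&a\end{smallmatrix}\right)\succeq 0$; since $Y=(1-b)I_3+bJ_3$ has eigenvalues $1+2b$ and $1-b$, this reads $-\tfrac12\le b\le 1$ and $a\ge 0$. For the partial transpose I would use $\rho_{(X,Y,X)}^{T_B}=\rho_{(X,X,Y)}$ from (\ref{eq:rhoXYZ-block-PT}); applying (\ref{eq:rhoXYZ-block}) again, $\rho(a,b)^{T_B}\succeq 0$ is equivalent to $X\succeq 0$ (i.e. $-\tfrac12\le a\le 1$) together with $\left(\begin{smallmatrix}a&b\\b&a\end{smallmatrix}\right)\succeq 0$ (i.e. $a\ge|b|$). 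Intersecting the four constraints $-\tfrac12\le b\le 1$, $a\ge 0$, $-\tfrac12\le a\le 1$, $a\ge|b|$ and discarding the redundant ones (note $b\le|b|\le a\le 1$ and $a\ge 0>-\tfrac12$) yields exactly $-\tfrac12\le b$ and $|b|\le a\le 1$.

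For sufficiency, which is the crux, I would first observe that the feasible parameter set $R=\{(a,b):-\tfrac12\le b,\ |b|\le a\le 1\}$ is a convex quadrilateral with vertices $(0,0),(1,1),(1,-\tfrac12),(\tfrac12,-\tfrac12)$. Since $(X,Y)=((1-a)I_3+aJ_3,(1-b)I_3+bJ_3)$ is affine in $(a,b)$ and $(X,Y,Z)\mapsto\rho_{(X,Y,Z)}$ is linear, it suffices to exhibit a separable decomposition of $\rho(a,b)$ at each of these four vertices and then take the convex combination. For this I would invoke Theorem \ref{theo:TCPDec} in the symmetric form $y=x$, for which the generator of $\TCP_3$ attached to $x\in\C^3$ is $(pp^{T},qq^*,pp^{T})$ with $p=x\circ\overline x=(|x_k|^2)_k$ and $q=x\circ x=(x_k^2)_k$, matching the required shape $(X,Y,X)$. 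Concretely: $x=e_k$ ($k=1,2,3$) gives the vertex $(0,0)$; $x=(1,1,1)$ gives $(1,1)$; the conjugate pair $x=(1,e^{\bfi\pi/3},e^{2\bfi\pi/3})$ and $\overline x$, each with weight $\tfrac12$, gives $p=(1,1,1)$, $q=(1,\omega,\omega^2)$ with $\omega=e^{2\pi\bfi/3}$, hence $\tfrac12(qq^*+\overline q\,\overline q^{\,*})=\tfrac32 I_3-\tfrac12 J_3$, realizing $(1,-\tfrac12)$; and the three complex vectors $x=e_i+\bfi e_j$ ($1\le i<j\le 3$), each with weight $\tfrac12$, give $p=e_i+e_j$, $q=e_i-e_j$ and sum to $(\tfrac12 I_3+\tfrac12 J_3,\,\tfrac32 I_3-\tfrac12 J_3,\,\tfrac12 I_3+\tfrac12 J_3)$, realizing $(\tfrac12,-\tfrac12)$. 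Writing $(a,b)$ as a convex combination of the four vertices and combining the corresponding $\TCP_3$-decompositions shows $(X,Y,X)\in\TCP_3$, i.e. $\rho(a,b)\in\SEP_3$.

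The main obstacle is this sufficiency direction, and specifically the discovery of the two complex atoms realizing the negative-$b$ vertices $(1,-\tfrac12)$ and $(\tfrac12,-\tfrac12)$: a negative off-diagonal entry of $Y$ cannot come from any real vector, since that would force $q=x\circ x\ge 0$ entrywise, so the roots-of-unity phases and the purely imaginary coordinate $\bfi$ are essential. This is precisely the mechanism behind the accompanying observation (Remark \ref{rem:exampleab}) that these real, separable states need not admit any real separable decomposition. Once the four vertex atoms are in hand, checking that their convex hull is all of $R$ and that affineness in $(a,b)$ together with linearity of $\rho_{(\cdot,\cdot,\cdot)}$ transports the decompositions is routine.
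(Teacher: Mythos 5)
Your proof is correct and follows essentially the same route as the paper's: the identical block-diagonal necessity argument (via (\ref{eq:rhoXYZ-block}) and (\ref{eq:rhoXYZ-block-PT})), the same four vertices $(0,0),(1,1),(1,-\tfrac12),(\tfrac12,-\tfrac12)$ of the parameter region, and the same convexity argument, with complex atoms at the negative-$b$ vertices (your vectors $e_i+\bfi e_j$ are exactly the paper's $x,y,z$ for the vertex $(\tfrac12,-\tfrac12)$, and your conjugate pair with sixth-root-of-unity phases plays the role of the paper's $u,v$ at $(1,-\tfrac12)$). The only cosmetic difference is the bookkeeping: you package the vertex decompositions via Theorem \ref{theo:TCPDec} with $y=x$, whereas the paper applies the projection $\Pi_{\LDOI}$ to rank-one symmetric states and invokes Lemma \ref{lem:projection-psd-sep}; these amount to the same computation.
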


\begin{proof}
Only two implications need a proof. First, assume $\rho(a,b)\in \DPS^{(1)}_3$, we show that $-{1\over 2} \le b$ and $|b|\le a\le 1$. Indeed,  the first condition follows since $\rho(a,b)\succeq 0$ and the second one from the fact that $\rho(a,b)^{T_B}\succeq 0$, since 
$$\rho(a,b)^{T_B} = \rho_{(X,X,Y)}=
 \begin{pmatrix} 1 & a & a \cr a & 1 & a\cr a& a &1\end{pmatrix}
\oplus \begin{pmatrix} a & b \cr b & a\end{pmatrix}
\oplus \begin{pmatrix} a & b \cr b & a\end{pmatrix}
\oplus \begin{pmatrix} a & b \cr b & a\end{pmatrix}.
$$
Assume now that $-{1\over 2}\le b$ and $|b|\le a\le 1$, we show that $\rho(a,b)\in\SEP_3$. For this note that 
the set 
$$\Big\{(a,b)\in\R^2: |b|\le a\le 1, \ -{1\over 2}\le b \Big\}$$ is convex bounded, and equals  the convex hull of the four points 
$(0,0)$, $(1,1)$, $\big({1\over 2}, -{1\over 2}\big)$, and $\big(1, -{1\over 2}\big)$. Hence, it suffices to show that $\rho(a,b)$ is separable for each of them.  For $(a,b)=(0,0)$ we have
$$\rho(0,0)= (e_1e_1^*)^{\ot 2} +(e_2e_2^*)^{\ot 2}+ (e_3e_3^*)^{\ot 2}\in \SEP_3.$$
The all-ones bipartite state $ (ee^*)^{\ot 2}$  (with $e=\sum_{i=1}^3 e_i$) is separable, hence also its projection $\rho(1,1)=\Pi_{\LDOI}((ee^*)^{\ot 2})$ is separable. Next, one can verify that
\begin{align} \label{eq:rhohalfhalf}
\begin{split}
 \rho\Big({1\over 2},-{1\over 2}\Big) & ={1\over 2} \Pi_{\LDOI} ((xx^*)^{\ot 2}+(yy^*)^{\ot 2}+(zz^*)^{\ot 2}),\\ & \text{ with } 
x= \begin{pmatrix} 0 \cr 1 \cr \bfi\end{pmatrix},
 y=  \begin{pmatrix} 1 \cr 0 \cr \bfi\end{pmatrix},
 z=  \begin{pmatrix} 1 \cr \bfi  \cr 0\end{pmatrix},  
 \end{split}\end{align}
 \begin{align}
 \begin{split}
 \rho\Big(1,-{1\over 2}\Big)& = \Pi_{\LDOI} ((uu^*)^{\ot 2} + (vv^*)^{\ot 2}), \\
 & \text{ with }
u=\begin{pmatrix} e^{\bfi \pi\over 3}\cr e^{-\bfi \pi\over 3}\cr 1\end{pmatrix},\ \ 
v=\begin{pmatrix} e^{2\bfi \pi\over 3}\cr e^{-2\bfi \pi\over 3}\cr 1\end{pmatrix}. \label{eq:rho1half}
\end{split}
\end{align}
Hence, $\rho(a,b)\in\SEP_3$ in both cases, again because  the LDOI projection preserves separability.
\qed \end{proof}

\begin{remark}\label{rem:exampleab}
Observe that, if $a=b \in [0,1]$, then $\rho(a,b)$ has a   separable decomposition that involves only real-valued rank one states (since this is the case for $\rho(0,0)$ and $\rho(1,1)$). However, if $b<a$, then no  separable decomposition exists that uses only real-valued rank one states. To see this, observe that if $\rho(a,b)=\sum_\ell (x_\ell x_\ell^*)^{\ot 2}$ for some $x_\ell\in \R^3$, then we have
$b=\rho(a,b)_{11,22}= \sum_\ell (x_\ell)_1^2 (x_\ell)_2^2 =\rho(a,b)_{12,12}=a$.
In particular, if $b=0< a\le 1$, then $\rho(a,0)=\rho_{(X,\cdot,X)}$  (resp., $\rho(a,0)^{T_B}=\rho_{(X,X)}$) is LDUI (resp.,   CLDUI),  separable, and has   no real-valued separable decomposition.

In addition, observe that, for   $b=-1/2$, any separable decomposition of the form $\rho(a,-1/2)=\sum_\ell (x_\ell x_\ell^*)^{\ot 2}$ must satisfy the condition
$\sum_{\ell} (x_\ell)_i^2=0$ for $i\in [3].$ Indeed, the vector $\sum_{i=1}^3 e_i\ot e_i$ belongs to the kernel of $\rho(a,-1/2)$, and thus also to the kernel of each $(x _\ell x_\ell^*)^{\ot 2}$, which implies the claimed fact. 
Hence, we again see that the $x_\ell$'s cannot be real-valued, and this also indicate  how to come up with the separable decompositions from (\ref{eq:rhohalfhalf}) and (\ref{eq:rho1half}).
\end{remark} %\label{sec:DPS-symmetric}

% temporarily cutting chap 5 out because of latex errors
\section{Implementation }\label{sec:implementation}

In this section we consider the question of how to implement the $\DPS$ hierarchy more efficiently in practice. %practically. 
For this, we reformulate it using the moment approach within the polynomial optimization framework. We do this first in the generic case and then we consider the $\CLDUI$ and $\LDOI$ cases. We conclude with a  comparison of runtimes in the different regimes by showing how the developed sparsity reduction technique can be used to tackle a class of examples motivated by the   PPT$^2$ conjecture from quantum information theory.

\subsection{The $\DPS$ hierarchy in the moment formalism}

Suppose one is given a bipartite quantum state $\rho_{AB}\in\Herm(\C^n\ot\C^n)_+$ and one wants to test whether $\rho_{AB}$ lies in a given level $t$ of the $\DPS$ hierarchy, i.e., whether $\rho_{AB}\in\DPS^{(t)}_n$. 
The obvious way to do so is to implement the semidefinite program modeling definition (\ref{eq:DPS}), asking for the existence of an extended state $\rho_{AB[t]}\in\Herm(\C^n\ot S^t(\C^n))$ that is positive semidefinite under   partial transposition of the $B$-registers and recovers the original state $\rho_{AB}$ by taking a partial trace. We refer to this original viewpoint as the `tensor formalism'.

In the tensor formalism the resulting SDP inherits redundancies arising from the symmetries in the $t$ $B$-registers as required in (\ref{eq:DPS1}): for any $\sigma \in \Perm_t$ and $i_0\ui\in[n]^{t+1}$,  the   columns (and rows) indexed by $i_0\ui$ and $i_0  \ui^\sigma$ are identical, i.e., $(\rho_{AB[t]})_{\cdot, i_0 \ui^\sigma} = (\rho_{AB[t]})_{\cdot, i_0\ui}$. Hence, one can set up an equivalent, reduced SDP that is obtained by keeping only one column out of the orbit consisting of the  columns indexed by $i_0\ui^\sigma$ for     $\sigma\in\Perm_t$.
%ignoring the columns (and rows) corresponding to the indices $i_0\ui^\sigma$ whenever $\sigma$ is not the identity permutation. 
An example of this fact was seen in Lemma~\ref{LExBlocksAreScalar} (and its proof), where it was used to show that the block of a $\DPS^{(2)}$-certificate  indexed, e.g.,  by $\{123,132\}$,    is   a scalar multiple of the all-ones matrix.

\medskip
A way to carry out this reduction more systematically is to consider the $\DPS$ hierarchy in the `moment formalism', as has been developed in \cite{GLS_2021}. For this, note that two elements $i_0\ui,j_0\uj\in [n]^{t+1}$ indexing the extended state $\rho_{AB[t]}$  lie in the same orbit of $\Perm_t$ if and only if $i_0=j_0$ and $\alpha(\ui) = \alpha(\uj)$. Hence, roughly speaking, it suffices to index the extended state by elements $\alpha(i_0)\alpha(\ui)\in \N^n\times \N^n$ (indexing monomials)   instead of sequences $i_0\ui\in [n]^{t+1}$. %Here, $\N^n_{=t}=\{\alpha\in \N^n: |\alpha|=\sum_{i=1}^n\alpha_i=t\}$.

More precisely, the moment approach works as follows. We consider Hermitian linear functionals $L\in (\C[x,\ox,y,\oy]_{2,2t})^*$, i.e., $L:\C[x,\ox,y,\oy]_{2,2t}\to \C$ acting  on bihomogenous polynomials of degree 2 in $x,\ox$ and of degree $2t$ in $y,\oy$. Being Hermitian means that $L(\overline p)=\overline { L(p)}$ for any polynomial $p\in \C[x,\ox,y,\oy]_{2,2t}$.
Then, the corresponding moment matrix of $L$, denoted here as $M_{1,t}(L)$, is indexed by 
elements  $(\alpha,\alpha',\beta,\beta')\in (\N^n)^4$ such that $|\alpha+\alpha'|=1$ and $|\beta+\beta'|=t$;
 its $(\alpha\alpha'\beta\beta',\tilde \alpha\tilde\alpha'\tilde\beta\tilde\beta')$-entry is  $M_{1,t}(L)_{\alpha\alpha'\beta\beta',\tilde\alpha\tilde\alpha'\tilde\beta\tilde\beta'}$ , defined as 
$$
%M_{1,t}(L)_{\alpha\alpha'\beta\beta',\tilde\alpha\tilde\alpha'\tilde\beta\tilde\beta'} = 
L( x^\alpha \ox^{\alpha'} y^\beta \oy ^{\beta'} \cdot 
\overline { x^{\tilde \alpha} \ox^{\tilde \alpha'} y^{\tilde \beta} \oy^{\tilde \beta'}  })
= L(x^{\alpha+\tilde\alpha'}\ox^{\alpha'+\tilde\alpha}y^{\beta+\tilde\beta'}\oy^{\beta'+\tilde\beta}).$$ 
%where $\alpha,\alpha',\tilde\alpha,\tilde\alpha'\in\N^n$ are such that $|\alpha+\alpha'|=1$, $|\tilde\alpha+\tilde\alpha'|=1$, and $\beta,\beta',\tilde\beta,\tilde\beta'\in\N^n$ are such that $|\beta+\beta'|=t$, $|\tilde\beta+\tilde\beta'|=t$. 
Hence, $M_{1,t}(L)$ is a Hermitian matrix since $L$ is Hermitian. Moreover, we have  $M_{1,t}(L)\succeq 0$ if and only if $L(p\overline p)\ge 0$ for any $p\in \C[x,\ox,y,\oy]_{=1,=t}$, which reflects the fact that the moment approach is dual to the sum of squares approach used earlier.
The key connection between the `tensor formalism' and the `moment formalism' is then established by setting 
\begin{align}\label{eq:Lrho}
 L(xx^*\ot(yy^*)^{\ot t})=\rho_{AB[t]},
 \end{align}
 where $L$ is applied entrywise to the matrix $  xx^*\ot(yy^*)^{\ot t}$.
  Indeed, any linear functional $L\in (\C[x,\ox,y,\oy]_{2,2t})^*$ defines via (\ref{eq:Lrho}) an extended state lying in $\Herm(\C^n\ot S^t(\C^n))$ that satisfies (\ref{eq:DPS1}). Conversely,  given such an extended state $\rho_{AB[t]}$,  relation (\ref{eq:Lrho})  permits to define a linear functional $L\in (\C[x,\ox,y,\oy]_{2,2t})^*$ as follows: For $(\gamma,\gamma',\delta,\delta')\in (\N^n)^4$ such that $|\gamma+\gamma'|=2$ and $|\delta+\delta'|=2t$, set
%  if $|\gamma|=|\gamma'|=1$ and $|\delta|=|\delta'|=t$ then set   $L(x^\gamma \ox^{\gamma'}y^\delta \oy^{\delta'})= (\rho_{AB[t]})_{i_0\ui,j_0\uj}$ where $\gamma=\alpha(i_0),\gamma'=\alpha(j_0), \delta=\alpha(\ui), \delta'=\alpha(\uj)$, 
  \begin{align}\label{eq:Lnot0}
  L(x^\gamma \ox^{\gamma'}y^\delta \oy^{\delta'})= (\rho_{AB[t]})_{i_0\ui,j_0\uj} \text{ if } 
  |\gamma|=|\gamma'|=1 \text{ and }   |\delta|=|\delta'|=t,
    \end{align}
 where   $\gamma=\alpha(i_0),\gamma'=\alpha(j_0), \delta=\alpha(\ui), \delta'=\alpha(\uj)$
    and, otherwise, set  $L$ to 0, i.e.,
  \begin{align}\label{eq:L0}
  L(x^\gamma \ox^{\gamma'}y^\delta \oy^{\delta'})=0 \text{ if } |\gamma|\ne |\gamma'| \text{ or } |\delta|\ne |\delta'|. % \quad \text{ for }  |\gamma+\gamma'|=2, |\delta+\delta'|=2t.
    \end{align}
%  one obtains   a Hermitian linear functional $L\in (\C[x,\ox,y,\oy]_{=2,=2t})^*$.
  For the moment matrix of $L$,  property (\ref{eq:L0}) implies  that $M_{1,t}(L)_{\alpha\alpha'\beta\beta',\tilde\alpha\tilde\alpha'\tilde\beta\tilde\beta'}\not=0$ only if $|\alpha+\tilde\alpha'|=|\alpha'+\tilde\alpha|=1$ and $|\beta+\tilde\beta'|=|\beta'+\tilde\beta|=t$. 
  %Putting all conditions on the $\alpha$  and $\beta$ together yields 
  As observed in \cite[Section 5.2]{GLS_2021}, this permits to show that the matrix $M_{1,t}(L)$ is block-diagonal and that its  blocks are indexed%\footnote{We denote the index as $s'$, since $s'$ corresponds to the amount of partial transpositions in the DPS certificate. Since the original $s$ selecting the amount of partial transpositions runs from 0 to $t$ we denote the new variable with a prime. In fact, $s'$ relates to $s$ by a linear shift.} 
  \footnote{As we see  in the proof of Lemma \ref{lem:DPSmom}, the principal submatrix of    $M_{1,t}(L)$ indexed by   $I_{1,s'}$ corresponds to applying the partial transpose $T_{B[s]}$ to $\rho_{AB[t]}$, when $s'=t-2s$. This explains why we use the letter $s'$ here.}
  by the following sets 
  $$I_{r,s'}=\{(\alpha,\alpha',\beta,\beta')\in(\N^n)^4:|\alpha+\alpha'|=1, |\beta+\beta'|=t, |\alpha|-|\alpha'|=r, |\beta|-|\beta'|=s'\}$$ for $r\in\{-1,1\}$ and $s'\in\{-t, -t+2,-t+4,\dots,t-4,t-2,t\}$. As we see in the next lemma, it suffices to consider $r=1$, in which case any $(\alpha,\alpha',\beta,\beta')\in I_{1,s}$ has $\alpha'=0$, so that the entry $\alpha'$ can be ignored. 
      
\begin{lemma} \cite{GLS_2021}\label{lem:DPSmom}
    Let $\rho_{AB}\in\Herm(\C^n\ot\C^n)$. Then, $\rho_{AB}\in\DPS^{(t)}_n$ if and only if there exists a Hermitian linear functional $L\in (\C[x,\ox,y,\oy]_{2,2t})^*$ satisfying conditions (\ref{eq:L0})  and    (i),(ii) below:     \begin{itemize}
        \item[(i)] $L(xx^*\ot yy^*||y||^{2(t-1)}) = \rho_{AB}$,
        \item[(ii)] $M_{1,t}(L)\succeq 0$ or, equivalently, 
        $M_{1,t}(L)[I_{r,s'}]\succeq 0$ for  $r=1$ and $s'\in [-t,t]$ with $s'\equiv t$ (mod 2).
        %$s'\in\{-t, -t+2, \dots, t-2,t\}$.
       \end{itemize}
\end{lemma}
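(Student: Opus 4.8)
The plan is to make the dictionary between Hermitian linear functionals $L$ and extended states $\rho_{AB[t]}$ fully precise, and then to check that each of the three defining conditions of $\DPS^{(t)}_n$ corresponds, under this dictionary, to one of the conditions (\ref{eq:L0}), (i), (ii). First I would record that the assignment $\rho_{AB[t]}=L(xx^*\ot(yy^*)^{\ot t})$ of (\ref{eq:Lrho}), together with its inverse recipe (\ref{eq:Lnot0})--(\ref{eq:L0}), sets up a bijection between Hermitian $L\in(\C[x,\ox,y,\oy]_{2,2t})^*$ obeying (\ref{eq:L0}) and Hermitian matrices $\rho_{AB[t]}\in\Herm(\C^n\ot S^t(\C^n))$. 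The only point to verify here is well-definedness: since an extended state satisfying (\ref{eq:DPS1}) has entries $(\rho_{AB[t]})_{i_0\ui,j_0\uj}$ depending only on $i_0,j_0$ and the multisets $\alpha(\ui),\alpha(\uj)$, formula (\ref{eq:Lnot0}) is consistent; conversely any $L$ obeying (\ref{eq:L0}) produces via (\ref{eq:Lrho}) a matrix supported on $\C^n\ot S^t(\C^n)$, i.e. satisfying (\ref{eq:DPS1}). Hermiticity of $L$ matches Hermiticity of $\rho_{AB[t]}$.

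Next I would verify that condition (i) is exactly the partial-trace condition (\ref{eq:DPS3}). Expanding $\|y\|^{2(t-1)}=(\sum_k y_k\oy_k)^{t-1}=\sum_{\uk\in[n]^{t-1}}y^{\alpha(\uk)}\oy^{\alpha(\uk)}$ and applying $L$ entrywise to $x_{i_0}\ox_{j_0}\,y_{i_1}\oy_{j_1}\,\|y\|^{2(t-1)}$, each resulting term is $L(x^{e_{i_0}}\ox^{e_{j_0}}y^{\alpha(i_1\uk)}\oy^{\alpha(j_1\uk)})=(\rho_{AB[t]})_{i_0 i_1\uk,\,j_0 j_1\uk}$ by (\ref{eq:Lnot0}); summing over $\uk$ reproduces precisely the partial trace $\Tr_{B[2:t]}(\rho_{AB[t]})_{i_0 i_1,j_0 j_1}$ of (\ref{eq:partialtrace}). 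Hence (i) holds iff $\Tr_{B[2:t]}(\rho_{AB[t]})=\rho_{AB}$.

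The heart of the proof --- and the step I expect to be the main obstacle --- is matching condition (ii) with the positive-semidefiniteness conditions (\ref{eq:DPS2}). I would first use (\ref{eq:L0}) to see that an entry $M_{1,t}(L)_{\alpha\alpha'\beta\beta',\tilde\alpha\tilde\alpha'\tilde\beta\tilde\beta'}$ can be nonzero only when $|\alpha|-|\alpha'|=|\tilde\alpha|-|\tilde\alpha'|=:r$ and $|\beta|-|\beta'|=|\tilde\beta|-|\tilde\beta'|=:s'$, so $M_{1,t}(L)$ is block-diagonal with blocks indexed by $I_{r,s'}$; Hermiticity of $L$ then identifies the block $I_{-1,-s'}$ with the entrywise conjugate of a symmetric relabelling of $I_{1,s'}$, so it suffices to treat $r=1$ (in which case $\alpha'=0$). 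The crux is the identification, for $s=(t-s')/2$, of $M_{1,t}(L)[I_{1,s'}]$ with $\rho_{AB[t]}^{T_{B[s]}}$. Using (\ref{eq:Lnot0}) the $((i_0,\beta,\beta'),(j_0,\tilde\beta,\tilde\beta'))$ entry of this block equals $(\rho_{AB[t]})_{i_0\ui,j_0\uj}$ with $\alpha(\ui)=\beta+\tilde\beta'$ and $\alpha(\uj)=\tilde\beta+\beta'$; on the other hand, writing out (\ref{eq:partialtranspose}) and invoking full $B$-register permutation invariance (\ref{eq:DPS1}), one checks that $\rho_{AB[t]}^{T_{B[s]}}$ depends on its row/column sequences only through the pair (multiset of the first $s$ registers, multiset of the last $t-s$ registers), and that its compressed matrix under the relabelling $\beta'\leftrightarrow(\text{first }s)$, $\beta\leftrightarrow(\text{last }t-s)$ has exactly these entries. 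Since rows (and columns) of $\rho_{AB[t]}^{T_{B[s]}}$ sharing the same such pair are literally identical, a standard blow-up argument ($v^*Mv=w^*\widehat Mw$, where $w$ collects the group-sums of $v$) yields $\rho_{AB[t]}^{T_{B[s]}}\succeq 0\iff M_{1,t}(L)[I_{1,s'}]\succeq 0$. The careful bookkeeping of these multiset identities is where the real work lies.

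Finally I would note that as $s'$ runs over $\{-t,-t+2,\dots,t\}$, the index $s=(t-s')/2$ runs over $\{0,1,\dots,t\}$, with $s'=t$ (so $\beta'=0$) giving the untransposed block $\rho_{AB[t]}\succeq 0$; hence (ii) is equivalent to (\ref{eq:DPS2}) holding for all $s$. Combining the three correspondences, the existence of $L$ satisfying (\ref{eq:L0}), (i), (ii) is equivalent to the existence of a $\DPS^{(t)}$-certificate $\rho_{AB[t]}$ satisfying (\ref{eq:DPS1})--(\ref{eq:DPS3}), that is, to $\rho_{AB}\in\DPS^{(t)}_n$.
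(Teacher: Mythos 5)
Your proposal is correct and follows essentially the same route as the paper's proof: the dictionary (\ref{eq:Lrho})/(\ref{eq:Lnot0})--(\ref{eq:L0}) between functionals and extended states, the identification of (i) with the partial-trace condition (\ref{eq:DPS3}), the block-diagonality of $M_{1,t}(L)$ under (\ref{eq:L0}) with reduction to $r=1$ via Hermiticity, and the matching of the block indexed by $I_{1,t-2s}$ with the partial transpose $\rho_{AB[t]}^{T_{B[s]}}$ after removing identical rows/columns. The only difference is that the paper outsources this last identification to relation (67) of \cite{GLS_2021}, whereas you prove it directly through the multiset bookkeeping and the row/column blow-up argument, which is precisely the content of the cited fact.
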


\begin{proof}
  The lemma essentially follows\footnote{We refer in particular to \cite[Prop.23]{GLS_2021} (which considers the general case where also the $A$-register is extended). There, the statement is in terms of existence of a linear functional $\tilde L\in (\C[x,\ovx,y,\ovy]_{\le 2,\le 2t})^*$
  %, acting on polynomials of degree at most $2$ in $x,\ovx$ and at most degree $2t$ in $y,\ovy$, 
  satisfying the `ideal' conditions 
  $\tilde L(p(1-\|x\|^2)=\tilde L(p(1-\|y\|^2)=0$ for all  $p\in \C[x,\ovx,y,\ovy]$ such that
  $p(1-\|x\|^2), p(1-\|y\|^2)\in \C[x,\ovx,y,\ovy]_{\le 2,\le 2t}.$ This non-homogeneous setting was used in \cite{GLS_2021} to give an alternative, real-algebraic proof for the completeness of the DPS hierarchy. 
  Note that one can easily navigate between the non-homogeneous and homogeneous settings:  by restricting   $\tilde L$ to $\C[x,\ox,y,\oy]_{2,2t}$ and, conversely, one can extend $L\in (\C[x,\ox,y,\oy]_{2,2t})^*$ to $\tilde L\in (\C[x,\ovx,y,\ovy]_{\le 2,\le 2t})^*$ using the above `ideal' constraints combined with  (\ref{eq:L0}). }  
  from results in \cite[Section 5]{GLS_2021},  
  %The full details of the proof can be found in  
   we   give a   sketch of proof. %for the convenience of the reader.
As mentioned above, the guiding fact for the proof is relation (\ref{eq:Lrho}) stating $\rho_{AB[t]}=L(xx^*\ot (yy^*)^{\ot t})$, which indicates how to construct a $\DPS^{(t)}$-certificate from a linear functional and vice versa.

First, note that  $L(xx^*\ot (yy^*)^{\ot t})$ clearly satisfies condition (\ref{eq:DPS1}).
Second, note that condition (i) precisely corresponds to condition \eqref{eq:DPS3}, since $$\Tr_{B[2:t]}(L(xx^*\ot(yy^*)^{\ot t}))=L(\Tr_{B[2:t]}(xx^*\ot(yy^*)^{\ot t}))=L(xx^*\ot yy^*||y||^{2(t-1)}).$$
%The final step of the proof consists in relating 
There remains to relate condition (ii) to condition (\ref{eq:DPS2}).
% to positive semidefiniteness of the blocks   $M_{1,t}(L)[I_{1,s'}]$ for $-t\le s'\le t$ with $s'\equiv t$ (mod 2).
  As observed above,  under condition (\ref{eq:L0}), $M_{1,t}(L)$ is block-diagonal, with its blocks indexed by the sets $I_{r,s'}$ for $r=\pm 1$ and $-t\le s'\le t$ with $s'\equiv t$ (mod 2). Hence,  $M_{1,t}(L)\succeq 0$  if and only if $M_t(L)[I_{r,s'}]\succeq 0$  for all $r\in\{-1,1\}$ and $s'\in\{-t, -t+2,\dots,t-2,t\}$. Moreover, one can  restrict   to $r=1$, since $L$ being Hermitian implies $$M_{1,t}(L)[I_{r,s'}]=\overline{M_{1,t}(L)[I_{-r,-s'}]}$$ (and positive semidefiniteness is preserved under taking the complex conjugate).
% The final step of the proof consists in relating condition (\ref{eq:DPS2}) to positive semidefiniteness of the blocks   $M_t(L)[I_{1,s'}]$. 
For $\rho_{AB[t]}=L(xx^*\ot (yy^*)^{\ot t})$ and $s\in \{0,1,\ldots,t\}$, we have $(\rho_{AB[t]})^{T_{B[s]}}
= L(xx^*\ot (\oy \oy^*)^{\ot s}\ot (yy^*)^{\ot (t-s)})$ and thus $(\rho_{AB[t]})^{T_{B[s]}}
\succeq 0$ if and only if
$\widetilde \rho:= L(xx^*\ot  (yy^*)^{\ot (t-s)} \ot (\oy \oy^*)^{\ot s})\succeq 0.$
The final step now  relies on the fact (see relation (67) in \cite{GLS_2021}) that $M_{1,t}(L)[I_{1,t-2s}]$ is identical to the matrix obtained from $\widetilde\rho$,  
    %with regards to the symmetry in the $B$ register as described above, i.e., 
    by removing its identical rows and columns (arising from the Bose symmetry of the $t$ B-registers, as explained above). Hence, condition (ii) is equivalent to (\ref{eq:DPS2}), as desired.
\qed\end{proof}

\subsection{The DPS hierarchy for CLDUI and LDOI states  in the moment formalism}\label{sec:momDPSCLDUI}

We now specialize the result of Lemma \ref{lem:DPSmom} for 
%DPS hierarchy in the moment formalism 
 bipartite states with CLDUI or LDOI sparsity structure. For $\alpha\in \N^n$, $\alpha\equiv 0$ (mod 2) means that $\alpha_i$ is even for all $i\in [n]$.

\begin{lemma}\label{lem:DPS-mom-LDOI}
 Let $\rho_{AB}\in\CLDUI_n$ (resp.,  $\rho_{AB}\in\LDOI_n$).
Then,     $\rho_{AB}\in\DPS^{(t)}_n$ if and only if there exists a Hermitian linear functional 
$L\in (\C[x,\ox,y,\oy]_{2,2t})^*$ satisfying   (\ref{eq:L0}),   conditions (i),(ii) from Lemma \ref{lem:DPSmom}, and, for any $\gamma,\gamma',\delta,\delta'\in \N^n$ such that $ |\gamma|=|\gamma'|=1$ and $ |\delta|=|\delta'|=t$,
%   Then, $\rho_{AB}$ has a $\DPS^{(t)}$-certificate,  provided by $L\in\C^*[x,\ox,y,\oy]_{=2,=2t}$ as in (\ref{eq:Lrho}),  which can be chosen to satisfy the   condition
      $$L(x^\gamma\ox^{\gamma'}y^\delta\oy^{\delta'}) = 0\quad 
      \text{ if } \gamma+\delta'\not=\gamma'+\delta \quad (\text{resp., if } \gamma+\gamma'+\delta+\delta'\not\equiv 0 \ \text{(mod } 2)).           $$
%       if $\gamma+\delta'\not=\gamma'+\delta$ (resp.,
 %     if  $\gamma+\gamma'+\delta+\delta'\not\equiv 0$ (mod 2)).   
% for $\gamma,\gamma',\delta,\delta'\in \N^n$ such that $ |\gamma+\gamma'|=2$ and $ |\delta+\delta'|=2t.$
\end{lemma}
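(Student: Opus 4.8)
The plan is to obtain this as a refinement of Lemma~\ref{lem:DPSmom}, which already characterizes $\rho_{AB}\in\DPS^{(t)}_n$ by the existence of a Hermitian functional $L$ satisfying (\ref{eq:L0}) together with conditions (i),(ii). The added vanishing condition on $L$ should be nothing more than the translation, via the dictionary (\ref{eq:Lnot0})--(\ref{eq:L0}) between functionals and extended states, of the requirement that an associated $\DPS^{(t)}$-certificate $\rho_{AB[t]}$ lie in $\CLDUI^{(t)}_n$ (resp.\ $\LDOI^{(t)}_n$). So first I would recall that, under (\ref{eq:L0}), a Hermitian $L$ and an extended state $\rho_{AB[t]}$ are linked by $L(x^\gamma\ox^{\gamma'}y^\delta\oy^{\delta'})=(\rho_{AB[t]})_{i_0\ui,j_0\uj}$ for $|\gamma|=|\gamma'|=1$, $|\delta|=|\delta'|=t$, $\gamma=\alpha(i_0)$, $\gamma'=\alpha(j_0)$, $\delta=\alpha(\ui)$, $\delta'=\alpha(\uj)$, this being well defined since the Bose symmetry (\ref{eq:DPS1}) makes the right-hand side depend only on the multisets $\alpha(\ui),\alpha(\uj)$.

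The key step would then be an elementary exponent computation, relying on the additivity of $\alpha(\cdot)$ under concatenation of sequences (already used in the proof of Lemma~\ref{lem:Cliques-s=0}). With the notation above one has $\alpha(i_0\uj)=\gamma+\delta'$ and $\alpha(j_0\ui)=\gamma'+\delta$, so that definition (\ref{def:Omegat}) yields
\[
(i_0\ui,j_0\uj)\in\Omega^{(t)}_n \ \Longleftrightarrow\ \gamma+\delta'=\gamma'+\delta,
\]
while $\alpha(i_0\ui j_0\uj)=\gamma+\gamma'+\delta+\delta'$ and definition (\ref{def:Psit}) yields
\[
(i_0\ui,j_0\uj)\in\Psi^{(t)}_n \ \Longleftrightarrow\ \gamma+\gamma'+\delta+\delta'\equiv 0 \pmod 2.
\]
Through (\ref{eq:Lnot0}) this shows that the stated condition ``$L(x^\gamma\ox^{\gamma'}y^\delta\oy^{\delta'})=0$ whenever $\gamma+\delta'\neq\gamma'+\delta$'' is equivalent to $\supp(\rho_{AB[t]})\subseteq\Omega^{(t)}_n$, hence by Lemma~\ref{lem:suppCLDUIt} to $\rho_{AB[t]}\in\CLDUI^{(t)}_n$; and likewise the parity condition is equivalent to $\supp(\rho_{AB[t]})\subseteq\Psi^{(t)}_n$, i.e.\ to $\rho_{AB[t]}\in\LDOI^{(t)}_n$.

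With this dictionary established, both implications should fall out at once. For the \emph{if} direction, any $L$ meeting (\ref{eq:L0}),(i),(ii) already yields $\rho_{AB}\in\DPS^{(t)}_n$ by Lemma~\ref{lem:DPSmom}, the extra vanishing condition being harmless. For the \emph{only if} direction, I would start from $\rho_{AB}\in\CLDUI_n\cap\DPS^{(t)}_n$ (resp.\ $\LDOI_n\cap\DPS^{(t)}_n$), invoke Theorem~\ref{theo:CLDUItDPSCert} (resp.\ Theorem~\ref{theo:LDUILDOItDPSCert}) to select a certificate $\rho_{AB[t]}\in\CLDUI^{(t)}_n$ (resp.\ $\LDOI^{(t)}_n$), and define $L$ from it via (\ref{eq:Lnot0})--(\ref{eq:L0}); Lemma~\ref{lem:DPSmom} then supplies (\ref{eq:L0}),(i),(ii), while the sparsity of $\rho_{AB[t]}$ translates into exactly the stated extra condition through the equivalences above. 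The closest thing to an obstacle is purely one of consistency: the vanishing condition is phrased at the level of exponents $(\gamma,\gamma',\delta,\delta')$ rather than of index sequences, so I would need to note that both $\Omega^{(t)}_n$ and $\Psi^{(t)}_n$ depend only on the multisets $\alpha(\ui),\alpha(\uj)$ (and on $i_0,j_0$), which makes this translation unambiguous and compatible with the monomial indexing.
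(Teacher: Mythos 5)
Your proposal is correct and follows essentially the same route as the paper: both directions rest on the dictionary (\ref{eq:Lnot0})--(\ref{eq:L0}) between functionals and certificates, the sparsity characterizations $\supp(\rho_{AB[t]})\subseteq\Omega^{(t)}_n$ (Lemma \ref{lem:suppCLDUIt}(iii)) and $\supp(\rho_{AB[t]})\subseteq\Psi^{(t)}_n$, and the exponent identities $\alpha(i_0\uj)=\gamma+\delta'$, $\alpha(j_0\ui)=\gamma'+\delta$, $\alpha(i_0\ui j_0\uj)=\gamma+\gamma'+\delta+\delta'$. If anything, you are slightly more explicit than the paper in invoking Theorems \ref{theo:CLDUItDPSCert} and \ref{theo:LDUILDOItDPSCert} for the choice of an invariant certificate and in noting the triviality of the ``if'' direction, which the paper's terser proof leaves implicit.
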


\begin{proof}
%The proof builds up on  Lemma \ref{lem:DPSmom}.  
Consider a $\DPS^{(t)}$-certificate $\rho_{AB[t]}$ for $\rho_{AB}$ and the associated linear functional $L$ defined via (\ref{eq:Lrho}) as in Lemma \ref{lem:DPSmom}. In particular, relations (\ref{eq:Lnot0}) and (\ref{eq:L0}) hold.
%Then, for  $i_0\ui, j_0\uj\in [n]^{t+1}$, we have
%$$(\rho_{AB[t]})_{i_0\ui,j_0\uj}= \left(L(xx^*\ot (yy^*)^{\ot t})\right)_{i_0\ui,j_0\uj}
%=L(x^\gamma\ox^{\gamma'}y^\delta\oy^{\delta'}),$$
%    setting $\gamma = \alpha(i_0)=e_{i_0}$, $\gamma'=\alpha(j_0)=e_{j_0}$, $\delta=\alpha(\ui)$, and $\delta'=\alpha(\uj)$. 
    Assume first  $\rho_{AB[t]}\in\CLDUI^{(t)}_n$. Then, by   Lemma \ref{lem:suppCLDUIt} (iii) (and the definition of the sparsity pattern $\Omega^{(t)}_n$ in (\ref{def:Omegat})), $(\rho_{AB[t]})_{i_0\ui,j_0\uj}=0$  if $e_{i_0}+\alpha(\uj)\not=e_{j_0}+\alpha(\uj)$, which implies 
    $L(x^\gamma \ox^{ \gamma'}    y^\delta \oy^{\delta'})=0$ if $\gamma+\delta'\not=\gamma'+\delta$.   %concludes the proof in the CLDUI case.
     %Theorem \ref{theo:CLDUItDPSCert} 
 %. This concludes the case $\rho_{AB}\in\CLDUI$. 
    
    Assume now $\rho_{AB[t]}\in\LDOI^{(t)}$. Then, its sparsity pattern is provided by the set $\Psi^{(t)}_n$ from  (\ref{def:Psit}). Hence,   $(\rho_{AB[t]})_{i_0\ui,j_0\uj}=0$ if $\alpha(i_0\ui j_0\uj)\ne 0$ (mod 2), which implies $L(x^\gamma \ox^{ \gamma'}    y^\delta \oy^{\delta'})    =0$ if 
    $\gamma+\gamma'+\delta+\delta'\not\equiv 0    $ (mod 2).
 %           The case where $\rho_{AB[t]}\in\LDOI^{(t)}$ follows very similarly, since the right hand side vanishes by Theorem \ref{theo:LDUILDOItDPSCert} for $e_{i_0}+\alpha(\uj)+e_{j_0}+\alpha(\uj)$ not entrywise even.
\qed\end{proof}

%Below we display the block sizes corresponding to implementing the DPS hierarchy as an SDP in the three different regimes. Consider for example the first table at $n=3,t=2$. In the tensor formulation, one has to implement an SDP with three different blocks of size 27 corresponding to $\rho_{AB[2]},\rho_{AB[2]}^{T_{B[1]}}$ and $\rho_{AB[2]}^{T_{B[2]}}$. In the moment formulation both $\rho_{AB[2]}$ and $\rho_{AB[2]}^{T_{B[2]}}$ reduce to blocks of size 18 and $\rho_{AB[2]}^{T_{B[1]}}$ remains of size 27, which coincides with the values displayed in the table. We will now argue why these are the precise size reductions that occur. 

In Tables \ref{tab:GenericBlockSize}, \ref{tab:LDOIBlockSize} and \ref{tab:CLDUIBlockSize} below, we display the block sizes that arise when reformulating the DPS hierarchy in the moment formalism,   in the following three  regimes:  $\rho_{AB}\in \Herm(\C^n\ot \C^n)$ (generic case),  $\rho_{AB}\in \LDOI_n$ (LDOI case),  $\rho_{AB}\in \CLDUI_n$ (CLDUI case), respectively, We display these block sizes  for  $t=2,3,4,5,6,7$ and $n=3,4,5$, and the notation   $m_k$ is used to indicate that there are $k$ blocks of size $m$. For instance,  for $(t,n)=(2,3)$,   the entry  `$27_1, 18_2$' in Table \ref{tab:GenericBlockSize}  indicates that the SDP implementing the $\DPS^{(2)}$-relaxation for generic states in $\Herm(\C^3\ot \C^3)$ involves one block of size 27 and two blocks of size 18.
We next give some more details for the case $(t,n)=(2,3)$.

\begin{example}\label{ex:t2n3} 
We consider the case $(t,n)=(2,3)$ and discuss the associated entries in Tables \ref{tab:GenericBlockSize}  and \ref{tab:CLDUIBlockSize}, which  give the  block sizes  in the reduced SDP's modeling existence of a $\DPS^{(2)}$-certificate in the moment formulation, for a generic or CLDUI bipartite state $\rho_{AB}\in \Herm(\C^3\ot \C^3)$. 
%$\rho_{AB[2]}\in \Herm(\C^3\ot (\C^3)^{\ot 2})$ for   $\rho_{AB}\in \Herm(\C^3\ot \C^3)$.  

\smallskip
Consider first the generic case. In  the tensor formulation, the SDP involves  three   blocks  of size $3^3=27$, corresponding to $\rho_{AB[2]},\rho_{AB[2]}^{T_{B[1]}}$ and $\rho_{AB[2]}^{T_{B[2]}}$. In the moment formulation, both $\rho_{AB[2]}$ and $\rho_{AB[2]}^{T_{B[2]}}$ reduce to blocks of size 18 and $\rho_{AB[2]}^{T_{B[1]}}$ remains of size 27, thus matching the entry `$27_1, 18_2$'   in Table~\ref{tab:GenericBlockSize}. We will now argue why these  size reductions do occur. 

%We start by considering the rows of $\rho_{AB[2]}$ indexed by $(i_0i_1i_2)$ and $(j_0j_1j_2)$. If $i_0=j_0$ and $\alpha(i_1,i_2)=\alpha(j_1,j_2)$ it follows by condition \eqref{eq:DPS1} that $(\rho_{AB[2]})_{i_0 i_1 i_2,k_0 k_1 k_2}=(\rho_{AB[2]})_{j_0 j_1 j_2,k_0 k_1 k_2}$ for every $(k_0k_1k_2)$. The single corresponding row of $M_t(L)$ is indexed by $\alpha=e_{i_0}=e_{j_0}$, $\beta=\alpha(i_1,i_2)=\alpha(j_1,j_2)$ and $\beta'=0.$ All possible reductions of this form are given by $(i_0,1,2)$ and $(i_0,2,1)$, $(i_0,2,3)$ and $(i_0,2,3)$ and finally $(i_0,3,1)$ and $(i_0,1,3)$. Therefore, the overall size reduction of $\rho_{AB[2]}$ is given by deleting 9 rows and columns which results in a block of size 18.
First, let us see why $\rho_{AB[2]}$ reduces to a block of size 18. Consider two rows of $\rho_{AB[2]}$ indexed by  distinct   $i_0i_1i_2, j_0j_1j_2\in [3]^2$. If $i_0=j_0$ and $\{i_1,i_2\}=\{j_1,j_2\}$ %$\alpha(i_1i_2)=\alpha(j_1j_2)$ 
then, by (\ref{eq:DPS1}),  these two rows are equal and they correspond to a single row of $M_{1,2}(L)$ indexed by $(\alpha=e_{i_0}, \alpha'=0,\beta=\alpha(i_1i_2), \beta'=0)$. All possible reductions of this form    arise for  $(i_012,i_021)$,  $(i_013,i_031)$, and $(i_023,i_032)$ for  $i_0\in [3]$. Hence, one can remove 9 rows/columns from $\rho_{AB[2]}$ so that it reduces to a block of size $27-9=18$. 

%Running the same analysis for $(\rho_{AB[2]}^{T_{B[2]}})$ yields that the same rows coincide as for $(\rho_{AB[2]}$. A row indexed by $(i_0i_1i_2)$ corresponds to the row of $M_t(L)$ indexed by $\alpha=e_{i_0}$, $\beta=0$ and $\beta'=\alpha(e_{i_1}e_{i_2})$.
The same analysis applies to   $\rho_{AB[2]}^{T_{B[2]}}$ since the partial conjugate is applied to both $B$-registers, and thus $\rho_{AB[2]}^{T_{B[2]}}$ also reduces to a block of size 18. Note that, as expected,  the row of $\rho_{AB[2]}^{T_{B[2]}}$
indexed by $i_0i_1i_2$  corresponds to the row of $M_{1,2}(L)$ indexed by $(\alpha=e_{i_0},\alpha'=0,\beta=0,\beta'=\alpha(i_1i_2))$.

%We now want to derive when when $(\rho_{AB[2]}^{T_{B[1]}})_{i_0i_1i_2,k_0k_1k_2}=(\rho_{AB[2]}^{T_{B[1]}})_{j_0j_1j_2,k_0k_1k_2}$ holds for all $(k_0k_1k_2)$. Using the defintion of the partial trace this is equivalent to asking when $(\rho_{AB[2]})_{i_0k_1i_2,k_0i_1k_2}=(\rho_{AB[2]})_{j_0k_1j_2,k_0j_1k_2}$ holds. Condition \eqref{eq:DPS1} is only applicable when $i_0=j_0$, $\alpha(k_1i_2)=\alpha(k_1j_2)$ and $i_1=j_1$, implying that $(i_0i_1i_2)=(j_0j_1j_2)$. Therefore, no size reduction occurs. The row of $(\rho_{AB[2]}^{T_{B[1]}})$ indexed by $(i_0i_1i_2)$ corresponds to the row of $M_t(L)$ indexed by $\alpha=e_{i_0}$, $\beta=e_{i_2}$ and $\beta'=e_{i_1}$.
Consider now $\rho_{AB[2]}^{T_{B[1]}}$. By the definition of the partial transpose, for   $i_0i_1i_2,$ $j_0j_1j_2,k_0k_1k_2\in [3]^3$,  $(\rho_{AB[2]}^{T_{B[1]}})_{i_0i_1i_2,k_0k_1k_2}=(\rho_{AB[2]}^{T_{B[1]}})_{j_0j_1j_2,k_0k_1k_2}$    if and only if 
$(\rho_{AB[2]})_{i_0k_1i_2,k_0i_1k_2}= (\rho_{AB[2]})_{j_0k_1j_2, k_0j_1k_2}$. The latter equality holds using condition (\ref{eq:DPS1})  only if $i_0=j_0$, $\{k_1,i_2\}=\{k_1,j_2\}$, $\{i_1,k_2\}=\{j_1,k_2\}$,
%$\alpha(k_1i_2)=\alpha(k_1j_2)$ and $\alpha(i_1k_2)=\alpha(j_1k_2)$, 
which implies $i_0i_1i_2=j_0j_1j_2$. Hence, no size reduction can occur.  Note that the row of $\rho_{AB[2]}^{T_{B[1]}}$ indexed by $i_0i_1i_2$ corresponds to the row of $M_{1,2}(L)$ indexed by $(\alpha=e_{i_0},\alpha'=0,\beta=e_{i_2},\beta'=e_{i_1})$.

\smallskip
Assume now $\rho_{AB}\in \CLDUI_3$.
 %Adding up the block sizes of table \ref{table0}, \ref{table1} and \ref{table2} one gets that $6_1,5_6,3_6,2_6,1_{15}$ are the necessary block sizes and amounts for setting up the SDP in the tensor formulation in the CLDUI regime. By applying the size reduction discussed in the generic case (i.e. $(i_0i_1i_2)$ and $(j_0j_1j_2)$ index the same rows in $\rho_{AB[2]}$ and $\rho_{AB[2]}^{T_{B[2]}}$ if $i_0=j_0$ and $\alpha(i_1i_2)=\alpha(j_1j_2)$) one gets for example that the block indexed by $\{(111),(212),(221),(313),(331)\}$ reduces to a block indexed by $(111),(212),(313)$. Doing this for every maximal Clique one gets precisely the block sizes and amounts displayed in table \ref{tab:CLDUIBlockSize}.
In the tensor formalism, the SDP modeling a $\DPS^{(2)}$-certificate $\rho_{AB[2]}\in \CLDUI_3^{(2)}$ involves $5_3, 2_3,1_6$ blocks for $\rho_{AB[2]}$ (see Table \ref{table0}), $5_3,2_3,1_6$ blocks for $\rho_{AB[2]}^{T_{B[1]}}$ (see Table \ref{table1}), and $6_1,3_6,1_3$ blocks for $\rho_{AB[2]}^{T_{B[2]}}$ (see Table \ref{table2}), leading to a total of $6_1,5_6,3_6,2_6,1_{15}$ blocks. In comparison, the SDP in the moment formulation involves a total of $5_3,3_4,2_9,1_{18}$ blocks, as can be seen in Table \ref{tab:CLDUIBlockSize}. To see why this size reduction takes place, it suffices to check all the maximal cliques that are listed in Tables \ref{table0}, \ref{table1}, \ref{table2}. For each such clique, one just checks whether it contains two distinct  indices $i_0i_1i_2$  and $j_0j_1j_2$ such that $i_0=j_0$ and $\{i_1,i_2\}=\{j_1,j_2\}$, in which case one of them is removed from  the clique. So, in Table \ref{table0}, each clique of size 5 (resp., size 2) reduces to size 3 (resp., size 1), yielding block sizes $3_3,1_9$ after reduction. In Table \ref{table1}, no size reduction applies, so one keeps $5_3,2_3,1_6$ as block sizes.  Finally, in Table \ref{table2}, the clique of size 6 reduces to size 3 and each clique of size 3 reduces to size 2, yielding block sizes $3_1,2_6,1_3$. Putting things together one indeed arrives at a total of $5_3,3_4,2_9,1_{18}$ blocks as shown in Table~\ref{tab:CLDUIBlockSize}.
\end{example}

\begin{table}[h!]
    \centering
    \begin{tabular}{c|c|c|c}
         $t\backslash n$ & 3 & 4 & 5 \\ \hline
         2 & $27_1$, $18_2$ & $64_1$, $40_2$ & $125_1$, $75_2$ \\ \hline
         3 & $54_2$, $30_2$ & $160_2$, $80_2$ & $375_2$, $175_2$ \\ \hline
         4 & $108_1$, $90_2$, $45_2$ & $400_1$, $320_2$, $140_2$ & $1125_1$, $875_2$, $350_2$\\\hline
         5 & $180_2$, $135_2$, $63_2$ & $800_2$, $560_2$, $224_2$ & $2625_2$, $1750_2$, $630_2$\\\hline
         6 & $300_1$, $270_2$, $189_2$, $84_2$ & $1600_1$, $1400_2$, $896_2$, $336_2$ & $6125_1$, $5250_2$, $3150_2$, $1050_2$\\\hline
         7 & $450_2$, $378_2$, $252_2$, $108_2$ & $2800_2$, $2240_2$, $1344_2$, $480_2$ & $12250_2$, $9450_2$, $5250_2$, $1650_2$
    \end{tabular}
    \caption{Block sizes in the moment formulation of $\DPS^{(t)}$-certificates for generic states}
    \label{tab:GenericBlockSize}
\end{table}

\begin{table}[h!]
    \centering
    \begin{tabular}{c|c|c|c}
         $t\backslash n$ & 3 & 4 & 5 \\ \hline
         2 & $7_{3}$, $6_{1}$, $5_{6}$, $3_{2}$ & $10_{4}$, $7_{8}$, $6_{4}$, $3_{8}$ & $13_{5}$, $9_{10}$, $6_{10}$, $3_{20}$ \\ \hline
         3 & $15_{2}$, $13_{6}$, $9_{2}$, $7_{6}$ & $28_{2}$, $20_{12}$, $16_{2}$, $12_{2}$, $10_{12}$, $4_{2}$ & $45_{2}$, $27_{20}$, $25_{2}$, $13_{20}$, $12_{10}$, $4_{10}$ \\ \hline
         4 & $28_{3}$, $24_{1}$, $23_{6}$,  & $58_{4}$, $46_{8}$, $42_{4}$, & $99_{5}$, $77_{10}$, $60_{10}$, $47_{20}$, $35_{10}$, \\
         & $21_{2}$, $12_{6}$, $9_{2}$ & $34_{8}$, $22_{8}$, $13_{8}$ &  $30_{1}$, $20_{2}$, $17_{20}$, $5_{2}$\\\hline
         5& $48_2$, $44_6$, $36_2$, & $124_2$, $100_{12}$, $88_2$, $76_2$, $70_{12}$, & $255_2$, $180_{20}$, $175_2$, $121_{20}$, $114_{10}$, \\
          & $33_6$, $18_2$, $15_6$ & $52_2$, $40_2$, $28_{12}$, $16_2$ & $75_2$, $73_{10}$, $45_{20}$, $21_{10}$\\\hline
        6& $76_{3}$, $72_{1}$, $69_{6}$, $63_{2}$, & $218_{4}$, $193_{8}$, $182_{4}$, $157_{8}$, & $479_{5}$, $417_{10}$, $350_{10}$, $297_{20}$, $255_{10}$, $230_{1}$, \\
        & $48_{6}$, $45_{2}$, $22_{6}$, $18_{2}$ & $124_{8}$, $100_{8}$, $50_{8}$, $34_{8}$ & $195_{2}$, $177_{20}$, $105_{2}$, $95_{10}$, $55_{20}$, $25_{2}$\\\hline
         & $117_{2}$, $111_{6}$, $99_{2}$, & $404_{2}$, $350_{12}$, $328_{2}$, $296_{2}$, & $1045_{2}$, $825_{2}$, $817_{20}$, $633_{20}$, \\
        7 & $93_{6}$, $66_{2}$, $62_{6}$,& $280_{12}$, $232_{2}$, $200_{2}$, $168_{12}$, & $607_{10}$, $475_{2}$, $459_{10}$, $355_{20}$, \\
         &  $30_{2}$, $26_{6}$ & $136_{2}$, $80_{2}$, $60_{12}$, $40_{2}$ & $245_{10}$, $175_{2}$, $115_{20}$, $65_{10}$
    \end{tabular}
    \caption{Block sizes in the moment formulation of $\DPS^{(t)}$-certificates for states in $\LDOI_n$}
    \label{tab:LDOIBlockSize}
\end{table}

\begin{table}[h!]
    \centering
    \begin{tabular}{c|c|c|c}
         $t\backslash n$ & 3 & 4 & 5 \\ \hline
         2 & $5_3$, $3_4$, $2_9$, $1_{18}$ & $7_4$, $4_4$, $3_4$, $2_{24}$, $1_{40}$ & $9_5$, $5_5$, $3_{10}$, $2_{50}$, $1_{75}$ \\ \hline
         3 & $9_1$, $7_3$, $5_9$, & $16_1$, $10_6$, $7_{16}$, $4_{11}$, & $25_1$, $13_{10}$, $9_{25}$, $5_{15}$, \\
         & $3_9$, $2_{18}$, $1_{30}$ & $3_{16}$, $2_{60}$, $1_{80}$ & $4_5$, $3_{50}$, $2_{150}$, $1_{175}$ \\\hline
         4 & $12_3$, $9_4$, $7_9$, $5_{18}$, & $22_4$, $16_4$, $13_4$, $10_{24}$, $7_{40}$, & $35_5$, $25_5$, $17_{10}$, $13_{50}$, $9_{75}$, \\
          &  $3_{16}$, $2_{30}$, $1_{45}$ &  $4_{24}$, $3_{40}$, $2_{120}$, $1_{140}$ &  $5_{36}$, $4_{25}$, $3_{150}$, $2_{350}$, $1_{350}$\\ \hline
          & $18_1$, $15_3$, $12_9$, & $40_1$, $28_6$, $22_{16}$, $16_{11}$, & $75_1$, $45_{10}$, $35_{25}$, $25_{15}$, $21_{5}$, \\
          5 & $9_9$, $7_{18}$, $5_{30}$, &  $13_{16}$, $10_{60}$, $7_{80}$, $4_{45}$, & $17_{50}$, $13_{150}$, $9_{175}$, $5_{75}$, \\
           & $3_{25}$, $2_{45}$, $1_{63}$ & $3_{80}$, $2_{210}$, $1_{224}$ & $4_{75}$, $3_{350}$, $2_{700}$, $1_{630}$\\\hline
         & $22_3$, $18_4$, $15_9$, $12_{18}$, & $50_4$, $40_4$, $34_4$, $28_{24}$, $22_{40}$, & $95_5$, $75_5$, $55_{10}$, $45_{50}$, $35_{75}$, \\
         6  & $9_{16}$, $7_{30}$, $5_{45}$, & $16_{24}$, $13_{40}$, $10_{120}$, $7_{140}$, & $25_{36}$, $21_{25}$, $17_{150}$, $13_{350}$, $9_{350}$, \\
           & $3_{36}$, $2_{63}$, $1_{84}$ & $4_{76}$, $3_{140}$, $2_{336}$, $1_{336}$ & $5_{141}$, $4_{175}$, $3_{700}$, $2_{1260}$, $1_{1050}$\\\hline
          & $30_1$, $26_3$, $22_9$, $18_9$, & $80_1$, $60_6$, $50_{16}$, $40_{11}$, $34_{16}$, & $175_1$, $115_{10}$, $95_{25}$, $75_{15}$, $65_5$, $55_{50}$, \\
         7& $15_{18}$, $12_{30}$, $9_{25}$, $7_{45}$, & $28_{60}$, $22_{80}$, $16_{45}$, $13_{80}$, $10_{210}$, & $45_{150}$, $35_{175}$, $25_{75}$, $21_{75}$, $17_{350}$, $13_{700}$, \\
          & $5_{63}$, $3_{49}$, $2_{84}$, $1_{108}$ & $7_{224}$, $4_{119}$, $3_{224}$, $2_{504}$, $1_{480}$ & $9_{630}$, $5_{245}$, $4_{350}$, $3_{1260}$, $2_{2100}$, $1_{1650}$
    \end{tabular}
    \caption{Block sizes in the moment formulation of $\DPS^{(t)}$-certificates for states in $\CLDUI_n$}
    \label{tab:CLDUIBlockSize}
\end{table}

We now want to give some insight on how the size of the biggest block involved in the SDP in the generic setting compares to the biggest block in the LDOI setting. We restrict ourselves to the LDOI case, since going from generic to LDOI yields a far greater reduction in size than going from LDOI to CLDUI. Furthermore, the analysis in the LDOI case is significantly easier because the LDOI condition in 
Lemma \ref{lem:DPS-mom-LDOI} %Lemma \ref{lem:DPSmom}
 is `global', in the same sense as explained before Lemma \ref{lem:sizecliqueLDOI}.
 %that it involves only a parity argument on the whole sequence.} %it is invariant under taking partial transposes. 
 We fix $n\in\N$, $t\ge 1$, $s'\in [-t,t]$ such that $s'\equiv t$ (mod 2).  Define 
 $$N_{s',t}:=\max_{(\tilde\alpha,\tilde\beta,\tilde\beta')\in I_{1,s'}}|\{(\alpha,\beta,\beta')\in I_{1,s'}:\alpha+\beta+\beta'\equiv\tilde\alpha+\tilde\beta+\tilde\beta'\text{ (mod 2)}\}|.$$

\begin{lemma}\label{lem:LDOIBlockRatio}
    Let $n,t\in\N$ and $s'\in\{-t,-t+2,\dots,t-2,t\}$. Then, we have
    $$\frac{N_{s',t}}{|I_{1,s'}|}\leq\frac{(\frac{t+|s'|}{2})!}{n^{\lceil\frac{t+|s'|}{4}\rceil}}.$$
   Hence,     the ratio of the largest block size   in the LDOI regime to the largest block size in the generic regime is given by 
   $$\max _{s'} \frac{N_{s',t}}{|I_{1,s'}|}\leq \max_{s'} \frac{(\frac{t+|s'|}{2})!}{n^{\lceil\frac{t+|s'|}{ {4}}\rceil}}\le {t!\over {n^{\lceil {t\over 4}\rceil}}}.$$
\end{lemma}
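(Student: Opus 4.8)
The plan is to count, for a fixed feasible residue pattern $(\tilde\alpha,\tilde\beta,\tilde\beta')\in I_{1,s'}$, how many triples $(\alpha,\beta,\beta')\in I_{1,s'}$ satisfy the entrywise congruence $\alpha+\beta+\beta'\equiv\tilde\alpha+\tilde\beta+\tilde\beta'\pmod 2$, and then upper bound the ratio by comparing with $|I_{1,s'}|$. By symmetry between $\beta$ and $\beta'$ (which corresponds to swapping $s'$ and $-s'$) I would assume without loss of generality that $s'\ge 0$, so that $|\beta|=\tfrac{t+s'}{2}\ge |\beta'|=\tfrac{t-s'}{2}$. Since $|\alpha|=1$, there are at most $n$ choices for $\alpha$; fix one. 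Writing $\delta=\tilde\alpha+\tilde\beta+\tilde\beta'-\alpha$, the congruence forces $\beta+\beta'\equiv\chi^O\pmod 2$, where $O$ is the odd support of $\delta$. The key counting step is to first choose $\beta'$ freely (there are ${n-1+(t-s')/2\choose n-1}$ choices) and then count the admissible $\beta$: the constraint pins down the parities of $\beta$, leaving only even increments to distribute over a budget reduced by the odd-support size. This yields a binomial count of roughly ${n-1+\lfloor (t+s')/4\rfloor\choose n-1}$ for $\beta$ in the worst (most permissive) case of minimal odd support.

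First I would make precise the formula $|I_{1,s'}|=n\cdot{n-1+(t+s')/2\choose n-1}\cdot{n-1+(t-s')/2\choose n-1}$, then bound $N_{s',t}\le n\cdot{n-1+(t-s')/2\choose n-1}\cdot{n-1+\lfloor(t+s')/4\rfloor\choose n-1}$, so that the ratio simplifies to
\begin{align*}
\frac{N_{s',t}}{|I_{1,s'}|}\le \frac{{n-1+\lfloor(t+s')/4\rfloor\choose n-1}}{{n-1+(t+s')/2\choose n-1}}.
\end{align*}
To pass from this binomial ratio to the stated bound I would invoke the two-sided estimate ${m\choose n-1}\ge m^{n-1}/(n-1)!$ for the denominator and ${m'\choose n-1}\le (m')^{n-1}\le$ (a small power of $n$ absorbed into the factorial) for the numerator, analogous to relation \eqref{eq:tensor-monomial2} used elsewhere in the paper. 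The cleanest route is to bound the numerator by $\big(\tfrac{t+s'}{2}\big)!$ times a power of $n^{\lfloor(t+s')/4\rfloor}$ and the denominator from below by $n^{(t+s')/2}/(\ldots)$, giving the target $\big(\tfrac{t+s'}{2}\big)!/n^{\lceil(t+s')/4\rceil}$ after collecting terms and using $s'\ge 0$.

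For the second inequality I would maximize the bound $\big(\tfrac{t+|s'|}{2}\big)!/n^{\lceil(t+|s'|)/4\rceil}$ over the admissible $s'$. The factorial grows with $|s'|$ while the denominator's exponent also grows, so the extreme case is $|s'|=t$, where the numerator becomes $t!$ and the exponent becomes $\lceil t/4\rceil\cdot$(adjusted); tracking this carefully gives the final bound $t!/n^{\lceil t/4\rceil}$. The main obstacle I anticipate is the combinatorial bookkeeping in the worst-case analysis of the odd support $O$: one must verify that minimizing $|O|$ genuinely maximizes the count of admissible $\beta$ and that this minimum ($|O|\in\{0,1\}$ depending on parity) is actually attainable for $n$ large, so that the binomial estimate ${n-1+\lfloor(t+s')/4\rfloor\choose n-1}$ is not an overcount that breaks the inequality. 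Getting the floor/ceiling arithmetic and the parity constraint $s'\equiv t\pmod 2$ to line up exactly with the claimed exponent $\lceil(t+|s'|)/4\rceil$ is the delicate part; this mirrors the subtleties already flagged in the commented-out alternative analysis in the excerpt.
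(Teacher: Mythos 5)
Your proposal follows essentially the same route as the paper's proof: reduce to $s'\ge 0$ by swapping the roles of $\beta$ and $\beta'$, fix $\alpha$ (at most $n$ choices), choose $\beta'$ freely, and observe that the congruence pins down the parity pattern of $\beta$, so the count of admissible $\beta$ is a binomial coefficient maximized at the minimal parity-consistent odd support, giving $N_{s',t}\le n\binom{n-1+\frac{t-s'}{2}}{n-1}\binom{n-1+\lfloor\frac{t+s'}{4}\rfloor}{n-1}$ and then the stated bound via $\frac{n^t}{t!}\le\binom{n-1+t}{n-1}\le n^t$. Two small remarks: the attainability worry you flag is unnecessary, since for an upper bound you only need that each choice of $(\alpha,\beta')$ admits at most $\binom{n-1+\lfloor\frac{t+s'}{4}\rfloor}{n-1}$ values of $\beta$ (monotonicity in the odd-support size suffices); and the final inequality does not come from a single extreme $s'$ (at $|s'|=t$ the exponent is $\lceil t/2\rceil$, not $\lceil t/4\rceil$) but from bounding the numerator by $t!$ and the denominator below by $n^{\lceil t/4\rceil}$ separately.
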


\begin{proof}
Fix an arbitrary $(\tilde\alpha,\tilde\beta,\tilde\beta')\in I_{1,s'}$. Assume first $0\leq s'$. We want to count how many $(\alpha,\beta,\beta')\in I_{1,s'}$ exist such that $\alpha+\beta+\beta'\equiv\tilde\alpha+\tilde\beta+\tilde\beta'\text{ (mod 2)}$. Define the odd support  of $\tilde\alpha+\tilde\beta+\tilde\beta'$ as the set $O\subseteq[n]$   consisting of the indices $i\in [n]$ for which the $i$th entry of the vector $\tilde\alpha+\tilde\beta+\tilde\beta'$ is odd. Note that $|O|\equiv |\tilde\alpha+\tilde\beta+\tilde\beta'|=t+1$ (mod 2). Therefore, we need to count the sequences $(\alpha,\beta,\beta')\in I_{1,s'}$ whose odd support is $O$.

        First, we pick $\alpha$ and $\beta'$ arbitrarily, for which  there are   $n\cdot\binom{n-1+\frac{t-s'}{2}}{n-1}$ options. Denote the odd support of $\alpha+\beta'$ as $O'\subseteq[n]$. Now we will select $\beta$. In order to fulfil the desired congruence condition,  the odd support of $\beta$ must be equal to the symmetric difference $O\Delta O'$ of $O$ and $O'$. Note that $|O'|\equiv |\alpha+\beta'|=1+(t-s')/2$ (mod 2).
      %  has to be of the form $\beta=2\gamma+\chi^{O\triangle O'}$ where $O\triangle O'=O\setminus O'\cup O'\setminus O$ is the symmetric difference of $O,O'$ and $\chi^A\in\N^n$ denotes the characteristic vector of a set $A\subseteq[n]$ (i.e., $(\chi^A)_i=1$ if $i\in A$ and $(\chi^A)_i=0$ if $i\not\in A$). 
 So, one must have $\beta=  2\gamma+\chi^{O\triangle O'}$, where $\gamma\in\N^n$ and $\chi^{O\Delta O'}$ is the characteristic vector of $O\Delta O'$ (with $i$th entry 1 if $i\in O\Delta O'$ and 0 otherwise).  Since $|\beta|=\frac{t+s'}{2}$, it follows that %$\gamma\in\N^n$ has to be of weight 
    $|\gamma|=(\frac{t+s'}{2}-|O\triangle O'|)/2$. This is a valid weight because 
    $|O\Delta O'|\equiv |O|+|O'|\equiv t +(t-s')/2 \equiv (t+s)/2$ (mod 2).
  %  $\frac{t+s}{2}$ and $|O\triangle O'|$ are of the same parity which can be seen by the following congruence relation $$|O\triangle O'|\equiv |O|+|O'|\equiv t+\frac{t-s}{2}\equiv\frac{t+s}{2}\text{ (mod 2)}.$$
    In total we get $\binom{n-1+(\frac{t+s'}{2}-|O\triangle O'|)/2}{n-1}$ options for $\gamma$, which also gives the number  of options for $\beta$. This term is clearly maximized for $|O\triangle O'|$ minimal, which depends on the parity of $\frac{t+s'}{2}$. Set $a=0$ if $\frac{t+s'}{2}$ is even and set $a=1$ if $\frac{t+s'}{2}$is odd. We can conclude  that
    \begin{align}\label{eq:NstUpperBound}
    \begin{split}
        N_{s',t}& \leq \  n\binom{n-1+\frac{t-s'}{2}}{n-1}\binom{n-1+(\frac{t+s'}{2}-a)/2}{n-1}\\
        & =n\binom{n-1+\frac{t-s'}{2}}{n-1}\binom{n-1+\lfloor\frac{t+s'}{4}\rfloor}{n-1}.  
    \end{split}\end{align}
Since $|I_{1,s'}|=n{n-1+(t-s')/2 \choose n-1}{n-1+(t+s')/2\choose 2}$, we obtain that   
 $$\frac{N_{s',t}}{|I_{1,s'}|}\leq\frac{\binom{n-1+\lfloor\frac{t+s'}{4}\rfloor}{n-1}}{\binom{n-1+\frac{t+s'}{2}}{n-1}}\leq\big(\frac{t+s'}{2}\big)! \cdot \frac{n^{\lfloor\frac{t+s'}{4}\rfloor}}{n^{\frac{t+s'}{2}}},$$ where we get the second inequality by applying the following (easy to check) inequality
    \begin{align*}
        {n^t\over t!}\le {n-1+t\choose n-1}\le n^t \ \text{ for any integers } n,t\ge 1.
    \end{align*}
    One can quickly see that this formula coincides with the formula given in the lemma. Consider now the case $s'\leq0.$ Then we can   redo the same analysis, only altering the approach by switching the roles of $\beta,\beta'$, i.e., first selecting $\beta$ arbitrarily and after that selecting $\beta'$.  This leads to replacing $\frac{t+s'}{2}$ with $\frac{t-s'}{2}$ in formula \eqref{eq:NstUpperBound}. This completes the proof.
\qed\end{proof}

\begin{remark}
    In the case $s=t=1$  the formula of Lemma \ref{lem:LDOIBlockRatio} gives   an upper bound of $\frac{1}{n}$. This exactly coincides with the known sparsity ratio for the  LDOI pattern (and the CLDUI/LDUI patterns) (recall relation (\ref{eq:rho-ABC})).
    \end{remark}

\subsection{Runtime comparison for a class of examples based on the PPT$^2$ conjecture}\label{sec:runtime}

The goal of this section is to present a comparison of the runtimes for an implementation of the DPS hierarchy in the three   regimes: for generic, CLDUI,  LDOI bipartite states. We first give an overview of the setup for our benchmark examples, which is motivated by the PPT$^2$ conjecture in quantum information. The PPT$^2$ conjecture is usually formulated on the `channel side'. We will state it here on the `state side' and only explain the necessary steps for setting up our benchmark instances. Further details can be found in Appendix \ref{A:PPT^2Conj}. The PPT$^2$ conjecture states that the composition of any two {PPT} linear maps from $\C^{n\times n}$ to $\C^{n\times n}$ is entanglement breaking (see Conjecture~\ref{conj:PPT2}). It has been shown to hold  for $n\leq3$ \cite{CMHW-2018,CYT-2019} and for the linear maps arising from one state in $\CLDUI_n\cup\LDUI_n$ and the other one in $\LDOI_n$ \cite{Singh-Nechita-PPT2,Nechita-Park_2025}. It is still open whether it also holds when both states lie in $\LDOI_n$.
For this reason we next formulate the PPT$^2$ conjecture for LDOI states, posed in \cite[Conjecture 5.1]{Singh-Nechita-PPT2}.

\begin{conjecture}[The PPT$^2$ conjecture for LDOI states]\label{conj:LDOIstates}
Let $(X_k,Y_k,Z_k)\in \R^{n\times n}\times\Herm^n\times\Herm^n$ such that $\diag(X_k)=\diag(Y_k)=\diag(Z_k)$ for $k=1,2$.
%   Let  $(X_1,Y_1,Z_1),(X_2,Y_2,Z_2)\in\R^{n\times n}\times\Herm^n\times\Herm^n$ such that $\diag(X_1)=\diag(Y_1)=\diag(Z_1)$ and $\diag(X_2)=\diag(Y_2)=\diag(Z_2)$. 
Assume that the corresponding LDOI states are positive semidefinite and have positive partial transpose, i.e.,  $\rho_{(X_k,Y_k,Z_k)}\in \DPS^{(1)}_n$ for $k=1,2$.
%$\rho_{(X_1,Y_1,Z_1)},\rho_{(X_2,Y_2,Z_2)}\in\DPS^{(1)}_n$.
Then,   $\rho_{(X_3,Y_3,Z_3)}\in\SEP_n$, where we define 
\begin{align*}
(X_3,Y_3,Z_3)& =(X_1,Y_1,Z_1)\bullet(X_2,Y_2,Z_2)\\
& :=(X_1X_2,Y_1\circ Y_2+Z_1\circ Z_2^T+DY_3,Y_1\circ Z_2+Z_1\circ Y_2^T+DZ_3),
\end{align*}
$DY_3:=\Diag(X_1X_2-Y_1\circ Y_2-Z_1\circ Z_2^T), DZ_3:=\Diag(X_1X_2-Y_1\circ Z_2-Z_1\circ Y_2^T)$, so that 
 $\diag(X_3)=\diag(Y_3)=\diag(Z_3)$ holds.
% \footnote{Both $DY_3$ and $DZ_3$ only serve to ensure, that the matrix triple $(X_3,Y_3,Z_3)$ shares the same diagonal.}.
\end{conjecture}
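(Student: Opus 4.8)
The statement is the (still open) PPT$^2$ conjecture restricted to LDOI states, so what follows is a strategy of attack rather than a complete argument. Via the Choi--Jamio\l{}kowski correspondence, each triple $(X_k,Y_k,Z_k)$ is the Choi matrix of a linear map $\Phi_k\colon\C^{n\times n}\to\C^{n\times n}$, the hypothesis $\rho_{(X_k,Y_k,Z_k)}\in\DPS^{(1)}_n$ says that $\Phi_k$ is completely positive and PPT, and the bilinear product $\bullet$ encodes precisely the composition $\Phi_1\circ\Phi_2$ at the level of LDOI parameters. The desired conclusion $\rho_{(X_3,Y_3,Z_3)}\in\SEP_n$ is equivalent, by Theorem~\ref{theo:TCPDec}, to $(X_3,Y_3,Z_3)\in\TCP_n$, i.e.\ to the composed map being entanglement breaking. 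The first thing I would record is that membership in $\DPS^{(1)}_n$ is automatically inherited by the composition: composition of completely positive maps is completely positive and composition of PPT maps is PPT, so $\rho_{(X_3,Y_3,Z_3)}$ is again PSD with positive partial transpose. On the parameter side this is reflected by the facts that $X_3=X_1X_2\ge 0$ entrywise (a product of entrywise nonnegative matrices, since the $2\times2$ blocks in \eqref{eq:rhoXYZ-block} force $X_k\ge0$), and that $Y_1\circ Y_2+Z_1\circ Z_2^T$ and $Y_1\circ Z_2+Z_1\circ Y_2^T$ are positive semidefinite by the Schur product theorem (using $Y_k,Z_k\succeq0$ and $Z_2^T=\overline{Z_2}\succeq0$). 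Thus the entire content of the conjecture lies in upgrading ``PPT'' to ``separable''.

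\textbf{Main line of attack.} The most promising route is to exploit the bilinearity of $\bullet$ together with the already established special cases, namely that $\Phi_1\circ\Phi_2$ is entanglement breaking whenever one of the two factors is CLDUI or LDUI (i.e.\ has $Z_k$ or $Y_k$ diagonal) and the other is LDOI \cite{Singh-Nechita-PPT2,Nechita-Park_2025}. I would try to write one PPT factor, say $(X_2,Y_2,Z_2)$, as a conic combination of triples of the \emph{solved} types --- CLDUI, LDUI, and purely diagonal --- \emph{each of which is again PPT} --- and then distribute $\bullet$ over this decomposition, handling every summand by the known results and summing the resulting separable decompositions. A complementary, more hands-on route is to produce a direct $\TCP_n$ decomposition of $(X_3,Y_3,Z_3)$ in the spirit of Theorem~\ref{theo:TCPDec}: take rank-one atoms $x\otimes y$ realizing the separable structure guaranteed for each factor, form the ``composed'' atoms, and verify that the resulting triples of the form $\big((x\circ\overline x)(y\circ\overline y)^*,(x\circ y)(x\circ y)^*,(x\circ\overline y)(x\circ\overline y)^*\big)$ reproduce $(X_3,Y_3,Z_3)$.

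\textbf{Computational leverage.} A third, quantitative approach is to certify $\rho_{(X_3,Y_3,Z_3)}\in\SEP_n$ through the block-diagonalized DPS hierarchy of Theorem~\ref{theo:LDUILDOItDPSCert}: since both the input states and the composition are LDOI, every DPS level is a small block-diagonal SDP, and completeness \eqref{eq:DPScomplete} guarantees that separability is detected at some finite order. The analysis of Section~\ref{sec:example}, where membership in $\DPS^{(2)}_3$ was characterized \emph{analytically} by exploiting the maximal cliques of $G^{(t)}_n$ together with kernel and symmetry reductions, is the template I would imitate: identify the kernel of $\rho_{(X_3,Y_3,Z_3)}$ and its symmetry group, propagate these to the certificate via Lemmas~\ref{lem:kernel-DPS} and \ref{LThetaAppliedDPSLCertificate}, and attempt to close the argument at level $t=2$ for small $n$, then look for an $n$-independent pattern.

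\textbf{The main obstacle.} The crux is exactly the PPT--separability gap, which is why the conjecture is open precisely when \emph{both} factors are genuinely LDOI (neither CLDUI nor LDUI). In the reduction strategy, the sticking point is that an arbitrary PPT LDOI triple need not decompose into PPT pieces of the solved CLDUI/LDUI/diagonal types: the positive-partial-transpose constraints couple $Y_k$ and $Z_k$ through the shared $2\times2$ blocks of \eqref{eq:rhoXYZ-block}, so splitting off a CLDUI part (forcing $Z$ diagonal) can destroy positivity of the remainder. In the direct-decomposition strategy, the difficulty is that composing separable atoms does not in general stay within the conic generators of Theorem~\ref{theo:TCPDec}, because the Hadamard and transpose mixing in the definition of $Y_3,Z_3$ entangles the phases of the atoms in a way that is not manifestly a single rank-one LDOI pattern. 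I expect that overcoming this will require genuinely new structural insight into how the composition interacts with the off-diagonal phase data $(Y_k,Z_k)$, beyond what the first DPS level and the known special cases provide.
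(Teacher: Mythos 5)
The statement you were asked to prove is Conjecture~\ref{conj:LDOIstates}, which the paper itself leaves open: there is no proof of it anywhere in the manuscript. The paper's treatment consists of (i) the appendix machinery (Definition~\ref{def:Choimatrix}, Lemmas~\ref{lem:computePhi} and~\ref{lem:Phi123}) showing that the product $\bullet$ is exactly composition of the associated DOC maps, so the conjecture is a faithful restatement of the PPT$^2$ conjecture for LDOI states; (ii) a recollection of the known cases ($n\le 3$, one factor CLDUI/LDUI, or one factor entanglement breaking, cf. Lemma~\ref{lem:conj-holds}); and (iii) the numerical experiments of Section~\ref{sec:runtime}, which test composed states against the block-diagonalized DPS hierarchy and find no counterexample. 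You correctly recognized that the statement is open and offered strategies rather than a proof; your preliminary observations (the composition of PPT maps is again PPT; $X_3\ge 0$ entrywise; Schur-product positivity of $Y_1\circ Y_2+Z_1\circ Z_2^T$) are all correct, and your identification of the central obstruction --- that a PPT LDOI triple need not split into PPT pieces of CLDUI/LDUI/diagonal type, because the partial-transpose constraints couple $Y_k$ and $Z_k$ through the shared $2\times 2$ blocks of \eqref{eq:rhoXYZ-block} --- is exactly why the problem remains open.

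One concrete error in your third strategy: you claim that completeness \eqref{eq:DPScomplete} ``guarantees that separability is detected at some finite order''. This is backwards. The DPS hierarchy is an \emph{outer} approximation of $\SEP_n$; completeness says that an \emph{entangled} state is excluded at some finite level. A separable state passes every level, but passing any finite number of levels certifies nothing, so no finite SDP computation along these lines can prove the conjecture --- it can only fail to refute it, which is precisely what the paper's experiments in Section~\ref{sec:runtime} do. Finite-level certification of separability requires an exactness statement in the spirit of Theorem~\ref{theo:rhoaa} (where $\rho_{a,a'}\in\DPS^{(2)}_3\Leftrightarrow\rho_{a,a'}\in\SEP_3$ is proved analytically for a specific family), and establishing such exactness for the composed states $\rho_{(X_3,Y_3,Z_3)}$ uniformly in $n$ is essentially as hard as the conjecture itself. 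So, as stated, your computational route cannot close the argument even in principle; the decomposition strategy is the one with a chance, and there you have already located the precise point where it breaks.
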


Suppose we want to construct a counterexample to the above conjecture. 
As mentioned above, the PPT$^2$ conjecture holds for $n\leq3$ and for states in $\CLDUI_n\cup\LDUI_n$; in addition, it holds 
%we have  $\rho_{(X_3,Y_3,Z_3)}\in\SEP_n$
 if either $\rho_{(X_1,Y_1,Z_1)}\in\SEP_n$, or $\rho_{(X_2,Y_2,Z_2)}\in\SEP_n$ (see  Lemma \ref{lem:conj-holds}).
Hence, the smallest case to investigate is $n=4$. So, we want to construct matrix triplets $(X,Y,Z)\in\R^{4\times 4}\times\Herm^4\times\Herm^4$, such that $\diag(X)=\diag(Y)=\diag(Z)$, $\rho_{(X,Y,Z)}\in\DPS^{(1)}_4\setminus\SEP_4$, and neither $Y$ nor $Z$ is a diagonal matrix. 
Then, we will use such triplets as arguments $(X_k,Y_k,Z_k)$ ($k=1,2$) to build their compositions $(X_3,Y_3,Z_3)$ that we will then test for existence of a $\DPS^{(t)}$-certificate. In case of a negative answer, one would obtain a counterexample to the PPT$^2$ conjecture.

For this, we start by considering the states $\rho_{a,a'}$ as defined in Definition \ref{def:rhoaap}. We will restrict\footnote{We choose $a'= 1/a$, which implies the largest violation of the conditions $a'\ge 1/a$ (to ensure $\rho_{a,a'}\in\DPS^{(1)}_3$) and $a'<1$ (to ensure $\rho_{a,a'}\not\in\DPS^{(2)}_3$).
}
to $(a,a')\in\{(2,1/2),(3,1/3),(4,1/4)\}$, in which case $\rho_{a,a'}\in\DPS^{(1)}_3\setminus\DPS^{(2)}_3$ in view of  Lemma~\ref{lem:rhoaaDPS1} and Theorem \ref{theo:rhoaa}.
 Then, $\rho_{a,a'}= \rho_{(\tilde X_{a},\tilde Y)}\not\in\SEP_3$, after setting
  $$\tilde X_{a}=\begin{pmatrix}
            1&a&1/a\\
            1/a&1&a\\
            a&1/a&1
        \end{pmatrix},\
        \tilde Y=\begin{pmatrix}
            1&1&1\\
            1&1&1\\
            1&1&1
        \end{pmatrix}.$$
%encodes the state $\rho_{a,a'}$. 
We   extend the matrices $\tilde X_{a},\tilde Y$ to the following $4\times 4$ matrices $X_{a},Y$:
%enlarge the ambient dimension to $n=4$ in the following way: we define 
\begin{align}\label{eq:XYaap}
X_{a}=\begin{pmatrix}
            1&a&1/a&1\\
            1/a&1&a&1\\
            a&1/a&1&1\\
            1&1&1&1
        \end{pmatrix},\
        Y=\begin{pmatrix}
            1&1&1&1\\
            1&1&1&1\\
            1&1&1&1\\
            1&1&1&1
        \end{pmatrix}.
        \end{align}
By construction,     $\rho_{(X_{a},Y)}\in\DPS^{(1)}_4$ (i.e., $\rho_{(X_{a},Y)}, \rho_{(X_{a},Y)}^{T_B}=\rho_{(X_{a},\cdot,Y)}\succeq 0$, which is easy to check using (\ref{eq:rhoXYZ-block}), (\ref{eq:rhoXYZ-block-PT})) and $\rho_{(X_{a},Y)}\not\in\SEP_4$ (since, otherwise, this would imply  $\rho_{(\tilde X_{a},\tilde Y)}\in\SEP_3$). Next, we generate matrices $Z$ by choosing random unit vectors $v_k\in\R^4$ for $k\in[4]$ and  defining $Z=( v_h^Tv_k )_{h,k=1}^4$ as their Gram matrix. Then, $\diag(Z)=\diag(X_{a})$  (since the $v_k$'s are unit vectors) and  %if $|Z_{i,j}|^2\leq (X_{a})_{i,j}(X_{a})_{j,i}$   for all $i,j\in [4]$, then 
$\rho_{(X_{a},Y,Z)}\in\DPS^{(1)}_4\setminus\SEP_4$. Indeed, 
$\rho_{(X_{a},Y,Z)}\in \DPS^{(1)}_4$, because $\rho_{(X_{a},Y,Z)}\succeq 0$ and $\rho_{(X_{a},Y,Z)}^{T_B}=\rho_{(X_{a},Z,Y)} \succeq 0$ (as $|Z_{i,j}|^2\leq (X_{a})_{i,j}(X_{a})_{j,i}=1$ for any $1\le i<j\le 4$ and using  (\ref{eq:rhoXYZ-block}), (\ref{eq:rhoXYZ-block-PT})). Moreover, if $\rho_{(X_{a},Y,Z)}$ would be separable then,  by Lemma \ref{lem:projection-psd-sep}, also its CLDUI projection $\rho_{(X_{a},Y)}$ would be separable, yielding a contradiction.% by Theorem \ref{theo:LDUILDOItDPSCert} and Lemma \ref{lem:projection-psd-sep}. 

%This way we generate a list of $Z[i]$ for $i\in[5]$ (which is valid for all three choices of $(a,a')$). We can now test whether $\rho_{(X,Y,Z[i])\bullet(X,Y,Z[j])}\not\in\SEP$ using the implementations of the $\DPS$ Hierarchy. Note, that $(X,Y,Z[i])\bullet(X,Y,Z[j])=(X,Y,Z[j])\bullet(X,Y,Z[i])$ because $X_1=X_2$, $Y=Y^T$ and $Z[i]^T=Z[i]$ (the $Z$'s are real and therefore symmetric).
We generate   five such random matrices $Z$, denoted as $Z[i]$ for $i\in [5]$. So, we get 15  triplets $(X_a,Y,Z[i])$ such that $\rho_{(X_a,Y,Z[i])}\in\DPS^{(1)}_4\setminus \SEP_4$, where  $X_a,Y$ are as in (\ref{eq:XYaap}) for  $a=2,3,4$. Now, the goal is to test whether the composed states\footnote{We also tested states of the form $\rho_{(X_a,Y,Z[i])\bullet(X_b,Y,Z[j])} $ with $a\ne b$. We do not include the runtimes for these states in Figure \ref{fig:runtimes} since the results are %the same as 
similar to those in the setup ($a=b$) described in the text.}
$\rho_{(X_a,Y,Z[i])\bullet(X_a,Y,Z[j])} $ are separable or entangled.
Note that both arguments commute,
%$(X_a,Y,Z[i])\bullet(X_a,Y,Z[j])=(X_a,Y,Z[j])\bullet(X_a,Y,Z[i])$ 
because %$X_1=X_2$, 
$Y $ and all $Z[i]$'s are real symmetric. % $Z[i]^T=Z[i]$ (the $Z$'s are real and therefore symmetric)
So, we consider only $1\le i\le j \le 5$, which leads to a total of $3\times 15=45$ 
different composed states $\rho_{AB}$ for which membership in $\DPS^{(t)}_4$ can be tested. 

By construction, these composed states $\rho_{AB}$ have a LDOI sparsity structure. We use them to test the behaviour of the SDP implementing existence of a $\DPS^{(t)}$-certificate in the three regimes: first, in the generic case (searching for a $\DPS^{(t)}$-certificate in $\Herm((\C^n)^{\ot (t+1)}$, thus ignoring the sparsity structure of $\rho_{AB}$), second in the LDOI regime (searching for a $\DPS^{(t)}$-certificate in $\LDOI_n^{(t)}$), and third in the CLDUI regime (searching for a $\DPS^{(t)}$-certificate for $\Pi_{\CLDUI}(\rho_{AB})$ in $\CLDUI_n^{(t)}$). 
We note that on all tested examples, a $\DPS^{(t)}$-certificate could be found. So, no instance of an entangled state was found, so that the status of the PPT$^2$ conjecture remains open for LDOI states.

\medskip
The graphic in Figure \ref{fig:runtimes} below   displays the different runtimes (in seconds) for the three regimes (generic, LDOI, CLDUI) and relaxation levels from $t=1$ to $t=7$. The runtime consists only of the time needed for solving the SDP. It excludes the preprocessing phase, since the preprocessing time remains in the seconds for every regime and every level $t$ displayed in Figure \ref{fig:runtimes}.
%\footnote{\ML{Question: for which computation was the SDP solving in the hours??}}. 
The experiments were carried out on a laptop running Windows 11 Home 64-bit with an AMD Ryzen 7 8845HS @ 3.8GHz, 8 cores, 16 threads and 16GB of Ram.
%Our code is available at (add link where to find the code) and was coded in Julia (insert ref) utilizing JuMP (insert ref) for problem formulation, and Hypatia (insert ref) as the semidfinite program solver.} 
We used the  programming language Julia \cite{Julia}  utilizing the JuMP   package \cite{Jump} for coding\footnote{{Our code is available at https://github.com/JonasBritz/ImplementationsDPSHierarchy.git.}} the problem formulation, and Hypatia \cite{coey2022solving}  as the semidefinite programming solver.

As expected, the generic implementation is the slowest and the CLDUI implementation is the fastest. For instance, for $t=3$, the runtimes are roughly  2.5 minutes, 10 seconds and 1 second in the generic, LDOI and CLDUI cases and, for $t=5$, roughly 30 seconds and 21 minutes for the CLDUI and LDOI cases.
 Note that the faster runtimes in the CLDUI regime allow to test membership in $\DPS^{(t)}_4$ up to level $t=7$, while one can only test up to level $t=5$ in the LDOI regime, and up to level $t=3$ in the generic regime. 
%at higher levels than in the LDOI regime. 
%Therefore, it is worthwhile to  use the CLDUI relaxation, in particular at the levels $t=6,7$ for which the LDOI implementation becomes unpractical to run repeatedly.
Hence, this also indicates that the CLDUI implementation can be used as a useful necessary condition for testing existence of a $\DPS^{(t)}$-certificate for generic $\rho_{AB}$, by applying it to its projection $\Pi_{\CLDUI}(\rho_{AB})$, since one can then go to higher levels $t$.
 
 \bigskip
\begin{figure}[h]
    \centering
    \includegraphics[width=\linewidth]{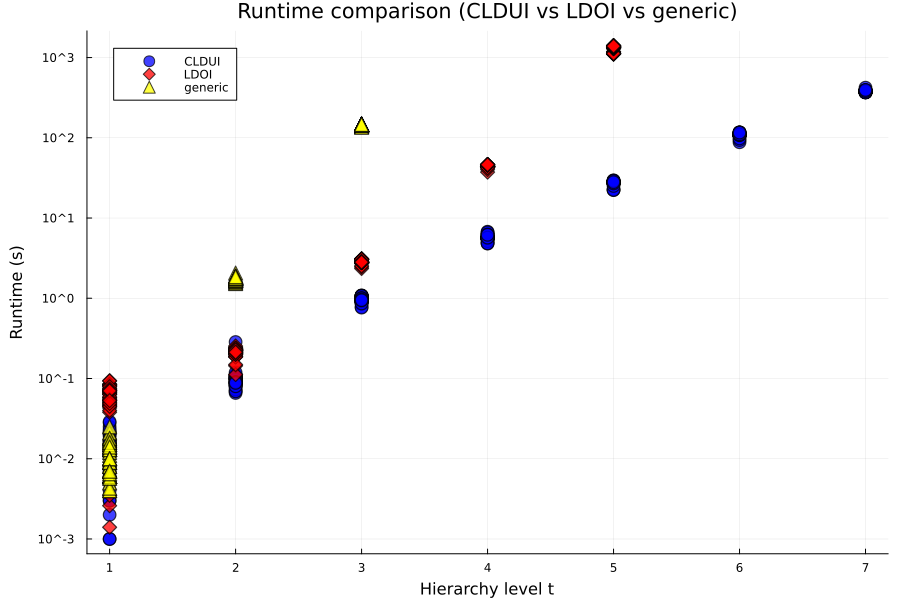}
    \caption{Graphic displaying the runtimes of the DPS hierarchy in three regimes: for CLDUI, LDOI and generic bipartite states}
    %three different implementation methods of the DPS hierarchy: CLDUI, LDOI and the generic version}
    \label{fig:runtimes}
\end{figure}

%\includegraphics[width=\linewidth]{} 
%\label{sec:implementation}

\section{Concluding remarks}\label{sec:conclusion}

In this paper we explore how structural properties of quantum bipartite states  can be exploited to design more efficient variations of the  semidefinite   hierarchy $\DPS^{(t)}_n$ for testing separability and detecting quantum entanglement.   
On the one hand, we consider bipartite (CLDUI, LDUI, LDOI) states with diagonal unitary invariance.
 These states admit  a    block diagonal sparsity pattern that can be transferred to any given level of the DPS hierarchy. By applying block diagonalization and, in addition, reformulating the DPS hierarchy in the moment formalism, one obtains substantial matrix size reduction in the SDP models,
 so that higher levels can be computed in practice. 
 
On the other hand, we consider Bose symmetric   bipartite states, i.e., states that are symmetric in their two registers. This symmetry property can again be transferred to any given level of the DPS hierarchy. This leads to the stronger symmetry-adapted $\tilDPS^{(t)}_n$ hierarchy. We characterize its  dual, in terms of existence of a r-sum-of-squares representation for an associated complex  polynomial. This characterization mirrors the known characterization of the dual of $\DPS^{(t)}_n$. In addition, it is   closely related to the sum-of-squares characterization of the approximation cone $\MK^{(t)}_n$ for $\COP_n$.

The completely positive cone $\CP_n$ and its dual $\COP_n$ play an important role within both settings. Indeed, separable bipartite states with CLDUI, LDUI (LDOI) sparsity structure can be captured by  pairwise (triplewise)  completely positive matrix pairs (triples), leading to the cones $\PCP_n,\TCP_n$ that generalize $\CP_n$. On the dual side, there is a close relationship between   $\COP_n$ and the dual of the cone of Bose symmetric separable states, which extends  to their respective conic approximations by $\MK^{(t)}_n$ and \smash{$(\tilDPS^{(t)}_n)^*$}.  

Hence, there are interesting connections among several topics, ranging from quantum information to semidefinite optimization, matrix cones, and polynomial optimization. 
We now conclude with mentioning some open  questions.

\medskip
The symmetry-adapted DPS hierarchy applies to Bose symmetric states, so we have the inclusion 
 $\tilDPS^{(t)}_n\subseteq \DPS^{(t)}_n\cap \BS(\C^n\ot \C^n)$ (the right most inclusion in (\ref{eq:DPS-BS-inclusion})). Equality holds for $t=1$ (since no state extension takes place). A natural question, already posed in \cite[Question 8.20]{GNP_2025},  is whether equality holds for larger $t\ge 2$. 
We expect the answer to be negative, yet we could not find a counterexample.  Let us briefly sketch our numerical approach for trying to  find a counterexample.

Let $n=5$ and $t=2$. For $C\in \MK^{(1)}_5\setminus \MK^{(0)}_5$,  consider the   semidefinite program
$$p^*_C:=\min\{\langle C, X\rangle: X\in \MS^5,\  \rho_{(X,X)}^{T_B} \in \DPS^{(2)}_5\}
$$
(which can be solved using the LDUI sparsity reduction). Note that, if we find a feasible $X$ such that $\langle C,X\rangle <0$, then $X\not\in (\MK^{(1)}_5)^*$ and thus, by Theorem~\ref{theo:DPS-CP}, $\rho_{(X,X)}^{T_B}\in \DPS^{(2)}_5\setminus \tilDPS^{(2)}_5$ while, by construction, $\rho_{(X,X)}^{T_B}\in\BS(\C^5\ot\C^5)$. Hence,  $p_C^*<0$ would imply strict inclusion  $\tilDPS^{(2)}_5\subset \DPS^{(2)}_5\cap \BS(\C^5\ot \C^5)$. The extreme rays of $\COP_5$ are fully characterized in \cite{Hildebrand-COP5}: those that do not belong to $\MK^{(0)}_5$ are the Horn matrix $H$ together with a class of parametrized matrices $T(\psi)$ (for $\psi\in \R^5$, $\psi>0$ and $\sum_{i=1}^5\psi_i<\pi$) and their positive diagonal scalings $DHD$, $DT(\psi)D$ for any positive diagonal matrix $D$. We computed the above SDP  for the following matrices $C=H, T(\psi)$ for thousands of random choices of $\psi$, and for hundreds of diagonal scalings $DHD$ of the Horn matrix, making sure to select $D$ such that $DHD\in \MK^{(1)}_5$ (following \cite[Theorem 4]{Laurent-Vargas_2023}).
The returned optimum value $p_C^*$ is (numerically) zero, so that no counterexample could be found.

Alternatively, one can try to show the following equality on the dual side: $\smash{(\tilDPS^{(2)}_n)^*}=(\DPS^{(2)}_n)^* + \Herm(S^2(\C^n))^\perp$.
That is, if  $M\in  {(\tilDPS^{(2)}_n)^*}$  (meaning 
$M\ot I_n= B+W_0+W_1$ with $B\in \Herm(S^3(\C^n))^\perp$, $W_0\succeq 0$,  $W_1^{T_{B[1]}}\succeq 0$), does there exist $M'\in (\DPS^{(2)}_n)^*$ (meaning 
$M'=B'+W_0'+W_1'+W_2'$ with $B'\in  \Herm(\C^n\ot S^2(\C^n))^\perp$, $W_0'\succeq 0$, 
$(W'_1)^{T_{B[1]}}\succeq 0$, $(W'_2)^{T_{B[2]}}\succeq 0$)  such that $M-M'\in \Herm(S^2(\C^n))^\perp$? 
(Here, we have used Theorems \ref{theo:FF} and \ref{theo:FF-BS}.)

\medskip
Consider now the inclusion $\SEPBS_n\subseteq \tilDPS^{(t)}_n$ (the left most inclusion in (\ref{eq:DPS-BS-inclusion})).
By  Theorem~\ref{theo:DPS-CP}, if a matrix $X$ belongs $(\MK^{(t)}_n)^*\setminus \CP_n$, then the associated Bose symmetric (LDUI) state $\rho_{(X,X)}^{T_B}$ belongs to ${\tilDPS^{(t+1)}_n}\setminus \SEPBS_n$ and thus the (CLDUI) state $\rho_{(X,X)}$ belongs to $\DPS^{(t+1)}_n\setminus \SEP_n$. Such  matrices $X$ do exist for any $n\ge 5$ and $t\ge 0$;
indeed, the inclusion $\MK^{(t)}_n\subset \COP_n$ is  shown to be strict in \cite{DDGH_2013}, which implies that also  $\CP_n\subset (\MK^{(t)}_n)^*$ is a strict inclusion. This   implies that   both inclusions $\SEPBS_n\subset {\tilDPS^{(t)}_n}$ and $\SEP_n\subset \DPS^{(t)}_n$ are strict  for any $n\ge 5$ and $t\ge 1$, as observed in \cite{GNP_2025}. In fact,  Fawzi \cite{Fawzi-SEP} shows that the separable cone $\SEP_n$ is not semidefinite representable for any $n\ge 3$, which implies that the inclusion $\SEP_n\subset \DPS^{(t)}_n$ is strict for any $n\ge 3$ and $t\ge 1$. In contrast, recall that $\SEP_2=\DPS^{(1)}_2$ (in fact this equality holds for bipartite states acting on $\C^3\ot \C^2$, see \cite{Woronowicz}). 

{Examples of matrices $A\in \COP_n\setminus \cup_{t\ge 0}\MK^{(t)}_n$ are constructed in \cite{Laurent-Vargas_2023} for any $n\ge 6$. Hence, in view of Theorem \ref{theo:DPS-KCOP}, any such matrix $A$ provides a LDUI instance {$M^{T_B}_{A,0}\in (\SEP^\BS_n)^*\setminus \cup_{t\ge 0} (\tilDPS^{(t)}_n)^*$}. In addition, it is shown in \cite[Theorem 8.5]{GNP_2025} how to use such matrices $A$ to construct  (C)LDUI matrices 
belonging to $\SEP_n^* \setminus \cup_{t\ge 0} (\DPS^{(t)}_n)^*$.
}

\medskip
As observed earlier, $\COP_4 = \MK^{(0)}_4$, so that the hierarchy collapses: $\MK^{(0)}_4=\MK^{(t)}_4$ for all $t\ge 1$. In \cite[Question 8.21]{GNP_2025} the authors ask whether the inclusion $\MK^{(t-1)}_n \subseteq \MK^{(t)}_n$ is strict for $n\ge 5$ and $t\ge 1$. Note that, in view of Theorem \ref{theo:DPS-CP}, a positive answer would  imply  {$\tilDPS^{(t)}_n\not=\tilDPS^{(t+1)}_n$.}
It is known that the inclusion $\MK^{(0)}_5\subset \MK^{(1)}_5$ is strict; for instance, the  Horn matrix belongs to  $\MK^{(1)}_5\setminus \MK^{(0)}_5$ \cite{Parrilo_2000}.
We are not aware of explicit examples of matrices lying in $\MK^{(t)}_n\setminus \MK^{(t-1)}_n$ for $t\ge 2$ and $n\ge 5$.
%\footnote{\ML{For us to discuss: the graph matrix $M_G=\alpha(G)(I+A_G)-J$ of the graph $G$, obtained by adding 9 isolated nodes to $C_5$, does not belong to $\MK^{(1)}_{14}$ - we showed this in \cite{Laurent-Vargas_2023}, so $\alpha(G)=2+9=11$ and $n=5+9=14$. I do not think that we checked whether it belongs to $\MK^{(2)}_{14}$. Maybe you can try to check it?}}

\medskip
Finally, let us mention again  the PPT$^2$ conjecture. 
It is   expected that   the conjecture   fails in general. However, {since it has been shown to hold when one state is LDOI and the other one is (C)LDUI \cite{Singh-Nechita-PPT2,Nechita-Park_2025},} it remains intriguing to see whether it holds when both states are  LDOI.
% states, which form a common generalization of CLDUI and LDUI states. 
% \label{sec:conclusion}

%\input{P-Temporary-Chapter-For-Examples.tex}

\bigskip\noindent
{\bf Acknowledgements.} 
We are grateful to Victor Magron for many helpful discussions and for bringing to our attention the preprint \cite{GNP_2025}.  We thank Aabhas Gulati, Ion Nechita and Sang-Jun Park for giving us further pointers to their recent works. We thank Sander Gribling for pointing out an error in Remark 2.7 of an earlier version of this work.

Our work has been supported by the European Union's
HORIZON-MSCA-2023-DN-JD programme under the Horizon Europe (HORIZON) Marie
Sk\l odowska-Curie Actions, grant agreement 101120296 (TENORS).
%\ML{Anything else to mention?}

\bibliographystyle{alpha}
\bibliography{ref.bib}

\appendix

\section{The PPT$^2$ conjecture}\label{A:PPT^2Conj}

We give here some background information about the PPT$^2$ conjecture.
Set $M_n(\C)=\C^{n\times n}$ and let $\MT_n(\C)$ denote the set of linear maps $\Phi: M_n(\C)\to M_n(\C)$. %, also known as {\em quantum channels}. 
Let $I_n, \sf T\in \MT_n(\C)$ denote, respectively,  the identity map and the transpose map. Given two maps $\Phi_1,\Phi_2\in\MT_n(\C)$, $\Phi_1\circ \Phi_2\in\MT_n(\C)$ denotes their  composition, defined by $\Phi_1\circ\Phi_2(X)=\Phi_1(\Phi_2(X))$ for $X\in M_n(\C)$. 
We group some definitions about linear maps in $\MT_n(\C)$ and refer, e.g., to \cite{SN-CLDUI,Singh-Nechita-PPT2} and further references therein for a detailed treatment.
%\footnote{\ML{Should we not assume that $\Phi$ maps Hermitian matrices to Hermitian matrices?}}

\begin{definition}
%    The set of all linear maps $\Phi:M_n(\C)\to M_n(\C)$ is denoted by $\mathcal{T}_n(\C)$. 
Consider a linear map   $\Phi\in\mathcal{T}_n(\C)$. % \ML{that preserves $\Herm^n$}. 
\\
$\bullet$ $\Phi$  is called {\em positive} if $\Phi(X)\succeq0$ for all $X\in\Herm(\C^n)_+$.\\
$\bullet$  $\Phi$ is called {\em completely positive} (CP)\footnote{This notion of CP linear map is classical in quantum information and should not be confused with the earlier notion of completely positive matrix.} if the map 
$I_r\ot\Phi:M_r(\C)\ot M_n(\C)\to M_r(\C)\ot M_n(\C)$   is positive for all $r\in\N$.\\
%, and $\Phi$ is called {\em completely copositive} (coCP)\footnote{These notions of CP and coCP maps  are classical in quantum information theory and should not be confused with the notions of completely positive and copositive matrices   considered earlier.} if 
%$\Phi\circ \sf T$ is completely positive.\\
%\footnote{in the context of maps $\circ$ denotes the usual concatenation of functions $f\circ g(x)=f(g(x))$}
 %T$ ($T$ denotes the the usual matrix transposition) is completely positive we call $\Phi$ completely copositive (coCP). 
 $\bullet$   $\Phi$ is  called  {\em PPT} if both $\Phi$ and $\Phi\circ \sf T$ are  completely positive.\\
 $\bullet$   $\Phi$ is called {\em entanglement breaking} if  $[I_n\ot \Phi](Y)\in\SEP_n$ for all $Y\in \Herm(\C^n\ot\C^n)_+$.
\end{definition}

Linear maps $\Phi\in\MT_n(\C)$ that are CP and preserve the trace (i.e., $\Tr(\Phi(X))=\Tr(X)$ for all $X\in M_n(\C)$) are known as {\em quantum channels}, they are used   in quantum information to  model the evolution of quantum systems. 
There is a  well-known correspondence between  maps $\Phi\in\MT_n(\C)$ and bipartite states in  $M_n(\C)\ot M_n(\C)$ via their   Choi (or Choi-Jamio\l kowski) matrix $J(\Phi)$.

\begin{definition}\label{def:Choimatrix}
For  $\Phi\in\mathcal{T}_n(\C^n)$,   its   Choi state $J(\Phi)\in M_n(\C)\ot M_n(\C)$ is defined by 
$$J(\Phi)=\sum_{r,s=1}^n\Phi(e_r e_s^*)\ot e_r e_s^* \quad \text{ i.e., }\quad 
(J(\Phi))_{ij,kl}= \Phi(e_je_l^*)_{ik} \text{ for } i,j,k,l\in [n].$$
Conversely, any state $\rho\in M_n(\C)\ot M_n(\C)$ corresponds to a linear map $\Phi_\rho \in\MT_n(\C)$ via the above relationship, given by $\Phi_\rho(e_je_l^*)_{ik}=\rho_{ij,kl}$ for all $i,j,kl\in [n]$, so that $J(\Phi_\rho)=\rho$.
\end{definition}

\noindent
Note that $J(\Phi)$ is Hermitian if and only if $\Phi$ preserves Hermitian matrices (i.e., $\Phi(X)$ is Hermitian whenever $X$ is Hermitian).  Moreover, the following links can be shown (see, e.g., \cite{SN-CLDUI, Singh-Nechita-PPT2}).

\begin{lemma}\label{lem:Phi-Choi}
    Let $\Phi\in\mathcal{T}_n(\C)$ and $J(\Phi)$ its associated Choi state. Then, 
     \begin{align*}
     \Phi \text{ is CP} & \Longleftrightarrow J(\Phi)\succeq 0,\\
     \Phi\circ \sf T \text{ is CP} &\Longleftrightarrow J(\Phi)^{T_B}\succeq 0,\\
        \Phi \text{ is PPT} &\Longleftrightarrow J(\Phi)\in\DPS^{(1)}_n,\\
        \Phi \text{ is entanglement breaking} &\Longleftrightarrow J(\Phi)\in\SEP_n.
    \end{align*}
\end{lemma}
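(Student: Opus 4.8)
The plan is to reduce all four equivalences to the first and the last, which are the two substantive statements (Choi's theorem and the measure-and-prepare characterization of entanglement breaking maps), and to obtain the middle two almost for free by a short index computation together with the already-recorded description of $\DPS^{(1)}_n$.

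For the first equivalence I would exploit the identity $J(\Phi)=(\Phi\ot I_n)(\omega\omega^*)$, where $\omega=\sum_{r=1}^n e_r\ot e_r$, so that $\omega\omega^*=\sum_{r,s}e_re_s^*\ot e_re_s^*$ and the definition of $J(\Phi)$ is matched term by term. If $\Phi$ is CP then $\Phi\ot I_n$ is positive and sends the PSD matrix $\omega\omega^*$ to a PSD matrix, giving $J(\Phi)\succeq0$. Conversely, if $J(\Phi)\succeq0$ I would write $J(\Phi)=\sum_\ell v_\ell v_\ell^*$ and reshape each $v_\ell\in\C^n\ot\C^n$ into a matrix $K_\ell\in\C^{n\times n}$ via $v_\ell=\sum_{i,k}(K_\ell)_{ik}\,e_i\ot e_k$. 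A direct comparison of entries using $(J(\Phi))_{ij,kl}=\Phi(e_je_l^*)_{ik}$ then produces the Kraus representation $\Phi(X)=\sum_\ell K_\ell XK_\ell^*$, which is manifestly CP.

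Next I would verify the register identity $J(\Phi\circ \sf T)=J(\Phi)^{T_B}$ by a one-line index computation: using (\ref{eq:transpose1}) we get $(J(\Phi)^{T_B})_{ij,kl}=(J(\Phi))_{il,kj}=\Phi(e_le_j^*)_{ik}=(\Phi\circ \sf T)(e_je_l^*)_{ik}=(J(\Phi\circ \sf T))_{ij,kl}$. Applying the first equivalence to the map $\Phi\circ \sf T$ then yields the second equivalence. For the third, the definition of PPT (both $\Phi$ and $\Phi\circ \sf T$ CP) combines with the first two equivalences to give $J(\Phi)\succeq0$ and $J(\Phi)^{T_B}\succeq0$, which is exactly membership in $\DPS^{(1)}_n$ by the characterization recalled just after (\ref{eq:DPS}).

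The last equivalence is the only genuinely non-formal step. For the forward direction I would feed $Y=\omega\omega^*\succeq0$ into the definition of entanglement breaking to obtain $(I_n\ot\Phi)(\omega\omega^*)\in\SEP_n$; since this matrix is the image of $J(\Phi)$ under the register swap, which preserves $\SEP_n$ (it sends $xx^*\ot yy^*$ to $yy^*\ot xx^*$), we conclude $J(\Phi)\in\SEP_n$. For the converse I would start from a separable decomposition $J(\Phi)=\sum_\ell\lambda_\ell\,a_\ell a_\ell^*\ot b_\ell b_\ell^*$, reshape it through the Choi correspondence into the measure-and-prepare form $\Phi(X)=\sum_\ell\lambda_\ell\,(w_\ell^*Xw_\ell)\,a_\ell a_\ell^*$ with $w_\ell=\overline{b_\ell}$, and then, for a rank-one $Y=zz^*$ with $z=\sum_p e_p\ot\zeta_p$, compute $(I_n\ot\Phi)(zz^*)=\sum_\ell\lambda_\ell\,\nu_\ell\nu_\ell^*\ot a_\ell a_\ell^*$ where $\nu_\ell=\sum_p(w_\ell^*\zeta_p)\,e_p$; this is visibly separable, and linearity together with closedness of $\SEP_n$ extends it to all $Y\succeq0$. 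The main obstacle is precisely this converse: one must track the complex conjugations introduced by the reshaping (the $B$-register vector $b_\ell$ enters $\Phi$ as $w_\ell=\overline{b_\ell}$) and confirm that the resulting map is exactly measure-and-prepare, so that its output on any pure input is a conic combination of product states.
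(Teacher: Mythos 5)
Your proof is correct, but note that the paper itself does not prove this lemma: it is stated as a known fact with a pointer to the literature (see the sentence preceding it, citing \cite{SN-CLDUI,Singh-Nechita-PPT2}), so there is no in-paper argument to compare against. What you supply is the standard self-contained proof, and all the index computations check out against the paper's conventions $(J(\Phi))_{ij,kl}=\Phi(e_je_l^*)_{ik}$ and $(\rho^{T_B})_{ij,kl}=\rho_{il,kj}$: the identity $J(\Phi)=(\Phi\ot I_n)(\omega\omega^*)$ plus Kraus reshaping gives Choi's theorem; the register identity $J(\Phi\circ \sfT)=J(\Phi)^{T_B}$ reduces the second equivalence to the first; the third follows from the description of $\DPS^{(1)}_n$ recalled after (\ref{eq:DPS}); and the measure-and-prepare argument for the last equivalence tracks the conjugation $w_\ell=\overline{b_\ell}$ correctly, which is indeed the one place where a sign/conjugation slip would be easy to make. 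Two small refinements: first, the paper defines complete positivity via positivity of $I_r\ot\Phi$ for all $r$, whereas your opening step uses positivity of $\Phi\ot I_n$; these are equivalent by conjugating with the swap unitary (which also preserves positive semidefiniteness), but a one-line remark to that effect would make the step airtight. Second, closedness of $\SEP_n$ is not needed in your final step: a finite sum of separable matrices is separable simply because $\SEP_n$ is a convex cone, so linearity alone suffices once $Y$ is decomposed into rank-one positive semidefinite terms.
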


\noindent
Hence, both $\Phi$ and $\Phi\circ \sf T$ are CP if and only if $J(\Phi)\in \DPS^{(1)}_n$. Another well-known fact is that any CP map $\Phi$ admits a Kraus decomposition: $\Phi(X)=\sum_{j =1}^k W_j X W_j^*$ for some $W_j\in M_n(\C)$ (and $k\le n^2$). Christandl posed the following conjecture in 2012, see also \cite{CMHW-2018}.

\begin{conjecture}[PPT$^2$ \text{conjecture}]\label{conj:PPT2}
    The composition $\Phi_1\circ \Phi_2$ of two arbitrary PPT linear maps $\Phi_1,\Phi_2$  is entanglement breaking.
\end{conjecture}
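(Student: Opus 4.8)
The plan is to transport everything to the level of Choi matrices via Definition~\ref{def:Choimatrix} and Lemma~\ref{lem:Phi-Choi}, which turns the conjecture into a purely state-theoretic statement: given $\rho_1=J(\Phi_1)$ and $\rho_2=J(\Phi_2)$ with $\rho_1,\rho_2\in\DPS^{(1)}_n$ (the PPT cone), one must show that the Choi matrix $J(\Phi_1\circ\Phi_2)$ is separable, i.e.\ lies in $\SEP_n$. First I would record the composition formula expressing $J(\Phi_1\circ\Phi_2)$ as a ``link product'' of $\rho_1$ and $\rho_2$: computing $(J(\Phi_1\circ\Phi_2))_{ij,kl}=\sum_{p,q}(J(\Phi_1))_{ip,kq}(J(\Phi_2))_{pj,ql}$ shows that $J(\Phi_1\circ\Phi_2)$ is obtained by tensoring $\rho_1$ and $\rho_2$ on a shared middle register and tracing that register out. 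This is the natural object to analyse, since both inputs are PPT.

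Next I would exploit the two Kraus representations available to a PPT map. Because both $\Phi$ and $\Phi\circ\mathsf T$ are CP, $\Phi$ admits simultaneously a CP form $\Phi(X)=\sum_a A_aXA_a^*$ and a co-CP form $\Phi(X)=\sum_b B_bX^{\mathsf T}B_b^*$. Feeding the co-CP form of $\Phi_1$ into the CP form of $\Phi_2$ (or vice versa) rewrites $\Phi_1\circ\Phi_2$ as a double sum in which the composition ``transposes across'' the contraction; the aim would then be to regroup these terms into the standard measure-and-prepare form $X\mapsto\sum_j\Tr(F_jX)R_j$ with $F_j,R_j\succeq0$, which is exactly the defining shape of an entanglement-breaking map and hence yields $J(\Phi_1\circ\Phi_2)\in\SEP_n$. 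For the structured instances studied in this paper one would instead use the explicit $\bullet$-composition of the defining triples in Conjecture~\ref{conj:LDOIstates} together with the block-diagonal DPS reductions (Lemma~\ref{lem:DPS-mom-LDOI} and Corollary~\ref{cor:sizeclique}): by completeness of the DPS hierarchy, $\bigcap_{t\ge1}\DPS^{(t)}_n=\SEP_n$ from \eqref{eq:DPScomplete}, it suffices to certify $J(\Phi_1\circ\Phi_2)\in\DPS^{(t)}_n$ for every $t$, and the sparsity-exploiting SDPs make each fixed level checkable.

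The hard part --- and the reason the statement is only a conjecture --- is precisely the regrouping step: a generic element of $\DPS^{(1)}_n$ is PPT but \emph{not} separable (bound-entangled states exist for $n\ge3$), so no argument that uses the PPT structure of a single factor can succeed. The separability of the link product must come from the interaction of the two PPT structures \emph{simultaneously}, and controlling this interaction uniformly in the dimension $n$ is exactly what is missing. I would therefore expect a complete proof to require either a dimension-independent structural description of PPT maps sharper than the mere existence of the two Kraus forms, or a uniform bound showing that the composite always certifies membership at a fixed finite DPS level. In the restricted LDOI setting the realistic goal is the weaker dichotomy: either prove that $J(\Phi_1\circ\Phi_2)\in\DPS^{(t)}_4$ forces separability, or produce the first counterexample through the block-diagonal search outlined in Section~\ref{sec:runtime}.
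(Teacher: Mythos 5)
This statement is an open conjecture, not a theorem: the paper contains no proof of Conjecture~\ref{conj:PPT2}, and neither does anyone else. Your proposal correctly recognizes this --- it is a strategy sketch that ends by identifying exactly why the strategy cannot be completed --- so there is no proof on either side to compare. What can be assessed is whether your assessment of the landscape matches the paper's, and it does. Your reduction to Choi matrices via Lemma~\ref{lem:Phi-Choi} is precisely the paper's restatement in Conjecture~\ref{conj:PPT2-states}; your link-product formula $(J(\Phi_1\circ\Phi_2))_{ij,kl}=\sum_{p,q}(J(\Phi_1))_{ip,kq}(J(\Phi_2))_{pj,ql}$ is correct; and your observation that the existence of bound-entangled states (elements of $\DPS^{(1)}_n\setminus\SEP_n$ for $n\ge3$) blocks any argument using the PPT structure of a single factor is exactly the obstruction that confines the known results to special cases.

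Where the paper does supply proofs, your proposal lines up with them in spirit. The only unconditional composition result the paper proves is Lemma~\ref{lem:conj-holds} (one factor entanglement breaking, the other CP, implies the composition is entanglement breaking), which is the degenerate case of your ``regrouping into measure-and-prepare form'' step --- the regrouping is trivial there because one factor is already in that form. The genuinely nontrivial known cases (DUC/CDUC maps, Theorem~\ref{theo:PPT2CLDUI}, and the mixed DUC--DOC case of Nechita--Park) are cited, not reproved. Finally, your closing ``realistic dichotomy'' for the LDOI setting --- either certify $J(\Phi_1\circ\Phi_2)$ at a finite DPS level or find a counterexample via the block-diagonal search --- is precisely the program the paper carries out in Section~\ref{sec:runtime} against Conjecture~\ref{conj:LDOIstates}: all $45$ composed LDOI states at $n=4$ admitted $\DPS^{(t)}$-certificates up to $t=7$, so the conjecture's status for LDOI states remains open. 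One caution: certifying membership in $\DPS^{(t)}_n$ for every fixed $t$ one can compute never \emph{proves} separability, since by \eqref{eq:DPScomplete} one needs all $t\ge1$; so even in the restricted setting your dichotomy can only deliver a counterexample or accumulating evidence, never a positive resolution by computation alone.
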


\begin{conjecture}[Restatement in state language]\label{conj:PPT2-states}
    Let $\rho_1,\rho_2 \in\Herm(\C^n\ot \C^n)$ and let $\Phi_{\rho_1},\Phi_{\rho_2}$ be the corresponding linear maps. If $\rho_1,\rho_2\in \DPS^{(1)}_n$, then   $J(\Phi_{\rho_1}\circ\Phi_{\rho_2})\in\SEP_n$.
\end{conjecture}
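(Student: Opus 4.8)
\emph{Plan.} The statement is the PPT$^2$ conjecture transported to the state picture, and as noted in the excerpt it is currently open; what follows is an attack plan rather than a complete argument. The first step is to translate everything into the language of linear maps via Lemma~\ref{lem:Phi-Choi}. The hypothesis $\rho_k\in\DPS^{(1)}_n$ says exactly that $\Phi_{\rho_k}$ is PPT, i.e. that both $\Phi_{\rho_k}$ and $\Phi_{\rho_k}\circ\sfT$ are completely positive, while the desired conclusion $J(\Phi_{\rho_1}\circ\Phi_{\rho_2})\in\SEP_n$ says exactly that $\Phi_{\rho_1}\circ\Phi_{\rho_2}$ is entanglement breaking. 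Being PPT, each factor $\Phi_{\rho_k}$ therefore admits \emph{two} Kraus-type decompositions: a completely positive one $\Phi_{\rho_k}(X)=\sum_i A^{(k)}_i X (A^{(k)}_i)^*$ coming from $J(\Phi_{\rho_k})\succeq 0$, and a co-completely positive one $\Phi_{\rho_k}(X)=\sum_j B^{(k)}_j X^{\sfT}(B^{(k)}_j)^*$ coming from $J(\Phi_{\rho_k})^{T_B}\succeq 0$. The whole difficulty is to exploit the simultaneous availability of both forms for each factor.

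\emph{Two routes.} I see two natural routes. The first uses completeness of the DPS hierarchy: by \eqref{eq:DPScomplete} it suffices to prove $J(\Phi_{\rho_1}\circ\Phi_{\rho_2})\in\DPS^{(t)}_n$ for \emph{every} $t\ge 1$, i.e. to build, for each $t$, a $B$-symmetric extension of the composed Choi matrix all of whose partial transposes are positive semidefinite. The aim would be to manufacture such an extension directly from the CP and co-CP Kraus data of the two factors, tensoring the CP decomposition of $\Phi_{\rho_1}$ with $t$-fold copies built from $\Phi_{\rho_2}$ to get a positive object, and re-expressing each partial transpose through the co-CP decomposition of the appropriate factor (in the spirit of the Hadamard-conjugation computation in the proof of Lemma~\ref{lem:WsPiCLDUIt}). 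The second route is the classical measure-and-prepare factorization: one would try to show the composition has Holevo form $\Phi_{\rho_1}\circ\Phi_{\rho_2}(X)=\sum_k P_k\,\Tr(Q_k X)$ with $P_k,Q_k\succeq 0$ by composing the co-CP form of $\Phi_{\rho_2}$ with the CP form of $\Phi_{\rho_1}$ and arguing that the intermediate operator system collapses to rank-one blocks.

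\emph{Main obstacle and tractable subcase.} The hard part is that neither route yields separability from PPT for free: since $\SEP_n\subsetneq\DPS^{(t)}_n$ for all $n\ge 3$ and $t\ge 1$ (Fawzi), the PPT property alone never certifies membership in $\SEP_n$, and the conjecture is precisely the unproved claim that \emph{two} PPT applications suffice to land inside the separable cone. In the completeness route this manifests as the difficulty of controlling all partial transposes of the manufactured extension simultaneously; in the factorization route it is the rank collapse that resists a general argument. Where the machinery of this paper does give genuine traction is the structured LDOI case: when $\rho_1,\rho_2$ are LDOI, the composition is the explicit triple $(X_3,Y_3,Z_3)=(X_1,Y_1,Z_1)\bullet(X_2,Y_2,Z_2)$ of Conjecture~\ref{conj:LDOIstates}, and by Theorem~\ref{theo:TCPDec} separability of $\rho_{(X_3,Y_3,Z_3)}$ is equivalent to $(X_3,Y_3,Z_3)\in\TCP_n$, a completely-positive-type condition on explicit Hadamard and ordinary products of the input data. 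The block-diagonalization of Section~\ref{sec:DPS-CLDUI} then lets one search for $\DPS^{(t)}$-certificates of the composition at higher levels $t$, which is exactly the computational test reported in Section~\ref{sec:runtime}; turning these finite-level certificates into a uniform proof (or, conversely, exhibiting an entangled composition) is the concrete form in which the open problem presents itself.
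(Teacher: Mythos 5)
This statement is Conjecture~\ref{conj:PPT2-states}, the PPT$^2$ conjecture transported to the state picture, which the paper itself states \emph{without proof} — it is open, with only special cases settled (Lemma~\ref{lem:conj-holds}, the $n\le 3$ and (C)LDUI/LDOI results cited in Section~\ref{sec:runtime} and Appendix~\ref{A:PPT^2Conj}). You correctly recognize this and your framing matches the paper's: the translation via Lemma~\ref{lem:Phi-Choi} (PPT $\Leftrightarrow$ Choi state in $\DPS^{(1)}_n$, entanglement breaking $\Leftrightarrow$ Choi state in $\SEP_n$), the identification of the core obstacle (PPT alone never certifies membership in $\SEP_n$, by Fawzi's non-representability result), and the reduction of the LDOI subcase to the explicit composition $(X_3,Y_3,Z_3)$ and the $\DPS^{(t)}$-certificate search of Section~\ref{sec:runtime} are all faithful to the paper, so no gap can be charged against a proposal that honestly stops where the open problem begins.
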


%\begin{remark}
%    The conjecture is only interesting when $\rho_1,\rho_2\not\in\SEP_n$. Indeed, if one of them is separable, then the corresponding linear map is entanglement breaking and the composition of two linear maps is entanglement breaking whenever at least one of them  is entanglement breaking.
%\end{remark}

As we now see, the conjecture holds if one of the two maps $\Phi_1, \Phi_2$ is entanglement breaking and the other one is CP.

\begin{lemma}\label{lem:conj-holds}
Assume $\Phi_1$ is CP and $\Phi_2$ is entanglement breaking, or vice versa, then their composition $\Phi_1\circ \Phi_2$ is entanglement breaking.
\end{lemma}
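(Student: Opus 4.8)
The plan is to prove the statement by passing through the Choi matrix correspondence (Lemma \ref{lem:Phi-Choi}) and reducing everything to the language of separable states, where entanglement breaking of a composition can be analyzed via Kraus decompositions. First I would handle the two symmetric cases ($\Phi_1$ CP, $\Phi_2$ entanglement breaking, and vice versa); by symmetry it suffices to treat one of them carefully, say the case where $\Phi_2$ is entanglement breaking and $\Phi_1$ is CP. The key structural fact I would invoke is the characterization of entanglement breaking maps: a map $\Phi_2$ is entanglement breaking precisely when it admits a decomposition of the form $\Phi_2(X) = \sum_{\ell} \langle B_\ell, X\rangle\, A_\ell$ with $A_\ell, B_\ell \in \Herm(\C^n)_+$, equivalently $\Phi_2(X) = \sum_\ell \Tr(b_\ell b_\ell^* X)\, a_\ell a_\ell^*$ for rank-one $a_\ell a_\ell^*, b_\ell b_\ell^*$. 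This follows from the equivalence $\Phi_2$ entanglement breaking $\Longleftrightarrow J(\Phi_2)\in\SEP_n$ (Lemma \ref{lem:Phi-Choi}) together with the definition of $\SEP_n$ in (\ref{eq:SEP}) as a conic combination of rank-one tensor products $xx^*\ot yy^*$.

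Next I would show that the class of entanglement breaking maps is a two-sided ideal inside the monoid of CP maps under composition. Concretely, writing $\Phi_2$ in the measure-and-prepare form above, the composition $\Phi_1\circ\Phi_2$ applied to an input $X$ gives $\Phi_1(\Phi_2(X)) = \sum_\ell \Tr(b_\ell b_\ell^* X)\, \Phi_1(a_\ell a_\ell^*)$. Since $\Phi_1$ is CP (hence positive), each $\Phi_1(a_\ell a_\ell^*)$ is again positive semidefinite, so $\Phi_1\circ\Phi_2$ is itself a measure-and-prepare map, which is again entanglement breaking. I would make this rigorous at the level of Choi states: using that $J(\Phi_2)\in\SEP_n$ decomposes as $\sum_\ell a_\ell a_\ell^*\ot b_\ell b_\ell^*$ and tracking how composition acts on Choi matrices, one shows that $J(\Phi_1\circ\Phi_2)$ is a conic combination of states of the form $\Phi_1(a_\ell a_\ell^*)\ot \overline{b_\ell}\,\overline{b_\ell}^*$ (up to the appropriate conjugation bookkeeping from Definition \ref{def:Choimatrix}), each of which is separable because $\Phi_1(a_\ell a_\ell^*)\succeq 0$ is a sum of rank-one positive states. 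Hence $J(\Phi_1\circ\Phi_2)\in\SEP_n$, which by Lemma \ref{lem:Phi-Choi} means $\Phi_1\circ\Phi_2$ is entanglement breaking.

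For the reverse assignment of roles ($\Phi_1$ entanglement breaking, $\Phi_2$ CP) the argument is the mirror image: writing $\Phi_1(Y)=\sum_\ell \Tr(b_\ell b_\ell^* Y)\, a_\ell a_\ell^*$, one gets $\Phi_1(\Phi_2(X)) = \sum_\ell \Tr(b_\ell b_\ell^*\,\Phi_2(X))\, a_\ell a_\ell^* = \sum_\ell \Tr(\Phi_2^*(b_\ell b_\ell^*)\, X)\, a_\ell a_\ell^*$, where $\Phi_2^*$ is the adjoint. Since $\Phi_2$ is CP, so is $\Phi_2^*$, hence each $\Phi_2^*(b_\ell b_\ell^*)\succeq 0$, and again the composition is manifestly a measure-and-prepare map and therefore entanglement breaking.

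The step I expect to be the main obstacle is the bookkeeping of complex conjugates and register ordering when translating the composition-of-maps statement into a statement about Choi matrices and separable decompositions. The partial transpose and conjugation conventions in Definition \ref{def:Choimatrix} mean that the separable decomposition of $J(\Phi_1\circ\Phi_2)$ does not come out term-by-term in the naive form, and one must verify carefully that positivity of each $\Phi_1(a_\ell a_\ell^*)$ (respectively $\Phi_2^*(b_\ell b_\ell^*)$) survives the translation so that each summand remains a genuine element of the generating set $\{xx^*\ot yy^*\}$ of $\SEP_n$. Once the measure-and-prepare characterization of entanglement breaking maps is established and this conjugation bookkeeping is pinned down, the ideal property and hence the lemma follow essentially without further computation.
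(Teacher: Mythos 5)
Your proof is correct, but it takes a genuinely different route from the paper's. The paper works entirely at the level of maps: from the interchange identity $(\Psi_1\circ\Psi_2)\ot(\Gamma_1\circ\Gamma_2)=(\Psi_1\ot\Gamma_1)\circ(\Psi_2\ot\Gamma_2)$ it gets $I_n\ot(\Phi_1\circ\Phi_2)=(I_n\ot\Phi_1)\circ(I_n\ot\Phi_2)$, and both cases then follow straight from the definition of entanglement breaking: if $\Phi_2$ is entanglement breaking and $\Phi_1$ is CP, then $I_n\ot\Phi_2$ outputs separable states and $I_n\ot\Phi_1$ preserves separability (it sends $uu^*\ot vv^*$ to $uu^*\ot \Phi_1(vv^*)$ with $\Phi_1(vv^*)\succeq 0$); in the reverse case $I_n\ot\Phi_2$ preserves positive semidefiniteness and $I_n\ot\Phi_1$ then outputs separable states --- no Choi matrices, no decompositions, no conjugation bookkeeping. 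You instead pass through the Choi correspondence and the measure-and-prepare (Holevo) form $\Phi(X)=\sum_\ell\langle B_\ell,X\rangle A_\ell$ of entanglement breaking maps, proving that this class is a two-sided ideal inside the CP maps; your second case additionally invokes that the adjoint of a CP map is positive, a standard fact the paper never needs (though it is immediate from self-duality of the PSD cone, or from the Kraus decomposition that the paper does mention). The conjugation issue you flag as the main obstacle in fact resolves benignly: a direct computation from Definition \ref{def:Choimatrix} gives $J(\Phi_1\circ\Phi_2)=\sum_\ell \Phi_1(a_\ell a_\ell^*)\ot b_\ell b_\ell^*$ with no conjugates at all in the first case, and in any event transposition and conjugation preserve positive semidefiniteness, so separability of each term is never at risk. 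What each approach buys: the paper's argument is shorter and coordinate-free; yours yields the Holevo characterization and the ideal property as reusable byproducts, and is close in spirit to an alternative state-level proof the authors drafted but left commented out in the LaTeX source, which likewise decomposes the Choi matrix into rank-one separable terms.
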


\begin{proof}
    The proof relies on the following (easy to check) identity: For $\Psi_1,\Psi_2,\Gamma_1,\Gamma_2\in\MT_n(\C)$,
     $$(\Psi_1\circ\Psi_2)\ot(\Gamma_1\circ\Gamma_2)=(\Psi_1\ot\Gamma_1)\circ(\Psi_2\ot\Gamma_2).$$
  Therefore, since $I_n=I_n\circ I_n$, we get 
  $$I_n\ot(\Phi_1\circ\Phi_2)=(I_n\ot\Phi_1)\circ(I_n\ot\Phi_2).$$
 Assume first that $\Phi_1$ is CP and $\Phi_2$ is entanglement breaking.   By assumption, $I_n\ot\Phi_2$ maps any $Y\in\Herm(\C^n\ot\C^n)_+$ to a separable state and separability is clearly preserved by $I_n\ot\Phi_1$. Now, assume $\Phi_2$ is CP and $\Phi_1$ is entanglement breaking. Then, 
 % $I_n\ot\Phi_2$ preserves membership in $\Herm(\C^n\ot\C^n)_+$ and consequently $I_n\ot\Phi_1$ would map to a separable state.
$I_n\ot \Phi_2$ maps  any $Y\in\Herm(\C^n\ot\C^n)_+$ to a positive semidefinite state, which is thereafter mapped by $I_n\ot\Phi_1$ to a separable state. 
  \end{proof}

Linear maps $\Phi\in \MT_n(\C)$ whose Choi matrix $J(\Phi)$ has some diagonal unitary invariance are investigated in \cite{SN-CLDUI,Singh-Nechita-PPT2}. In particular, $J(\Phi)\in \LDUI_n$ (resp., $J(\Phi)\in \CLDUI_n$) if and only if $\Phi(UXU^*) = U^*\Phi(X)U$ (resp., $\Phi(UXU^*)=U\Phi(X)U^*$) for all $U\in\MDU_n$ (in which case $\Phi$ is dubbed a DUC or CDUC map). The PPT$^2$ conjecture holds when restricting to such maps. 

\begin{theorem}\cite[Theorem 4.5]{Singh-Nechita-PPT2}\label{theo:PPT2CLDUI}  The PPT$^2$ conjecture holds when restricting to DUC and CDUC linear maps $\Phi_1,\Phi_2$. That is, if  $J(\Phi_1),J(\Phi_2))\in \CLDUI_n \cup \LDUI_n$ and $\Phi_1,\Phi_2$ are PPT,  then $\Phi_1\circ \Phi_2$ is entanglement breaking. 
%if $\Phi_1,\Phi_2$ are PPT
\end{theorem}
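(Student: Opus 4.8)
The plan is to translate the statement entirely into the language of Choi states and the cone $\PCP_n$, and then to reduce it to an explicit membership question that can be settled using the positive-semidefiniteness and Cauchy--Schwarz data encoded in the PPT hypotheses. First I would pass to Choi matrices via Lemma~\ref{lem:Phi-Choi}: the hypothesis that $\Phi_1,\Phi_2$ are PPT means $J(\Phi_1),J(\Phi_2)\in\DPS^{(1)}_n$, and the goal that $\Phi_1\circ\Phi_2$ be entanglement breaking means $J(\Phi_1\circ\Phi_2)\in\SEP_n$. The next observation is that the DUC/CDUC property (i.e. $J(\Phi_k)\in\LDUI_n\cup\CLDUI_n$) is stable under composition: a direct check of the covariance relations $\Phi(UXU^*)=U\Phi(X)U^*$ (CDUC) and $\Phi(UXU^*)=U^*\Phi(X)U$ (DUC), using that $U,U^*\in\MDU_n$, shows that $\Phi_1\circ\Phi_2$ is again DUC or CDUC in all four cases; hence $J(\Phi_1\circ\Phi_2)\in\LDUI_n\cup\CLDUI_n$. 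Thus it suffices to prove separability of a diagonal-unitary-invariant state, which by the definition of $\PCP_n$ (and Theorem~\ref{theo:PCPDec}) amounts to showing that the associated matrix pair lies in $\PCP_n$. The CLDUI and LDUI cases are interchangeable here, since partial transposition preserves separability and, by (\ref{eq:rhoXYZ-block-PT}), swaps the two sparsity patterns: $\rho_{(X,\cdot,Z)}=\rho_{(X,Z)}^{T_B}$ is separable if and only if $(X,Z)\in\PCP_n$.

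Second I would make the action of the maps concrete. Writing $J(\Phi_k)=\rho_{(X_k,Y_k)}$ for a CDUC map, the support pattern (\ref{eq:suppCLDUI}) forces $\Phi_k(e_ie_j^*)=(Y_k)_{ij}e_ie_j^*$ for $i\ne j$ and $\Phi_k(\Diag(d))=\Diag(X_kd)$, so that $\Phi_k$ multiplies diagonals by $X_k$ and rescales off-diagonal matrix units by the Hadamard factor $(Y_k)_{ij}$ (with a coordinate swap in the DUC case coming from $\Phi_n$ in (\ref{eq:suppLDUI})). Composing two such maps therefore produces a map whose diagonal action is the matrix product $X_1X_2$ and whose off-diagonal action is the Hadamard product $Y_1\circ Y_2$ (up to transposes/conjugates), which is exactly the composition law $\bullet$ recorded in Conjecture~\ref{conj:LDOIstates}. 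Meanwhile, unpacking $J(\Phi_k)\in\DPS^{(1)}_n$ through the block decomposition (\ref{eq:rhoXYZ-block}) gives, for each $k$, the three ingredients $X_k\ge 0$ entrywise, $Y_k\succeq 0$, and the Cauchy--Schwarz bound $|(Y_k)_{ij}|^2\le (X_k)_{ij}(X_k)_{ji}$ (cf.\ (\ref{eq:propiii})).

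The heart of the argument is then to show that the composed pair $(X_1X_2,\,Y_1\circ Y_2)$, with its diagonal adjusted so that $\diag$ agrees with $\diag(X_1X_2)$, belongs to $\PCP_n$, i.e.\ to exhibit a conic decomposition of the form (\ref{eq:PCPDec}): vectors $(x_\ell,y_\ell)$ with $\sum_\ell(x_\ell\circ\ox_\ell)(y_\ell\circ\oy_\ell)^*=X_1X_2$ and $\sum_\ell(x_\ell\circ y_\ell)(x_\ell\circ y_\ell)^*=Y_1\circ Y_2$. The natural building blocks are a Gram factorization $Y_k=\sum_a v^{(k)}_a (v^{(k)}_a)^*$ (from $Y_k\succeq 0$), whose Schur product reproduces $Y_1\circ Y_2$ in the second slot, together with a rank-one expansion of $X_1X_2$ over the entrywise-nonnegative columns of $X_1$ and rows of $X_2$, realized by real square-root vectors in the first slot. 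The difficulty, and the step I expect to be the main obstacle, is that these two expansions must be produced by \emph{one and the same} family $(x_\ell,y_\ell)$: the first slot records only moduli while the second slot must carry the complex phases of $Y_1\circ Y_2$, and the two must be reconciled entry by entry. This is precisely where the inequalities $|(Y_k)_{ij}|^2\le (X_k)_{ij}(X_k)_{ji}$ enter, guaranteeing that the phase information demanded by the $Y$-slot is always dominated by the modulus budget available in the $X$-slot, after the matrix product $X_1X_2$ has enlarged that budget relative to the Hadamard-shrunk $Y_1\circ Y_2$. Matching the diagonal corrections (so that the resulting triple is a genuine CLDUI/LDUI pair) is a bookkeeping step I would handle last.

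Finally, I would stress why the composition is essential and cannot be bypassed by Lemma~\ref{lem:conj-holds}: a single PPT CDUC map need not be entanglement breaking, since PPT is strictly weaker than membership in $\PCP_n$ for these states, so neither factor is individually EB. It is only the growth of $X_1X_2$ against $Y_1\circ Y_2$ that tips the composed data into $\PCP_n$, and capturing this quantitatively through the Cauchy--Schwarz budget is what makes the theorem work.
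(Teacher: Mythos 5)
You have a genuine gap, and it sits exactly where you yourself locate it. (A preliminary remark: the paper does not prove Theorem \ref{theo:PPT2CLDUI} at all --- it imports it from \cite{Singh-Nechita-PPT2} --- so your argument has to stand on its own; it cannot lean on anything in the text.) Your reductions are fine: passing to Choi states via Lemma \ref{lem:Phi-Choi}, checking that the DUC/CDUC classes are closed under composition, identifying the composed data as $(X_1X_2,\,Y_1\circ Y_2)$ up to diagonal corrections (this is Lemma \ref{lem:Phi123} specialized to diagonal $Z$-slots), and extracting $X_k\ge 0$, $Y_k\succeq 0$, $|(Y_k)_{ij}|^2\le (X_k)_{ij}(X_k)_{ji}$ from the PPT hypothesis. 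But the entire content of the theorem is the step you label ``the heart of the argument'' and then do not carry out: exhibiting a decomposition of $(X_1X_2,\,Y_1\circ Y_2)$ as in (\ref{eq:PCPDec}). You assert that the Cauchy--Schwarz bounds ``guarantee that the phase information demanded by the $Y$-slot is always dominated by the modulus budget available in the $X$-slot,'' but no construction and no inequality chain is given, and the assertion cannot be salvaged at the level of generality at which you invoke it: the three conditions $X\ge 0$, $Y\succeq 0$, $|Y_{ij}|^2\le X_{ij}X_{ji}$ are \emph{exactly} PPT for a CLDUI state and are strictly weaker than membership in $\PCP_n$. The paper's own example makes this concrete: $\rho_{2,1/2}$ from Definition \ref{def:rhoaap} satisfies all three conditions (Lemma \ref{lem:rhoaaDPS1}) yet is entangled (Theorem \ref{theo:rhoaa}). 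So ``modulus budget'' reasoning based only on those inequalities proves nothing; any correct proof must exploit the product/Hadamard structure of the composed pair quantitatively.

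Concretely, what is missing is the following. One must pick Gram decompositions $Y_1=\sum_a u_au_a^*$, $Y_2=\sum_b w_bw_b^*$, form atoms $(x_{ab},y_{ab})$ with $x_{ab}\circ y_{ab}=u_a\circ w_b$ (so that the $Y$-slot reproduces $Y_1\circ Y_2=\sum_{a,b}(u_a\circ w_b)(u_a\circ w_b)^*$), note that the induced $X$-slot contribution is $\sum_{a,b}|u_a\circ w_b|\,|u_a\circ w_b|^T$ (entrywise moduli), and then prove that the residual $X_1X_2-\sum_{a,b}|u_a\circ w_b|\,|u_a\circ w_b|^T$, after the diagonal bookkeeping, is entrywise nonnegative --- since a pair $(N,0)$ with $N\ge 0$ is trivially in $\PCP_n$, this would finish the proof. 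Establishing that residual inequality is the real work: it requires comparing $(X_1X_2)_{ij}=\sum_k (X_1)_{ik}(X_2)_{kj}$ against $\bigl(\sum_a|(u_a)_i(u_a)_j|\bigr)\bigl(\sum_b|(w_b)_i(w_b)_j|\bigr)$, and a crude Cauchy--Schwarz bound of the latter by $\sqrt{(X_1)_{ii}(X_1)_{jj}(X_2)_{ii}(X_2)_{jj}}$ is \emph{not} dominated by $(X_1X_2)_{ij}$ in general (take $X_1=X_2=I$), so the Gram decompositions must be chosen adaptedly to the PPT data. This choice and the resulting estimates constitute Singh--Nechita's proof of their Theorem 4.5 and are absent from your proposal. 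A second, smaller issue: in the mixed DUC/CDUC cases the composed data involves the $Z$-slot and transposes (cf.\ Lemma \ref{lem:Phi123}), so the four cases are not literally the same pair $(X_1X_2, Y_1\circ Y_2)$; ``up to transposes/conjugates'' needs to be spelled out, though this is routine compared to the missing core.
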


This motivates investigating (DOC) linear maps $\Phi$ whose Choi state $J(\Phi)$ belongs to $\LDOI_n$, in which case the conjecture has only been answered partially. The following results are shown in \cite[Corollary 6.4, Theorem 6.5]{Nechita-Park_2025} and generalize the result from Theorem \ref{theo:PPT2CLDUI}.

\begin{theorem}
    The PPT$^2$ conjecture holds when restricting to one DUC or CDUC linear map $\Phi_1$ and one DOC linear map $\Phi_2$. That is, if  $J(\Phi_1))\in \CLDUI_n \cup \LDUI_n$,  $J(\Phi_2)\in\LDOI_n$ and $\Phi_1,\Phi_2$ are PPT, then $\Phi_1\circ \Phi_2$ and $\Phi_2\circ \Phi_1$ are entanglement breaking.
    % if $\Phi_1,\Phi_2$ are PPT. 
    Furthermore, the composition of two random DOC maps generically satisfies the PPT$^2$ conjecture.
\end{theorem}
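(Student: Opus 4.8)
The plan is to transport the statement to the state picture via the Choi correspondence (Lemma~\ref{lem:Phi-Choi}) and then certify separability through membership in $\TCP_n$. Writing $J(\Phi_k)=\rho_{(X_k,Y_k,Z_k)}$ for $k=1,2$, the PPT hypotheses become $\rho_{(X_k,Y_k,Z_k)}\in\DPS^{(1)}_n$; by the block form (\ref{eq:rhoXYZ-block}) together with (\ref{eq:propiii}) this means exactly that $X_k\ge 0$ entrywise, $Y_k,Z_k\succeq 0$, and $|(Y_k)_{ij}|^2,|(Z_k)_{ij}|^2\le (X_k)_{ij}(X_k)_{ji}$ for all $i\ne j$. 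The $(C)DUC$ assumption fixes a diagonal structure on the first map: $J(\Phi_1)\in\CLDUI_n$ forces $Z_1$ diagonal, while $J(\Phi_1)\in\LDUI_n$ forces $Y_1$ diagonal. First I would check, straight from Definition~\ref{def:Choimatrix} and the LDOI sparsity of both Choi states, that the map composition is implemented by the product $\bullet$ of Conjecture~\ref{conj:LDOIstates}, i.e.\ $J(\Phi_1\circ\Phi_2)=\rho_{(X_1,Y_1,Z_1)\bullet(X_2,Y_2,Z_2)}$, and symmetrically for $\Phi_2\circ\Phi_1$ (with the roles of the two triples exchanged). The problem then reduces to showing that the composed triple $(X_3,Y_3,Z_3)$ lies in $\TCP_n$.

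The key is to use the diagonal factor to decouple the Hadamard products in $\bullet$. In the CDUC case ($Z_1=\Diag(d_1)$) the terms $Z_1\circ Z_2^{T}$ and $Z_1\circ Y_2^{T}$ are diagonal, so off the diagonal the composition collapses to $X_3=X_1X_2\ge 0$, $(Y_3)_{ij}=(Y_1)_{ij}(Y_2)_{ij}$ and $(Z_3)_{ij}=(Y_1)_{ij}(Z_2)_{ij}$, the two coherence matrices $Y_3,Z_3$ thus sharing the common Hadamard factor $Y_1$ (the DUC case is identical with $Z_1$ in the role of $Y_1$ and transposes on the $\Phi_2$-blocks). Since $Y_1\succeq 0$ and $X_1\ge 0$ with $|(Y_1)_{ij}|^2\le (X_1)_{ij}(X_1)_{ji}$, the plan is to feed a Gram representation $Y_1=\sum_m w_mw_m^{*}$, combined with the generating decompositions of $\TCP_n$ for $\Phi_2$ (Theorem~\ref{theo:TCPDec}), into a single family of vectors that produces $(X_3,Y_3,Z_3)$ as a conic combination of the generators $((x\circ\overline x)(y\circ\overline y)^{*},(x\circ y)(x\circ y)^{*},(x\circ\overline y)(x\circ\overline y)^{*})$. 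The shared factor is what makes $Y_3$ and $Z_3$ simultaneously realizable by the \emph{same} $(x,y)$, while the entrywise positivity and domination of $X_1,X_2$ control the product $X_1X_2$.

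The main obstacle is precisely this last step: reconciling the matrix product defining $X_3$ with the Hadamard products defining $Y_3,Z_3$ so that all three components arise from one consistent family of rank-one generators. Neither input is separable (only PPT), so the construction must genuinely interleave the two $2\times2$-block certificates of $\Phi_1$ and $\Phi_2$ rather than treat one factor as already entanglement breaking; it is here that the covariance (diagonal) structure of the $(C)DUC$ map is indispensable, and a Cauchy--Schwarz/Schur-product estimate is needed to verify that the candidate coefficients remain nonnegative. As a sanity check, the fully covariant case $J(\Phi_2)\in\CLDUI_n\cup\LDUI_n$ already recovers Theorem~\ref{theo:PPT2CLDUI}, and the degenerate case in which one factor is separable reduces to Lemma~\ref{lem:conj-holds}.

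For the final assertion --- that the composition of two random DOC maps generically satisfies the conjecture --- the plan is to show that the set of PPT DOC pairs whose composition is separable is dense of full measure. I would first isolate a \emph{sufficient} separability criterion for LDOI triples (for instance a doubly-nonnegative/diagonal-dominance condition of the type appearing for $(\MK^{(0)}_n)^{*}$, implying membership in $\TCP_n$), then verify that this criterion is met by $(X_3,Y_3,Z_3)$ on an open dense set of inputs: generically the PPT inequalities $|(Y_k)_{ij}|^2,|(Z_k)_{ij}|^2\le(X_k)_{ij}(X_k)_{ji}$ hold strictly, the coherences $Y_3,Z_3$ are strictly dominated, and the product $X_1X_2$ is strictly entrywise positive, placing the composed triple in the interior of $\TCP_n$. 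The harder point --- and the main obstacle for this part --- is to make ``generic'' precise and to prove that the exceptional set (where the composition is PPT but entangled) is a proper, hence measure-zero, semialgebraic subset; a convenient route is to exhibit one explicit pair whose composition lies in the interior of $\TCP_n$ and then propagate separability to an open neighborhood by continuity, leaving the residual codimension estimate for the boundary cases.
\qed
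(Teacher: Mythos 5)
First, a structural point: the paper contains no proof of this theorem. It is imported verbatim from \cite[Corollary 6.4, Theorem 6.5]{Nechita-Park_2025}, exactly as Theorem~\ref{theo:PPT2CLDUI} is imported from \cite{Singh-Nechita-PPT2}; the appendix only develops the composition formalism (Lemma~\ref{lem:computePhi}, Lemma~\ref{lem:Phi123}) needed to \emph{state} the LDOI version of the conjecture. So your attempt cannot be measured against an internal argument, and judged on its own it has a genuine gap at its core.

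The preparatory material in your plan is correct but routine: the Choi transport via Lemma~\ref{lem:Phi-Choi}, the identification $J(\Phi_1\circ\Phi_2)=\rho_{(X_1,Y_1,Z_1)\bullet(X_2,Y_2,Z_2)}$ (this is Lemma~\ref{lem:Phi123}), the translation of PPT into $X_k\ge 0$, $Y_k,Z_k\succeq 0$, $|(Y_k)_{ij}|^2,|(Z_k)_{ij}|^2\le (X_k)_{ij}(X_k)_{ji}$, and the observation that a diagonal $Z_1$ (resp.\ $Y_1$) collapses the off-diagonal entries of $Y_3,Z_3$ to Hadamard products with the single common factor $Y_1$ (resp.\ $Z_1$). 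The fatal step is the one you yourself flag as ``the main obstacle.'' You propose to certify $(X_3,Y_3,Z_3)\in\TCP_n$ by combining a Gram factorization $Y_1=\sum_m w_mw_m^*$ with ``the generating decompositions of $\TCP_n$ for $\Phi_2$'' from Theorem~\ref{theo:TCPDec}. But that theorem provides such a decomposition only when $(X_2,Y_2,Z_2)$ already lies in $\TCP_n$, i.e., when $\rho_{(X_2,Y_2,Z_2)}$ is separable --- whereas $\Phi_2$ is merely PPT, and if it were separable the conclusion would already follow from Lemma~\ref{lem:conj-holds}. You notice this contradiction one paragraph later (``Neither input is separable (only PPT)\ldots''), yet you never supply the replacement construction: no family of vectors $(x,y)$, no candidate coefficients, no statement of the Cauchy--Schwarz/Schur-product inequality that is supposed to make them nonnegative. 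Since reconciling the matrix product $X_3=X_1X_2$ with the Hadamard products in $Y_3,Z_3$ \emph{is} the entire mathematical content of the theorem, what you have is a map of where a proof would have to go, not a proof. (The cited works close this step with dedicated sufficiency/decomposition lemmas for $\PCP_n$ and $\TCP_n$ tailored to the composed triple; your outline neither reproduces nor replaces them.)

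The genericity claim has a second, independent gap. Exhibiting one pair whose composition lies in the interior of $\TCP_n$ and invoking continuity yields an \emph{open} set of good pairs, not a dense or full-measure one; ``generic'' requires showing that the exceptional set (PPT inputs whose composition is entangled) is contained in a proper, lower-dimensional subset. That is exactly the ``residual codimension estimate'' you defer, and nothing in the outline rules out that the bad set has nonempty interior. As written, neither assertion of the theorem is established.
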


In what follows,  for the convenience of the reader, we explain  how to compute the DOC map of a LDOI state, i.e., the map $\Phi$ whose Choi matrix $J(\Phi)$ is a given LDOI state (see also  \cite[Lemma~9.3]{SN-CLDUI}).
%In the next lemma we compute \textcolor{red}{the corresponding DOC map of a LDOI state, i.e. the linear map $\Phi$, such that the Choi state $J(\Phi)$ coincides with the LDOI state}
%(extending the treatment in  \cite{Singh-Nechita-PPT2} for DUC/CDUC maps
For a matrix $Y\in M_n(\C)$, let $Y^0:=Y-\Diag(\diag(Y))$ denote the matrix obtained from $Y$ by setting to 0 its diagonal entries.

\begin{lemma}\label{lem:computePhi}
    Given $(X,Y,Z)\in\R^{n\times n}\times\Herm^n\times\Herm^n$ with $\diag(X)=\diag(Y)=\diag(Z)$, the  linear map corresponding  to $\rho_{(X,Y,Z)}$, denoted as $\Phi_{(X,Y,Z)}:=\Phi_{\rho_{(X,Y,Z)}}$ for short,    is defined by 
    $$\Phi_{(X,Y,Z)}(M)=\Diag(X\diag(M))+Y^{0}\circ M + Z^{0}\circ M^T\ \text { for } M\in M_n(\C).$$
\end{lemma}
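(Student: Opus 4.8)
The plan is to compute directly from the Choi correspondence of Definition \ref{def:Choimatrix}. Writing $\rho:=\rho_{(X,Y,Z)}$ and using linearity, for any $M=\sum_{j,l}M_{jl}e_je_l^*\in M_n(\C)$ and any output indices $i,k\in[n]$ one has
$$\Phi_{(X,Y,Z)}(M)_{ik}=\sum_{j,l=1}^n M_{jl}\,\Phi_\rho(e_je_l^*)_{ik}=\sum_{j,l=1}^n M_{jl}\,\rho_{ij,kl},$$
where the second equality is exactly the defining relation $\Phi_\rho(e_je_l^*)_{ik}=\rho_{ij,kl}$. Thus the whole computation reduces to reading off the nonzero entries $\rho_{ij,kl}$ from the explicit description (\ref{eq:rhoXYZ0}) and collecting the surviving terms for each fixed pair $(i,k)$.

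First I would record, from (\ref{eq:rho-ABC})--(\ref{eq:rhoXYZ0}), that the only nonzero entries of $\rho$ are of three types: the diagonal entries $\rho_{ij,ij}=X_{ij}$ (for all $i,j$), the entries $\rho_{ii,jj}=Y_{ij}$ (for $i\neq j$), and the entries $\rho_{ij,ji}=Z_{ij}$ (for $i\neq j$). Substituting each type into the sum and determining, for fixed $(i,k)$, which summation indices $(j,l)$ land on a nonzero position yields three contributions. The diagonal entries contribute only when $i=k$ and $l=j$, giving $\sum_j M_{jj}X_{ij}=(X\diag(M))_i$, i.e.\ the diagonal matrix $\Diag(X\diag(M))$; the $Y$-entries contribute only when $j=i$, $l=k$ and $i\neq k$, giving $Y_{ik}M_{ik}=(Y^0\circ M)_{ik}$; and the $Z$-entries contribute only when $j=k$, $l=i$ and $i\neq k$, giving $Z_{ik}M_{ki}=(Z^0\circ M^T)_{ik}$. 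The superscript $0$ (zeroing the diagonal) arises precisely because the $Y$- and $Z$-entries are present only off the diagonal.

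The remaining step is a bookkeeping check that the three contributions are disjoint and jointly exhaust the sum. This is immediate: the $X$-positions satisfy $i=k$, whereas both the $Y$- and $Z$-positions require $i\neq k$, and the $Y$-positions (row $(i,i)$) never coincide with the $Z$-positions (row $(i,j)$ with $i\neq j$). Summing the three pieces gives
$$\Phi_{(X,Y,Z)}(M)=\Diag(X\diag(M))+Y^0\circ M+Z^0\circ M^T,$$
as claimed. I do not expect any genuine obstacle here: the only subtlety is to respect the index convention $\Phi_\rho(e_je_l^*)_{ik}=\rho_{ij,kl}$ of Definition \ref{def:Choimatrix}, which is what forces $M$ (not $M^T$) to multiply $Y^0$ and $M^T$ to multiply $Z^0$; and the hypothesis $\diag(X)=\diag(Y)=\diag(Z)$ guarantees consistency on the diagonal, where only the $X$-term survives.
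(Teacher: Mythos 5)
Your proof is correct and is essentially the same verification the paper gives, differing only in direction: the paper starts from the claimed formula for $\Phi_{(X,Y,Z)}$, evaluates it on the matrix units $e_ie_j^*$, assembles the Choi matrix, and matches it with $\rho_{(X,Y,Z)}$, whereas you invert the correspondence and read $\Phi_\rho(M)_{ik}=\sum_{j,l}M_{jl}\,\rho_{ij,kl}$ off the sparsity pattern of $\rho_{(X,Y,Z)}$. Since the Choi map is a linear bijection, the two directions carry identical content and the same index bookkeeping, and your treatment of the diagonal overlap (only the $X$-term survives when $i=k$) and of the $M$ versus $M^T$ convention is accurate.
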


\begin{proof}
    We compute the Choi state $J(\Phi_{(X,Y,Z)})=\sum_{i,j=1}^n\Phi_{(X,Y,Z)}(e_i e_j^*)\ot e_ie_j^*$ and show that it coincides with $$\rho_{(X,Y,Z)} =\sum_{i,j=1}^n X_{ij} e_ie_i^*\ot e_je_j^* +\sum_{i\ne j\in [n]} Y_{ij} e_ie_j^*\ot e_ie_j^*+\sum_{i\ne j\in [n]} Z_{ij} e_ie_j^* \ot e_je_i^*.$$
    For this we compute 
    \begin{align*}
   & \Phi_{(X,Y,Z)}(e_j e_j^*) = \Diag(Xe_j) = \Diag((X_{ij})_{i\in[n]}) = \sum_{i=1}^nX_{ij}e_ie_i^* \ &\text{ for } j\in [n],\\
  &\Phi_{(X,Y,Z)}(e_i e_j^*)=Y_{ij}e_ie_j^* + Z_{ji}e_je_i^* \ & \text{ for } i\ne j\in [n].
  \end{align*}
    Putting both computations together yields $\sum_{i,j=1}^n\Phi_{(X,Y,Z)}(e_i e_j^*)\ot e_ie_j^*=\rho_{(X,Y,Z)}$, as desired.
\end{proof}

\begin{definition}\label{def:DOC123}
    Let $(X_1,Y_2,Z_2),(X_2,Y_2,Z_2)\in\R^{n\times n}\times\Herm^n\times\Herm^n$ such that, for $k=1,2$,  $\diag(X_k)=\diag(Y_k)=\diag(Z_k)$.
    % and $\diag(X_2)=\diag(Y_2)=\diag(Z_2)$. 
    We define their composition `$\bullet$' by
    $$(X_1,Y_2,Z_2)\bullet(X_2,Y_2,Z_2)=(X_1X_2,Y_1\circ Y_2+Z_1\circ Z_2^T+DY_3,Y_1\circ Z_2+Z_1\circ Y_2^T+DZ_3),$$
   setting $DY_3:=\Diag(X_1X_2-Y_1\circ Y_2-Z_1\circ Z_2^T)$ and $DZ_3:=\Diag(X_1X_2-Y_1\circ Z_2-Z_1\circ Y_2^T)$ (that are used to ensure that the three arguments in the resulting matrix triplet share the same diagonal).
\end{definition}

\begin{lemma}\label{lem:Phi123}
    Let $(X_1,Y_2,Z_2),(X_2,Y_2,Z_2)\in\R^{n\times n}\times\Herm^n\times\Herm^n$ such that, for $k=1,2$, $\diag(X_k)=\diag(Y_k)=\diag(Z_k)$, and  $(X_3,Y_3,Z_3):=(X_1,Y_2,Z_2)\bullet(X_2,Y_2,Z_2)$ (as in Definition~\ref{def:DOC123}). Then,  we have
       $$\Phi_{\rho_{(X_3,Y_3,Z_3)}} = \Phi_{\rho_{(X_1,Y_1,Z_1)}}\circ\Phi_{\rho_{(X_2,Y_2,Z_2)}}.$$
\end{lemma}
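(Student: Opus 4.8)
The plan is to prove the composition formula by direct computation, following the same strategy used in the proof of Lemma~\ref{lem:computePhi}: I would compute the composed map $\Phi_{\rho_{(X_1,Y_1,Z_1)}}\circ\Phi_{\rho_{(X_2,Y_2,Z_2)}}$ explicitly on a general matrix $M\in M_n(\C)$ using the closed form provided by Lemma~\ref{lem:computePhi}, and then verify that the result equals $\Phi_{\rho_{(X_3,Y_3,Z_3)}}$ with $(X_3,Y_3,Z_3)$ as in Definition~\ref{def:DOC123}. Since each factor map is of the form $\Phi_{(X,Y,Z)}(M)=\Diag(X\diag(M))+Y^0\circ M+Z^0\circ M^T$, the calculation is entirely algebraic and comes down to tracking how the three operations (diagonal scaling, Hadamard multiplication, and Hadamard multiplication after transposition) interact under composition.

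First I would apply the inner map to obtain $N:=\Phi_{(X_2,Y_2,Z_2)}(M)=\Diag(X_2\diag(M))+Y_2^0\circ M+Z_2^0\circ M^T$, recording both its diagonal $\diag(N)=X_2\diag(M)$ (since the off-diagonal Hadamard terms vanish on the diagonal) and its off-diagonal part. Then I would feed $N$ into the outer map $\Phi_{(X_1,Y_1,Z_1)}$. The diagonal contribution $\Diag(X_1\diag(N))=\Diag(X_1X_2\diag(M))$ immediately produces the $X_3=X_1X_2$ term. For the Hadamard term $Y_1^0\circ N$, one uses that $Y_1^0$ annihilates the diagonal of $N$ and that $(A\circ B)^T=A^T\circ B^T$ together with the symmetry of $Z_2$, so that $Y_1^0\circ(Z_2^0\circ M^T)$ contributes a $Z_1$-type interaction; the crucial bookkeeping is that $Y_1^0\circ(Y_2^0\circ M)=(Y_1\circ Y_2)^0\circ M$ and $Z_1^0\circ(Z_2^0\circ M^T)^T=(Z_1\circ Z_2^T)^0\circ M$ (after matching the transposes), which together assemble into the off-diagonal part of $Y_3=Y_1\circ Y_2+Z_1\circ Z_2^T+DY_3$. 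The analogous combination $Y_1\circ Z_2+Z_1\circ Y_2^T$ arises for the $M^T$-coefficient, giving $Z_3$.

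The one point that requires care, and which I expect to be the main (though still routine) obstacle, is the precise handling of the diagonal-versus-off-diagonal split. Lemma~\ref{lem:computePhi} writes the maps using $Y^0,Z^0$ (off-diagonal parts), so when I reassemble the composed map into the canonical form $\Phi_{(X_3,Y_3,Z_3)}$, I must check that the diagonal correction terms $DY_3=\Diag(X_1X_2-Y_1\circ Y_2-Z_1\circ Z_2^T)$ and $DZ_3$ from Definition~\ref{def:DOC123} are exactly what is needed to make $\diag(X_3)=\diag(Y_3)=\diag(Z_3)$ hold, while simultaneously not altering the action of the map (since only $Y_3^0=(Y_1\circ Y_2+Z_1\circ Z_2^T)^0$ and $Z_3^0$ enter the Hadamard terms, the added diagonal pieces are invisible to the off-diagonal action and only serve to package the diagonal scaling correctly into the $X_3$-term). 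In other words, the identity holds as an equality of linear maps irrespective of the diagonal of $Y_3,Z_3$, and the specific choice of $DY_3,DZ_3$ is merely the normalization that places the composed map inside the parametrization with a shared diagonal. Once this split is made explicit, comparing coefficients of $\Diag(\cdot\,\diag(M))$, of $M$, and of $M^T$ on both sides finishes the proof.
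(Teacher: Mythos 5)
Your proposal is correct and follows essentially the same route as the paper's proof: both compute $\Phi_{(X_1,Y_1,Z_1)}(\Phi_{(X_2,Y_2,Z_2)}(M))$ directly via Lemma~\ref{lem:computePhi}, use that the hollow matrices $Y_2^0,Z_2^0$ do not affect the diagonal so that $\diag(\Phi_{(X_2,Y_2,Z_2)}(M))=X_2\diag(M)$, and then match the coefficients of $\Diag(\cdot\,\diag(M))$, of $M$, and of $M^T$, with the diagonal corrections $DY_3,DZ_3$ playing no role in the map's action. One tiny remark: no symmetry of $Z_2$ is needed anywhere—the identity $(Z_2^0\circ M^T)^T=(Z_2^0)^T\circ M$ suffices, exactly as in the paper's computation of the summand $C$.
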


\begin{proof}
Let $M\in M_n(\C)$. By Lemma \ref{lem:computePhi},  we have
\begin{align*}
& \Phi_{(X_1,Y_1,Z_1)}\circ \Phi_{(X_2,Y_2,Z_2)} (M) = \Phi_{(X_1,Y_1,Z_1)}(\Phi_{(X_2,Y_2,Z_2)}(M))\\
& = \underbrace{\Diag(X_1 \diag(\Phi_{(X_2,Y_2,Z_2)}(M)))}_{=A} + \underbrace{Y_1^0 \circ \Phi_{(X_2,Y_2,Z_2)}(M)}_{=B}
 + \underbrace{Z_1^0 \circ (\Phi_{(X_2,Y_2,Z_2)}(M))^T}_{=C}.
 \end{align*}
 %   \begin{align*}
%        \Phi_{(X_1,Y_1,Z_1)}(\Phi_{(X_2,Y_2,Z_2)}(M))&=\Phi_{(X_1,Y_1,Z_1)}(\Diag(X_2\diag(M))+Y_2^{0}\circ M + Z_2^{0}\circ M^T)\\
 %       &=A+B+C
 %   \end{align*}
%    where 
%    \begin{align*}
 %       A &= \Diag(X_1\diag(\Diag(X_2\diag(M))+Y_2^{0}\circ M + Z_2^{0}\circ M^T)),\\
 %       B &= Y_1^0\circ(\Diag(X_2\diag(M))+Y_2^{0}\circ M + Z_2^{0}\circ M^T),\\
 %       C &= Z_1^0\circ(\Diag(X_2\diag(M))+Y_2^{0}\circ M + Z_2^{0}\circ M^T)^T.
  %  \end{align*}
We compute each of the three summands $A,B,C$, using the fact (again, by Lemma \ref{lem:computePhi}) that 
$$\Phi_{(X_2,Y_2,Z_2)}(M)=\Diag(X_2\diag(M))+ Y_2^0\circ M+Z_2^0\circ M^T.$$
As $Y_2^0$ and $Z_2^0$ are traceless, the same holds for $Y_2^0\circ M+Z_2^0\circ M^T$ and thus  we have
    \begin{align*}
        A = \Diag(X_1\diag(\Diag(X_2\diag(M)))) = \Diag(X_1X_2\diag(M)).
    \end{align*}
 As $Y_1^0$ and $Z_1^0$ are traceless, we obtain
     \begin{align*}
        B = Y_1^0\circ(Y_2^{0}\circ M + Z_2^{0}\circ M^T) = Y_1^0\circ Y_2^0\circ M + Y_1^0\circ Z_2^0\circ M^T,\\
        C = Z_1^0\circ(Y_2^{0}\circ M + Z_2^{0}\circ M^T)^T = Z_1^0\circ (Y_2^0)^T\circ M^T + Z_1^0\circ (Z_2^0)^T\circ M.
    \end{align*}
    Putting everything together we obtain $$ A + B + C = \Diag(X_1X_2\diag(M)) + (Y_1^0\circ Y_2^0 + Z_1^0\circ (Z_2^0)^T)\circ M + (Y_1^0\circ Z_2^0 + Z_1^0\circ (Y_2^0)^T)\circ M^T,$$
which concludes the proof.
\end{proof}

Using Lemma \ref{lem:Phi123} we now see that Conjecture \ref{conj:LDOIstates} is indeed a reformulation of Conjecture \ref{conj:PPT2-states} for the case of LDOI states.

\end{document}